\title[\bf Eisenstein cohomology and $L$-values]{Eisenstein Cohomology for ${\rm GL}_N$ and ratios of critical values of Rankin--Selberg $L$-functions}
\author{ \bf G\"unter Harder \ \ \and \ \ A. Raghuram} 
\date{June 13, 2015}      
\subjclass[2010]{11F75; 11F66, 11F67, 11F70, 22E55}
\address{G\"unter Harder: Max Planck Institut f\"ur Mathematik, 7 Vivatsgasse, 53111 Bonn, Germany.}
\email{harder@mpim-bonn.mpg.de}
\address{A. Raghuram: Department of Mathematics\\ Indian Institute of Science Education and Research, Dr. Homi Bhabha Road, Pashan, Pune Maharashtra 411008, India.} 
\email{raghuram@iiserpune.ac.in}
\numberwithin{equation}{section}   
\newtheorem{lemma}[equation]{Lemma}
\newtheorem{satz}[equation]{Theorem}
\newtheorem{thm}[equation]{Theorem}
\newtheorem{prop}[equation]{Proposition}
\newtheorem{cor}[equation]{Corollary}
\newtheorem{defn}[equation]{Definition}
\newtheorem{rem}[equation]{Remark}
\newcommand{\bfgreek}[1]{\bm{\@nameuse{up#1}}}
\let\oldtocsection=\tocsection
\let\oldtocsubsection=\tocsubsection
\let\oldtocsubsubsection=\tocsubsubsection
\renewcommand{\tocsection}[2]{\hspace{0em}\oldtocsection{#1}{#2}}
\renewcommand{\tocsubsection}[2]{\hspace{1em}\oldtocsubsection{#1}{#2}}
\renewcommand{\tocsubsubsection}[2]{\hspace{2em}\oldtocsubsubsection{#1}{#2}}
\begin{document}  
   
\maketitle

\centerline{\bf Appendix 1 by Uwe Weselmann}

\smallskip

\centerline{\bf Appendix 2 by Chandrasheel Bhagwat and A. Raghuram}

 \tableofcontents
 

 \def\R{\mathbb{R}}
\def\C{\mathbb{C}}
\def\Z{\mathbb{Z}}
\def\Q{\mathbb{Q}}
\def\A{\mathbb{A}}
\def\F{\mathbb{F}}
\newcommand\D{{\mathbb{ D}}}
 \def\L{\mathbb{L}}
\def\bG{\mathbb{G}}
\def\N{\mathbb{N}}

\newcommand\Dm{\D_\mu}
\newcommand\Dmp{\D_{\mu^\prime}}
 
\newcommand\cal{\mathcal}
\newcommand\SMK{{\cal S}^M_{K^M_f}}
\newcommand\tMZl{\tM_{\lambda,\Z}} 
\newcommand\Gm{{\mathbb G}_m}
\newcommand\cA{\cal A}
\newcommand\cC{\cal C}
\newcommand\calL{\cal L}
\newcommand\cO{\cal O}
\newcommand\cU{\cal U}
\newcommand\cK{\cal K}   
\newcommand\cW{\cal W}     
\newcommand\HH{{\cal H}}
\newcommand\cF{\mathcal{F}} 
\newcommand\G{\mathcal{G}}
\newcommand\cB{\mathcal{B}}
\newcommand\cT{\mathcal{T}}
\newcommand\cS{\mathcal{S}}

\newcommand\Gl {{ \rm  GL}}
\newcommand\GL{{ \rm  GL}}
\newcommand\Gsp{{\rm Gsp}}
\newcommand\Lie {{ \rm Lie}} 
\newcommand\Sl{{ \rm  SL}}
\newcommand\SL{{ \rm  SL}}
\newcommand\SO {{ \rm  SO}}
\newcommand{\Sp}{\text{Sp}}

\def\ringO{\mathcal{O}}
\def\idealP{\mathfrak{P}} 
 \def\g{\mathfrak{g}}
\def\k{\mathfrak{k}}
\def\z{\mathfrak{z}}
\def\s{\mathfrak{s}}
\def\b{\mathfrak{b}}
\def\t{\mathfrak{t}}
\def\q{\mathfrak{q}}
\def\l{\mathfrak{l}}
\def\gl{\mathfrak{gl}}
\def\sl{\mathfrak{sl}}
\def\u{\mathfrak{u}} 
\def\fp{\mathfrak{p}} 
\def\p{\mathfrak{p}} 
\def\r{\mathfrak{r}}
\def\fd{\mathfrak{d}}
\def\fR{\mathfrak{R}}
\def\fI{\mathfrak{I}}
\def\fJ{\mathfrak{J}}
\def\i{\mathfrak{i}}
\def\perm{\mathfrak{S}}
\newcommand\fg{\mathfrak g}
\newcommand\fk{\mathfrak k}
\newcommand\fgK{(\mathfrak{g},K_\infty^0)}
\newcommand\gK{ \mathfrak{g},K_\infty^0 }

\newcommand\ul{\underline} 
\newcommand\Spec{\hbox{\rm Spec}} 
\newcommand\SGK{\mathcal{S}^G_{K_f}}
\newcommand\SMP{\mathcal{S}^{M_P}}
\newcommand\SMQ{\mathcal{S}^{M_Q}}
\newcommand\SMp{\mathcal{S}^{M }_{K_f^M}}
\newcommand\SMq{\mathcal{S}^{M^\prime}_{K_f^{M^\prime}}}
\newcommand\uSMP{\ul{\mathcal{S}}^{M_P}}
\newcommand\SGnK{\mathcal{S}^{G_n}_{K_f}}
\newcommand\SG{\mathcal{S}^G}
\newcommand\SGKp{\mathcal{S}^G_{K^\prime_f}}
\newcommand\piKK{{ \pi_{K_f^\prime,K_f}}}
\newcommand\piKKpkt{\pi^{\pkt}_{K_f^\prime,K_f}}
\newcommand\BSC{ \bar{\mathcal{S}}^G_{K_f}}
\newcommand\PBSC{\partial\SGK}
\newcommand\pBSC{\partial\SG}
\newcommand\PPBSC{\partial_P\SGK}
\newcommand\PQBSC{\partial_Q\SGK}
\newcommand\ppBSC{\partial_P\mathcal{S}^G}
\newcommand\pqBSC{\partial_Q\mathcal{S}^G}
\newcommand\prBSC{\partial_R\mathcal{S}^G}
\newcommand \bs{\backslash} 
 \newcommand \tr{\hbox{\rm tr}}
\newcommand \Tr{\hbox{\rm Tr}}
\newcommand\HK{\mathcal{H}^G_{K_f}}
\newcommand\HKv{\mathcal{H}^G_{K_v}}
\newcommand\HGS{\mathcal{H}^{G,\place}}
\newcommand\HKp{\mathcal{H}^G_{K_p}}
\newcommand\HKpo{\mathcal{H}^G_{K_p^0}}
\newcommand\ch{{\bf ch}}

\newcommand\M{\mathcal{M}}
\newcommand\Ml{\M_\lambda}
\newcommand\tMl{\tilde{\Ml}}
\newcommand\tM{\widetilde{\mathcal{M}}}
\newcommand\tMZ{\tM_\Z}
\newcommand\tsigma{\tilde{\sigma}}
\newcommand \pkt{\bullet}
\newcommand\tH{\widetilde{\mathcal{H}}}
\newcommand\Mot{{\bf M}} 
\newcommand\eff{{\rm eff}}
\newcommand\Aql{A_{\q}(\lambda)}
\newcommand\wl{w\cdot\lambda}
\newcommand\wlp{w^\prime\cdot\lambda} 

\def\w{{\bf w}} 
\def\d{{\sf d}}
\def\e{{\bf e}} 
\def\x{{\tt x}}
\def\y{{\tt y}}
\def\v{{\sf v}} 
\def\ff{{\bf f}}
\def\bk{{\bf k}}
 
\def\Ext{{\rm Ext}}
\def\Hom{{\rm Hom}}
\def\Ind{{\rm Ind}}
\def\aInd{{}^{\rm a}{\rm Ind}}
\def\aIndPG{\aInd_{\pi_0(P(\R)) \times P(\A_f)}^{\pi_0(G(\R)) \times G(\A_f)}}
\def\aIndQG{\aInd_{\pi_0(Q(\R)) \times Q(\A_f)}^{\pi_0(G(\R)) \times G(\A_f)}}
\def\Gal{{\rm Gal}}
\def\End{{\rm End}} 
\newcommand\Coh{{\rm Coh}}  
\newcommand\Eis{{\rm Eis}}
\newcommand\Res{\text{Res}}
\newcommand\place{\mathsf{S}}
\newcommand\emb{\mathcal{I}} 
\newcommand\LB{\mathcal{L}}  
\def\Hod{{\mathcal{H}od}}
  
 \newcommand\ip{\pi_f\circ \iota} 
\newcommand\Wp{W_{\pi_\infty\times \ip}}
\newcommand\Wpc{W^{\text{cusp}}_{\pi_\infty\times \pi_f\circ\iota}}
\newcommand\Lcusp{  L^2_{\text{cusp}}(G(\Q)\bs G(\A_f)/K_f)}
\newcommand\MiC{\tM_{\iota\circ\lambda,\C} }
\newcommand\miC{\M_{\iota\circ\lambda,\C} }
\newcommand\tpl{{}^\iota}
\newcommand\Id{\rm Id}
\newcommand\Lr{L^{\text{rat}}}
\newcommand \iso{ \buildrel \sim \over\longrightarrow} 
\newcommand \into{\hookrightarrow}
\newcommand\ppfeil[1]{\buildrel #1\over \longrightarrow}
    
\def\bfpi{\mathbf{\Pi}}
\def\bfdelta{\mathbf{\Delta}}

 \newcommand{\set}[1]{\left\{#1\right\}}

\section{\bf Introduction}

The main aim of this article is to study rank-one Eisenstein cohomology for the group $\GL_N/F,$ where $F$ is a totally real field extension of  $\Q.$ This is then used to prove rationality results for ratios of successive critical values for Rankin--Selberg $L$-functions for $\GL_n \times \GL_{n'}$ over $F$  with the parity condition that $nn'$ is even. The key idea is to interpret Langlands's constant term theorem in terms of Eisenstein cohomology.  
In \cite{harder-raghuram} we had announced the main results for $\GL_n \times \GL_{n'}$ over $\Q$ and 
in the case when $n$ is even and $n'$ is odd. In the mean-time we realized that our methods and results can be extended to the case when both $n$ and $n'$ are even and also to the situation when the base field is a totally real field. 
We hope to consider other variations of the above context in future articles. 

\medskip

For the moment, let $G = {\rm Res}_{F/\Q}(\GL_n/F)$ where $F$ is a totally real field extension of $\Q.$ 
We fix a maximal torus $T$ inside a Borel subgroup $B$ of $G.$ 
For an open-compact subgroup $K_f \subset G(\A_f),$ where 
$\A_f$ is the ring of finite ad\`eles of $\Q$, let $\SGK$ be the ad\`elic locally symmetric space; 
see \S\,\ref{sec:loc-sym-space}. 
Let $E$ be a Galois extension of $\Q$ which contains a copy of $F.$ For a dominant integral weight $\lambda \in X^*(T)$ we let $\M_{\lambda, E}$ be the algebraic irreducible representation of $G \times E$ and let $\tM_{\lambda,E}$ be the associated sheaf on $\SGK;$ see \S\,\ref{sec:sheaf-coh}.  
The fundamental object of interest is the cohomology group $H^\bullet(\SGK, \tM_{\lambda, E}).$ The main tool that we use is the Borel--Serre compactification $\BSC$ of $\SGK,$ and if  $\partial\SGK$ is the boundary of $\BSC$, then we 
have the following long exact-sequence (\S\,\ref{sec:long-e-seq}): 
$$
\cdots  \longrightarrow H^i_c(\SGK, \tM_{\lambda, E}) 
\stackrel{\mathfrak{i}^*}{\longrightarrow}   H^i(\BSC, \tM_{\lambda,E}) 
\stackrel{\mathfrak{r}^*}{\longrightarrow } H^i(\partial \SGK, \tM_{\lambda,E}) 
\longrightarrow H^{i+1}_c(\SGK, \tM_{\lambda,E}) \longrightarrow \cdots
$$
The image of cohomology with compact supports inside the full cohomology is called {\it inner} or {\it interior} cohomology and is denoted 
$H^{\bullet}_{\, !} := {\rm Image}(\mathfrak{i}^*) = {\rm Im}(H^{\bullet}_c \to H^{\bullet}).$  All these cohomology groups are 
finite-dimensional vector spaces over $E.$ They are 
Hecke-modules, i.e., there is an action of $\pi_0(G(\R)) \times \HK.$ The inner cohomology is a semi-simple module under the Hecke algebra and if $E/\Q$ is large enough then we get an isotypical decomposition:
$$
H^{\bullet}_{\, !}(\SGK, \tM_{\lambda, E}) \ = \ 
\bigoplus_{\pi_f \in \Coh_!(G, K_f, \lambda)} H^{\bullet}_{\, !}(\SGK, \tM_{\lambda, E})(\pi_f), 
$$
where $\Coh_!(G, K_f, \lambda)$ is the finite set of isomorphism types of any absolutely irreducible $\HK$-module 
which occurs with (a finite) non-zero multiplicity in this decomposition. We may also pass to the limit over all open-compact $K_f$ and get an action of $\pi_0(G(\R)) \times G(\A_f)$ on $H^\bullet(\SG, \tM_{\lambda,E})$, 
and to retrieve the cohomology group for a particular level-structure $K_f$, we can take invariants: 
$H^\bullet(\SG, \tM_{\lambda,E})^{K_f} = H^\bullet(\SGK, \tM_{\lambda,E});$ the definitions of the Hecke algebra and such Hecke-actions are reviewed in \S\,\ref{sec:hecke}. 

\medskip

We may pass to a transcendental level by taking an embedding $\iota : E \to \C$, and then use the theory of automorphic forms on $\GL_n$ to study $H^\bullet(\SGK, \tM_{{}^\iota\!\lambda, \C})$. 
It is well-known that 
inner cohomology over $\C$ contains cuspidal cohomology, and is contained in square-integrable cohomology, i.e., 
we have a chain of inclusions: 
$$
 H_{\rm cusp}^\bullet(\SGK, \tM_{{}^\iota\!\lambda, \C}) \subset 
 H_{!}^\bullet(\SGK, \tM_{{}^\iota\!\lambda, \C})   \subset  
 H_{(2)}^\bullet(\SGK, \tM_{{}^\iota\!\lambda, \C})  \subset  H^\bullet(\SGK, \tM_{\lambda,E}) \otimes_{E,\iota}\C ; 
$$
see \S\,\ref{sec:filt}. 
The square-integrable cohomology, via the work of Borel and Garland, is captured by the discrete spectrum of $\GL_n$; 
see \S\,\ref{sec:borel-garland}. Furthermore, cuspidal cohomology is understood using results about the possible infinite components of cohomological cuspidal representations; we use here the fact that cuspidal representations are globally generic (i.e., have a Whittaker model) and hence locally generic; the local components at infinity are reviewed in 
\S\,\ref{sec:repn-infinity}.   

\medskip

Our first theorem in this paper is an arithmetic characterization of cuspidal cohomology. 
We identify a subspace $H^\bullet_{!!}(\SGK, \tM_{\lambda, E})$ 
of inner cohomology, that we call strongly-inner, which by definition is spanned by all those Hecke-modules inside inner cohomology whose isotypic component in global 
cohomology is captured already by the isotypic component in inner cohomology; 
see \S\,\ref{sec:coh-!!}. 
Strongly-inner cohomology splits off in global cohomology via a Manin--Drinfeld principle, and we get a canonical decomposition 
$$
H^\bullet(\SGK,\tM_{\lambda, E}) \ = \ 
H^\bullet_{!!}(\SGK, \tM_{\lambda, E}) \oplus H^\bullet_{\rm Eis}(\SGK, \tM_{\lambda, E}), 
$$
which gives an arithmetic definition of Eisenstein cohomology inside global cohomology; 
see (\ref{eqn:manin-drinfeld-easy}). If we go to a transcendental level via $\iota: E \to \C$ then we have:
$$
 H^\bullet_{!!}(\SGK, \tM_{{}^\iota\!\lambda, \C}) = H^\bullet_{\rm cusp}(\SGK, \tM_{{}^\iota\!\lambda, \C}).
$$
See Thm.\,\ref{thm:cuspidal-cohomology} where we summarize all the known characterizations of cuspidal cohomology. 

\medskip

The proofs of the assertions about strongly-inner cohomology involve understanding the cohomology of the boundary $H^\bullet(\partial\SGK,  \tM_{\lambda, E})$, and especially to be able to pick out the Hecke modules which do not appear in boundary cohomology; this is the subject matter of all of \S\,\ref{sec:bdry-cohomology}. The boundary 
$\partial\SGK$ is stratified as $\cup_P \partial_P\SGK$, with a stratum corresponding to every $G(\Q)$-conjugacy class of parabolic subgroups. There is a spectral sequence built from the cohomology of $\partial_P\SGK$ which may be described either at an arithmetic level or at a transcendental level; see \S\,\ref{sec:spectral-sequence}. Furthermore, for a single stratum, $H^\bullet(\partial_P\SGK, \tM_{\lambda, E})$ may be described in terms of the algebraic induction of the cohomology of the Levi quotient $M_P$ of $P$ with coefficient systems depending on $\lambda$ and the set $W^P$ of Kostant representatives for $P$ in the Weyl group of $G$; see Prop.\,\ref{prop:bdry-coh-2}. The proofs about strongly-inner cohomology also make use of multiplicity one and strong multiplicity one results of 
Jacquet--Shalika~\cite{jacquet-shalika-I} \cite{jacquet-shalika-II} and 
M\oe glin--Waldspurger~\cite{moeglin-waldspurger}.

\medskip

As a first application of strongly-inner cohomology, we describe how to attach certain periods which later play an important role in the results on special values of $L$-functions.  
For this paragraph we take $n$ to be \underline{even} and let $\pi_f \in \Coh_{!!}(G, \lambda)$, i.e., $\pi_f$ is an irreducible Hecke module contributing to strongly-inner cohomology. Taking $E$ to be large enough, we know that 
for every character $\varepsilon$ of $\pi_0(G(\R))$, and for the degree $\bullet$ being an extremal degree 
$b_n^F$ or $\tilde t_n^F$ (see Prop.\,\ref{prop:cohomology-degree}), the module 
$\pi_f \times \varepsilon$ appears in $H^\bullet_{!!}(\SGK, \tM_{\lambda, E})$ with multiplicity one. We fix an 
arithmetic identification $T^\varepsilon_{\rm arith}(\lambda, \pi_f)$ between the occurrences of 
$\pi_f \otimes \varepsilon$ and $\pi_f \otimes -\varepsilon$ in degree $b_n^F$; see (\ref{eqn:t-arith}). 
Then, we go to a transcendental level via $\iota : E \to \C$, and fix an identification 
$T^\varepsilon_{\rm trans}({}^\iota\!\lambda, {}^\iota\!\pi_f)$ between the occurrences of 
${}^\iota\!\pi_f \otimes \varepsilon$ and ${}^\iota\!\pi_f \otimes -\varepsilon$ in cuspidal cohomology in degree $b_n^F$ 
by a map described purely in terms of the relative Lie algebra cohomology groups at infinity; see \S\,\ref{sec:T-trans}.
We define a nonzero complex number, which we call a {\it relative period}, as the discrepancy between these two identifications: 
$$
\Omega^\varepsilon({}^\iota\!\lambda, {}^\iota\!\pi_f) \, T^\varepsilon_{\rm trans}({}^\iota\!\lambda, {}^\iota\!\pi_f) \ = \ 
T^\varepsilon_{\rm arith}(\lambda,\pi_f) \otimes_{E,\iota} {\bf 1}.
$$
Varying $\iota$, the family of relative periods attached to $\pi_f$, gives a well-defined element of $(E \otimes \C)^\times/E^\times.$

\medskip

As a second application of strongly-inner cohomology, we go back to boundary cohomology and prove a strong form of the Manin--Drinfeld principle, by showing that a certain $E$-subspace of 
$H^\bullet(\partial\SGK, \tM_{\lambda,E})$ splits off as an isotypical Hecke-module; see 
Thm.\,\ref{thm:manin-drinfeld-strong}. To explain this result, take $N = n+n'$ and now let 
$G = {\rm Res}_{F/\Q}(\GL_N/F)$, and $P$ be the maximal parabolic subgroup with Levi quotient 
$M_P = G_n \times G_{n'} := {\rm Res}_{F/\Q}(\GL_n/F) \times {\rm Res}_{F/\Q}(\GL_{n'}/F).$ Let $Q$ be the associate parabolic subgroup of $P$ with Levi quotient 
$M_Q = G_{n'} \times G_{n}.$
Take pure weights $\mu$ and $\mu'$ for $G_n$ and $G_{n'}$; see \S\,\ref{sec:pure}. 
Let $\sigma_f \in {\rm Coh}_{!!}(G_n, \mu)$ and $\sigma'_f \in {\rm Coh}_{!!}(G_{n'}, \mu')$. We make a crucial combinatorial assumption on the weights $\mu$ and $\mu'$, that there is a Kostant representative $w \in W^P$ such that 
its length $l(w)$ is ${\rm dim}(U_P)/2,$ and $w^{-1} \cdot (\mu \otimes \mu')$ is dominant as a weight for $G$.  An obvious necessary condition for the existence of such a $w$ is that ${\rm dim}(U_P) = nn'$ is even. Without loss of generality we will take $n$ even and $n'$ of any parity. 
This condition on $\mu$ and $\mu'$ has other equivalent formulations which are captured by our {\it combinatorial lemma}; see \S\,\ref{sec:com-lemma}. This lemma is proved by Uwe Weselmann in Appendix 1. 
A consequence is that the representation algebraically (un-normalized) parabolically induced from 
$\sigma_f  \otimes \sigma'_f$ appears in boundary cohomology: 
$$
\aInd_{P(\A_f)}^{G(\A_f)}(\sigma_f  \otimes \sigma'_f) \ \hookrightarrow 
H^{b_N^F}(\partial_P\SG, \tM_{\lambda, E}), 
$$
where $b_N^F$ is a special degree corresponding to the bottom-most degrees; see \S\,\ref{sec:simple-notation}.
Pick a level structure $K_f \subset G(\A_f)$ so that the induced representation has $K_f$-fixed vectors to get a 
Hecke-stable ${\sf k}$-dimensional subspace (for some ${\sf k}$) in $H^{b_N^F}(\partial_P\SG, \tM_{\lambda, E})^{K_f}.$  
Denote this ${\sf k}$-dimensional space as 
$I^\place_b(\sigma_f,\sigma^\prime_{f},\varepsilon')_{P, w}^{K_f}$, where $w$ is coming from the combinatorial lemma, and 
$\varepsilon'$ is a signature discussed in (\ref{eqn:epsilon'-n'-cases}). The element $w \in W^P$ has an associate 
$w' \in W^Q$ which is also balanced in the sense that $l(w') = {\rm dim}(U_Q)/2.$ Another consequence is that 
$
\aInd_{Q(\A_f)}^{G(\A_f)}(\sigma'_f(n)  \otimes \sigma_f(-n')) \ \hookrightarrow 
H^{b_N^F}(\partial_Q\SG, \tM_{\lambda, E}),$ 
and taking $K_f$-invariants we get a ${\sf k}$-dimensional Hecke-stable subspace, denoted  
$I^\place_b(\sigma_f,\sigma^\prime_{f},\varepsilon')_{Q, w'}^{K_f},$  
in $H^{b_N^F}(\partial_Q\SG, \tM_{\lambda, E})^{K_f}.$ 
Thm.\,\ref{thm:manin-drinfeld-strong} states that 
the $2{\sf k}$-dimensional space 
$$
I^\place_b(\sigma_f,\sigma^\prime_{f},\varepsilon)_{P, w}^{K_f} \  \oplus \ 
I^\place_b(\sigma_f,\sigma^\prime_{f},\varepsilon)_{Q, w'}^{K_f} 
$$
splits off as an isotypical Hecke-summand inside $H^{b_N^F}(\partial\SG, \tM_{\lambda,E})^{K_f}.$ Furthermore, the duals of these modules splits off as an isotypical summand inside 
$H^{\tilde t_N^F-1}(\partial\SG, \tM_{\lambda^{\sf v},E})^{K_f},$ where 
the degree $\tilde t_N^F-1$ is coming from the top-degrees; see \S\,\ref{sec:simple-notation}. For this introduction, we let 
$$
\fR \ : \ H^{b_N^F}(\partial\SG, \tM_{\lambda,E})^{K_f} \ \to \ 
I^\place_b(\sigma_f,\sigma^\prime_{f},\varepsilon)_{P, w}^{K_f}
 \oplus I^\place_b(\sigma_f,\sigma^\prime_{f},\varepsilon)_{Q, w'}^{K_f} 
 $$ 
 denote this Hecke-projection in bottom-degree.

\medskip

We now come to our main result on rank-one Eisenstein cohomology; see Thm.\,\ref{thm:rank-one-eis}. It states that the image of full cohomology under the composition of the maps $\fR \circ \r^*$ as in: 
$$
H^{b_N^F}( \SGK,\tM_{\lambda,E})  \ \stackrel{\r^*}{\longrightarrow} \ 
H^{b_N^F}(\partial \SGK,\tM_{\lambda,E}) \ \stackrel{\fR}{\longrightarrow} \ 
I^\place_b(\sigma_f,\sigma_f', \varepsilon')_{P,w}^{K_f} \ \oplus \ I^\place_b(\sigma_f,\sigma_f', \varepsilon')_{Q, w'}^{K_f}
$$
is a ${\sf k}$-dimensional $E$-subspace of the $2{\sf k}$-dimensional target-space.  There are two-aspects to the proof: 
(i) One aspect is purely cohomological which says that the Eisenstein part of boundary cohomology is an isotropic 
subspace under Poincar\'e duality (Prop.\,\ref{prop:isotropic-subspace}) bounding the dimension of the image by 
${\sf k}$;  
see \S\,\ref{sec:app-poincare}. (ii) The other aspect of the proof is transcendental and appeals to the theory of 
automorphic forms; it also gives information about the internal structure of this image. 
Base-change to $\C$ via an embedding $\iota : E \to \C,$ and then use Langlands's constant term theorem, recalled in Thm.\,\ref{thm:langlands}, 
which says that the constant term relative to $Q$ of an Eisenstein series built from a section $f$ of an induced
representation from $P$ is the same as the standard intertwining operator $T_{\rm st}$ applied to $f$. For the interpretation of Langlands's theorem in 
cohomology, the reader should compare the diagrams (\ref{eqn:diagram-without-coh}) and (\ref{eqn:diagram-with-coh}). 
This interpretation implies in particular that the required image contains all classes of the form $(\xi, T_{\rm st}^*\xi)$; see 
(\ref{eqn:image-description-PQ}), i.e., the image is at least a ${\sf k}$-dimensional subspace in a 
$2{\sf k}$-dimensional vector space. It is helpful to think about a situation when ${\sf k}=1$, then the image is a line in a two-dimensional space, and we will see later,  that the `slope' of this line carries arithmetic information about ratios of $L$-values. Putting both the aspects together, we conclude that 
$$
{\rm Image}(\fR \circ \r^*) \ = \ 
\left\{ \left(\xi, T^P_{\rm Eis}(\xi)\right) \ : \ \xi \in I^\place_b(\sigma_f,\sigma_f', \varepsilon')_{P,w}^{K_f} \right\}, 
$$
for an operator $T^P_{\rm Eis} : I^\place_b(\sigma_f,\sigma_f', \varepsilon')_{P,w}^{K_f} \to I^\place_b(\sigma_f,\sigma_f', \varepsilon')_{Q, w'}^{K_f}$ defined over $E$. 
It is of course possible that the 
Eisenstein series constructed from a section $f$ picks up a pole at the point of evaluation which happens to be 
$s = -N/2$, but in this case we prove in Prop.\,\ref{prop:at-least-one-holomorphic}, that the relevant Eisenstein series 
built from a section of the appropriate induced representation from $Q$ is holomorphic at its point of evaluation which is 
$s = N/2.$ The proof of this proposition uses the combinatorial lemma which says that existence of a balanced 
$w \in W^P$ as above is equivalent to the fact that the points $s = -N/2$ and $s = 1-N/2$ being critical for the Rankin--Selberg $L$-function $L(s, {}^\iota\!\sigma \times {}^\iota\!\sigma'^{\sf v}),$ where ${}^\iota\!\sigma'^{\sf v}$ denotes the contragredient representation of ${}^\iota\!\sigma';$ the proof also uses Shahidi's results on local factors.  In this case, the image would consist of classes of the form $(T^Q_{\rm Eis}(\psi), \psi)$ with $\psi \in 
I^\place_b(\sigma_f,\sigma_f', \varepsilon')_{Q, w'}^{K_f}.$

\medskip

Our main result on special values of $L$-functions, see Thm.\,\ref{thm:main-theorem-l-values}, follows from considering the `slope' mentioned above, i.e., we analyze classes of the form $(\xi, T^P_{\rm Eis}(\xi))$, or of the form 
$(T^Q_{\rm Eis}(\psi), \psi).$ Passing to a transcendental level, we rewrite such a class in terms of the standard intertwining operator which is given by an integral. Normalizing the local standard intertwining operator using appropriate $L$-values we get an operator denoted $T_{\rm loc}.$ At finite places one checks that it is nonzero and  holomorphic using results of M\oe glin--Waldspurger~\cite{moeglin-waldspurger}, and further more that it is rational. 
At an archimedean place, using Speh's results (see, for example, \cite[Theorem 10b]{moeglin-edinburgh}) on reducibility for induced representations for $\GL_N(\R)$, one sees that the induced representations at hand are irreducible; next, using Shahidi's results \cite{shahidi-duke85} on 
local factors and the fact that $-N/2$ and $1-N/2$ are critical, we deduce that the standard intertwining operator is both holomorphic and nonzero; and therefore induces an isomorphism at the level of relative Lie algebra cohomology groups which are one-dimensional, and after making certain careful choice of generators, this scalar turns out to be a power of $(2 \pi i)$ as shown in Harder~\cite{harder-arithmetic}. Then we descend down to an arithmetic level via the relative periods and the arithmetic identification mentioned earlier. This exercise gives us a rationality result for such a ratio of $L$-values divided by the relative periods. 
Before stating the result, let us mention that 
the $L$-functions at hand may be thought of from different points of view: (i) as motivic $L$-functions attached to the tensor product of the pure motives $\Mot(\sigma_f)$ and $\Mot(\sigma'_f)$ that are conjecturally attached to $\sigma_f$ and 
$\sigma'_f$; (ii) as cohomological $L$-functions attached to $\sigma_f$ and $\sigma'_f$, and this is entirely from the perspective of Hecke action on the cohomology of arithmetic groups; 
(iii) as automorphic Rankin--Selberg $L$-functions attached to a pair of cuspidal automorphic representations. The interplay between these three points of view is reviewed in \S\ref{sec:eff-motives-coh-l-fns}. For the rest of this introduction we  will talk in terms of the completed cohomological $L$-function $L^{\rm coh}(\iota, \sigma \times \sigma'^{\v}, s)$ attached to 
$\sigma_f$, $\sigma'_f$ and $\iota : E \to \C.$ Suppose $n$ is even and $n'$ is odd then 
Thm.\,\ref{thm:main-theorem-l-values} says: 
$$
\frac{1}{\Omega^{\varepsilon'}({}^\iota\!\sigma_f)} \, 
\frac{L^{\rm coh}\left(\iota, \sigma \times \sigma'^{\v}, {\sf m}_0\right)}
{L^{\rm coh}\left(\iota , \sigma \times \sigma'^{\v}, 1+{\sf m}_0\right)}
\ \in  \ 
\iota(E),
$$
and moreover this ratio of $L$-values divided by the period is well-behaved under the absolute Galois group of $\Q.$ 
On the other hand, if both $n$ and $n'$ are even, then we have 
$$
\frac{L^{\rm coh}\left(\iota, \sigma \times \sigma'^{\v}, {\sf m}_0\right)}
{L^{\rm coh}\left(\iota , \sigma \times \sigma'^{\v}, 1+{\sf m}_0\right)}
\ \in  \ 
\iota(E), 
$$
which is also well-behaved under Galois automorphisms. The point of evaluation ${\sf m}_0$ (which corresponds to 
the point 
$-N/2$ for the automorphic $L$-function) is given in 
\S\,\ref{sec:arith-comb-lem}, where we also discuss an important point concerning the combinatorial lemma: we may replace $\sigma_f$, say, by Tate twists $\sigma_f(k)$ for $k \in \Z$, and try to get other ratios of $L$-values; the lemma however imposes some restrictions on the set of such permissible $k$, and it turns out that we get a rationality result for 
exactly all the successive pairs of critical values, no more and no less. 

At some other occasion we will  take the integral structure on the cohomology into account. Then a slight refinement
of our reasoning implies that the periods $ \Omega^{\varepsilon'}({}^\iota\!\sigma_f) $ are well defined modulo
a group of units $\cO^\times_{E,S} $ where we inverted   primes out of a small well controlled set $S$ of primes.
Then we can speak of the prime factorization of the above expression and ask whether  primes
dividing the above expression are visible in the structure of the cohomology. A first instance of such an event
is discussed in  ~\cite{harder-cong}. In this note the choice of the periods was simply an educated guess.
This issue is also addressed in ~\cite{harder-p-adic}.

\medskip

The above results on critical values are compatible with Deligne's conjecture \cite{deligne} on the critical values of motivic $L$-functions. This compatibility is proved in Bhagwat--Raghuram~\cite{bhagwat-raghuram} by proving an appropriate period relation for the ratio $c^+/c^-$ of Deligne's periods for the tensor product motive
$\Mot(\sigma_f) \otimes \Mot(\sigma'_f).$ See also Bhagwat~\cite{bhagwat}. 
It is interesting to note that such period relations exist for all possible combinations of parities of $n$ and $n'$. However, the methods of this article seem to break-down when $n$ and $n'$ are both odd. In Appendix 2, Bhagwat and the second author show that the analogue of the combinatorial lemma does not hold: if we ask for a situation where we have two successive critical values then it is shown in 
Prop.\,\ref{prop:appendix-2} that there does not exist an element of the Weyl group, let alone a Kostant representative, which would be needed to arrange for an induced module to appear in boundary cohomology.

\medskip

To conclude the introduction, in summary, the principal result of this article is: 

\begin{itemize} 
\item[\Small{$\bullet$}] An algebraicity theorem for ratios of critical values. See Thm.\,\ref{thm:main-theorem-l-values}. 
\end{itemize}
Towards this result on $L$-values, the other highlights are:  
\begin{itemize} 
\item[\Small{$\bullet$}] An arithmetic characterization of cuspidal cohomology. See Thm.\,\ref{thm:cuspidal-cohomology}. 
\item[\Small{$\bullet$}] Definition of the relative periods. See Def.\,\ref{defn:relative-periods}.  
\item[\Small{$\bullet$}] A Manin-Drinfeld principle for boundary cohomology. See Thm.\,\ref{thm:manin-drinfeld-strong}. 
\item[\Small{$\bullet$}] The image of Eisenstein cohomology via the constant term theorem. See Thm.\,\ref{thm:rank-one-eis}. 
\end{itemize}

\bigskip

{\small
{\it Acknowledgements:} It is a pleasure for both the authors to acknowledge the support of several institutions and mathematical colleagues over the last few years during which time this project evolved. The project began during a  hike in the hills in the Black Forest near the Mathematisches Forchugsinstitut Oberwolfach (MFO) in 2008 when the authors met for a conference organized by Steve~Kudla and Joachim~Schwermer. It evolved further during meetings at 
the Erwin Sch\"odinger Institute, Vienna; their invitations are gratefully acknowledged.  
The authors also thank Don~Blasius, Harald~Grobner, Benedict~Gross, Michael~Harris and Freydoon~Shahidi for their interest and helpful discussions.  The second author is grateful to the Max Planck Institut f\"ur Mathematik (MPIM), an Alexander von Humboldt Fellowship, an NSF grant (DMS-0856113) and travel grants from Oklahoma State University and IISER Pune which funded his visits to MPIM on innumerable occasions. Finally, both authors are grateful to MFO for hosting them as part of their  `Research in Pairs' program  in May 2014 
when this version of the manuscript was written up.}

\section{\bf Cohomology of arithmetic subgroups of $\GL_n$}
\label{sec:coh-arithmetic}

\medskip
\subsection{The ad\`elic locally symmetric space}
\label{sec:loc-sym-space}

\medskip 
 \subsubsection{\bf The base field}
 Let $F$ be a totally real number field of degree ${\sf r} = {\sf r}_F =[F:\Q]$. By a number field we mean a finite extension of $\Q$. Let $\ringO_F$ be the ring of integers of $F$. 
Let $\place_\infty = {\rm Hom}(F,\R)$ denote the set of archimedean places. Let $\bar\Q$ be the field of algebraic numbers in $\C$. We identify the sets: 
$\Sigma_F := {\rm Hom}(F,\bar{\Q}) = {\rm Hom}(F, \C) =  {\rm Hom}(F,\R) = \place_\infty.$

\medskip
\subsubsection{\bf The groups}
Let  $G_0 = \GL_n/F$, and put 
$G  =   {\rm R }_{F/\Q}(G_0) = {\rm R }_{F/\Q}( {\GL_n/F} ).$
An $F$-group will be denoted by  $G_0,H_0,$ etc.,  
and the corresponding $\Q$-group, via Weil restriction of scalars, will be denoted  by the same letter without the subscript. For any $\Q$-algebra $A,$ we have 
$G (A) = {G}_0(A \otimes_\Q F) = \GL_n(A \otimes_\Q F).$ 
Let $ {B}_0$ stand for the standard Borel subgroup of $G_0$ consisting of all upper triangular matrices, ${T}_0$ the standard torus of all diagonal matrices, and ${U}_0$ the unipotent radical of $B_0$. The center of ${G}_0$ will be denoted by $ {Z}_0$. These groups define the corresponding $\Q$-groups $G  \supset B  = T U  \supset T \supset Z $. Let $S$ be the maximal $\Q$-split torus in $Z $; we identify $S \cong \mathbb{G}_m/\Q$, by sending $x \in \mathbb{G}_m(\ )$ to the diagonal matrix with all entries equal to $x.$ We have the norm character $N_{F/\Q} : Z \to \mathbb{G}_m$, and if we restrict to $S$ then it becomes $x \mapsto x^{\sf r}.$ The character group $X^*(\mathbb{G}_m) = \Z$, with the character 
$x \mapsto x^k$ denoted by $[k].$ 

Let $ G_0^{(1)}$ stand for the group ${\rm SL}_n/F$ and $G^{(1)} = R_{F/\Q}( G_0^{(1)} )$. 
The superscript $(1)$ will mean that we have intersected with $\SL_n$; for example, $T^{(1)} = T \cap \SL_n.$

\medskip
\subsubsection{\bf The symmetric space}
\label{sec:group-infinity}

For any topological group $\G$, let $\G^0$ stand for the connected  component of the identity, and 
$\pi_0(\G) = \G/\G^0$ stand for the group of connected components. 
Note that $G (\R) = \prod_{v \in \place_{\infty}} \GL_n(\R).$ Similarly, 
$Z (\R) \cong  \prod_{v \in \place_{\infty}} \R^\times,$ where each copy of 
$\R^\times$ consists of nonzero scalar matrices in the corresponding copy of $\GL_n(\R)$. The subgroup 
$S(\R)$ of $Z (\R)$ consists of 
$\R^\times$ diagonally embedded in $\prod_{v \in \place_{\infty}} \R^\times.$
The group $K_\infty^{(1)} :=  \prod_{v \in S_{\infty}} {\rm SO}(n)$ is a maximal compact subgroup of $G^{(1)}(\R)$ and the corresponding Cartan involution $\Theta$  on $G^{(1)}(\R)$ is given
by  $g\mapsto  {}^tg^{-1}$ on each factor. Similarly, $C_\infty :=  \prod_{v \in S_{\infty}} {\rm O}(n)$ is a maximal compact subgroup of $G(\R).$ Let $K_\infty = C_\infty S(\R) = C_\infty S(\R)^0,$
and so $ K_\infty^0  =   K_\infty^{(1)} \times S(\R)^0$. 
The torus $T^{(1)}\times_\Q\R$ is the maximal split torus
which is invariant under $\Theta.$ Let $T[2]$ denote the $2$-torsion subgroup of $T(\R)$, then 
$K_\infty =K_\infty^0 \cdot T[2]$. Inclusion induces a canonical identification 
$\pi_0(K_\infty )= \pi_0(G(\R))$ which is isomorphic to an ${\sf r}$-fold product of $\Z/2\Z$.
The (generalized)  symmetric space is defined as: 
$X := G(\R)/K^0_\infty.$
On this space we have an action of $T[2]$  which acts transitively 
on the set of connected components.

\medskip
\subsubsection{\bf The  ad\`elic locally symmetric space}  
\label{sec:adelic-space}
Let $\A$ be the ring of ad\`eles of $\Q$, which we decompose into  its
infinite and its finite part: $\A=\R\times\A_f$.
The group of ad\`eles is given by $G(\A) = G(\R) \times G(\A_f)$. Elements in the ad\`elic group are denoted by underlined letters
$\underline g, \underline h,$ etc.,  and the decomposition of an element
$\underline g$ into its finite and its infinite part will be denoted $\ul g = g_\infty \times{ \ul  g}_f.$
Let $K_f = \prod_p K_p \subset G(\A_f)$ be an open compact subgroup. 
The {\it ad\`elic symmetric space} is: 
$$ 
G(\A)/K_\infty  K_f \ = \ 
X \times (G(\A_f)/K_f) \ = \  G(\R)/K^0_\infty\times  (G(\A_f)/K_f). 
$$ 
It is a product of the symmetric space $X = G(\R)/K_\infty^0$ and an infinite discrete set $G(\A_f)/K_f$. 
On this space $G(\Q)$ acts  properly discontinuously  and we get a quotient
 \begin{equation}
 \label{eqn:map-pi}
 \pi \, : \, G(\R)/K_\infty^0 \times G(\A_f)/K_f  \ \longrightarrow \    
 G(\Q) \backslash \left( G(\R)/K_\infty^0 \times G(\A_f)/K_f \right). 
\end{equation}
The target space, called the {\it ad\`elic locally symmetric space of $G$ with level structure $K_f$}, is denoted: 
$$ 
\SGK \ := \ G(\Q)\backslash G(\A)/K_{\infty}^0K_f.
$$

\smallskip

To get an idea of what this space looks like, consider the action of $G(\Q)$ on the discrete space $G(\A_f)/K_f$.
It follows from classical finiteness results that this quotient is 
finite; let $\{\underline g_f^{(i)}\}_{i=1}^{i=m}$ be a finite set of representatives. 
The stabilizer of the coset  $\ul g_f^{(i)}K_f/K_f$ in
$G(\Q)$ is equal to 
$$
\Gamma_i \ := \ \Gamma^{\ul g_f^{(i)}} \ \ := \ \ G(\Q) \ \cap\ \ul g_f^{(i)} K_f(\ul g_f^{(i)})^{-1} 
$$ 
which is an arithmetic subgroup of $G(\Q)$ that acts properly discontinuously on $X$. 
We say $K_f$ is {\it neat} if all the subgroups $ \Gamma^{\ul g_f^{(i)}}$ are torsion free. For any choice of $K_f$ we can pass to a subgroup $K_f'$ of finite index in $K_f$ which is neat; we may take $K_f'$ to be normal in $K_f$.  
We have the following decomposition of $\SGK$: 
$$
\SGK \  \cong \ \coprod_{i=1}^m  \ \Gamma_i \backslash G(\R) / K_\infty^0, 
$$
Let $\ul g = g_\infty \times \ul g_f \in G(\A);$ there is a unique index $i$ such that $\ul g_f K_f = \gamma \ul g_f^{(i)} K_f$ for some $\gamma \in G(\Q);$ 
the map which sends 
$G(\Q) \, \ul g \, K_\infty^0 K_f$ in $\SGK$ to 
$\Gamma_i \gamma^{-1}g_\infty K_\infty^0$ sets up the required homeomorphism.

\medskip
\subsection{Highest weight modules $\M_\lambda$ and their associated sheaves $\tM_\lambda$}
\label{sec:sheaf-coh}

\medskip 
\subsubsection{\bf The character module of  $T/\Q$}
\label{sec:q-lambda}

Let $E/\Q$ be a Galois extension such that $ {\rm Hom}(F,E) \neq 0.$ We denote this set of embeddings 
by $\{\tau: F\to E\}$ on which we have a transitive action of the Galois group
$\Gal(E/\Q).$   The base change 
  $T\times_\Q E$  is a split torus; more precisely (we drop the subscript $\Q$) we have $T\times E=\prod_{ \tau: F\to E }T_0\times_{F,\tau}E=\prod_{ \tau: F\to E }T_0 .$  
  Often, the field $E$ will be taken to be large enough (so that, for example, some module can be split off over $E$) and we will analyze the behavior of objects as we change $E$ to a field $E'$ always subject to the condition of being Galois over $\Q$ and containing an isomorphic copy of $F$. We also allow $E=\C.$

\smallskip
 
 We consider the group of characters of the torus $T$ over $E$: $X^*(T \times E) = \Hom(T \times E, {\mathbb G}_m),$ 
on which there is a natural action of $\Gal(E/\Q).$ Since $T = R_{F/\Q}(T_0)$, we have: 
\begin{equation}
\label{eqn:X*T}
X^*(T \times E) = \bigoplus_{\tau:F\to E } X^*(T_0 \times_\tau E)=  \bigoplus_{\tau:F\to E } X^*(T_0  ) ,  
\end{equation}
and any element $\lambda \in X^*(T)$ is of the form
$\lambda \ = \  (\lambda^\tau)_{\tau : F \to E}, $ 
with $\lambda^\tau \in X^*(T_0 \times_\tau E)$ a character of the split torus $T_0 \times_\tau E$ (since, under our hypothesis on $E$, we have $F \otimes_\Q E = \prod_{\tau : F \to E} E$). Any $\eta \in \Gal(E/\Q)$ acts on $\lambda \in X^*(T \times E)$ by permutations:
\begin{equation}
\label{eqn:galois-lambda}
{}^\eta\lambda \ = \ (({}^\eta\lambda)^\tau)_{\tau : F \to E}  \ = \  (\lambda^{\eta^{-1}\circ\tau})_{\tau : F \to E} .
\end{equation}
It is easy to see that ${}^{\eta_1\eta_2}\lambda = 
{}^{\eta_1}({}^{\eta_2}\lambda)$ for all $\eta_1,\eta_2 \in  \Gal(E/\Q).$

\smallskip

Given $\lambda\in X^*(T \times E)$, define its rationality field $E(\lambda)$ as 
$$
E(\lambda) \ := \ E^{\{\eta \in \Gal(E/\Q) \ : \ {}^\eta\lambda = \lambda\}}, 
$$ 
i.e., it is the subfield of $E$ which is fixed by the subgroup of $\Gal(E/\Q)$ which fixes $\lambda.$  
It is easy to see that $E(\lambda)$ is the subfield of $E$ generated by the values of $\lambda$ on $T(\Q).$

\medskip
 \subsubsection{\bf The Weyl group and the pairing on $X^*(T^{(1)})$. }
The normalizer  $N(T)$  of the maximal torus is a subgroup of $G/\Q$ whose 
connected component of the identity is $T/\Q.$ 
The quotient $N(T)/T = \cW$ is a finite algebraic group scheme over $\Q.$ For the base change
$G\times E\supset N(T)\times E\supset T\times E $ we have $\cW(E) =N(T)(E)/T(E)$
and $\cW(E) =: W$ will be the absolute Weyl group which is a product 
$W=\prod_{\tau : F \to E} W_0^\tau$ with each $W_0^\tau$ isomorphic to $W_0$ the symmetric group in $n$ letters which is the Weyl group of $G_0/F$ with respect to $T_0/F.$ 
The Galois group $\Gal(E/\Q)$ acts on $W$ by permutations as in (\ref{eqn:galois-lambda}) above. 
There is a positive definite symmetric pairing 
$$
\langle\ , \ \rangle : X^*(T^{(1)} \times E) \ \times \ X^*(T^{(1)} \times E)  \ \longrightarrow \ \R,
$$
which is invariant under the action of $W.$  The direct sum decomposition (\ref{eqn:X*T}) for $T^{(1)}$
is orthogonal with respect to this pairing,  and on each summand it is unique up to a scalar.
Restriction of characters gives an inclusion $X^*(T \times E) \subset X^*(T^{(1)} \times E)\oplus X^*(Z\times E) .$ 
We extend the form $\langle \ , \ \rangle$ trivially by zero on $X^*(Z \times E);$ and the Weyl group action is also trivial on this summand.

\medskip
 \subsubsection{\bf Standard basis and fundamental basis for the group of characters}
\label{sec:standard-fundamental} 
For the moment we only consider $T_0/F.$
The character module  $X^*( {T}_0)$ is a free abelian group on $\{\e_1,\dots, \e_n\}$, where, for any $t = {\rm diag}(t_1,\dots,t_n) \in {T}_0( A)$, with $A$ an $F$-algebra, 
we have $\e_i(t) = t_i\in A^\times$. For integers $b_1,\dots,b_n$, the character 
$\lambda = (b_1,\dots,b_n) := \sum b_i \e_i$ is given by $\lambda(t) = \prod_i t_i^{b_i}$.  
We will call $\{\e_1,\dots, \e_n\}$ the {\it standard basis} for $X^*( {T}_0)$. For example, 
the determinant character is given by $\bfgreek{delta}_n := {\rm det} = (1,\dots,1) = \sum_{i=1}^n \e_i$. The simple roots for 
$\SL_n$ or $\GL_n$ are given by $\{\bfgreek{alpha}_1,\dots,\bfgreek{alpha}_{n-1}\}$  where 
$\bfgreek{alpha}_i = \e_i -  \e_{i+1}$.

\smallskip

The fundamental weights $\{\bfgreek{gamma}_1,\dots, \bfgreek{gamma}_{n-1}\}$ in 
$X_\Q^*( {T_0 })  := X^*( {T_0 }) \otimes \Q$  are defined by: 
$$ 
\frac{ 2\langle \bfgreek{gamma}_i, \bfgreek{alpha}_j  \rangle}
{\ \langle \bfgreek{alpha}_j, \bfgreek{alpha}_j \rangle} = \delta_{ij}, 
$$
and the restriction of each $\bfgreek{gamma}_i$ to $Z_0$ is zero. 
(This only makes sense if $\bfgreek{gamma}_i$ is in  $X_\Q^*( {T_0 }) := X^*(T_0) \otimes \Q.$)
In terms of the standard basis the fundamental weights  are given by: 
$$
\bfgreek{gamma}_i \ = \  
(\e_1+\cdots + \e_i) - \frac{i}{n} \bfgreek{delta}_n 
\ =  \  \left(1-\frac{i}{n},\dots,1-\frac{i}{n},-\frac{i}{n},\dots,-\frac{i}{n}\right). 
$$
The basis for $X^*_\Q(T_0)$ given by 
$\{\bfgreek{gamma}_1,\dots, \bfgreek{gamma}_i,\dots, \bfgreek{gamma}_{n-1}, \bfgreek{delta}_n \}$
will be called the {\it fundamental basis}. 
This basis respects the decomposition  $T_0 = T_0^{(1)} \cdot Z_0$ 
(up to isogeny), i.e., restriction of characters gives an inclusion  
$X^*( {T_0})\subset  X^*(T_0^{(1)}) \oplus 
X^*( {Z}_0)$ and after tensoring by $\Q$ this becomes an isomorphism: 
$X_\Q^*( {T_0}) \cong X_\Q^*( {T_0}^{(1)}) \oplus X_\Q^*( {Z}_0).$ 
 The  restriction to $T_0^{(1)}$ of  the fundamental weights $\{\bfgreek{gamma}_1, \dots, \bfgreek{gamma}_{n-1}\}$ span 
$X^*(T_0^{(1)})$, and the determinant character $\bfgreek{delta}_n$ spans $X_\Q^*( {Z}_0).$ 

For example, half the sum of positive roots for $ {G}_0$ or 
$ {G}^{(1)}_0$, is in $X_\Q^*( {T}^{(1)}_0),$ and is given by
$$
\bfgreek{rho}_n \ := \ \sum_{i=1}^{n-1} \gamma_i  \ = \ 
\left(\frac{n-1}{2}, \frac{n-3}{2},\dots,\frac{-(n-1)}{2} \right). 
$$

Given $\lambda \in X^*_\Q(T_0)$ we will write 
$$
\lambda \ = \ \sum_{i=1}^{n-1} (a_i-1)  \bfgreek{gamma}_i \ + \ d \cdot \bfgreek{delta}_n. 
$$
The $(a_i-1)$ might seem strange here, but has the virtue that it will simplify expressions later on. (See, for example,  the discussion below on motivic weight.) 
The above expression for $\lambda$ is the same as writing 
$\lambda + \bfgreek{rho}_n = \sum_{i=1}^{n-1} a_i \bfgreek{gamma}_i \ + \ d \cdot \bfgreek{delta}_n.$ 
We will denote $\lambda^{(1)}= \sum_i (a_i-1) \bfgreek{gamma}_i$ and call it the semi-simple part of $\lambda$, and similarly,  $\lambda_{\rm ab} = d \cdot \bfgreek{delta}_n$ will be called the abelian part of $\lambda$.

\smallskip

We describe the dictionary between the standard and the fundamental bases. 
Let $\lambda \in X^*_\Q(T_0)$ be written as 
$\lambda = \sum_{i=1}^{n-1} (a_i-1) \bfgreek{gamma}_i+ d \cdot \bfgreek{delta}_n$. 
Formally, as a `character' of $T_0$, it may be written as: 
$$
t = {\rm diag}(t_1,\dots,t_n) \mapsto \lambda(t) = 
t_1^{a_1+a_2+\dots+a_{n-1} -(n-1)} \cdot t_2^{a_2+\dots+a_{n-1} - (n-2)} \cdots 
t_{n-1}^{a_{n-1}-1} \cdot 
(t_1\cdots t_n)^{r_{\lambda}}, 
$$ 
where 
$$
r_{\lambda} := (nd - \sum_{i=1}^{n-1} i (a_i-1))/n.
$$ 
In the standard basis if $\lambda = (b_1,\dots,b_n)$ then 
\begin{eqnarray*}
b_1 & = & a_1 + a_2 + \dots + a_{n-1} - (n-1) + r_{\lambda}, \\
b_2 & = & a_2 + \dots + a_{n-1} - (n-2) + r_{\lambda},  \\
 & \vdots & \\ 
b_{n-1} & = & a_{n-1} - 1 + r_{\lambda}, \\
b_n & = & r_{\lambda}.
\end{eqnarray*}
Conversely, 
\begin{eqnarray*}
a_i  - 1 & = & b_i - b_{i+1}, \quad  {\rm for}\  1 \leq i \leq n-1, \ {\rm and} \\
d & = & (b_1+\dots+b_n)/n.
\end{eqnarray*}

\smallskip
 
 All the above notations are adapted for characters of $T/\Q = R_{F/\Q}(T_0).$ Hence, for  
$\lambda = (\lambda^\tau)_{\tau : F \to E}$ in $X^*_\Q(T \times E )$, we have: 
\begin{equation}
\label{eqn:lambda-def}
\lambda^\tau 
\ = \ \sum_{i=1}^{n-1} (a_i^\tau-1) \bfgreek{gamma}_i \ + \ d^\tau \cdot \bfgreek{delta}_n
\ = \ (b^\tau_1, \dots, b^\tau_n). 
\end{equation}
We define $\rho_n=(\bfgreek{rho}_n)_{\tau:F\to E}$ then we  can form $\lambda+\rho_n$ and get 
 \begin{equation}
\label{eqn:lambdaplusrho-def}
(\lambda+\rho_n)^\tau 
\ = \ \sum_{i=1}^{n-1}  a_i^\tau  \bfgreek{gamma}_i \ + \ d^\tau \cdot \bfgreek{delta}_n
 \end{equation}
We also put $\delta_n=(\bfgreek{delta}_n)_{\tau:F\to E}$; actually  we are only interested in the case that
$d^\tau=d$ for all $\tau,$ and in this case we write $\lambda=\lambda^{(1)} + d\delta_n;$ 
see Lem.\,\ref{dconst} below. 
 
\medskip
\subsubsection{\bf Integral weights}
\label{sec:integral}
Let $\lambda = \sum_{i=1}^{n-1} (a_i-1) \bfgreek{gamma}_i+ d \cdot \bfgreek{delta}_n  = (b_1,\dots,b_n)  \in 
X^*_\Q(T_0).$   
We say that $\lambda$ is an integral weight if and only if $\lambda \in X^*(T_0)$, which is the same as saying  that 
$b_i \in \Z$ for all $i.$ In terms of the fundamental basis: 
$\lambda \in X^*_\Q(T_0)$ is integral if and only if
\begin{equation}
\label{eqn:integral-lambda}
\lambda \in X^*(T_0) \ \Longleftrightarrow \ 
\left\{\begin{array}{l} 
a_i \in \Z, \quad 1 \leq i \leq n-1, \\
nd \in \Z, \\
nd \equiv \sum_{i=1}^{n-1} i (a_i-1) \pmod{n}.
\end{array}\right.
\end{equation}
This implies that $r_{\lambda} = (nd -  \sum_{i=1}^{n-1} ia_i)/n \in \Z$. 
A weight $\lambda = (\lambda^\tau)_{\tau : F \to E} \in X^*_\Q(T \times E)$ is integral if and only if each $\lambda^\tau$ is integral.

\medskip
\subsubsection{\bf Dominant integral weights}
\label{sec:dominant}
Let $\lambda = (b_1,\dots,b_n) = \sum_{i=1}^{n-1} (a_i-1) \bfgreek{gamma}_i+ d \cdot \bfgreek{delta} \in 
X^*(T_0)$ be an integral weight. 
Then $\lambda$ is dominant, for the choice of the Borel subgroup being the standard upper triangular subgroup $B_0$,  if and only if 
$b_1 \geq b_2 \geq \dots \geq b_n$, or equivalently, 
\begin{equation}
\label{eqn:dominant-lambda}
\lambda \ \mbox{is dominant} \ \Longleftrightarrow \ a_i \geq 1 \  \mbox{for $1 \leq i \leq n-1.$ \ 
(There is no condition on $d$.)}
\end{equation}
A weight $\lambda = (\lambda^\tau)_{\tau : F \to E} \in X^*_\Q(T  \times E)$ is dominant-integral if and only if each 
$\lambda^\tau$ is 
dominant-integral.  
Denote the set of dominant integral weights for $T_0$ (resp., $T$) by $X^+(T_0)$ (resp., $X^+(T \times E)$).

\medskip
\subsubsection{\bf The representation $(\rho_\lambda, \M_\lambda)$}
\label{sec:h-w-module}
Let $\lambda \in X^+(T \times E)$ be a dominant integral weight and $(\rho_\lambda, \M_\lambda)$ be the absolutely irreducible finite-dimensional representation of $G \times E(\lambda)$ of highest weight $\lambda.$  We can get hold of 
$\M_\lambda$ after going to a large enough Galois extension $E/\Q$ and descend down to $E(\lambda).$ Put 
 $$
 \M_{\lambda, E} \ = \ \bigotimes_{\tau : F \to E} \M_{\lambda^\tau}, 
 $$
 where $\M_{\lambda^\tau}/E$ is the absolutely irreducible finite-dimensional representation of 
 $G_0 \times_\tau E = \GL_n/F \times_\tau E$ with highest weight  $\lambda^\tau.$ If necessary, we will write 
 $(\rho_{\lambda^\tau}, \M_{\lambda^\tau})$ for this representation. We can produce a descent data and show that $\M_{\lambda,E}$ is defined over $E(\lambda)$, and an $E(\lambda)$-structure, which will be unique up to homotheties by $E^\times$, will be denoted $\M_\lambda.$ The group $G(\Q) = \GL_n(F)$ acts on $\M_{\lambda, E}$ diagonally, i.e., 
$\gamma \in G(\Q)$ acts on a pure tensor
$\otimes_\tau m_\tau$ via: 
\begin{equation}
\label{eqn:G(Q)-action}
\gamma \cdot (\otimes_\tau m_\tau) \ = \ \otimes_\tau (\tau(\gamma) \cdot m_\tau). 
\end{equation}

\smallskip
For an alternative constructive description of $\M_\lambda$ using an algebraic version of Borel-Weil-Bott theorem, take the flag variety of all Borel subgroups  
$B\backslash G$, which works over $\Z.$ (See, for example, Demazure-Grothendieck \cite[XXII, 5.8]{demazure-grothendieck}.) 
Let $w_0$ be the element of the Weyl group of longest length which is represented by an element of $G(\Q)$. 
The weight $w_0(\lambda)$ gives a line bundle $\LB_{w_0(\lambda)}$ on $B\backslash G \times E(\lambda).$ 
The line bundle $\LB_{w_0(\lambda)}$ has global sections if and only if $\lambda$ is dominant; the representation space of $\M_\lambda$ can be taken as  the $E(\lambda)$-vector space $H^0(B\backslash G, \LB_{w_0(\lambda)})$
of global sections.  Let $A(G)$ be the algebra of regular functions on $G/\Q.$ Then
\begin{eqnarray*}
\M_\lambda & = & H^0(B\backslash G, \LB_{w_0(\lambda)}) \\
& = & 
\{f \in A(G) \otimes E(\lambda) \ : \ f(bg) = w_0(\lambda)(b)f(g), \ \forall b \in B(E(\lambda)), \forall g \in G(E(\lambda))\}.
\end{eqnarray*}
The $\rho_\lambda$-action of $g \in G(E(\lambda))$ on any  $f \in \M_\lambda$ is by right-shifts: 
$(\rho_\lambda(g)(f))(g') = f(g'g)$ for all $g,g' \in G(E(\lambda)).$ 
The highest/lowest weight vector can be explicitly written down; see \cite{harder-p-adic}.

\medskip
\subsubsection{\bf The sheaf $\tM_\lambda$ and algebraic dominant-integral weights}
\label{sec:sheaves}
Let $\M$ be a finite dimensional $\Q$-vector space and 
$ \rho : G/\Q \to \GL (\M)$
be a rational representation of the algebraic group $G/\Q.$
This representation  $\rho$ provides a sheaf $\tM$ of $\Q$-vector spaces on $\SGK$: the sections 
over an open subset $V\subset \SGK$ are given by
\begin{equation}
\label{eqn:sections}
\tM(V) = \left\{ s: \pi^{-1}(V) \to \M \ | \ s \, \, \hbox
 {locally constant and} \,\,
s( \gamma v)  = \rho(\gamma) s(v) , \gamma\in G(\Q) \right\}, 
\end{equation}
where $\pi$ is as in (\ref{eqn:map-pi}). Take $\M = \M_{\lambda,E}$ as in \S\ref{sec:h-w-module}; 
it is explained in \cite[Chap.\,3]{harder-book} that the cohomology 
$H^\bullet(\SGK, \tM_{\lambda,E}) = 0$ if the central character of $\rho_\lambda$ 
is not the type of an algebraic Hecke character of $F.$ (See, also, \cite[1.1.3]{harder-inventiones}.) 
In our situation this looks as follows:  
 
\begin{lemma}\label{dconst}
Let $\lambda \in X^+(T \times E)$, and say $\lambda = (\lambda^\tau)_{\tau : F \to E}$ with  
$\lambda^\tau = \sum_{i=1}^{n-1} (a^\tau_i-1) \bfgreek{gamma}_i + d^\tau \cdot  \bfgreek{delta}.$ If there exist 
$\tau_1$ and $\tau_2$ in $\Hom(F,E)$ 
such that $d^{\tau_1} \neq d^{\tau_2}$ then $\tM_\lambda = 0$. 
Equivalently, if $\tM_\lambda \neq 0$ then 
$\lambda^{\tau_1}_{\rm ab} = \lambda^{\tau_2}_{\rm ab}$.
\end{lemma}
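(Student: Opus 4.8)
The plan is to reduce the vanishing of $\tM_\lambda$ to a statement about the central character of $\rho_\lambda$ and an algebraic Hecke character condition, which is the fact quoted just before the lemma from \cite[Chap.\,3]{harder-book}. First I would observe that $\tM_\lambda$ is a sheaf on $\SGK = G(\Q)\backslash G(\A)/K_\infty^0 K_f$, and that the connected component $S(\R)^0$ of the maximal $\Q$-split central torus acts on $\SGK$; indeed, since $\SGK$ is built from arithmetic quotients of $X = G(\R)/K_\infty^0$, and $S(\R)^0 \cong \R_{>0}$ sits inside the center, the sheaf cohomology $H^\bullet(\SGK,\tM_\lambda)$ can only be nonzero if the central character $\chi_\lambda$ of $\rho_\lambda$ restricted to $S(\R)^0$ is trivial — more precisely, if the restriction of $\chi_\lambda$ to the relevant central subgroup is consistent with descending to a character of $G(\Q)Z(\R)^0\backslash G(\A)$, i.e.\ $\chi_\lambda$ must be (the infinity-type of) an algebraic Hecke character of $F$.

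The key computation is then to identify the central character $\chi_\lambda$ of $\M_{\lambda,E} = \bigotimes_{\tau:F\to E}\M_{\lambda^\tau}$. Since $Z_0 = \Gm \cdot I_n$ inside $\GL_n/F$, the central character of $\M_{\lambda^\tau}$ is $z \mapsto z^{b_1^\tau + \cdots + b_n^\tau} = z^{nd^\tau}$, using the standard-to-fundamental dictionary $b_1^\tau + \cdots + b_n^\tau = n d^\tau$ established in \S\ref{sec:standard-fundamental}. Hence, viewing $Z(\R) \cong \prod_{v\in\place_\infty}\R^\times$ with the $\tau$-component being the $v$-component under the identification $\Sigma_F = \place_\infty$, the central character of $\M_{\lambda,E}$ on $Z(\R)$ is $(z_\tau)_\tau \mapsto \prod_\tau z_\tau^{n d^\tau}$. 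Now $S(\R)^0$ is $\R_{>0}$ embedded diagonally, so $x \in S(\R)^0$ acts by $x^{n\sum_\tau d^\tau}$ — this alone is harmless; the real constraint comes from the requirement that $\chi_\lambda$ be \emph{algebraic} in the sense of Hecke characters: the infinity type $(n d^\tau)_{\tau}$ must be of the form required for an algebraic Hecke character of $F$. For $F$ totally real, an algebraic Hecke character has infinity type $x \mapsto \prod_v |x_v|^{?} \cdot \mathrm{sgn}(\cdots)$ with all the exponents (the "weight" at each infinite place) equal to a single integer $w$; that is, the infinity type must be $\mathrm{N}_{F/\Q}^{w}$ up to a finite-order character. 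Since the $\tau$-component of the infinity type is $n d^\tau$, this forces $n d^{\tau_1} = n d^{\tau_2}$ for all $\tau_1, \tau_2$, i.e.\ $d^{\tau_1} = d^{\tau_2}$, which is exactly $\lambda^{\tau_1}_{\rm ab} = \lambda^{\tau_2}_{\rm ab}$. Contrapositively, if $d^{\tau_1}\neq d^{\tau_2}$ the central character is not an algebraic Hecke character type, so $H^\bullet(\SGK,\tM_\lambda) = 0$, and in degree zero this already gives $\tM_\lambda = 0$ as a sheaf (a locally constant sheaf with nonzero stalks on a nonempty space has nonzero $H^0$ on at least one connected component; here one uses that the relevant central element acts on the fiber $\M_{\lambda,E}$ by a nontrivial scalar while acting trivially on $\SGK$, forcing the fiber to be $0$).

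The cleanest way to phrase the last point, avoiding any appeal to cohomology vanishing, is directly on the level of sections: by \eqref{eqn:sections}, a section over a small open $V$ is a locally constant $s:\pi^{-1}(V)\to\M_{\lambda,E}$ with $s(\gamma v) = \rho_\lambda(\gamma)s(v)$; but the space $\SGK$ on which the sheaf lives has already had $K_\infty^0 \supset S(\R)^0$ (and hence a nontrivial central $\R_{>0}$) quotiented out on the archimedean side, so if the central torus $S(\R)^0$ — or rather its image, which must be accounted for in forming $\SGK$ — acts by a nontrivial character on $\M_{\lambda,E}$, then $s$ must vanish identically. Concretely, because $\M_{\lambda,E}$'s central character on $Z(\R)^0$ is $(z_\tau)\mapsto \prod z_\tau^{nd^\tau}$, and one can only form the sheaf consistently on the space where the center acts through an algebraic Hecke character, non-constancy of $\tau \mapsto d^\tau$ is precisely the obstruction. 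I expect the main obstacle to be pinning down exactly which central subgroup is divided out in the definition of $\SGK$ versus which acts on $\M_\lambda$ — i.e.\ being careful that it is the \emph{algebraic Hecke character} condition on the full infinity type $(nd^\tau)_\tau$, not merely triviality on the diagonal $S(\R)^0$, that is doing the work — and citing \cite{harder-book} and \cite{harder-inventiones} for the precise form of that condition rather than reproving it here.
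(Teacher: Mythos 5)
Your identification of the mechanism is essentially the paper's: the obstruction is the central character of $\rho_\lambda$, whose value on a central element $z \in Z(\Q) = F^\times$ is $\prod_{\tau}\tau(z)^{nd^\tau}$ (your computation of this, via $b_1^\tau+\cdots+b_n^\tau = nd^\tau$, is correct), and for a totally real $F$ the non-constancy of $\tau\mapsto d^\tau$ is exactly what prevents this from being the type of an algebraic Hecke character. The paper's proof is the one-line global version of this: restrict the central character to a suitable congruence subgroup $U$ of the units $\ringO_F^\times$, embedded diagonally in $Z(\Q)\subset\GL_n(F)=G(\Q)$. By Dirichlet's unit theorem the vectors $(\log|\tau(u)|)_\tau$ span the trace-zero hyperplane, so the character $u\mapsto\prod_\tau\tau(u)^{nd^\tau}$ is trivial on some (equivalently, on every) finite-index subgroup of $\ringO_F^\times$ if and only if the $d^\tau$ are all equal; when they are not, this nontriviality, fed into the equivariance condition (\ref{eqn:sections}), is what annihilates the stalks. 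The classification fact you quote as a black box — that an algebraic Hecke character of a totally real field has all archimedean exponents equal — is proved by exactly this evaluation on units, so the two arguments have the same content.

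The genuine gap is the step you yourself flag as the "main obstacle": you never exhibit the elements of $G(\Q)$ on which the nontriviality of the central character actually bites, and without them the passage from "the infinity type is not that of an algebraic Hecke character" to "the sheaf vanishes" is not an argument. Your surrounding discussion of $S(\R)^0$ is moreover a red herring: the restriction of $\omega_\lambda$ to $S(\R)^0$ is $y\mapsto y^{n\sum_\tau d^\tau}$, which is nontrivial even for perfectly admissible $\lambda$ (cf. (\ref{eqn:trivial-on-S(R)^0}), where it is $y^{{\sf r}nd}$), and only the diagonal $S(\R)^0$ — not all of $Z(\R)^0$ — is divided out in forming $\SGK$; so no purely archimedean consideration can distinguish the $d^\tau$ from one another. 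The constraint is invisible at infinity and at each finite place separately and only appears globally, through $\ringO_F^\times\subset G(\Q)$. Supplying that step — a congruence subgroup of units small enough to lie in $K_f$, on which $\rho_\lambda$ acts by a scalar that Dirichlet guarantees is $\neq 1$ — is precisely the paper's proof, and with it your argument closes.
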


\begin{proof}
If some $d^{\tau_1} \neq d^{\tau_2}$ then consider the central character of $\rho_\lambda$ on a suitable congruence subgroup of the units in the diagonal center: $\ringO_F^\times \simeq S(\ringO_F)  \subset \GL_n(F) = 
G(\Q)$ to see that every stalk is zero, and hence the sheaf is the zero sheaf. 
\end{proof}

Define $X^+_{\rm alg}(T \times E)$ to be the subset of those dominant-integral weights which satisfy the {\it algebraicity} condition that $d^{\tau_1} =  d^{\tau_2}$ for all $\tau_1$ and $\tau_2$ in $\Hom(F,E)$, i.e., 
$$
X^+_{\rm alg}(T \times E) := \{\lambda \in X^+(T \times E) \ : \ 
\lambda^{\tau_1}_{\rm ab} = \lambda^{\tau_2}_{\rm ab}, \ \forall \tau_1, \tau_2 \in \Hom(F,E)\}.
$$ 
Henceforth, we will only consider such algebraic, dominant-integral highest weights $\lambda.$ Algebraicity means that 
the central character $\omega_{\lambda} \in X^*(T\times E)$ 
factors via the norm character $N_{F/\Q}$, i.e., 
\begin{equation}
\omega_{\lambda}(x) \ = \ N_{F/\Q}(x)^d, 
\end{equation}
and it's restriction to $S = {\mathbb G}_m$ is given by 
\begin{equation}
\label{eqn:trivial-on-S(R)^0}
\omega_{\lambda}(y) \ = \ y^{{\sf r}nd}.  
\end{equation}

If $K_f$ is neat then every stalk of $\tM_\lambda$ 
is isomorphic to $\M_\lambda$ and the sheaf $\tM_\lambda$ is a local system. Note that 
 $\tM_\lambda$ is a sheaf of $E(\lambda)$-vector spaces, and the base change  
$\M_{\lambda, E} = \M_\lambda \otimes_{E(\lambda)} E$ gives a sheaf $\tM_{\lambda,E}$ of $E$-vector spaces 
on $\SGK$.

\medskip
\subsection{Cohomology of the sheaves $\tM_\lambda$}

A fundamental problem is to understand the arithmetic information contained in the sheaf cohomology groups 
$H^\bullet(\SGK, \tM_{\lambda,E}).$

\medskip
\subsubsection{\bf Functorial properties upon changing the level structure and Hecke action}
\label{sec:hecke}

An inclusion $K_{1,f} \subset K_{2,f}$ of open-compact subgroups gives a canonical  map 
$H^\bullet (\mathcal{S}^G_{K_{2,f}}, \tM_{\lambda, E}) \to H^\bullet (\mathcal{S}^G_{K_{1,f}}, \tM_{\lambda, E}).$
Pass to the limit over all open compact subgroups $K_f$ and define: 
$$
H^\bullet (\SG, \tM_{\lambda, E}) \ := \  \varinjlim\limits_{K_f} \, H^\bullet (\SGK, \tM_{\lambda, E}). 
$$
On this limit, there is an action of $\pi_0(G(\R)) \times G(\A_f)$, which we now describe. 
An element $x_\infty \in \pi_0(G(\R)) = \pi_0(K_\infty)$ is represented by an element of $T[2]$ and so 
normalizes $K_\infty.$
Let $\ul x_f \in G(\A_f)$ and put $\ul x = x_\infty \times \ul x_f.$ 
Right multiplication by $\ul x$, i.e., $\ul g \mapsto \ul g \ul x$, induces a map 
$m_{\ul x} : \mathcal{S}^G_{K_f} \longrightarrow  \mathcal{S}^G_{ \ul x_f^{-1}K_f \ul x_f}.$
This gives a canonical map in cohomology: 
$$
m_{\ul x}^\bullet : H^\bullet(\mathcal{S}^G_{ \ul x_f^{-1}K_f \ul x_f}, m_{\ul x, *}\, \tM_{\lambda, E}) 
\longrightarrow 
H^\bullet(\mathcal{S}^G_{K_f}, \tM_{\lambda, E}).
$$
We have 
$m_{\ul x, *} \, \tM_{\lambda, E} = \tM_{\lambda, E}$ as sheaves on $\mathcal{S}^G_{ \ul x_f^{-1}K_f \ul x_f}.$ 
Hence, any $\ul x$ gives a canonical map
$$
m_{\ul x}^\bullet : H^\bullet(\mathcal{S}^G_{ \ul x_f^{-1}K_f \ul x_f},  \tM_{\lambda, E}) 
\longrightarrow 
H^\bullet(\mathcal{S}^G_{K_f}, \tM_{\lambda, E}). 
$$
Passing to the limit over all $K_f$ gives the action of $\ul x$ on $H^\bullet (\SG, \tM_{\lambda, E}).$ 
The cohomology for the spaces with level structure can then be retrieved by taking invariants: 
$$
H^\bullet(\SGK, \tM_{\lambda, E}) \ \cong \ 
H^\bullet (\SG, \tM_{\lambda, E})^{K_f}.
$$
Let $\HK = C^\infty_c(G(\A_f)/ \! \!/K_f)$ 
be the Hecke-algebra consisting of all locally constant and compactly supported bi-$K_f$-invariant $\Q$-valued functions on $G(\A_f)$; the algebra structure is given by convolution of functions where we take the Haar measure on $G(\A_f)$ to be the product of local Haar measures, and for every prime $p$, the local measure is normalized so that 
${\rm vol}(G(\Z_p)) = 1.$ We have a canonical action of 
$\pi_0(G(\R)) \times \HK$ on $H^\bullet(\SGK, \tM_{\lambda, E}).$

\medskip
\subsubsection{\bf Functorial properties upon changing the field $E$}
\label{sec:change-field-E}
Consider another field $E'$, also Galois over $\Q$  with an injection $\iota : E \to E'.$ 
Then $\iota$ induces an isomorphism
$X^*(T \times E) \longrightarrow X^*(T \times E')$ written as $\lambda \mapsto {}^\iota\!\lambda.$
 (The map $\tau \mapsto \iota \circ \tau$ is a bijection from 
$\Hom(F,E)$ onto $\Hom(F,E').$ Hence any $\tau' \in \Hom(F,E')$ is of the form $\tau' = \iota \circ \tau$ for a unique 
$\tau \in \Hom(F,E)$ and we put $\tau = \iota^{-1}\circ \tau'.$ If  $\lambda=(\lambda^\tau)_{\tau:F\to E} $ then 
$^\iota{}\lambda=(\lambda^{\iota\circ\tau})_{\iota\circ \tau :F\to E^\prime}$.) 
We get an identification $\iota^* : \M_{\lambda, E} \otimes_{E,\iota} E' \iso \M_{{}^\iota\!\lambda, E'}.$
 This yields an identification $\tM_{\lambda,E}\otimes_{E,\iota} E^\prime \iso \tM_{\lambda,E^\prime}$ of sheaves   
which induces an isomorphism: 
 $$
 \iota^\bullet :  H^\bullet(\SGK, \tM_{\lambda,E})\otimes_{E,\iota}E^\prime \ \iso \ 
 H^\bullet(\SGK, \tM_{{}^\iota\!\lambda,E'}). 
 $$
The map $\iota^\bullet$ is a $\pi_0(G(\R)) \times \HK$-equivariant, since it came from a morphism of sheaves whereas the Hecke action on these cohomology groups was intrinsic to the space $\SGK$. 

We may assume $E=E^\prime$ then $\iota$ is an element  of the Galois group. 
If $\eta_1, \eta_2 \in \Gal(E/\Q)$ then it is clear that 
$(\eta_1 \circ \eta_2)^* = \eta_1^* \circ \eta_2^*,$ and hence
$(\eta_1 \circ \eta_2)^\bullet = \eta_1^\bullet \circ \eta_2^\bullet,$ i.e., 
$$
\xymatrix{
H^\bullet (\SGK, \tM_{\lambda,E}) \ar[rr]^{\eta_2^\bullet} \ar[rrdd]_{(\eta_1 \circ \eta_2)^\bullet} 
& & H^\bullet (\SGK, \tM_{{}^{\eta_2}\!\lambda,E}) \ar[dd]^{\eta_1^\bullet}  \\
& & \\
 & & H^\bullet (\SGK, \tM_{{}^{\eta_1 \circ \eta_2}\!\lambda,E})
}$$
We say that the system of cohomology groups 
$\{H^\bullet(\SGK, \tM_{{}^\eta\lambda,E})\}_{\eta \in {\rm Gal}(E/\Q)}$ is defined over $\Q.$

 \medskip
 \subsubsection{\bf The fundamental long exact sequence}
 \label{sec:long-e-seq}

Let $\BSC$ be the Borel--Serre compactification of $\SGK$, i.e., 
$\BSC = \SGK \cup \partial\SGK$, where the boundary is stratified as 
$\partial \SGK  = \cup_P \partial_P\SGK$ with $P$ running through the $G(\Q)$-conjugacy classes of proper parabolic subgroups defined over $\Q$. (See Borel--Serre \cite{borel-serre}.) 
The sheaf $\tM_{\lambda, E}$ on $\SGK$ naturally extends, 
using the definition of the Borel--Serre compactification, to a sheaf on 
$\BSC$ which we also denote by $\tM_{\lambda, E}$. Restriction from $\BSC$ to $\SGK,$ in cohomology,  induces an isomorphism 
$
H^\bullet(\BSC, \tM_{\lambda, E}) \ \iso \  H^\bullet(\SGK, \tM_{\lambda, E}).
$
The cohomology of the boundary $H^\bullet(\partial \SGK, \tM_{\lambda, E})$ and the cohomology 
with compact supports $H^\bullet_c(\SGK, \tM_{\lambda, E})$ are naturally 
modules for $ \pi_0(G(\R)) \times \HK;$ the Hecke action on these other cohomology groups are described exactly as in 
Sect.\,\ref{sec:hecke}. 
Our basic object of interest is the following long exact sequence of $\pi_0(G(\R)) \times \HK$-modules: 
$$
\cdots  \longrightarrow H^i_c(\SGK, \tM_{\lambda, E}) 
\stackrel{\mathfrak{i}^*}{\longrightarrow}   H^i(\BSC, \tM_{\lambda,E}) 
\stackrel{\mathfrak{r}^*}{\longrightarrow } H^i(\partial \SGK, \tM_{\lambda,E}) 
\longrightarrow H^{i+1}_c(\SGK, \tM_{\lambda,E}) \longrightarrow \cdots
$$
The image of cohomology with compact supports inside the full cohomology is called {\it inner} or {\it interior} cohomology and is denoted 
$H^{\bullet}_{\, !} := {\rm Image}(\mathfrak{i}^*) = {\rm Im}(H^{\bullet}_c \to H^{\bullet}).$ 
The theory of Eisenstein cohomology is designed to describe the image of 
the restriction map $\mathfrak{r}^*$. 
Our considerations in Sect.\,\ref{sec:change-field-E} apply verbatim to the cohomology groups
$H^\bullet_?(\SGK, \tM_{\lambda,E}),$ where $? \in \{empty,,c,!,\partial\};$ 
 by $H^\bullet_\partial(\SGK, \tM_{\lambda,E})$ we mean the cohomology of the boundary 
 $H^\bullet(\PBSC, \tM_{\lambda,E}).$

\medskip
\subsubsection{\bf Inner cohomology and the inner spectrum ${\rm Coh}_!(G, \lambda)$}\label{sec:innercoh}
Inner cohomology  is a semi-simple module for the Hecke-algebra. (See  Harder \cite[Chap.\,3]{harder-book}.) 
After taking $E/\Q$ to be a sufficiently large finite Galois extension there is an isotypical decomposition:  
$$  
H^\bullet_{\,!}(\SGK, \M_{\lambda,E}) \ =\ 
\bigoplus_{\pi_f \in {\rm Coh}_!(G,K_f,\lambda)} H^\bullet_{\,!}(\SGK, \M_{\lambda,E})(\pi_f), 
$$
where $\pi_f$ is an isomorphism type of an absolutely  irreducible $\HK$-module, i.e., there is an $E$-vector space $V_{\pi_f}$ with an absolutely irreducible action $\pi_f$ of $\HK$.  
Let $\HKp = C^\infty_c(G(\Q_p)/ \! \!/K_p)$ 
be the local Hecke-algebra consisting of all locally constant and compactly supported bi-$K_p$-invariant $\Q$-valued functions on $G(\Q_p).$ The local factors  $\HKp$ are commutative outside a finite set $\place = \place_{K_f}$ of primes and the factors for two different primes commute with each other.  For $p \not\in \place$ the commutative algebra 
$\HKp$ acts
on $V_{\pi_f} $  by a homomorphism $\pi_p : \HKp \to E.$ Let $V_{\pi_p}$ be the one dimensional vector space $E$ with basis $1\in E$ with the action $\pi_p$ on it. Then  
$V_{\pi_f}  =\otimes_{p\in \place} V_{\pi_p} \otimes^\prime_{p\not\in \place} V_{\pi_p}.$
The set ${\rm Coh}_!(G, K_f, \lambda)$ of isomorphism classes which occur in  the  above decomposition is called the inner spectrum of $G$ with $\lambda$-coefficients and level structure $K_f.$  The inner spectrum of $G$ with $\lambda$-coefficients is: 
$$
{\rm Coh}_!(G, \lambda) \ = \ \bigcup_{K_f} {\rm Coh}_!(G, K_f, \lambda). 
$$

\smallskip

Going back to functorial properties upon changing the field $E$ via $\iota : E \to E'$, note that 
the map $\iota^\bullet$, since it came from a morphism of sheaves, preserves inner cohomology: 
$$
 \iota^\bullet :  H^\bullet_!(\SGK, \tM_{\lambda,E}) \longrightarrow H^\bullet_!(\SGK, \tM_{{}^\iota\!\lambda,E'}). 
$$ 
Given $\pi_f \in {\rm Coh}_!(G,K_f,\lambda)$, 
we deduce ${}^\iota\pi_f \in {\rm Coh}_!(G,K_f, {}^\iota\!\lambda)$ and 
that the $\pi_f$-isotypic component  is mapped by $\iota^\bullet$ onto the ${}^\iota\pi_f$-isotypic component. Furthermore,  given any character 
$\varepsilon$ of $\pi_0(G(\R))$ we have
$$
\iota^\bullet \left(H^\bullet_!(\SGK, \tM_{\lambda,E})(\pi_f \times \varepsilon ) \right) \ = \ 
H^\bullet_!(\SGK, \tM_{{}^\iota\!\lambda,E'})({}^\iota\pi_f \times \varepsilon), 
$$
which, via an abuse of notation, we may write as
$\iota^\bullet(\pi_f \times \varepsilon) = {}^\iota\pi_f \times \varepsilon. $

\smallskip

If $E$ is large enough so that the $\pi_f$-isotypic component can be split off from inner cohomology, then we can define of the rationality field of $\pi_f$ as:
$$
E(\pi_f) \ := \  E^{\{\eta \in \Gal(E/\Q) \ : \ {}^\eta\pi_f = \pi_f\}}. 
$$ 
One may see, using strong multiplicity one for $\GL_n/F$, that the field $E(\pi_f)$ is the subfield of $E$ generated by values of $\pi_p$ for $p \notin \place.$

\bigskip
\section{\bf Analytic tools}
\label{sec:analytic-tools}

We will now go to the transcendental level, i.e., take an embedding  $\iota : E \to  \C$ and extend the ground field to $\C.$ For all of Sect.\,\ref{sec:analytic-tools}, we will work over $\C$ and therefore we suppress  
the subscript $\C.$ Starting from Sect.\,\ref{sec:bdry-cohomology} we will return to working at an arithmetic level, i.e., over a Galois extension $E/\Q$ which contains a copy of the totally real base field $F,$ but for now, we work over $\C.$

\medskip
\subsection{Cuspidal parameters and the representation $\D_\lambda$ at infinity} 
\label{sec:repn-infinity}

\medskip
\subsubsection{\bf Lie algebras and relative Lie algebra cohomology}
\label{sec:lie-algebras}
Let $\g  $ denote the Lie algebra of $G/\Q$. Similarly,  let $\g^{(1)}$, $\b$, $\t$, $\z$ and $\s$ be the Lie-algebras of 
$G^{(1)}/\Q$, $B/\Q$, $T/\Q$, $Z/\Q$ and $S$, respectively. Let $\g_\infty$ be the Lie algebra of $G(\R)$, and 
$\k_\infty$ that of $K_\infty^0.$
 
We consider Harish-Chandra modules, also called $(\g_\infty, K_\infty^0)$-modules, $(\pi,V)$,  where $V$ is the space of 
smooth $K_\infty^0$-finite vectors of a reasonable representation $(\pi, \mathcal{V})$ of $G(\R)$. For example, 
if $(\pi,\mathcal{V})$ is essentially unitary (i.e., unitary up to a twist) and irreducible, then the space of $K_\infty^0$-finite vectors are automatically smooth. Given a $(\g_\infty, K_\infty^0)$-module $(\pi,V)$
by $H^\bullet(\g_\infty, K_\infty^0; V)$ we mean the cohomology of the complex 
${\rm Hom}_{K_\infty^0}(\Lambda^\bullet(\g_\infty/\k_\infty), V).$ (See Borel--Wallach~\cite{borel-wallach}.) 
If $(\pi,\mathcal{V})$ is a (reasonable) representation of $G(\R)$, then by $H^\bullet(\g_\infty, K_\infty^0; \mathcal{V})$, we mean $H^\bullet(\g_\infty, K_\infty^0; V)$ with $V$ as above.  
It will be important later on to stay as far as possible at a `rational' level. Note: $\g_\infty = \g \otimes_\Q \R.$ Similarly, 
let $\k^{(1)}$ be the Lie algebra of the $\Q$-group $\prod_{\tau : F \to \R} \SO(n)$, then 
$\k_\infty = (\s \oplus \k^{(1)}) \otimes_\Q \R.$ Given a $(\g_\infty, K_\infty^0)$-module $(\pi,V)$,  using connectedness of $K_\infty^0$, we have: 
$$
{\rm Hom}_{K_\infty^0}(\Lambda^\bullet(\g_\infty/\k_\infty), V) \ = \ 
{\rm Hom}_{K_\infty^0}(\Lambda^\bullet(\g /\k), V) = 
{\rm Hom}_{\k}(\Lambda^\bullet(\g /\k), V). 
$$
We will consider modules $(\pi, V)$ with a $\Q$-structure, i.e., $V = V_0 \otimes_\Q \C$, and then 
\begin{equation}
\label{eqn:g-k-rationa}
H^\bullet(\g_\infty, K_\infty^0; V) \ = \ H^\bullet(\g, K_\infty^0; V) \ = \ H^\bullet(\g, \k; V_0) \otimes \C.
\end{equation}
See \cite{harder-arithmetic}.

\medskip
\subsubsection{\bf Pure dominant integral weights}
\label{sec:pure}
Not all algebraic dominant integral weights can support inner cohomology. We have the following purity condition: 

\begin{lemma}[Purity Lemma]
Let $\lambda \in X^+_{\rm alg}(T \times \C)$ be an algebraic dominant integral-weight; and say 
$\lambda = (\lambda^\nu)_{\nu : F \to \C}$, with 
$\lambda^\nu  = \sum_{i=1}^{n-1} (a^\nu_i -1)  \bfgreek{gamma}_i + d \cdot \bfgreek{delta}.$ 
Suppose there is an irreducible essentially unitary representation $\mathcal{V}$ of $G(\R)$ such that 
$H^\bullet(\gK;  \mathcal{V} \otimes \M_\lambda) \neq 0.$
Then $\lambda$ is essentially self-dual, i.e., 
for each $\nu$, we have $a^\nu_i  = a^\nu_{n-i}.$ 
\end{lemma}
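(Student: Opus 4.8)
The plan is to reduce the statement to a classical fact about which weights can carry nonzero $(\g_\infty,K_\infty^0)$-cohomology, via infinitesimal characters. First I would observe that since $G = \mathrm{R}_{F/\Q}(\GL_n/F)$, we have $\g_\infty = \prod_{\nu : F \to \R} \gl_n(\R)$, $K_\infty^0 = \prod_\nu \SO(n)\cdot S(\R)^0$, and $\M_\lambda = \bigotimes_\nu \M_{\lambda^\nu}$, so by the K\"unneth formula the cohomology $H^\bullet(\gK; \mathcal{V}\otimes\M_\lambda)$ factors (after decomposing $\mathcal{V}$ appropriately) over the archimedean places; hence it suffices to treat one factor $\GL_n(\R)$ and show that if an irreducible essentially unitary $(\gl_n(\R),\SO(n)\R^\times)$-module $\mathcal{V}^\nu$ has nonzero cohomology with coefficients in $\M_{\lambda^\nu}$, then $a^\nu_i = a^\nu_{n-i}$ for all $i$.

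The key step is the standard necessary condition from Borel--Wallach: if $H^\bullet(\gl_n(\R),\SO(n)\R^\times; \mathcal{V}^\nu \otimes \M_{\lambda^\nu}) \neq 0$, then the infinitesimal character of $\mathcal{V}^\nu$ must equal that of the contragredient (or dual) of $\M_{\lambda^\nu}$, i.e.\ it is the Weyl-group orbit of $-w_0(\lambda^\nu) - \rho$ — equivalently, $\mathcal{V}^\nu$ and $\M_{\lambda^\nu}^{\sf v}$ have the same infinitesimal character. Now I would use the hypothesis that $\mathcal{V}^\nu$ is \emph{essentially unitary}: an essentially unitary representation of $\GL_n(\R)$ is unitary after twisting by a character $|\det|^{s}$, and its infinitesimal character, being that of a unitary representation up to a real translate in the $\det$-direction, is stable under the Cartan involution / complex conjugation on the relevant parameters. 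Writing the infinitesimal character of $\M_{\lambda^\nu}$ in standard coordinates as $\lambda^\nu + \bfgreek{rho}_n = (b_1 + \tfrac{n-1}{2}, b_2 + \tfrac{n-3}{2}, \dots, b_n - \tfrac{n-1}{2})$ with $b_i = b_i^\nu$, the self-duality forced by essential unitarity says this tuple, up to reordering and an overall real shift, is invariant under negation; that translates into $b_i^\nu + b_{n+1-i}^\nu$ being independent of $i$, which — since $a_i^\nu - 1 = b_i^\nu - b_{i+1}^\nu$ — is exactly $a_i^\nu = a_{n-i}^\nu$.

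Concretely, I would carry this out as follows: (i) decompose via K\"unneth to reduce to the single place $\GL_n(\R)$; (ii) invoke the infinitesimal-character matching lemma (Wigner's lemma / Borel--Wallach Ch.~I) to get that $\mathcal{V}^\nu$ has the infinitesimal character of $\M_{\lambda^\nu}^{\sf v}$; (iii) twist $\mathcal{V}^\nu$ by a power of $|\det|$ to make it unitary, noting this shifts the infinitesimal character by a real scalar in the $\bfgreek{delta}_n$-direction and so does not affect the semisimple part; (iv) use that a unitary representation of $\GL_n(\R)$ has an infinitesimal character invariant under $\chi \mapsto -\bar\chi$ composed with the Weyl group (contragredient of a unitary rep is its conjugate, with the same infinitesimal character up to $W$), and conclude that the $\SL_n$-part of $\lambda^\nu + \bfgreek{rho}_n$ is fixed by $-w_0$; (v) unwind $-w_0$ acting on $X^*(T_0^{(1)})$ in standard coordinates to read off $b_i^\nu + b_{n+1-i}^\nu = \text{const}$, hence $a_i^\nu = a_{n-i}^\nu$. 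The main obstacle I anticipate is step (iv): pinning down precisely what "essentially unitary" buys us at the level of the infinitesimal character and making the argument clean rather than hand-wavy — one must be careful that the relevant symmetry is genuinely $-w_0$ on the semisimple part and that the $\det$-twist is the only ambiguity, which is where the hypothesis $\lambda \in X^+_{\rm alg}$ (so the central character is a power of $N_{F/\Q}$) and the reduction to $G^{(1)}$ play their role.
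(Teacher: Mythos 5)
Your proposal is correct and takes essentially the same route as the paper, whose entire proof is the one-line citation ``this is a consequence of Wigner's Lemma'' (referring to \cite[Chap.\,3]{harder-book}). Your steps (ii)--(v) are precisely the standard unwinding of that citation: Wigner's lemma matches the infinitesimal character of $\mathcal{V}$ with that of $\M_\lambda^{\sf v}$, and $\pi^{\sf v}\cong\bar\pi$ for the unitary twist forces $b^\nu_i+b^\nu_{n+1-i}$ to be independent of $i$, i.e.\ $a^\nu_i=a^\nu_{n-i}$.
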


\begin{proof}
This is a consequence of Wigner's Lemma. See, for example, \cite[Chap.\,3]{harder-book}. 
\end{proof}

An algebraic dominant-integral essentially self-dual weight $\lambda$ will be called a {\it pure} weight. 
Let's summarize the restrictions on all the ingredients going into $\lambda$ if it is a pure weight:

\begin{lemma}
Let $\lambda \in X^*_{\rm alg}(T \times E)$ be an algebraic weight. Suppose that 
$\lambda = (\lambda^\tau)_{\tau : F \to E}$, with 
$\lambda^\tau \ = \ \sum_{i=1}^{n-1} (a^\tau_i -1)  \bfgreek{gamma}_i + d \cdot \bfgreek{delta}
\ = \ (b^\tau_1, \dots, b^\tau_n),$ is dominant, integral and pure. Then:

\smallskip
\begin{enumerate}

\item Integrality: 
  \begin{itemize}
  \item $a^\tau_i \in \Z$ for all $\tau$  and all $i$, $nd \in \Z$, and $r_{\lambda^\tau} \in \Z,$ i.e.,    
            $nd \equiv \sum i(a^\tau_i-1) \pmod{n};$ 
  \item $b^\tau_i \in \Z$ for all $\tau$  and all $i$. 
  \end{itemize}

\smallskip
\item Dominance: 
  \begin{itemize}
  \item $a^\tau_i \geq 1$  for all $\tau$  and all $i$;    
  \item $b^\tau_1 \geq b^\tau_2 \geq \cdots \geq b^\tau_n.$
  \end{itemize}
 
\smallskip
\item Purity: 
 \begin{itemize}
 \item $a^\tau_i =  a^\tau_{n-i}$ and $2d \in \Z.$ In particular, if $n$ is odd, then $d \in \Z;$   
 \item there exists $w \in \Z$ such that $b^\tau_i + b^\tau_{n-i+1} = w$ for all $\tau$ and all $i.$ 
  Indeed, $w = 2d.$ 
 \end{itemize}
\end{enumerate}
\end{lemma}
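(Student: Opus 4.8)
The plan is to unwind the definitions of integrality, dominance, and purity from the preceding subsections (\S\ref{sec:integral}, \S\ref{sec:dominant}, and the Purity Lemma) and translate each into the dictionary between the fundamental basis $\{\bfgreek{gamma}_i, \bfgreek{delta}_n\}$ and the standard basis $(b_1^\tau,\dots,b_n^\tau)$ that was set up in \S\ref{sec:standard-fundamental}. Since all three conditions hold $\tau$-componentwise, it suffices to argue for a single $\tau$; I would suppress $\tau$ for most of the argument and only reinsert it at the end, noting along the way that the algebraicity hypothesis $\lambda \in X^*_{\rm alg}(T\times E)$ forces $d^\tau = d$ independent of $\tau$ (Lemma~\ref{dconst}), which is why the purity statement has a single $w = 2d$ that works uniformly.

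First I would handle integrality. By (\ref{eqn:integral-lambda}), $\lambda^\tau \in X^*(T_0)$ is equivalent to $a_i^\tau \in \Z$ for $1\le i\le n-1$ together with $nd\in\Z$ and $nd \equiv \sum_{i=1}^{n-1} i(a_i^\tau-1) \pmod n$; the congruence is exactly the statement $r_{\lambda^\tau} = (nd - \sum i a_i^\tau)/n \in \Z$ after rewriting $\sum i(a_i^\tau-1)$ as $\sum i a_i^\tau - n(n-1)/2$ and absorbing the integer $(n-1)/2$ or $n/2$ appropriately into the congruence class. The equivalence with $b_i^\tau \in \Z$ for all $i$ is then immediate from the explicit formulas $b_i^\tau = (a_i^\tau + \dots + a_{n-1}^\tau) - (n-i) + r_{\lambda^\tau}$ and the inverse relations $a_i^\tau - 1 = b_i^\tau - b_{i+1}^\tau$, $d = (b_1^\tau + \dots + b_n^\tau)/n$. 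Next, dominance: by (\ref{eqn:dominant-lambda}), $a_i^\tau \ge 1$ for all $i$, and via $b_i^\tau - b_{i+1}^\tau = a_i^\tau - 1 \ge 0$ this is the same as $b_1^\tau \ge b_2^\tau \ge \dots \ge b_n^\tau$; again there is no condition imposed on $d$.

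The purity part is where the small amount of real content lies. The Purity Lemma gives $a_i^\tau = a_{n-i}^\tau$ for all $\tau$ and all $i$. I would now compute $b_i^\tau + b_{n-i+1}^\tau$ using $b_i^\tau = \sum_{j\ge i}(a_j^\tau - 1) + (\text{rest}) $; more cleanly, from $\lambda^\tau + \bfgreek{rho}_n$ one sees $b_i^\tau = (\lambda^\tau+\bfgreek{rho}_n)_i - \bfgreek{rho}_{n,i}$ where $\bfgreek{rho}_{n,i} = (n+1)/2 - i$, and the symmetry $a_i^\tau = a_{n-i}^\tau$ says precisely that $\lambda^\tau + \bfgreek{rho}_n$ is invariant under $i \mapsto n+1-i$ on the semisimple part; adding the abelian contribution, $b_i^\tau + b_{n-i+1}^\tau = 2d$ for every $i$ — a constant $w$ independent of $i$ and, by algebraicity, of $\tau$. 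Specializing: summing the relation $\sum_{i=1}^n b_i^\tau = nd$ is consistent, and taking $i$ with $2i = n+1$ (when $n$ is odd) or pairing up terms shows $w = 2d \in \Z$; in particular, when $n$ is odd, the middle index forces $2b^\tau_{(n+1)/2} = 2d$ with $b^\tau_{(n+1)/2}\in\Z$, whence $d\in\Z$. Conversely, given the constant-pairing condition $b_i^\tau + b_{n-i+1}^\tau = w$, reading off $a_i^\tau - 1 = b_i^\tau - b_{i+1}^\tau$ and $a_{n-i}^\tau - 1 = b_{n-i}^\tau - b_{n-i+1}^\tau$ and using the pairing twice recovers $a_i^\tau = a_{n-i}^\tau$, so the two formulations of purity are equivalent.

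The only genuine obstacle is bookkeeping: keeping the shift conventions straight (the $(a_i-1)$ versus $a_i$ offset, the half-integer entries of $\bfgreek{rho}_n$, and the definition of $r_{\lambda^\tau}$) so that the congruence $nd \equiv \sum i(a_i^\tau-1)\pmod n$ comes out exactly as stated rather than off by a fixed integer. I expect no conceptual difficulty beyond carefully invoking the Purity Lemma and the standard/fundamental dictionary; everything else is a direct unwinding. I would present the proof compactly as: (1) restate the three input conditions from \S\ref{sec:integral}, \S\ref{sec:dominant}, and the Purity Lemma; (2) invoke Lemma~\ref{dconst} to get $d^\tau = d$; (3) apply the change-of-basis formulas of \S\ref{sec:standard-fundamental} in both directions to pass between the $a_i^\tau, d$ description and the $b_i^\tau$ description; and (4) extract $w = 2d$ and the odd-$n$ consequence $d\in\Z$.
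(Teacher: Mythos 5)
Your proposal is correct and follows essentially the same route as the paper: parts (1) and (2) are direct unwindings of the definitions in \S\,\ref{sec:integral} and \S\,\ref{sec:dominant}, and part (3) reduces to the standard/fundamental change-of-basis dictionary. The only cosmetic difference is in how the purity computation is organized: the paper first derives $2d = \sum_i (a_i^\tau-1) + 2r_{\lambda^\tau} \in \Z$ in the fundamental basis and then shows $b_i^\tau + b_{n-i+1}^\tau$ is constant in $i$ by a telescoping argument, identifying the constant by summing; you instead get $b_i^\tau + b_{n-i+1}^\tau = 2d$ in one step from the symmetry of $\lambda^\tau + \bfgreek{rho}_n$ (note the semisimple part is \emph{anti}-invariant, i.e.\ negated, under $i \mapsto n+1-i$, not invariant as written) and then read off $2d \in \Z$ from $b_i^\tau \in \Z$, with the odd-$n$ conclusion $d = b^\tau_{(n+1)/2} \in \Z$ coming from the middle coordinate rather than from combining $nd \in \Z$ with $2d \in \Z$.
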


\begin{proof}
For integrality and dominance see Sect.\,\ref{sec:integral} and Sect.\,\ref{sec:dominant}, respectively. Under purity, we need to show that $2d \in \Z.$ Recall: by integrality $r_{\lambda^\tau} \in \Z$ and, by definition, 
$nd = \sum_{i=1}^{n-1} i (a^\tau_i -1) + n r_{\lambda^\tau}. $ The purity condition (essentially self-dual) implies 
$$
2nd = nd + nd  = \sum_{i=1}^{n-1} i (a^\tau_i -1) + \sum_{i=1}^{n-1} (n-i) (a^\tau_i -1) + 2nr_{\lambda^\tau}  
= n \sum_{i=1}^{n-1} (a^\tau_i -1) + 2n r_{\lambda^\tau}.
$$ 
Hence we have  
\begin{equation}
\label{eqn:2d}
2d = \sum_{i=1}^{n-1} (a^\tau_i-1) + 2 r_{\lambda^\tau}. 
\end{equation}
This implies that $2d \in \Z.$ Furthermore, if  $n$ is odd then $d \in \Z,$ since we already had $nd \in \Z$ by integrality. 
In terms of the standard basis, the condition $a^\tau_i = a^\tau_{n-i}$ translates to 
$b^\tau_i - b^\tau_{i+1} + 1 = b^\tau_{n-i} - b^\tau_{n-i+1} + 1,$
which is the same as 
$b^\tau_i + b^\tau_{n-i+1} = b^\tau_{i+1} + b^\tau_{n-i}.$
In other words, $b^\tau_i + b^\tau_{n-i+1}$ is independent of $i;$ say,  $b^\tau_i + b^\tau_{n-i+1} = w^\tau.$ Then
$$
2nd = nd + nd = \sum_{i=1}^n b^\tau_i + b^\tau_{n-i+1} = n w^\tau,
$$
or that $w^\tau = 2d$ is independent of $\tau;$ hence $b^\tau_i + b^\tau_{n-i+1} = 2d$ for all 
$i$ and $\tau.$
\end{proof}
 \smallskip

Denote by $X^*_0(T \times E)$ the set of pure weights. 
For $\lambda \in X^*_0(T \times E)$, with the notations as above, we shall call the integer $2d$ the {\it purity weight} of $\lambda;$ furthermore, if we write $\lambda = \lambda^{(1)} + d \bfgreek{delta}$, then 
note that the dual weight of $\lambda$ is given by $\lambda^{\sf v} = \lambda^{(1)} - d \bfgreek{delta}$, i.e., 
$\lambda^\v \ = \ \lambda - 2d\bfgreek{delta},$ which implies 
$\M_\lambda^\v \ = \ \M_\lambda \otimes ({\rm det})^{-2d}.$

\medskip
\subsubsection{\bf Motivic weight}
\label{sec:motivic-weight}
Let $\lambda \in X_0^*(T \times E)$ be as above. The {\it motivic weight} of $\lambda$ is defined to be the integer: 
$$
\w_\lambda := {\rm max} \{\w_{\lambda^\tau} \ | \ \tau : F \to E\}, \ \ {\rm where} \ \ 
\w_{\lambda^\tau} = \sum_{i=1}^{n-1} a_i^\tau. 
$$
From (\ref{eqn:2d}) we get the parity conditions  
\begin{equation}
\label{eqn:parity}
\w_{\lambda^\tau}  \ \equiv  \ 2d + n - 1 \pmod{2}, \ \forall \tau, \quad \mbox{hence} \quad
\w_{\lambda} \  \equiv \ 2d + n - 1 \pmod{2}.
\end{equation}
In particular, if $n$ is odd then $\w_\lambda \equiv 0 \pmod{2}.$ These parity conditions  play an important role in the numerology concerning cuspidal parameters, Hodge pairs, critical points, etc.

\medskip
 \subsubsection{\bf Cuspidal parameters and the representation at infinity}
Let $\lambda \in X_0^*(T \times \C)$ with $\lambda = (\lambda^\nu)_{\nu : F \to \C}.$ 
By Wigner's Lemma, the infinitesimal character of an irreducible admissible $(\g, K_\infty^0)$-module $\pi_\infty$ such that $H^\bullet(\g,  K_\infty^0; \pi_\infty \otimes \M_{\lambda}) \neq 0$ is uniquely determined by $\lambda.$ 
Hence, up to isomorphism, there are only finitely many such $\pi_\infty.$ We denote this finite set by $\Coh_\infty(G,\lambda).$
Amongst this finite set of possible $(\g, K_\infty^0)$-modules, exactly one, up to twisting by sign characters (see below), is generic, i.e., admits a Whittaker model.  Only this representation can appear as the representation at infinity of a global cohomological cuspidal representation.

\smallskip

Let  $\lambda^\nu = \sum_{i=1}^{n-1} (a^\nu_i -1) \bfgreek{gamma}_i+ d \cdot \bfgreek{delta}_n.$ 
The {\it cuspidal parameter of $\lambda$} is defined as $\ell = (\ell^\nu)_{\nu : F \to \C}$, 
where $\ell^\nu  = (\ell^\nu_1, \dots, \ell^\nu_n)$ and  
\begin{equation}
\label{eqn:cuspidal-parameter}
\begin{array}{lll}
\ell^\nu_1 & \ := \ \ \ a^\nu_1 +  a^\nu_2  +  a^\nu_3 + \dots +  a^\nu_{n-1}
& = \ \ a^\nu_1 +  a^\nu_2 +  a^\nu_3 + \dots +  a^\nu_{n-1}  \ =  \ {\bf w}_{\lambda^\nu}\\
\ell^\nu_2 & \ := \  - a^\nu_1 +  a^\nu_2 +  a^\nu_3 + \dots + a^\nu_{n-1} 
& = \ \ a^\nu_2  +  a^\nu_3  +  \dots +  a^\nu_{n-2}  \\
\ell^\nu_3 & \ := \  - a^\nu_1  -  a^\nu_2 +  a^\nu_3  + \dots +  a^\nu_{n-1}  
& = \  \ a^\nu_3  + \dots + a^\nu_{n-3}  \\
&  & \vdots \\
\ell^\nu_n  & \ := \ - a^\nu_1 -  a^\nu_2 - \dots -  a^\nu_{n-1} = -{\bf w}_{\lambda^\nu}& 
\end{array}
\end{equation}
The integers $\ell^\nu_j,$ are  also called as the cuspidal parameters of $\lambda.$ 
Observe that: 

\begin{enumerate}
\item[$\bullet$] $\ell$ depends only on the semi-simple part $\lambda^{(1)}$ and not on the abelian 
part $\lambda_{\rm ab}$ of $\lambda,$ i.e., $\ell$ depends only on the 
$a^\nu_i$'s, and not on the purity weight $2d$. 

\smallskip

\item[$\bullet$] $\ell^\nu_1 >  \ell^\nu_2  >  \dots  >  \ell^\nu_{[n/2]}  >  0$ and 
$\ell^\nu_i  =  - \ell^\nu_{n-i+1}$.

\smallskip

\item[$\bullet$] 
$
\ell^\nu = ({\bf w}_{\lambda^\nu}, \, {\bf w}_{\lambda^\nu}-2a^\nu_1, \, 
{\bf w}_{\lambda^\nu}-2a^\nu_1-2a^\nu_2, \, \dots, \, -{\bf w}_{\lambda^\nu}).
$
It readily follows that  
\begin{equation}
\label{eqn:parity-cuspidal-parameter}
\ell^\nu_i  \ \equiv \  \w_\lambda  \ \equiv \ 2d+n-1 \pmod{2}, 
\end{equation}
i.e., every cuspidal parameter has the same parity as the motivic weight. 
Furthermore, if $n$ is odd, then all the cuspidal parameters, the motivic weight and the purity weight are even; 
however,  if $n$ is even, then all the cuspidal parameters and the motivic weight have the same parity which is opposite to the parity of the purity weight.
\end{enumerate}

\smallskip

For any integer $\ell \geq 1$, let $D_\ell$ be the discrete series representation of $\GL_2(\R)$ of 
lowest non-negative $K$-type corresponding to $\ell+1$ and with 
central character ${\rm sgn}^{\ell+1}$. The Langlands parameter of $D_\ell$ is 
$\Ind_{\C^\times}^{W_\R }(\chi_\ell)$, where
$W_\R $ is the Weil group of $\R$, and $\chi_\ell$ is the character of $\C^\times$ sending 
$z$ to $(z/\bar{z})^{\ell/2}$, or $\chi_\ell(re^{i t}) = e^{i \ell t}.$ The infinitesimal character of 
$D_\ell$ is $(\ell/2, -\ell/2)$. For example, a holomorphic cuspidal modular form of  weight $k$ generates $D_{k-1}$ as the representation at infinity. (For more details see Raghuram--Tanabe \cite[\S3.1.2--\S3.1.5]{raghuram-tanabe}.)

\smallskip

Let $R_n$ stand for the standard parabolic subgroup of $\GL_n$ of type $(2,2,\dots,2)$ if $n$ is even, and 
of type $(2,2,\dots,2,1)$ if $n$ is odd. For $\lambda = (\lambda^\nu)_{\nu : F \to \C} \in X_0^*(T \times \C)$, define 
\begin{equation}
\label{eqn:rep-infinity-1}
\D_{\lambda^\nu} \ := \  
\left\{
\begin{array}{ll}
{\rm Ind}_{R_n(\R)}^{\GL_n(\R)}
\left((D_{\ell^\nu_1}\otimes|\cdot|_{\mathbb R}^{-d}) \otimes \cdots \otimes
(D_{\ell^\nu_{n/2}}\otimes|\cdot|_{\mathbb R}^{-d})\right), 
& \mbox{if $n$ is even;} \\ 
& \\
{\rm Ind}_{R_n(\R)}^{\GL_n(\R)}
\left((D_{\ell^\nu_1}\otimes|\cdot|_{\mathbb R}^{-d}) \otimes \cdots \otimes
(D_{\ell^\nu_{(n-1)/2}}\otimes|\cdot|_{\mathbb R}^{-d}) \otimes
|\cdot|_{\mathbb R}^{-d} \right), 
& \mbox{if $n$ is odd,}
\end{array}\right.
\end{equation}
where ${\rm Ind}_{R_n(\R)}^{\GL_n(\R)}$ denotes normalized parabolic induction. 
Identify the set $\place_\infty$ of infinite places with the set $\Sigma_F;$ 
say, $v \in \place_\infty$ corresponds to $\nu_v \in \Sigma_F.$ Define: 
\begin{equation}
\label{eqn:rep-infinity-2}
 \D_{\lambda} = \bigotimes_{v \in \place_\infty} \D_{\lambda^{\nu_v}}. 
\end{equation}

\smallskip

We will now discuss twisting by sign characters. Identify 
the sign-character ${\rm sgn}: \R^\times \to \{ \pm 1 \}$ with $-1$, and the trivial character of $\R^\times$ with $+1.$
Let $\varepsilon = (\varepsilon_v)_{v \in \place_\infty}$ be an ${\sf r}$-tuple of signs, i.e., $\varepsilon_v \in \{\pm 1\}.$ 
Then $\varepsilon$ gives a character of order $2$ on $\prod_{ v \in \place_\infty} \R^\times$, and via the determinant map, $\varepsilon$ gives of a character of $G(\R)/G(\R)^0 = \pi_0(G(\R)).$
\begin{enumerate}
\item If $n$ is even then 
$\D_\lambda \otimes \varepsilon \ \cong \ \D_\lambda$ for every $\varepsilon.$
However, 
\item if $n$ is odd, then for every nontrivial $\varepsilon$ we have 
$\D_\lambda \otimes \varepsilon \  \not\cong \ \D_\lambda.$ This may be seen by computing central characters. 
The central character $\omega_{\D_{\lambda^{\nu_v}} \otimes \varepsilon_v}$ of the $v$-th component of 
$\D_\lambda \otimes \varepsilon$ is: 
$$
\omega_{\D_{\lambda^{\nu_v}} \otimes \varepsilon_v} \ =  \ {\rm sgn}^{\tfrac{(n-1)}{2}} \cdot |\ |^{-nd} \cdot \varepsilon_v. 
$$
(We have used the fact that $\ell^\nu_j$ is even when $n$ is odd.)
\end{enumerate}

\smallskip
We collect some basic properties of $\D_\lambda$ in the following 
\begin{prop}
\label{prop:d-lambda}
Let $\lambda \in X^*_0(T \times \C)$  and $\D_\lambda$ be as in (\ref{eqn:rep-infinity-2}). 
Let $\varepsilon = (\varepsilon_v)_{v \in \place_\infty}$ be any $r$-tuple of signs. Then:
\begin{enumerate}
\item $\D_\lambda \otimes \varepsilon$ is an irreducible essentially tempered representation. 
\item $\D_\lambda \otimes \varepsilon$ admits a Whittaker model. 
\item $H^\bullet(\g, K_\infty^0; (\D_\lambda \otimes \varepsilon) \otimes \M_\lambda) \neq 0.$
\end{enumerate}
\end{prop}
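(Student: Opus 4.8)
The three assertions should be handled one at a time, reducing each to a statement about the local factors $\D_{\lambda^{\nu_v}} \otimes \varepsilon_v$ since $\D_\lambda \otimes \varepsilon$ is an outer tensor product over $v \in \place_\infty$ and both irreducibility and the relative Lie algebra cohomology behave well with respect to such products (via a K\"unneth formula for the latter). So it is enough to work with a single real place, i.e., with $\GL_n(\R)$.

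First I would treat (1). The building blocks $D_{\ell}$ are (essentially) discrete series of $\GL_2(\R)$, hence tempered; twisting by $|\cdot|_\R^{-d}$ makes them essentially tempered; and the one-dimensional factor $|\cdot|_\R^{-d}$ in the odd case is also essentially tempered on $\GL_1(\R)$. Thus each inducing datum is essentially tempered, and normalized parabolic induction of an essentially tempered representation from a (real) parabolic is essentially tempered by the standard Langlands classification picture for $\GL_n(\R)$. For irreducibility I would invoke exactly the input flagged in the introduction: the inducing discrete series $D_{\ell^\nu_1}, \dots, D_{\ell^\nu_{\lfloor n/2\rfloor}}$ have pairwise distinct parameters because the cuspidal parameters satisfy $\ell^\nu_1 > \ell^\nu_2 > \cdots > \ell^\nu_{[n/2]} > 0$ (noted right after (\ref{eqn:cuspidal-parameter})), so no two segments are linked; hence by Speh's reducibility criterion for induced representations of $\GL_N(\R)$ (cited in the introduction, e.g. \cite[Theorem 10b]{moeglin-edinburgh}) the induced module is irreducible. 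The extra $\GL_1$-factor in the odd case has infinitesimal character $0$ in the relevant coordinate, which is again distinct from the $\pm \ell^\nu_j/2$, so it does not create linkage either. Finally twisting the whole thing by the quadratic character $\varepsilon_v$ preserves irreducibility and essential temperedness.

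For (2): induction in stages shows $\D_{\lambda^{\nu_v}} \otimes \varepsilon_v$ is a subquotient — in fact, by irreducibility just established, equal to — the representation fully induced from the Borel-type data built out of the $D_{\ell^\nu_j}$ and the character; each $D_\ell$ is generic on $\GL_2(\R)$, and a representation parabolically induced from generic representations of the Levi blocks of a standard parabolic is generic (genericity is inherited under parabolic induction). Twisting by $\varepsilon_v$ (a character) does not affect having a Whittaker model. Alternatively one can simply quote that $\D_{\lambda^{\nu_v}}$ is, up to the sign twist, the unique generic member of $\Coh_\infty$, which is how it was characterized in \S\,\ref{sec:repn-infinity}.

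For (3): I would compute $H^\bullet(\g, K_\infty^0; (\D_\lambda \otimes \varepsilon) \otimes \M_\lambda)$ using Delorme's formula / the Vogan--Zuckerman classification of cohomological representations. The key point is that $\D_{\lambda^{\nu_v}}$ has the same infinitesimal character as $\M_{\lambda^{\nu_v}}$ — this is exactly why the cuspidal parameter $\ell^\nu$ was defined the way it was in (\ref{eqn:cuspidal-parameter}), matching $\rho$-shifted highest weight data — and an essentially tempered representation with the infinitesimal character of $\M_\lambda$ that is cohomological must be (a twist of) an $A_\q(\lambda)$ for $\q$ a $\theta$-stable parabolic; for $\GL_n(\R)$ the essentially tempered such module with the correct central character is precisely $\D_{\lambda^{\nu_v}}$, and its $(\g, K_\infty^0)$-cohomology against $\M_\lambda$ is nonzero (concentrated in the interval of degrees $[b_n^F/{\sf r}, \tilde t_n^F/{\sf r}]$ per place, with one-dimensional extremal pieces). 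The sign twist $\varepsilon_v$ is harmless when $n$ is even (then $\D_\lambda \otimes \varepsilon \cong \D_\lambda$ as already recorded) and, when $n$ is odd, $\M_\lambda$ itself is self-dual up to the relevant twist so pairing $\D_\lambda \otimes \varepsilon$ against $\M_\lambda$ still gives the same nonvanishing. I would then conclude by the K\"unneth formula over $v$.

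\medskip

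The main obstacle I anticipate is part (3): pinning down precisely that $\D_\lambda$ is the correct essentially tempered cohomological module and that its cohomology against $\M_\lambda$ is genuinely nonzero (not just that the infinitesimal characters match), which requires carefully matching the normalization conventions in the definition (\ref{eqn:rep-infinity-1}) — the $|\cdot|_\R^{-d}$ twists, the lowest $K$-type $\ell+1$ of $D_\ell$, the central character ${\rm sgn}^{\ell+1}$ — against the highest weight data of $\M_\lambda$ via the cuspidal parameters. The parity relations (\ref{eqn:parity}) and (\ref{eqn:parity-cuspidal-parameter}) are exactly what make these normalizations consistent, so the verification is bookkeeping rather than a genuine difficulty, but it is the step most likely to hide a sign or a shift. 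Parts (1) and (2) I expect to be routine given Speh's irreducibility criterion and the standard behavior of genericity under parabolic induction, both already cited in the introduction.
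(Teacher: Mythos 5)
Your proposal is correct and follows essentially the same route as the paper's own (very brief) proof: irreducibility via Speh's results, essential temperedness because the module is irreducibly induced from essentially discrete series, genericity by heredity of Whittaker models under parabolic induction, and nonvanishing of cohomology via Delorme's Lemma; the paper simply cites Knapp and M\oe glin for these facts where you spell out the linkage and infinitesimal-character bookkeeping.
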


\begin{proof}
These are well-known results for $\GL_n(\R)$; we refer the reader to the very useful survey articles of 
Knapp~\cite{knapp} and M\oe glin~\cite{moeglin-edinburgh} and to the references therein. 
(Irreduciblity follows from Speh's results. 
A representation irreducibly induced from essentially discrete series representation is essentially tempered. Admitting a Whittaker model is a hereditary property and an essentially discrete series representation of $\GL_2(\R)$ has a Whittaker model. Nonvanishing of cohomology follows from Delorme's Lemma; see below.)
\end{proof}

\medskip
 \subsubsection{\bf The cohomology degrees}
 The relative Lie algebra cohomology group in Prop.\,\ref{prop:d-lambda} 
 can be computed using Delorme's Lemma; see, for example, Borel--Wallach~\cite[Thm.\,III.3.3]{borel-wallach}. We summarize what we need about these cohomology groups in the following 
 
 \begin{prop}
 \label{prop:cohomology-degree}
Let $\lambda  \in X^*_0(T \times \C),$ $\D_\lambda$ and $\varepsilon$ be as above. 
Then 
$$
H^\bullet(\g,  K_\infty^0; (\D_\lambda \otimes \varepsilon) \otimes \M_{\lambda}) \neq 0 
\ \  \Longleftrightarrow \ \ 
b^F_n \leq \bullet \leq \tilde t^F_n, 
$$
where the bottom degree $b_n^F$ and the top degree $\tilde t_n^F$ are defined as:
$$
\begin{array}{lll}
b^\Q_n := \lfloor n^2/4 \rfloor, \ & \ b^F_n := {\sf r}_F\, b^\Q_n, &  \\ 
& \\
t^\Q_n := b^\Q_n + \lceil n/2 \rceil - 1, \ & \ t^F_n := {\sf r}_F\, t^\Q_n, \ & \ \tilde t^F_n := t^F_n + {\sf r}_F -1, 
\end{array}
$$
where, recall that ${\sf r}_F = [F : \Q]$, and for any real number $x$, $\lfloor x \rfloor $ denotes the greatest integer less than or equal to $x$, and $\lceil x \rceil = - \lfloor -x \rfloor$ is the least integer greater than or equal to $x$. 
Furthermore, 
the cohomology group $H^q(\g,  K_\infty^0; (\D_\lambda \otimes \varepsilon) \otimes \M_{\lambda}),$ as a module over
$K_{\infty}/K_\infty^0 = \pi_0(K_{\infty}) = \pi_0(G_n(\R))$, in the extreme degrees 
$q = b_n^F$ or $q = \tilde t_n^F$ is given by: 
\begin{enumerate}
\item If $n$ is even, then $\D_\lambda \otimes \varepsilon = \D_\lambda$ for all $\varepsilon$, and 
$$
H^q(\g,  K_\infty^0;  \D_\lambda \otimes \M_{\lambda}) \ = \ 
\bigoplus\limits_{\varepsilon \, \in \, \widehat{\pi_0(G_n(\R))}} \varepsilon. 
$$

\item If $n$ is odd then 
$$
H^q(\g,  K_\infty^0;  (\D_\lambda \otimes \varepsilon) \otimes \M_{\lambda}) \ = \ 
(-1)^{d + (n-1)/2} \varepsilon. 
$$
\end{enumerate}
 \end{prop}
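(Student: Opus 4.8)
The plan is to reduce the global relative Lie algebra cohomology to a tensor product over the archimedean places, compute each local factor by Delorme's Lemma, and read off the $\pi_0$-module structure in the extreme degrees from central characters; this refines the nonvanishing already proved in Prop.\,\ref{prop:d-lambda}. First I would set up the factorization. Since $\g_\infty = \bigoplus_{v \in \place_\infty} \gl_n(\R)$ while $\k_\infty$ meets the centre only in the single diagonal line $\s$, splitting $\gl_n(\R) = \sl_n(\R)\oplus\R$ at each place gives $\g_\infty/\k_\infty \cong \bigoplus_{v \in \place_\infty}\bigl(\sl_n(\R)/\mathfrak{so}(n)\bigr)\oplus\R^{{\sf r}_F-1}$, with $K_\infty^0$ acting trivially on the last summand; and because $\lambda$ is algebraic and $\D_\lambda\otimes\varepsilon$ is essentially tempered with matching central character, the connected centre (hence $\R^{{\sf r}_F-1}$) acts trivially on the coefficient module as well. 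So the Chevalley--Eilenberg complex splits off an exterior factor $\bigwedge^\bullet(\R^{{\sf r}_F-1})$ with vanishing differential, and the K\"unneth formula (using $\D_\lambda = \bigotimes_v \D_{\lambda^{\nu_v}}$ and $\M_\lambda = \bigotimes_v \M_{\lambda^{\nu_v}}$) gives
$$
H^\bullet(\g, K_\infty^0; (\D_\lambda\otimes\varepsilon)\otimes\M_\lambda) \ \cong \ {\textstyle\bigwedge^\bullet}(\R^{{\sf r}_F-1}) \ \otimes\ \bigotimes_{v \in \place_\infty} H^\bullet\!\bigl(\sl_n(\R), \SO(n); (\D_{\lambda^{\nu_v}}\otimes\varepsilon_v)\otimes\M_{\lambda^{\nu_v}}\bigr),
$$
compatibly with the action of $\pi_0(G_n(\R)) = \prod_v\{\pm 1\}$, which acts factorwise on the right and trivially on the exterior factor. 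Once each local factor is shown to be supported in $[b_n^\Q, t_n^\Q]$, the exterior algebra --- which lives in degrees $0$ through ${\sf r}_F-1$ --- stretches the range only at the top, giving $[{\sf r}_F\, b_n^\Q,\ {\sf r}_F\, t_n^\Q + {\sf r}_F - 1] = [b_n^F, \tilde t_n^F]$; this is also why the correction ${\sf r}_F-1$ shows up at the top but not at the bottom.

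Next I would compute the local factor using Delorme's Lemma (Borel--Wallach \cite[Thm.\,III.3.3]{borel-wallach}). At one place $\D_{\lambda^\nu}$ is normalised parabolic induction from $R_n$, whose Levi $L$ is a product of copies of $\GL_2$ together with one $\GL_1$ when $n$ is odd, with inducing datum a tensor product of discrete series $D_{\ell^\nu_j}$ (and a character, when $n$ is odd). Delorme's Lemma writes the cohomology as a sum over Kostant representatives $w \in W^{R_n}$ of $(\l, L\cap K_\infty^0)$-cohomology of the inducing datum tensored with $\M^{\l}_{w\cdot\lambda^\nu}$, placed in degree $\ell(w)$ shifted up by the degrees in which the Levi-cohomology of the discrete-series datum lives; the basic building block is that $H^\bullet(\gl_2(\R), \SO(2)\R^\times; D_\ell\otimes F)$ is concentrated in degree $1$, where as a $\pi_0(\GL_2(\R))$-module it is $\varepsilon_+\oplus\varepsilon_-$. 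Since $L\cap K_\infty^0$ is disconnected one must pass to invariants under its component group (the even-sign subgroup of $\prod_j\{\pm 1\}$), after which the $\l$-cohomology at its extreme degrees is one-dimensional when $n$ is odd and two-dimensional per place when $n$ is even. The combinatorial heart of the matter is that matching the strictly decreasing discrete-series gaps $\ell^\nu_j-1$ against the $\GL_2$-blocks of $w\cdot\lambda^\nu$ forces exactly one Kostant representative to contribute --- the balanced one, of length $\tfrac12\dim U_{R_n}$ --- and a $\rho$-shift computation then assembles the degrees into exactly $[b_n^\Q, t_n^\Q]$. (For $n$ odd one may instead use that $\GL_n(\R)\cong\SL_n(\R)\times\R^\times$ and invoke the Vogan--Zuckerman description of the tempered cohomological $A_{\mathfrak{q}}(\lambda)$-module of the \emph{connected} group $\SL_n(\R)$.)

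Finally, the $\pi_0$-structure in the extreme degrees. At $q = b_n^F$ only $\bigwedge^0$ and at $q=\tilde t_n^F$ only $\bigwedge^{{\sf r}_F-1}$ of the exterior factor contribute, both one-dimensional with trivial $\pi_0$-action; so the two extremes carry the same module, namely $\bigotimes_v H^{\mathrm{extr}}\bigl(\sl_n(\R), \SO(n); (\D_{\lambda^{\nu_v}}\otimes\varepsilon_v)\otimes\M_{\lambda^{\nu_v}}\bigr)$. If $n$ is even, then $\D_\lambda\otimes\varepsilon\cong\D_\lambda$ for every $\varepsilon$, so this module $M$ satisfies $M\cong M\otimes\varepsilon$ for every character of $\pi_0(G_n(\R))\cong(\Z/2)^{{\sf r}_F}$; hence $M$ is a sum of copies of the regular representation $\bigoplus_\varepsilon\varepsilon$, and the per-place dimension count from the previous step pins the multiplicity at $1$. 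If $n$ is odd the extreme-degree cohomology is one-dimensional and $\D_\lambda\otimes\M_\lambda$ is irreducible, so the scalar matrix $-I_n$ --- which has determinant $-1$, hence represents the nontrivial component of $\pi_0(\GL_n(\R))$ at each place, yet is central --- acts on that line by $\omega_{\D_{\lambda^\nu}\otimes\varepsilon_v}(-I_n)\,\omega_{\M_{\lambda^\nu}}(-I_n)$ at place $v$; the central-character computation recorded just before the proposition gives $\omega_{\D_{\lambda^\nu}\otimes\varepsilon_v}(-I_n) = (-1)^{(n-1)/2}\varepsilon_v$ and $\omega_{\M_{\lambda^\nu}}(-I_n) = (-1)^{nd} = (-1)^d$, and the product over all places is precisely the character $(-1)^{d+(n-1)/2}\varepsilon$.

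I expect the main obstacle to be the local analysis of the middle paragraph: isolating the unique balanced Kostant representative that contributes, carrying the component-group invariants through the Delorme decomposition correctly, and handling the degree shifts with enough care that the ranges assemble into $[b_n^\Q, t_n^\Q]$ locally and the asymmetric $[b_n^F, \tilde t_n^F]$ globally.
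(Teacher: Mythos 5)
Your proposal is correct and follows essentially the same route as the paper: split off the central factor $\z/\s$ of dimension ${\sf r}_F-1$ (which accounts for the asymmetric shift from $t_n^F$ to $\tilde t_n^F$), reduce the semisimple part to the per-place computation for $(\sl_n(\R),\SO(n))$ via Delorme's Lemma (the paper simply cites Clozel, Lem.\,3.14, for the resulting range $[b_n^\Q,t_n^\Q]$), and read off the $\pi_0$-action in the extreme degrees from the central characters of $\D_{\lambda^\nu}\otimes\varepsilon_v$ and $\M_{\lambda^\nu}$ at $-1_n$ when $n$ is odd. The only difference is one of granularity: you re-derive the local nonvanishing range that the paper outsources to Clozel, and your degree bookkeeping (balanced Kostant representative of length $\tfrac12\dim U_{R_n}$ plus the Levi's split central directions) checks out.
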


\begin{proof}
We just make a few comments, as all of this is well-known. First of all, observe that
$\g/\k = \g^{(1)}/\k^{(1)} \oplus \z/\s$, and hence 
$\Lambda^\bullet(\g/\k) = \Lambda^\bullet(\g^{(1)}/\k^{(1)}) \otimes \Lambda^\bullet(\z/\s).$ This implies: 
$$
\Hom_{K_\infty^0} (\Lambda^\bullet(\g/\k), (\D_\lambda \otimes \varepsilon) \otimes \M_{\lambda} ) \ = \ 
\Hom_{K_\infty^{(1)}}(\Lambda^\bullet(\g^{(1)}/\k^{(1)}), (\D_\lambda \otimes \varepsilon) \otimes \M_{\lambda}) \otimes
\Hom(\Lambda^\bullet(\z/\s), \C)
$$
since the Lie algebra $\z$ acts trivially on $(\D_\lambda \otimes \varepsilon) \otimes \M_{\lambda}.$ The group $H^\bullet(\g^{(1)},  K_\infty^{(1)}; (\D_\lambda \otimes \varepsilon) \otimes \M_{\lambda})$ is calculated in 
Clozel~\cite[Lem.\,3.14]{clozel}; in particular one sees that it is nonvanishing if and only if 
$b_n^F \leq \bullet \leq t_n^F.$ For $(\g,K_\infty^0)$-cohomology we need to go up to $\tilde t_n^F = t_n^F + {\sf r}_F-1$ since the dimension of $\z/\s$ is ${\sf r}_F-1.$ 
If $n$ is odd then ${\rm O}(n)/{\rm SO}(n)$ is represented by $\{\pm 1_n\}$, where $1_n$ is the identity $n \times n$-matrix. So, in case (2), it simply boils down to 
computing the central character of $(\D_\lambda \otimes \varepsilon) \otimes \M_{\lambda}.$ 
\end{proof}

\medskip
\subsection{Square-integrable cohomology}
\label{sec:square-integrable}

\medskip
\subsubsection{\bf de~Rham complex} 
\label{sec:de-rham}
For a level structure $K_f \subset G (\A_f)$, in what follows, we consider various spaces of functions on 
$G(\Q)\backslash G(\A) / K_f$ which are naturally $G(\R)$-modules. 
Assume for the moment that $K_f$ is a neat subgroup. 
Consider a pure weight $\lambda \in X^*_0(T \times \C).$ The sheaf $\tM_{\lambda }$ on 
$\SGK,$ as constructed in Sect.\,\ref{sec:sheaves}, is a local system of $\C$-vector spaces. 
The cohomology $H^\bullet(\SGK,\tM_\lambda)$ is the cohomology  of the de~Rham complex.  We have the isomorphism between the de~Rham complex and the relative Lie algebra complex
$$  
\Omega^\bullet(\SGK, \tM_\lambda ) \ = \  \Hom_{K_\infty^0} (\Lambda^\bullet(\fg/\fk), \, 
\cC^\infty(G(\Q)\backslash G(\A)/K_f, \omega_{\lambda}^{-1}|_{S(\R)^0}) \otimes  \M_{\lambda}), 
 $$
 where $\cC^\infty(G(\Q)\backslash G(\A)/K_f, \omega_{\lambda}^{-1}|_{S(\R)^0})$ consists of all smooth functions 
 $\phi : G(\A) \to \C$ such that $\phi(\gamma \, \ul g \, \ul k_f \, a_\infty) = \omega_{\lambda}^{-1}(a_\infty) \phi(\ul g),$ 
 for all $\ul g \in G(\A)$, $\gamma \in G(\Q)$, $\ul k_f \in K_f$ and $a_\infty \in S(\R)^0.$ 
 (See, for example,  \cite[Chap.\,3]{harder-book}.)  We will abbreviate 
 $\omega_{\lambda}^{-1}|_{S(\R)^0} = \omega_\infty^{-1}.$

\medskip
\subsubsection{\bf Definition of square-integrable cohomology} 
 Inside the space of smooth functions lies the subspace 
$\cC_2^\infty(G(\Q)\backslash G(\A)/K_f, \omega_\infty^{-1})$ 
of smooth square-integrable functions, which are functions 
$\phi \in  \cC^\infty(G(\Q)\backslash G(\A)/K_f, \omega_\infty^{-1})$ satisfying 
$$
\int\limits_{S(\R)^0G(\Q)\backslash G(\A)} |(\phi)(\ul g)|^2 |{\rm det}(g)|_F^{2d}  \, dg < \infty,  
$$
where the integrand being trivial on $S(\R)^0$ follows from (\ref{eqn:trivial-on-S(R)^0}). 
Define  $H^\bullet_{(2)}(\SGK,\tM_\lambda)$ as the submodule of $H^\bullet(\SGK,\tM_\lambda)$ consisting of those 
 cohomology classes which can be represented by square-integrable forms, i.e., by closed forms in  
 $$
 \Hom_{K_\infty^0}
( \Lambda^\bullet(\fg/\fk), \, 
\cC^\infty_2(G(\Q)\backslash G(\A)/K_f, \omega_\infty^{-1}) \otimes  \M_{\lambda}). 
 $$

\medskip
\subsubsection{\bf A result of Borel and Garland}
\label{sec:borel-garland}
We know from the fundamental work of Langlands \cite{langlands-eisenstein} that we have a decomposition into essentially unitary $G(\R)$-modules: 
$$
L^2(G(\Q)\backslash G(\A)/K_f, \omega_\infty^{-1}) \ \ = \ \ 
L^2_{\rm disc}(G(\Q)\backslash G(\A)/K_f, \omega_\infty^{-1}) \ \oplus \ 
L^2_{\rm cont}(G(\Q)\backslash G(\A)/K_f, \omega_\infty^{-1}), 
$$
where the first summand in the right hand side is the closure of the direct sum of irreducible subspaces. Hence, 
$\omega \in \Hom_{K_\infty^0}
( \Lambda^\bullet(\fg/\fk), \, 
\cC^\infty_2(G(\Q)\backslash G(\A)/K_f, \omega_\infty^{-1}) \otimes  \M_{\lambda})$ may be written as  
$ \omega = \omega_{\rm disc} \oplus \omega_{\rm cont}.$ 
It follows from \cite{borel-garland} that $\omega_{\rm cont}$ goes to zero in cohomology. (See also
\cite[Chap.\,3]{harder-book}.) We say that the square-integrable cohomology {\it comes} from the discrete spectrum. 
To be more precise, we define $\Coh_\infty^{(2)}(G,\lambda)$ to be the finite set of isomorphism classes of essentially unitary $G(\R)$-modules having nontrivial cohomology with $\M_\lambda$-coefficients. For each 
$\pi_\infty \in \Coh_\infty^{(2)}(G,\lambda)$ choose a representative $\mathcal{V}_{\pi_\infty}$, and let 
$V_{\pi_\infty}$ be the resulting Harish-Chandra module of $K_\infty$-finite vectors. Then, we put:
$$
W_{\pi_\infty}^{(2)} = 
\Hom_{G(\R)}\left(\mathcal{V}_{\pi_\infty}, L^2(G(\Q)\backslash G(\A)/K_f, \omega_\infty^{-1})\right) 
= 
\Hom_{G(\R)}\left(\mathcal{V}_{\pi_\infty}, L^2_{\rm disc}(G(\Q)\backslash G(\A)/K_f, \omega_\infty^{-1})\right). 
$$
It is clear that any $\Phi \in W_{\pi_\infty}^{(2)}$ sends $V_{\pi_\infty}$ to 
$\cC^\infty_2(G(\Q)\backslash G(\A)/K_f, \omega_\infty^{-1}).$ The theorem of Borel--Garland says that the induced map
\begin{equation}
\label{eqn:L2-deco-one}
 \bigoplus_{\pi_\infty\in \Coh_\infty^{(2)}(G,\lambda) } 
 W^{(2)}_{\pi_\infty} \otimes H^\bullet(\gK; V_{\pi_\infty} \otimes\M_\lambda)
  \ \longrightarrow \ 
 H_{(2)}^\bullet(\SGK, \tM_\lambda)
\end{equation}
is surjective. We can also take the action of the Hecke algebra into account; it's action via convolution on 
$L^2_{\rm disc}(G(\Q)\backslash G(\A)/K_f, \omega_\infty^{-1})$ is self-adjoint. 
For each isomorphism class $\pi_\infty \in {\rm Coh}_\infty^{(2)}(G, \lambda)$, take $\mathcal{V}_{\pi_\infty}$ as above, 
and similarly, for each isomorphism class $\pi_f$  of absolutely irreducible $\HK$-module choose a representative
 $V_{\pi_f} $ for  $\pi_f$, and define: 
 $$
 W^{(2)}_{\pi_\infty\otimes \pi_f} \ = \ 
 \Hom_{G(\R) \times \HK} \left( \mathcal{V}_{ \pi_\infty}\otimes V_{\pi_f} , \, 
L^2_{\rm disc}(G(\Q)\backslash G(\A)/K_f, \omega_\infty^{-1}) \right). 
 $$
Define $\Coh_{(2)}(G, \lambda,K_f)$ to be the set of those $\pi_f$ for which there exists a $\pi_\infty$ such that 
$W^{(2)}_{\pi_\infty\otimes \pi_f} \neq 0.$ Then we get the refined decomposition: 
\begin{equation}
\label{eqn:L2-deco-two}
\bigoplus_{\pi_\infty \times \pi_f} 
 W^{(2)}_{\pi_\infty\otimes \pi_f} \otimes 
H^\bullet(\gK; V_{\pi_\infty} \otimes\M_\lambda)
 \otimes V_{\pi_f} 
 \ \longrightarrow \ 
\bigoplus_{\pi_f \in \Coh_{(2)}(G, \lambda,K_f)} H_{(2)}^\bullet(\SGK, \tM_\lambda)(\pi_f), 
 \end{equation}
which is a surjective map onto $ H_{(2)}^\bullet(\SGK, \tM_\lambda),$ and the summand on the right hand side indexed by $\pi_f$ is in fact the $\pi_f$-isotypic component. 
This implies that  square-integrable cohomology is a semi-simple module under the action of the Hecke algebra, hence so is inner cohomology. It is a delicate question to compute the kernel of the above map. 
 We will return to this in Sect.\,\ref{sec:coh-res}. 

\smallskip 
 
 Note that, by the multiplicity one theorem for the discrete spectrum for  $\GL_n$ (proved in a special case by Jacquet \cite{jacquet-residual}, and in general by 
 M\oe glin--Waldspurger \cite{moeglin-waldspurger}), $\dim(W^{(2)}_{\pi_\infty\otimes \pi_f})=1.$

\medskip
\subsection{Cuspidal cohomology}
\label{sec:cuspidal}

\medskip
\subsubsection{\bf The cohomological cuspidal spectrum} 
\label{sec:w-cusp-pi}

 The space of square-integrable functions contains the space of smooth cusp forms
$$
 \cC_{\rm cusp}^\infty(G(\Q)\backslash G(\A)/K_f, \omega_\infty^{-1})
 \ \subset \  
  \cC^\infty_{(2)}(G(\Q)\backslash G(\A)/K_f, \omega_\infty^{-1}). 
$$
This inclusion, induces an inclusion in cohomology, and we define cuspidal cohomology by: 
$$
 H_{\rm cusp}^\bullet(\SGK, \tM_\lambda) \ := \ 
H^\bullet \left(\gK;  \, 
\cC^\infty_{\rm cusp}(G(\Q)\backslash G(\A)/K_f, \omega_\infty^{-1})  \otimes \M_\lambda \right).
$$
For $\pi_\infty \in {\rm Coh}_\infty(G, \lambda)$ and $\pi_f \in \Coh (G, \lambda,K_f),$ define: 
 $$
 W^{\rm cusp}_{\pi_\infty\otimes \pi_f} \ := \  
 \Hom_{(\fg,K_\infty)\otimes\HK} \left( 
 V_{ \pi_\infty}\otimes V_{\pi_f} , \,  
 \cC_{\rm cusp}^\infty(G(\Q)\backslash G(\A)/K_f, \omega_\infty^{-1})\right). 
 $$ 
 Define $\Coh_{\rm cusp}(G,\lambda,K_f)$ as the set of those $\pi_f \in \Coh (G, \lambda,K_f)$ 
 for which we can find a $\pi_\infty \in {\rm Coh}_\infty(G, \lambda)$
 such that $W^{\rm cusp}_{\pi_\infty\otimes \pi_f}\not= 0.$ We have a surjective map 
 $$  
 \bigoplus_{\pi_f\in \Coh (G, \lambda,K_f)}    \bigoplus_{\pi_\infty\in \Coh_\infty(G,\lambda) } 
 W^{\rm cusp}_{\pi_\infty \otimes \pi_f} \otimes 
 H^\bullet(\g, K_\infty^0; 
 V_{\pi_\infty} \otimes \M_\lambda)\otimes V_{\pi_f} 
 \ \longrightarrow \  
 H_{\rm cusp}^\bullet(\SGK, \M_\lambda), 
 $$
 which is in fact an isomorphism by a theorem of Borel \cite{borel-duke}.

 \medskip
 \subsubsection{\bf Consequence of multiplicity one and strong multiplicity one}
\label{sec:w-2}
By multiplicity one for the cuspidal spectrum 
(\cite{jacquet-residual}, \cite{moeglin-waldspurger}) for $G = R_{F/\Q}(\GL_n/F)$ if $ W^{(\rm cusp)}_{\pi_\infty\otimes \pi_f}$ is non-zero then it is of dimension one.  Furthermore, from strong multiplicity one for cuspidal representations due to Jacquet and Shalika \cite{jacquet-shalika-II}, it follows that 
$\pi_f$ is determined by the restriction  $\pi_f^\place$
of $\pi_f$ to the central subalgebra $\HGS = \otimes_{v \notin \place} \HKp$ of $\HK.$

 \medskip
 \subsubsection{\bf The character of the component group I}
\label{sec:epsilon}
 Since a cuspidal automorphic representation has a global Whittaker model it follows that the representation at infinity 
$\pi_\infty$ is isomorphic to $\D_\lambda\otimes \varepsilon$ for some sign character $\varepsilon$, because these are
the only $\pi_\infty \in \Coh_\infty(G, \lambda)$ which have a local Whittaker model. Applying the isomorphism mentioned 
in Sect.\,\ref{sec:w-cusp-pi}, and the cohomology of $\D_\lambda\otimes \varepsilon$ as described in 
Prop.\,\ref{prop:cohomology-degree} for extreme degrees $q \in \{b_n^F, \tilde t_n^F\},$ we conclude the following decomposition of cuspidal cohomology as a $\pi_0(G(\R)) \times \HK$-module: 
\begin{equation}
\label{eqn:character-component}
 H_{\rm cusp}^q(\SGK, \tM_\lambda) \ = \ 
\left\{ \begin{array}{ll} 
 \bigoplus\limits_{\pi_f \in \Coh_{\rm cusp}(G,\lambda,K_f)} \, 
 \bigoplus\limits_{\varepsilon \, \in \, \widehat{\pi_0(G_n(\R))}} \,  \varepsilon \times \pi_f, & 
 \mbox{if $n$ is even,} \\
 & \\
 \bigoplus\limits_{\pi_f \in \Coh_{\rm cusp}(G,\lambda,K_f)}  \varepsilon(\pi_f) \times \pi_f,  & 
 \mbox{if $n$ is odd,} 
 \end{array}\right.
\end{equation}
where, the canonical character $\varepsilon(\pi_f)$ that $\pi_f$ can pair with (when $n$ is odd) is given by: 
\begin{equation}
\label{eqn:character-at-infinity}
\varepsilon(\pi_f) = (\varepsilon(\pi_f)_v)_{v \in \place_\infty}, \quad 
\varepsilon(\pi_f)_v(-1) = (-1)^d\omega_{\pi_v}(-1),
\end{equation}
where $\omega_{\pi_v}$ is the central character of $\pi_v.$ 
This last assertion when $n$ is odd may be seen as follows: given $\pi_f \in \Coh_{\rm cusp}(G,\lambda,K_f)$, by strong multiplicity one, 
there is a unique $\pi_\infty$ that it can pair with to give a cuspidal automorphic representation 
$\pi = \pi_\infty \otimes \pi_f.$ Suppose, $\pi_\infty = \otimes_{v \in \place_\infty} \pi_v.$ By Prop.\,\ref{prop:cohomology-degree}, (2), we also see that $H^q(\g,  K_v^0;  \pi_v \otimes \M_{\lambda_v})$, as a 
$K_v/K_v^0 = {\rm O}(n)/{\rm SO}(n)$-module, is the sign character whose value at $-1$ is 
 $\omega_{\pi_v}(-1) (-1)^d.$ To justify the notation $\varepsilon(\pi_f)$, 
we note that $\omega_{\pi_v}$ is completely determined by $\omega_{\pi_f}$ by automorphy of 
the central character $\omega_\pi = \omega_{\pi_\infty} \otimes \omega_{\pi_f}.$ To parse it further, 
observe that $\omega_\pi = |\ |^{-nd} \otimes \omega_\pi^0$ where $\omega_\pi^0$ is a character of finite order. 
Fix $v \in \place_\infty$, and apply weak approximation to choose an $a \in F^\times$ (which will depend on $v$) 
such that $a < 0$ as an element of $F_v$ and $a > 0$ as an element of 
$F_w$ for all $w \in \place_\infty - \{v\};$ then $\omega_{\pi_v}(-1) = \omega_{\pi_f}^0(a).$ Similarly, 
we also have (when $n$ is odd):  
\begin{equation}
\label{eqn:epsilon-infinity}
\pi_\infty = \D_\lambda\otimes \varepsilon_\infty(\pi_f); \quad 
\varepsilon_\infty(\pi_f) = (\varepsilon_\infty(\pi_f)_v)_{v \in \place_\infty}; \quad 
\varepsilon_\infty(\pi_f)_v(-1) = (-1)^{(n-1)/2}\omega_{\pi_v}(-1). 
\end{equation}
The reader is also referred to the discussion in Gan--Raghuram~\cite[\S\,3]{gan-raghuram}.

\medskip
\subsubsection{\bf A bit of linear algebra}
\label{sec:lin-alg}
Let $V$ be a finite-dimensional $E$-vector space which is also a module for a commutative $\Q$-algebra $\cA.$ Let us assume that all absolutely irreducible subquotients are one-dimensional (i.e., all eigenvalues lie in $E$). 
 Define $\Spec_\cA(V)$ to be the set of isomorphism classes of absolutely irreducible $\cA$-modules over $E$ which appear as irreducible sub-quotients of $V.$ Let $W$ be an $\cA$-stable $E$-subspace of $V$ such that 
$$
\Spec_\cA(W) \ \cap \ \Spec_\cA(V/W) \ = \ \emptyset. 
$$
Then there is an $\cA$-equivariant projection $\pi_W : V \to W$, i.e., we have a splitting $V \simeq W \oplus V/W$ of 
$\cA$-modules.

\medskip
\subsubsection{\bf A filtration} 
\label{sec:filt}
We now drop the assumption that we are working  over $\C$ and go back to our coefficient system $\M_{\lambda, E}$ 
defined over $E.$ An embedding $\iota:E \to \C$ gives a chain of subspaces 
$$
 H_{\rm cusp}^\bullet(\SGK, \tM_{{}^\iota\!\lambda, \C}) \ \subset \ 
 H_{!}^\bullet(\SGK, \tM_{{}^\iota\!\lambda, \C})   \ \subset  \ 
 H_{(2)}^\bullet(\SGK, \tM_{{}^\iota\!\lambda, \C})  \ \subset  \ 
 H^\bullet(\SGK, \tM_{\lambda,E}) \otimes_{E,\iota}\C 
$$
Since  the Hecke module $H_{!}^\bullet(\SGK, \tM_{{}^\iota\!\lambda, \C}) = 
H_!^\bullet(\SGK, \tM_{\lambda,E}) \otimes_{E,\iota}\C
$ is a submodule  of a semi-simple module
it is also semi-simple.  But then already the $E$-module  $H_{!}^\bullet(\SGK, \tM_{\lambda, E})$ is semi-simple
and if $E$ is large enough  we get an isotypical decomposition  
$$
H_{!}^\bullet(\SGK, \tM_{\lambda, E})  \otimes_{E,\iota}\C \ \ = \bigoplus_{\pi_f\in \Coh_{!}(G, \lambda ,K_f)}
 H_{!}^\bullet(\SGK, \tM_{\lambda, E}) (\pi_f) \otimes_{E,\iota}\C, 
$$
where the $\pi_f$ are absolutely irreducible.

We have to understand the discrepancies between these spaces, especially
the difference between the cuspidal and the square integrable cohomology. This issue may become
very delicate for a general reductive group, but for $\Gl_n$ the situation is relatively simple.
Take $\pi_f $ in  $\Coh_!(G, \lambda ,K_f).$ Then, for any $\iota : E \to \C$, the module
${}^\iota\pi_f = \pi_f \otimes_{E, \iota}\C \in \Coh_{(2)}(G, \lambda, K_f).$
By definition, there is a ${}^\iota\pi_\infty$
such that $ W^{(2)}_{{}^\iota\pi_\infty \otimes {}^\iota\pi_f} $ is one-dimensional. We have to find criteria to decide
whether $ W^{\rm cusp}_{{}^\iota\pi_\infty \otimes {}^\iota\pi_f} =0 \text { or }  
W^{\rm cusp}_{{}^\iota\pi_\infty \otimes {}^\iota\pi_f}=
W^{(2)}_{{}^\iota\pi_\infty \otimes {}^\iota\pi_f}.  $   We will show later that 
\begin{multline}
\label{eqn:cuspidal-claim}
\mbox{the isomorphism type ${}^\iota\pi_f \in \Coh_{(2)}(G, \lambda ,K_f)$  is cuspidal}  \iff  \\
H_{!}^\bullet(\SGK, \tM_{\lambda, E}) (\pi_f)  \otimes_{E,\iota}\C  = 
H_{(2)}^\bullet(\SGK, \tM_{\lambda,E} \otimes_{E,\iota}\C ) ({}^\iota\pi_f). 
\end{multline}

\bigskip

\section{\bf Boundary cohomology}
\label{sec:bdry-cohomology}

We will be very brief in this section; our aim is only to set up notations concerning the cohomology of the boundary of the Borel--Serre compactification of the locally symmetric space $\SGK.$ 
The reader should consult \cite[1.1]{harder-lnm} and \cite[Chap.\,3]{harder-book} for more details and proofs.

\medskip
\subsection{A spectral sequence converging to boundary cohomology}
\label{sec:spectral-sequence}

Recall from Sect.\,\ref{sec:long-e-seq} the Borel--Serre compactification of $\SGK$ and the associated long exact sequence. 
We would like to understand the cohomology of the boundary: $H^\bullet(\PBSC, \tM_{\lambda,E}).$ There is a spectral sequence built out of the cohomology of the boundary strata $ \PPBSC$ which converges to the cohomology of the boundary.

\medskip
\subsubsection{\bf The spectral sequence at an arithmetic level}
\label{sec:spectral-seq-arith}
For $1 \leq i \leq n-1$, let $P_{0,i}$ be the standard maximal parabolic subgroup of $G_0 = \GL_n/F$ obtained by deleting the simple root $\bfgreek{alpha}_i.$ (See Sect.\,\ref{sec:standard-fundamental}.) Let $P_i = R_{F/\Q}(P_{0,i}).$ Then $P_i$ is a standard maximal parabolic subgroup of $G;$ note that $P_i$ is not absolutely maximal. Consider a simplex 
$\Delta_G$ whose vertices are $\{1,2,\dots,n-1\}$ where $i$ is identified with $P_i.$ Any standard parabolic subgroup 
$P$ corresponds to the simplex $\Delta_P$ whose set of vertices is the set of all $P_i$ containing $P$. 
Define $d(P) := {\rm dim}(\Delta_P).$ For example, $d(P_i) = 0$ and $d(B) = n-2.$ 
If $P \supset Q$ then $\Delta_P \subset \Delta_Q$ and by the Borel--Serre construction we have an embedding $\PQBSC \hookrightarrow  \overline{\PPBSC}$ which gives a restriction map in cohomology: 
$$
\r_{P,Q} : 
H^\bullet(\PPBSC, \tM_{\lambda,E}) \simeq H^\bullet(\overline{\PPBSC}, \tM_{\lambda,E}) \ \longrightarrow \ 
H^\bullet(\PQBSC, \tM_{\lambda,E}). 
$$ 
From this we get a spectral sequence converging to $H^\bullet(\PBSC, \tM_{\lambda,E})$ whose $E_1^{p,q}$ term is: 
$$
E_1^{p,q} \ := \ 
\bigoplus_{d(P) = p} H^q(\PPBSC, \tM_{\lambda,E}). 
$$
The boundary map $d_1 : E_1^{p,q} \to E_1^{p+1,q}$ is given by: 
\begin{equation}
\label{eqn:d-1}
d_1(\xi_P) \ = \ \sum_{\substack{Q \subset P \\ d(Q) = p+1}}  (-1)^{\sigma(P,Q)} \r_{P,Q}(\xi_P)
\end{equation}
for any $\xi_P \in H^q(\PPBSC, \tM_{\lambda,E})$ with $d(P) = p,$ and $\sigma(P,Q)$ is the place of the vertex defining $P$ in the ordered set of vertices defining $Q.$

\medskip
\subsubsection{\bf The spectral sequence at a transcendental level}
\label{sec:spectral-seq-trans}
Take an $\iota : E \to \C$ and define
$$
\Omega_P^q(\M_{{}^\iota\!\lambda}) \ := \ 
 \Hom_{K_\infty^0} \left(\Lambda^q(\fg/\fk), \, 
\cC^\infty (P(\Q)\backslash G(\A)/K_f, \omega_{\infty}^{-1}) \otimes  
\M_{{}^\iota\!\lambda}\right). 
$$
Also, define 
$$
\Omega_P^q(\M_{{}^\iota\!\lambda})^{(0)} \ := \ 
 \Hom_{K_\infty^0} \left(\Lambda^q(\fg/\fk), \, 
\cC^\infty(P(\Q)U_P(\A)\backslash G(\A)/K_f, 
\omega_{\infty}^{-1}) \otimes  \M_{{}^\iota\!\lambda}\right). 
$$
We have the inclusion $\Omega_P^q(\M_{{}^\iota\!\lambda})^{(0)} \subset \Omega_P^q(\M_{{}^\iota\!\lambda})$ and taking the constant term $\cF^P$ along $P$ gives a projection on to that subspace; recall that the constant term map 
$$
\cF^P : \cC^\infty(P(\Q)\backslash G(\A)/K_f, \omega_{\infty}^{-1}) 
\longrightarrow 
\cC^\infty(P(\Q)U_P(\A)\backslash G(\A)/K_f, \omega_{\infty}^{-1})
$$
is defined as: 
$$
\cF^P(\phi)(\ul g) \ = \ 
\int\limits_{U_P(\Q)\backslash U_P(\A)} \phi(\ul u \ul g)\, d\ul u.
$$
Hence $\Omega_P^q(\M_{{}^\iota\!\lambda})^{(0)}$ is a direct summand of $\Omega_P^q(\M_{{}^\iota\!\lambda}).$
Consider the double-complex $\Omega^{\bullet \bullet} = \Omega^{\bullet \bullet}(\M_{{}^\iota\!\lambda})$: 
$$
\xymatrix{
 & 0 & 0 &  & 0 & \\
0 \ar[r] & \Omega^{n-2,0} \ar[u] \ar[r] & \Omega^{n-2,1}  \ar[u]\ar[r]& \cdots \ar[r] & \Omega^{n-2, \d} \ar[u]\ar[r]& 0 \\
 & \vdots \ar[u]& \vdots \ar[u]&  & \vdots \ar[u] & \\
0 \ar[r] & \Omega^{1,0} \ar[u] \ar[r] & \Omega^{1,1}  \ar[u]\ar[r]& \cdots \ar[r] & \Omega^{1, \d} \ar[u]\ar[r]& 0 \\
0 \ar[r] & \Omega^{0,0} \ar[u] \ar[r] & \Omega^{0,1}  \ar[u]\ar[r]& \cdots \ar[r] & \Omega^{0, \d} \ar[u]\ar[r]& 0 \\
 & 0 \ar[u] & 0 \ar[u] &  & 0 \ar[u] & \\
}
$$
where, 
$$
\Omega^{p,q} \ = \ 
\bigoplus_{d(P) = p} \Omega^q_P(\M_{{}^\iota\!\lambda}); 
$$
the horizontal arrow $\Omega^{p,q} \to \Omega^{p,q+1}$ is exterior differentiation; the number of columns 
$\d$ is the dimension of $\fg/\fk$; the vertical arrow $\Omega^{p,q} \to \Omega^{p+1,q}$ is defined exactly as in (\ref{eqn:d-1}), i.e., as an alternating sum of maps 
$\Omega^q_P \to \Omega^q_Q$ for any $Q \subset P$ with $d(Q) = d(P)+1,$ while  
using the canonical map 
$\cC^\infty(P(\Q)\backslash G(\A), \omega_{\infty}^{-1}) \to 
\cC^\infty(Q(\Q)\backslash G(\A), \omega_{\infty}^{-1}).$ 
  The associated simple complex made from this double complex computes the cohomology of the boundary: 
$H^\bullet(\PBSC, \tM_{{}^\iota\!\lambda}).$ 
All of this remains true even if we work with the double-sub-complex 
$\Omega^{\bullet \bullet}(\M_{{}^\iota\!\lambda})^{(0)}.$  For the vertical arrows, one uses a partial constant term map: 
$$
\cF^Q_P : \cC^\infty(P(\Q) U_P(\A)\backslash G(\A), \omega_{\infty}^{-1}) \to 
\cC^\infty(Q(\Q)U_Q(\A)\backslash G(\A), \omega_{\infty}^{-1})
$$
given by:
$$
\cF^Q_P(\phi)(\ul g) \ = \ 
\int\limits_{U_Q(\Q) U_P(\A)\backslash U_Q(\A)} \phi(\ul u \, \ul g)\, d\ul u.
$$

\medskip
\subsection{Cohomology of a single boundary stratum}
\label{sec:coh-del-P}

\medskip
\subsubsection{\bf Cohomology of $\ppBSC$ as an induced representation}
It is clear from the $E_1^{p,q}$ term of the spectral sequence in Sect.\,\ref{sec:spectral-seq-arith} that to understand the cohomology of the boundary, we need to understand the cohomology of a single stratum $\PPBSC.$ 
It is known (\cite[Chap.\,3]{harder-book}) that
$$
H^\bullet(\partial_P\SGK, \tM_{\lambda,E}) \ = \ 
H^\bullet(P(\Q)\backslash G(\A)/ K_\infty^0K_f, \tM_{\lambda,E}).
$$
The space $P(\Q)\backslash G(\A)/ K_\infty^0K_f$ fibers over locally symmetric spaces of $M_P$ as we now explain. 
Let $\Xi_{K_f}$ be a complete set of representatives for $P(\A_f)\backslash G(\A_f)/K_f.$
Let $K^P_\infty = K_\infty^0 \cap P(\R),$ and for $\xi_f \in \Xi_{K_f}$, let $K_f^P(\xi_f) = P(\A_f) \cap \xi_f K_f \xi_f^{-1}.$ 
Then
$$
P(\Q)\backslash G(\A)/ K_\infty^0K_f  \ = \ \coprod_{\xi_f \, \in \, \Xi_{K_f}} P(\Q)\backslash P(\A)/ K^P_\infty K_f^P(\xi_f). 
$$
Let $\kappa_P : P \to P/U_P = M_P$ be the canonical map, and define 
$K_\infty^{M_P} = \kappa_P(K^P_\infty)$, and for $\xi_f \in \Xi_{K_f}$, let 
$K_f^{M_P}(\xi_f) = \kappa_P(K_f^P(\xi_f)).$ 
Define 
$$
\uSMP_{K_f^{M_P}(\xi_f)} \ := \ M_P(\Q)\backslash M_P(\A) / K_\infty^{M_P}K_f^{M_P}(\xi_f).
$$ 
The underline is to emphasize that we have divided out by $K_\infty^{M_P}$ which is not connected; see 
Sect.\,\ref{sec:ind-pi-0} below. Let $K_f^{U_P}(\xi_f) = U_P(\A_f) \cap \xi_f K_f \xi_f^{-1}.$ We have the fibration: 
$$
U_P(\Q)\backslash U_P(\A)/K_f^{U_P}(\xi_f) \ \hookrightarrow \ 
P(\Q)\backslash P(\A)/ K^P_\infty K_f^P(\xi_f)  \ \twoheadrightarrow \ 
\uSMP_{K_f^{M_P}(\xi_f)}.
$$
The corresponding Leray-Serre spectral sequence degenerates at $E_2$-level. See, for example, the discussion in 
Schwermer's articles \cite[Sec.\,7]{schwermer-lnm1447} and \cite[Thm.\,2.7]{schwermer-lnm988}.  The cohomology of the total space is given in terms of the cohomology of the base with coefficients in the cohomology of the fiber. For the cohomology of the fiber, one uses a classical theorem due to van Est: if $\u_P$ be the Lie algebra of $U_P$ then 
the cohomology of the fiber is the same as the unipotent Lie algebra cohomology group 
$H^\bullet(\u_P, \M_{\lambda, E})$, which is naturally an algebraic representation of $M_P$; the associated sheaf on $\uSMP_{K_f^{M_P}(\xi_f)}$ is denoted by putting a tilde on top. Putting all this together, we get: 
\begin{equation}
\label{eqn:bdry-P-coh}
H^\bullet(\partial_P\SGK, \tM_{\lambda,E})  \ = \ 
\bigoplus_{\xi_f \, \in \, \Xi_{K_f}} 
H^\bullet \left(\uSMP_{K_f^{M_P}(\xi_f)}, \widetilde{H^\bullet(\u_P, \M_{\lambda,E})}\right). 
\end{equation}
It is convenient to pass to the limit over all open compact subgroups $K_f$ and define:
$$
H^\bullet(\ppBSC, \tM_{\lambda,E}) \ := \ \varinjlim_{K_f} \, 
H^\bullet(\partial_P\SGK, \tM_{\lambda,E}). 
$$
Let $\uSMP \ := \ M_P(\Q)\backslash M_P(\A) / K_\infty^{M_P}.$ Now we can rewrite (\ref{eqn:bdry-P-coh}) as: 
$$
H^\bullet(\ppBSC, \tM_{\lambda,E})^{K_f} \ = \ 
\bigoplus_{\xi_f \, \in \, \Xi_{K_f}} 
H^\bullet \left(\uSMP, \widetilde{H^\bullet(\u_P, \M_{\lambda,E})}\right)^{K_f^{M_P}(\xi_f)}. 
$$
It is clear (using Mackey theory) that the right hand side is the $K_f$-invariants of an algebraically induced representation. We have the important result: 

\begin{prop}
\label{prop:bdry-coh-1}
The cohomology of $\ppBSC$ is given by:
$$
H^\bullet(\ppBSC, \tM_{\lambda,E}) \ = \ 
\aInd_{\pi_0(P(\R)) \times P(\A_f)}^{\pi_0(G(\R)) \times G(\A_f)}
\left( H^\bullet(\uSMP, \widetilde{H^\bullet(\u_P, \M_{\lambda,E})}) \right).
$$
The notation $\aInd$ stands for algebraic, or un-normalized, induction. 
\end{prop}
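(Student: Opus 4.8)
The plan is to deduce this from equation (\ref{eqn:bdry-P-coh}) by reorganizing the direct sum over the set of representatives $\Xi_{K_f}$ for $P(\A_f)\backslash G(\A_f)/K_f$ into the $K_f$-invariants of an induced module, and then passing to the limit over $K_f$. First I would fix a pure weight $\lambda$ and recall from Sect.\,\ref{sec:coh-del-P} that for each $\xi_f \in \Xi_{K_f}$ one has the locally symmetric space $\uSMP_{K_f^{M_P}(\xi_f)}$ of $M_P$ with the level structure $K_f^{M_P}(\xi_f) = \kappa_P(P(\A_f)\cap \xi_f K_f \xi_f^{-1})$, together with the coefficient sheaf attached to the $M_P$-module $H^\bullet(\u_P,\M_{\lambda,E})$. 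The key observation, which is classical Mackey theory, is that the right-hand side of (\ref{eqn:bdry-P-coh}) is precisely the space of $K_f$-fixed vectors in $\aInd_{P(\A_f)}^{G(\A_f)}$ applied to the $M_P(\A_f)$-module $\varinjlim_{K_f^{M_P}} H^\bullet(\uSMP_{K_f^{M_P}}, \widetilde{H^\bullet(\u_P,\M_{\lambda,E})})$: indeed, the double cosets $P(\A_f)\backslash G(\A_f)/K_f$ index exactly the summands, and the stabilizer computation identifies the $\xi_f$-summand with the $K_f^{M_P}(\xi_f)$-invariants of the value of the induced module.

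Next I would handle the archimedean component. The factor $\pi_0(P(\R))$ in the source of the induction and $\pi_0(G(\R))$ in the target arise because the boundary stratum $\partial_P\SGK$ is built from $P(\Q)\backslash G(\A)/K_\infty^0 K_f$, and dividing by $K_\infty^0$ rather than by all of $K_\infty$ introduces the component group; this is precisely the point flagged by the underline in $\uSMP$ and the reference to Sect.\,\ref{sec:ind-pi-0}. Concretely, $K_\infty^{M_P}=\kappa_P(K_\infty^0\cap P(\R))$ need not be connected, and keeping track of how $\pi_0(K_\infty)=\pi_0(G(\R))$ decomposes along $P$ gives the $\pi_0(P(\R))$ appearing in the source. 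So the induction is genuinely over $\pi_0(P(\R))\times P(\A_f)$ inside $\pi_0(G(\R))\times G(\A_f)$, matching the statement. I would then assemble these two inductions (archimedean component group and finite adeles) into the single algebraic induction $\aInd_{\pi_0(P(\R))\times P(\A_f)}^{\pi_0(G(\R))\times G(\A_f)}$.

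Finally, to pass from the $K_f$-level statement to the limit statement, I would take $\varinjlim_{K_f}$ on both sides. On the left this is the definition of $H^\bullet(\ppBSC,\tM_{\lambda,E})$. On the right, algebraic induction commutes with the direct limit over level structures (the smooth induced module is by construction the union of its $K_f$-fixed subspaces), and $\varinjlim_{K_f^{M_P}}$ of the cohomology of $\uSMP_{K_f^{M_P}}$ with the given coefficients is exactly $H^\bullet(\uSMP,\widetilde{H^\bullet(\u_P,\M_{\lambda,E})})$ in the limit notation already introduced. Combining, the limit of the right-hand side is the asserted $\aInd$ of $H^\bullet(\uSMP,\widetilde{H^\bullet(\u_P,\M_{\lambda,E})})$, and taking $K_f$-invariants recovers (\ref{eqn:bdry-P-coh}), confirming consistency.

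The main obstacle I anticipate is the bookkeeping with the non-connected groups: making sure that the component-group factors $\pi_0(P(\R))$ and $\pi_0(G(\R))$ are correctly matched through $\kappa_P$ and the identification $\pi_0(K_\infty)=\pi_0(G(\R))$, so that the induction is taken over the right subgroup and the Hecke- and $\pi_0$-module structures on both sides agree. The purely adelic Mackey-theory part, by contrast, is routine once the representatives $\Xi_{K_f}$ and the stabilizer subgroups $K_f^P(\xi_f)$ are in place.
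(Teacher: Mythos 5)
Your proposal is correct and follows essentially the same route as the paper: the paper also starts from (\ref{eqn:bdry-P-coh}), invokes Mackey theory to recognize the direct sum over $\Xi_{K_f}$ as the $K_f$-invariants of an algebraically induced module, passes to the limit over $K_f$, and relegates the bookkeeping of the component groups $\pi_0(P(\R))$ and $\pi_0(G(\R))$ to the discussion in Sect.\,\ref{sec:ind-pi-0}. Your write-up merely spells out the Mackey-theoretic identification that the paper dismisses as ``clear.''
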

The process of induction from $\pi_0(P(\R))$  to $\pi_0(G(\R))$ is important and needs some explanation.

\medskip
\subsubsection{\bf Induction from $\pi_0(P(\R))$ to $\pi_0(G(\R))$}
\label{sec:ind-pi-0}
The parabolic subgroup $P$ is of the form $P = R_{F/\Q}(P_0)$ for a parabolic subgroup $P_0/F$ of 
$G_0 = \GL_n/F$.  
Since $F$ is totally real, we identify the set of infinite places $\place_\infty$ with $\Sigma_F = \Hom(F,\R),$ and  
$G(\R) = \prod_{\tau \in \Sigma_F} \GL_n(\R)$ and $P(\R) = \prod_{\tau \in \Sigma_F} P_0^\tau(\R)$, where 
$P_0^\tau = P_0 \times_{F, \tau} \R.$   Furthermore, suppose, $P_0$ corresponds to the partition $n = n_1 + \dots + n_k$ with $k \geq 2$ and $n_j \geq 1,$  then $M_{P_0} = \GL_{n_1} \times \cdots \times \GL_{n_k} /F.$
We have
\begin{eqnarray*}
K^{M_P}_\infty 
& = & \kappa_P(P(\R) \cap K_\infty^0) \\
& = & \kappa_P\left(P(\R) \cap (S(\R)^0\cdot \prod_\tau \SO(n))\right) \\
& = & \kappa_P\left( S(\R)^0\cdot \prod_\tau (P_0^\tau(\R) \cap \SO(n)) \right) \\
& = & \kappa_P\left( S(\R)^0\cdot \prod_\tau {\rm S}({\rm O}(n_1) \times \cdots \times {\rm O}(n_k))\right) \\
& \simeq & S(\R)^0\cdot \prod_\tau {\rm S}({\rm O}(n_1) \times \cdots \times {\rm O}(n_k)). 
\end{eqnarray*}
Note that $\pi_0(P(\R))$ has order $2^{{\sf r}k}$ and 
$\pi_0(K^{M_P}_\infty)$ has order $2^{{\sf r}(k-1)}.$ (Recall, ${\sf r} = [F:\Q].$) 
Inclusion of   components induces a canonical surjective 
map $\pi_0(P(\R)) \to \pi_0(G(\R))$ giving a short exact sequence: 
\begin{equation}
\label{eqn:ses-infinity}
1 \longrightarrow \pi_0(K^{M_P}_\infty) \longrightarrow \pi_0(P(\R)) 
\longrightarrow  \pi_0(G(\R)) \longrightarrow 1.
\end{equation}
In the definition of $\uSMP$ we have divided out by $K^{M_P}_\infty$. For $\SMP$ we divide only by $K^{M_P,0}_\infty$, 
the connected component of the identity in $K^{M_P}_\infty.$ Hence, for any sheaf $\tM$ coming from an algebraic representation 
$\M$ of $M_P/\Q$ we have: 
$$
H^\bullet(\uSMP, \tM) \ = \ H^\bullet(\SMP, \tM)^{\pi_0(K^{M_P}_\infty)}.
$$
This means that $H^\bullet(\uSMP, \widetilde{H^\bullet(\u_P, \M_{\lambda,E})})$ is a module for 
$\pi_0(P(\R)) \times P(\A_f)$ on which $\pi_0(K^{M_P}_\infty)$ acts trivially and so is naturally a module for 
$\pi_0(G(\R)) \times P(\A_f),$ which is then induced up to get a module for $\pi_0(G(\R)) \times G(\A_f)$. 
See also Rem.\,\ref{rem:ind-infinity} below.

\medskip
\subsubsection{\bf Kostant's theorem on unipotent cohomology}
\label{sec:kostant-theorem}

The structure of the unipotent cohomology group $H^{\bullet}(\u_P, \M_{\lambda, E})$ is well-known by results of Kostant \cite{kostant}, and we briefly describe these results in our situation.

\smallskip
The calculation of the unipotent cohomology group is over the field $E$. Recall that we are dealing with the split group $G \times E = \prod_{\tau : F \to E} G_0^\tau$ where $G_0^\tau = G_0 \times_{F,\tau} E.$ 

Let $\bfdelta_{G_0}$ stand for the set of roots of $G_0$ and $\bfdelta_{G_0}^+$ the subset of positive roots 
(for choice of Borel subgroup being the upper triangular subgroup). Let $\bfpi_{G_0}$ be the set of simple roots.  
The notations $\bfdelta_{G_0^\tau}$, $\bfdelta_{G_0^\tau}^+$ and $\bfpi_{G_0^\tau}$ are clear. 
Let $P = R_{F/\Q}(P_0)$ be a parabolic subgroup of $G$ as above, and we let $P_0^\tau := P_0 \times_{\tau} E.$
The Weyl group factors as $W = \prod_{\tau : F \to E} W_0^\tau$ with each $W_0^\tau$ isomorphic to the permutation group $\perm_n$ on $n$-letters.

Let $W^P$ be the set of Kostant representatives in the Weyl group $W$ of $G$ corresponding to the parabolic subgroup $P$ which is defined as: $W^P = \{ w = (w^\tau) : w^\tau \in W^\tau_0{}^{P^\tau_0} \},$ where,  
$$
W^\tau_0{}^{P^\tau_0} \ := \{w^\tau \in W^\tau_0  : \ (w^\tau)^{-1}\alpha > 0, \ \forall \alpha \in \bfpi_{M_{P_0^\tau}} \}. 
$$
(Here $\bfpi_{M_{P_0^\tau}} \subset \bfpi_{G_0^\tau}$ denotes the set of simple roots in the Levi quotient $M_{P_0^\tau}$ of $P_0^\tau.$)

For $w \in W$, and in particular for $w \in W^P$, and for $\lambda \in X^*(T)$, by $w\cdot \lambda$ we mean the twisted action of $w$ on the weight $\lambda$, i.e., 
$w \cdot \lambda = (w^\tau \cdot \lambda^\tau)_{\tau : F \to E}$ and 
$$
w^\tau\cdot \lambda^\tau = w^\tau(\lambda^\tau + \bfgreek{rho}_n) - \bfgreek{rho}_n.
$$

Since $w \in W^P$, the weight $w\cdot \lambda$ is dominant and integral as a weight for $M_P \times E$ and we can consider the irreducible finite-dimensional representation $\M_{w\cdot \lambda}$ of $M_P \times E$. Kostant's theorem asserts that as $M_P \times E$-modules, one has a multiplicity-free decomposition: 
\begin{equation}
\label{eqn:kostant}
H^q(\u_P, \M_{\lambda, E}) \ \simeq \  \bigoplus_{\substack{w \in W^P \\ l(w) = q}} \M_{w \cdot \lambda, E}.
\end{equation}

The reader should bear in mind that the above result of Kostant can be parsed further since everything in sight factors over the set of embeddings $\tau : F \to E.$  We have:

\begin{eqnarray*}
H^q(\u_P, \M_{\lambda, E}) & = & \bigoplus_{\sum q_\tau = q} \, \bigotimes_{\tau : F \to E} 
H^{q_\tau}(\u_{P^\tau_0}, \M_{\lambda^\tau, E}) \quad \mbox{(K\"unneth theorem)}\\ 
& = & \bigoplus_{\sum q_\tau = q} \, \bigotimes_{\tau :  F \to E} \, 
\bigoplus_{\substack{w^\tau \in W_0^\tau{}^{P_0^\tau} \\ l(w^\tau) = q_\tau}}
\M_{w^\tau \cdot \lambda^\tau, E} \quad \mbox{(Kostant for each $\u_{P_0^\tau}$)}\\
& = & \bigoplus_{\sum q_\tau = q}  \, 
\bigoplus_{\substack{w^\tau \in W_0^\tau{}^{P_0^\tau} \\ l(w^\tau) = q_\tau}} \, \bigotimes_{\tau :  F \to E}
\M_{w^\tau \cdot \lambda^\tau, E} \quad \mbox{(tensoring commutes with direct sum)}\\
& = & \bigoplus_{\substack{w \in W^P \\ l(w) = q}} \M_{w \cdot \lambda, E} 
\quad \mbox{(since $w = (w^\tau)$ and $l(w) = \sum_\tau l(w^\tau)).$}
\end{eqnarray*}

Applying Kostant's theorem (\ref{eqn:kostant}) to the boundary cohomology as in Prop.\,\ref{prop:bdry-coh-1}, 
while using the description of the action of $\pi_0(P(\R))$ in Sect.\,\ref{sec:ind-pi-0}, we get 

\begin{prop}
\label{prop:bdry-coh-2}
The cohomology of $\ppBSC$ is given by:
$$
H^q(\ppBSC, \tM_{\lambda, E})  \ = \ 
\bigoplus_{w \in W^P}
{}^{\rm a}{\rm Ind}_{\pi_0(P(\R)) \times P(\A_f)}^{\pi_0(G(\R)) \times G(\A_f)}
\left(H^{q - l(w)}(\SMP, \tM_{w \cdot \lambda, E})^{\pi_0(K^{M_P}_\infty)} \right).
$$
\end{prop}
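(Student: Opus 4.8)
The plan is to combine the three inputs already assembled in this section: the identification of $H^\bullet(\partial_P\SGK,\tM_{\lambda,E})$ with the cohomology of $P(\Q)\backslash G(\A)/K_\infty^0K_f$, the fibration over $\uSMP$ with fiber $U_P(\Q)\backslash U_P(\A)/K_f^{U_P}$ together with the degeneration of the associated Leray--Serre spectral sequence, and Kostant's theorem in the form (\ref{eqn:kostant}). Concretely, I would start from the limit version
$$
H^\bullet(\ppBSC,\tM_{\lambda,E}) \ = \
\aInd_{\pi_0(P(\R))\times P(\A_f)}^{\pi_0(G(\R))\times G(\A_f)}
\left(H^\bullet(\uSMP,\widetilde{H^\bullet(\u_P,\M_{\lambda,E})})\right)
$$
of Prop.\,\ref{prop:bdry-coh-1}, and then feed in Kostant's decomposition for the coefficient sheaf.

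First I would observe that the van~Est isomorphism expresses the fiber cohomology of the tower as the unipotent Lie algebra cohomology $H^\bullet(\u_P,\M_{\lambda,E})$, which is an \emph{algebraic} representation of $M_P\times E$; hence the sheaf $\widetilde{H^q(\u_P,\M_{\lambda,E})}$ on $\uSMP$ decomposes, by (\ref{eqn:kostant}), as the direct sum over $w\in W^P$ with $l(w)=q$ of the sheaves $\widetilde{\M_{w\cdot\lambda,E}}$ attached to the irreducible $M_P$-modules $\M_{w\cdot\lambda,E}$. Because the Leray--Serre spectral sequence of the fibration degenerates at $E_2$ (as recalled in \S\,\ref{sec:coh-del-P}, following Schwermer), the cohomology of the total space in degree $q$ is
$$
H^q(\uSMP,\widetilde{H^\bullet(\u_P,\M_{\lambda,E})}) \ = \
\bigoplus_{w\in W^P} H^{q-l(w)}(\uSMP,\widetilde{\M_{w\cdot\lambda,E}}),
$$
the shift by $l(w)$ being exactly the fiber-degree in which $\M_{w\cdot\lambda,E}$ sits inside $H^\bullet(\u_P,\M_{\lambda,E})$. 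Then I would rewrite $H^{q-l(w)}(\uSMP,\widetilde{\M_{w\cdot\lambda,E}})$ as $H^{q-l(w)}(\SMP,\tM_{w\cdot\lambda,E})^{\pi_0(K^{M_P}_\infty)}$ using the identity $H^\bullet(\uSMP,\tM)=H^\bullet(\SMP,\tM)^{\pi_0(K^{M_P}_\infty)}$ established in \S\,\ref{sec:ind-pi-0}, and finally insert this into $\aInd$, using that $\aInd$ is exact and so commutes with the finite direct sum over $W^P$. Pulling out the sum over $w$ past the induction functor yields exactly the asserted formula.

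The step that needs the most care — and the one I expect to be the main obstacle — is bookkeeping the interaction between the component-group action and the grading by $W^P$: one must check that the $\pi_0(P(\R))$-action on $H^\bullet(\uSMP,\widetilde{H^\bullet(\u_P,\M_{\lambda,E})})$ respects the Kostant decomposition (so that taking $\pi_0(K^{M_P}_\infty)$-invariants can be done summand-by-summand) and that the induction $\aInd_{\pi_0(P(\R))\times P(\A_f)}^{\pi_0(G(\R))\times G(\A_f)}$ is genuinely applied to each $H^{q-l(w)}(\SMP,\tM_{w\cdot\lambda,E})^{\pi_0(K^{M_P}_\infty)}$ after it has been made into a $\pi_0(G(\R))\times P(\A_f)$-module via the short exact sequence (\ref{eqn:ses-infinity}). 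This is where one has to be faithful to the discussion surrounding Rem.\,\ref{rem:ind-infinity}; since $\pi_0(K^{M_P}_\infty)$ is a normal subgroup of $\pi_0(P(\R))$ acting through the Levi and the Kostant summands are $M_P\times E$-stable, the action is compatible with the grading and the invariants may indeed be taken term by term, so this obstacle is bookkeeping rather than conceptual. Everything else — exactness of $\aInd$, commuting it past finite sums, the van~Est and Kostant inputs — is either standard or already cited in the text.
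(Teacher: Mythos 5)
Your proposal is correct and follows essentially the same route as the paper, which obtains Prop.\,\ref{prop:bdry-coh-2} precisely by feeding Kostant's decomposition (\ref{eqn:kostant}) into Prop.\,\ref{prop:bdry-coh-1} and invoking the identity $H^\bullet(\uSMP,\tM)=H^\bullet(\SMP,\tM)^{\pi_0(K^{M_P}_\infty)}$ from \S\,\ref{sec:ind-pi-0}. The bookkeeping point you flag is handled correctly: the Kostant summands are $M_P\times E$-stable, so the $\pi_0(K^{M_P}_\infty)$-invariants and the induction can indeed be taken summand by summand.
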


\begin{rem}\label{rem:ind-infinity}{\rm 
Suppose we have an irreducible $M_P(\A_f)$-module $\pi_f$ such that for some 
$\lambda \in X^*(T)$ and some $w \in W^P$, we have $\pi_f \in {\rm Coh}(M_P, w\cdot \lambda).$ Let $\varepsilon$ be a character of $\pi_0(M_P(\R)) = \pi_0(P(\R))$. Then the induction to $\pi_0(G(\R)) \times G(\A_f)$ of 
$ \varepsilon  \times \pi_f$ will contribute to
$H^\bullet(\ppBSC, \tM_{\lambda, E})$ if and only if $\varepsilon$ is trivial on $\pi_0(K^{M_P}_\infty)$. 
}\end{rem}

\medskip
\subsection{The residual cohomology}
\label{sec:coh-res}

In this subsection we come back to the claim made in  (\ref{eqn:cuspidal-claim}).  Towards this, we start by giving a complete description of the square integrable  classes in terms of the cuspidal classes for the reductive quotients 
of certain parabolic subgroups, i.e., we give a much more explicit version of the decomposition
in (\ref{eqn:L2-deco-two}). We work at a transcendental level: our coefficient systems are $\C$-vector spaces.
The following is explained in \cite[Chap.\,3]{harder-book} in greater detail.

\medskip
\subsubsection{\bf An injectivity result in cohomology}
Write $N=u v.$ Take a parabolic subgroup 
$P_0$ of $G_0$ containing the
standard Borel subgroup, and with reductive quotient $M_0= \Gl_u\times\dots \times \Gl_u.$   
We write $P=R_{F/\Q}(P_0), M=R_{F/\Q}(M_0).$     
To such a decomposition $N=u v$, we also have a  $\Theta$-stable parabolic subgroup 
$P^\vee$ with reductive quotient $M^\vee=\Gl_v\times \Gl_v\times\dots \times\Gl_v.$
Take a highest weight  $\lambda$ whose $\tau$-component is given by
 $$
   \lambda^\tau =   a^\tau_1\bfgreek{gamma}_v +   a^\tau_2  \bfgreek{gamma}_{2v} + \dots  + 
  a^\tau_{u-1} \bfgreek{gamma}_{(u-1)v} +d\bfgreek{delta}_N
    $$ 
where in addition we have $ a^\tau_i=  a^\tau_{b-i}.$  The $\bfgreek{gamma}_{iv} $ are the dominant characters 
on $P^\vee,$ let $\q$ be the Lie algebra of $P^\vee.$ To this datum Vogan and Zuckerman  \cite{Vo-Zu} attach an irreducible Harish-Chandra module $A_{\q}(\lambda),$
which has nontrivial cohomology with coefficients in $\M_\lambda.$ This module contains 
a lowest $K_\infty$ type $\Theta(\q,\lambda)$ with highest weight $\mu_{\q}(\lambda)$ which is given by the 
formula \cite[(5.2)]{Vo-Zu}. Furthermore, 
\begin{equation*}
\begin{split}
H^\bullet(\g, K^0_\infty; A_{\q}(\lambda)\otimes\M_\lambda) & \ = \ \Hom_{K^0_\infty}(\Lambda^\bullet(\g/\k), A_{\q}(\lambda)\otimes \M_\lambda)) \\ 
& \ = \ \Hom_{K^0_\infty}(\Lambda^\bullet(\g/\k), A_{\q}(\lambda)(\Theta(\q,\lambda))\otimes \M_\lambda);  
\end{split}
\end{equation*}
and especially the differentials in the complex on the right all vanish.

\smallskip

The absolute Weyl group is $\prod_{\tau:F\to \C} W_0.$
 We choose a specific Kostant representative $w_{u,v}\in W^{P}$ whose $\tau$-component
is the  permutation in the letters $1,2,\dots, n$ given by the following rule: write $\nu=i+(j-1)v \text{ with }  1\leq i \leq u $ then $w_{u,v}(\nu) =  j+(i-1) v .$  The element $w(\lambda+\rho_N)-\rho_N\in  X^*(T\times E)$ and we get for its $\tau$ component
   \begin{align}
   \label{WAB}
   (w_{u,v} (\lambda +\rho_N)-\rho_N)^\tau  = (a^\tau_1+u-1)\bfgreek{gamma}^{M }_1+ (a^\tau_2+u-1)\bfgreek{gamma}^{M }_2 +\dots+( a^\tau_{b-1}+u-1)\bfgreek{gamma}^{M }_{v-1}+\bfgreek{gamma}_v\cr+
    (a^\tau_1+u-1)\bfgreek{gamma}^{M}_{1+v}+(a^\tau_2+u-1)\bfgreek{gamma}^{M }_{2+v} +\dots +( a^\tau_{b-1}+u-1)\bfgreek{gamma}^{M }_{2v-1}+\bfgreek{gamma}_{2v}+\dots
\end{align}
The length of  this Kostant representative is
 $$
 l(w_{u,v}) =n(u-1)(v-1)/4.    
 $$
Let $w_P$ be the longest Kostant representative which sends all the roots in $U_P$ to negative roots.
Then we define the  (reflected) Kostant representative  $ w^\prime_{u,v}$ by $ w_{u,v}=w_P w^\prime_{u,v}.$
We get 
 \begin{align}
 \label{WMU}
 w^\prime_{u,v}(\lambda+\rho)-\rho=  \mu=
   ( a^\tau_1+u-1)\bfgreek{gamma}^{M }_1+ (a^\tau_2+u-1)\bfgreek{gamma}^{M }_2 +\dots+( a^\tau_{v-1}+u-1)\bfgreek{gamma}^{M }_{v-1}+ \cr
    +
    (a^\tau_1+u-1)\bfgreek{gamma}^{M}_{1+v}+(a^\tau_2+u-1)\bfgreek{gamma}^{M }_{2+u} +\dots +( a^\tau_{v-1}+u-1)\bfgreek{gamma}^{M }_{2v-1}+ \dots \cr
    -(v+1)(\bfgreek{gamma}_v+\bfgreek{gamma}_{2v} +\dots + \bfgreek{gamma}_{(u-1)v})\phantom{          }.
    \end{align}
    We write the maximal torus $T\subset M $ as product $T_1\times T_2\times \dots\times T_u$ where the factors
    are canonically identified to $\Gm^v.$ Let $\delta_i$ be the determinant on the $i$-th factor. Then the characters $X^*(T)$ can be written as arrays: 
\begin{align*}
\begin{matrix}
w_{u,v} (\lambda +\rho_N)-\rho_N \ =  \ (\dots, \mu^{(1)}+ d_i\delta_i, \dots ), \cr
w^\prime_{u,v} (\lambda +\rho_N)-\rho_N \  = \  (\dots, \mu^{(1)}+ d^\prime_i\delta_i, \dots ), 
\end{matrix}
\end{align*}
i.e., the semi-simple component is always the same: The $\tau$-component is    $\mu^{(1)}= ( a^\tau_1+u-1)\bfgreek{gamma}^{M }_1+ (a^\tau_2+u-1)\bfgreek{gamma}^{M }_2 +\dots+( a^\tau_{v-1}+u-1)\bfgreek{gamma}^{M }_{v-1}$ and for the  coefficients of the $\delta_i$ terms we have
\begin{align}\label{dminusd}
d_i-d_{i+1} = 1,  \ \ d_i^\prime-d_{i+1}^\prime=-1. 
\end{align}
We see that $w^\prime_{u,v} (\lambda +\rho_N)-\rho_N$ is in the negative chamber.  On the $i$-th  factor of  $\Gl_u$ we have the module  $\D_{\mu^{(1)}+d_i\delta_i} $ and we define 
$\D_\mu=\otimes \D_{\mu^{(1)}+d_i\delta_i} $ and accordingly $\D_{\mu^\prime}.$ 
     
   \begin{satz}
   \label{thm:lowest-degree} We have a nonzero intertwining operator $:T^{(\rm loc) }(\D_\mu):\aInd_{P(\R)}^{G(\R)}\D_{\mu^\prime}\to
   \aInd_{P(\R)}^{G(\R)}\D_{\mu } $ 
and get a diagram
\begin{align}
  \begin{matrix}  \aInd_{P(\R)}^{G(\R)}\D_{\mu^\prime} & \ppfeil{T^{(\rm loc) }(D_\mu) }&   \Aql \cr
&& \downarrow\cr
&&  \aInd_{P(\R)}^{G(\R)}\D_{\mu }  
\end{matrix} 
\end{align}
The horizontal arrow is surjective, and the vertical arrow is injective.
The map induced by the vertical arrow in cohomology 
$$
H^q(\fg,K_\infty; \Aql \otimes \M_\lambda)  \ \longrightarrow \  
H^q(\fg, K_\infty; \aInd_{P(\R)}^{G(\R)}\D_{\mu }  \otimes \M_\lambda)
$$
is a bijection in the lowest degree of nonzero cohomology; this lowest degree is 
$$
q \ =\ v \left[\frac{u^2}{4}\right]  +\frac{n(u-1)(v-1)}{4}.
$$ 
\end{satz}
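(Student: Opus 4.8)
The plan is to prove the two assertions — construction of a nonzero intertwining operator together with the stated diagram, and the bijectivity of the induced map in cohomology in the lowest degree — by reducing everything, via the K\"unneth formula, to the analogous statement over $\R$ for a single archimedean place, i.e.\ to a statement about representations of $\GL_N(\R)$; then invoking known reducibility/irreducibility results and Delorme's Lemma. So the first step is to observe that all objects in sight factor over $\tau : F \to \R$: the induced modules $\aInd_{P(\R)}^{G(\R)} \D_{\mu}$, $\aInd_{P(\R)}^{G(\R)}\D_{\mu^\prime}$, the module $\Aql$ (which at infinity is $\bigotimes_\tau A_{\q^\tau}(\lambda^\tau)$), and the relative Lie algebra cohomology by \eqref{eqn:g-k-rationa} and the K\"unneth isomorphism. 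It therefore suffices to treat $F = \Q$, and the degrees then multiply by ${\sf r}_F$, matching the formula $q = v\lfloor u^2/4\rfloor + n(u-1)(v-1)/4$ (which is already the ${\sf r}_F = 1$ value in the statement).

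Next, over $\R$: the module $\D_{\mu^{(1)} + d_i\bfgreek{delta}_i}$ is an irreducible essentially tempered (in fact essentially discrete-series-type) representation of $\GL_v(\R)$ in the sense of Prop.\,\ref{prop:d-lambda}, and by \eqref{dminusd} the two parameters $\mu = w_{u,v}\cdot\lambda$ and $\mu' = w'_{u,v}\cdot\lambda$ have the $d_i$ marching up by $1$ versus down by $1$, so $\aInd_{P(\R)}^{G(\R)}\D_{\mu^\prime}$ is induced from the ``descending'' arrangement and $\aInd_{P(\R)}^{G(\R)}\D_\mu$ from the ``ascending'' one. I would invoke Speh's reducibility results for induced representations of $\GL_N(\R)$ (cf.\ \cite[Theorem 10b]{moeglin-edinburgh}, as used already in Prop.\,\ref{prop:d-lambda}) to get the standard intertwining operator $T^{(\rm loc)}(\D_\mu)$ from $\aInd\,\D_{\mu^\prime}$ to $\aInd\,\D_\mu$: its image is the unique irreducible quotient, which by the Vogan--Zuckerman classification \cite{Vo-Zu} of cohomological modules is precisely $\Aql$ (it is the Langlands quotient attached to this data, has the correct infinitesimal character by Wigner's Lemma, and has nonzero $\M_\lambda$-cohomology). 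Dually, $\Aql$ embeds as the unique irreducible submodule of $\aInd\,\D_\mu$ — this is the vertical arrow. This gives the diagram with the horizontal map surjective and the vertical map injective.

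For the cohomological statement I would use Delorme's Lemma / the Borel--Wallach computation of $(\fg, K_\infty)$-cohomology of induced representations, exactly as in the proof of Prop.\,\ref{prop:cohomology-degree} and as in Clozel \cite[Lem.\,3.14]{clozel}. The point is that $H^\bullet(\fg, K_\infty; \Aql \otimes \M_\lambda)$ is concentrated with its bottom nonzero degree equal to the length of the relevant Kostant representative plus the bottom degree of the cuspidal cohomology of $M$, namely $l(w_{u,v}) + v\,b^\Q_u = n(u-1)(v-1)/4 + v\lfloor u^2/4\rfloor$, while $H^\bullet(\fg, K_\infty; \aInd\,\D_\mu \otimes \M_\lambda)$, computed via the $\fu_P$-cohomology and Kostant's theorem \eqref{eqn:kostant}, has contributions from all $w \in W^P$, but in the lowest degree only $w = w_{u,v}$ contributes (all other Kostant representatives of minimal contributing length produce strictly larger total degree because of the extra shift from $\aInd$ being unnormalized and from the $-(v+1)$-twists in \eqref{WMU}). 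Hence in that single lowest degree both sides are built from the same one-dimensional $(\fm, K^M_\infty)$-cohomology space of $\D_\mu$ against $\M_{w_{u,v}\cdot\lambda}$, and the vertical inclusion induces an isomorphism there.

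The main obstacle I anticipate is the bookkeeping in the last step: pinning down that among all $w \in W^P$ the representative $w_{u,v}$ is the \emph{unique} one whose contribution to $H^\bullet(\fu_P, \M_\lambda)$ lands in the claimed lowest degree, and that the shift caused by passing from normalized to algebraic (unnormalized) induction is accounted for correctly, so that the degree really comes out to $v\lfloor u^2/4\rfloor + n(u-1)(v-1)/4$ and not something off by a half-sum-of-roots correction. This is the combinatorial heart and will require the explicit length formula $l(w_{u,v}) = n(u-1)(v-1)/4$ together with a careful comparison of $(\fg,K_\infty)$-cohomology degrees of $\Aql$ versus those predicted by Kostant for the induced module; everything else is an assembly of results already quoted in the paper.
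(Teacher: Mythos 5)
Your proposal is correct in substance and travels essentially the same road as the paper: reduce to a single archimedean place, realize $\Aql$ as the unique irreducible quotient of $\aInd_{P(\R)}^{G(\R)}\Dmp$ and the unique irreducible submodule of $\aInd_{P(\R)}^{G(\R)}\Dm$ via the Langlands classification, and then control the cohomology in low degrees by Delorme's Lemma. The one genuine difference is the mechanism in the last step. The paper's proof pivots on a lowest $K_\infty$-type comparison: the lowest $K_\infty$-type of $\aInd_{P(\R)}^{G(\R)}\Dm$ has the same highest weight $\mu_\q(\lambda)$ as the lowest $K_\infty$-type of $\Aql$, so Delorme's computation shows that \emph{both complexes} $\Hom_{K^0_\infty}(\Lambda^\bullet(\g/\k),-\otimes\M_\lambda)$ vanish in degrees $<q$ and the map between them is nonzero in degree $q$. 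You instead propose to compute both cohomologies via Delorme plus an infinitesimal-character argument singling out $w_{u,v}$, and then "match dimensions". Two remarks on this. First, the infinitesimal-character argument is actually cleaner than you state: since $\lambda+\bfgreek{rho}_N$ is regular, $w_{u,v}$ is the \emph{only} Kostant representative contributing to the Delorme decomposition in \emph{any} degree, not merely the unique one landing in lowest degree, so there is no delicate comparison of lengths to carry out. Second, and this is the step you should tighten: knowing that both $H^q(\fg,K_\infty;\Aql\otimes\M_\lambda)$ and $H^q(\fg,K_\infty;\aInd_{P(\R)}^{G(\R)}\Dm\otimes\M_\lambda)$ are one-dimensional does not by itself show the map induced by the inclusion is an isomorphism. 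You need the argument at the level of complexes: the complex for the induced module vanishes in degrees $<q$, so in degree $q$ there are no coboundaries, and left-exactness of $\Hom_{K^0_\infty}(\Lambda^q(\g/\k),-)$ applied to $\Aql\hookrightarrow\aInd_{P(\R)}^{G(\R)}\Dm$ sends nonzero cocycles to nonzero cocycles; injectivity plus equality of dimensions then gives the bijection. With that sentence added, your proof is complete and matches the paper's (which itself only spells out the injectivity and defers the rest to the book reference).
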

  
  \begin{proof}   
    From the description of $\aInd_{P(\R)}^{G(\R)}\D_{\mu } $  as an induced module we can compute its lowest $K_\infty$
  type. The highest weight of this lowest $K_\infty$ type  is equal  to the highest weight 
  $\mu_\q(\lambda)$  of the lowest $K_\infty$ type 
  in  $\Aql.$
   Then it follows from Delorme's computation that the complexes 
   $$
   \Hom_{K^0_\infty}(\Lambda^\bullet(\g/\k), A_{\q}(\lambda)\otimes \M_\lambda)) \to  \Hom_{K^0_\infty}(\Lambda^\bullet(\g/\k), \Ind_{P(\R)}^{G(\R)} \D_{\mu } \otimes\M_\lambda)
   $$
   are zero in degree $\bullet <q$ and are nonzero in degree $q.$ Hence we get injectivity for cohomology in this degree.
   (For more details we refer to  \cite[Chap.\,3]{harder-book}.)
 \end{proof}

In the extremal case $u=n, v =1$ the parabolic  subgroup $P$ is all of $G$ and 
$\Aql=\D_\lambda.$ In this case, and only  this case, the representation
$\Aql$ is tempered.

\medskip
\subsubsection{\bf Cuspidal spectrum is disjoint from boundary spectrum} 
We go back to the global situation, and in the following we put $w=w_{u,v}, w^\prime= w^\prime_{u,v}.$ 
The parabolic subgroup $P$ and its reductive quotient $M$ will be as above.
We consider an isotypical $\sigma_f$ in  the cuspidal  cohomology 
$H_{\rm cusp}^\pkt(\SMK, \Dmp\otimes \M_{w\cdot \lambda}).$ The K\"unneth formula yields that 
we can write $\sigma_f = \sigma_{1,f}\times\sigma_{2,f}\times  \dots\times \sigma_{v,f}$   where 
all the $\sigma_{i,f} $ occur in the cuspidal cohomology of $\Gl_u,$ hence they may be compared.
The relation (\ref{dminusd}) needs us to require that $\sigma_{i+1,f}= \sigma_{i,f}\otimes\vert\delta\vert.$
We say that $\sigma_f$ is a segment.
We assume $v\not=1$ and hence $P\not= G$. We analyze the constant term in the Eisenstein series  
(see \cite[Chap.\,3]{harder-book}): 
\begin{align*}
\cF^P\circ\Eis(\sigma\otimes \ul s): f \mapsto f + C(\sigma,\ul s)T^{\rm loc}(\sigma,\ul s)(f).
\end{align*}
We know that  under the assumption that $\sigma_f$ is a segment (and only under this assumption) the 
factor $C(\sigma,\ul s)$ has a simple pole along the lines $s_i=0.$ The  operator $T^{\rm loc}(\sigma,\ul s)$ is a product of local operators at all places
$$
T^{\rm loc}(\sigma,\ul s)=T^{\rm loc}_\infty(\sigma_\infty,\ul s)\times \prod_{p}T_p^{\rm loc}(\sigma_p,\ul s), 
$$ 
and the local factors are holomorphic as long as $\Re(s_i)\geq 0.$ We can evaluate the local operators at $\ul s=\ul 0$  then $T^{\rm loc}_\infty(\sigma_\infty,\ul 0)= T^{(\rm loc) }(\sigma_\infty) $ and at the finite places we get
homomorphisms
\begin{align} 
T_p^{\rm loc}(\sigma_p,\ul 0) : \aInd_{P(\Q_p)}^{G(\Q_p)} V_{\sigma_p}\to J_{\sigma_p}
\end{align}
where the $J_{\sigma_p} $ are essentially unitary, for almost all places  $J_{\sigma_p}^{K_p}$ is one-dimensional
and at these places the homomorphism $T_p^{\rm loc}(\sigma_p,\ul 0)$ is an isomorphism.  
We get a diagram
\begin{align}
 \begin{matrix}
\Aql\otimes J_{\sigma_f}^{K_f}  & \into  &  L^2(G(\Q)\backslash G(\A)/K_f, \omega_{\M_\lambda}^{-1}|_{S(\R)^0})\cr
 \downarrow &&  \downarrow \cF^P\cr
(\aInd_{P(\A)}^{G(\A)} V_{\sigma})^{K_f} & \into & \cA(P(\A)\backslash G(\A)/K_f)
\end{matrix}
\end{align}
We have the  inclusion $\Hom_{K_\infty}(\Lambda^\pkt(\g/\k), (\aInd_{P(\A)}^{G(\A)} V_{\sigma})^{K_f} \otimes \M_\lambda)\into
H^\pkt( \PPBSC,\M_\lambda)$ and hence we get a nontrivial contribution
\begin{align}\label{inj-low-degree}
H^q(\g, K_\infty; \Aql\otimes \M_\lambda)\otimes J_{\sigma_f}^{K_f} \into  H^q( \PPBSC,\M_\lambda)
\end{align}
where $q$ is the number in Thm.\,\ref{thm:lowest-degree} above.   Hence we can conclude

\begin{thm}
  \label{thm:cuspidal-vs-square-int} 
  An isotypical subspace $H^\pkt_{(2)}(\SGK, \M_\lambda)(\pi_f)$ in the square integrable cohomology is cuspidal
  if and only if  it's restriction to boundary cohomology 
  \begin{align*}
H^\pkt_{(2)}(\SGK, \M_\lambda)(\pi_f)  \ \longrightarrow \  H^\pkt(\partial\SGK,\M_\lambda)
\end{align*}
is the zero map, or what amounts to the same, we have
$H^\pkt_!(\SGK, \M_\lambda)(\pi_f) = H^\pkt_{(2)}(\SGK, \M_\lambda)(\pi_f).$ 
\end{thm}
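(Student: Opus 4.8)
The plan is to leverage the two structural inputs already assembled: (i) the Borel--Garland description of square-integrable cohomology via the discrete spectrum, together with the Moeglin--Waldspurger classification of the discrete spectrum of $\GL_N$ as residues of Eisenstein series from cuspidal data on Levi subgroups; and (ii) the injectivity statement in bottom degree from Thm.\,\ref{thm:lowest-degree}, which translates ``$\sigma_f$ is a segment'' into a nonzero contribution to the cohomology of the boundary stratum $\PPBSC$. The equivalence with $H^\pkt_! = H^\pkt_{(2)}$ on the $\pi_f$-isotypic component is then just the long exact sequence of Sect.\,\ref{sec:long-e-seq}: the restriction map $\r^*$ annihilates exactly the image of $H^\pkt_c$, so ``$\r^*$ is zero on the $\pi_f$-component'' is literally the same as ``the $\pi_f$-component of interior cohomology fills up all of the $\pi_f$-component of full (equivalently, here, square-integrable) cohomology''.

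First I would dispose of the easy direction: if $\pi_f$ is cuspidal, then by the theorem of Borel quoted in Sect.\,\ref{sec:w-cusp-pi} the $\pi_f$-isotypic class is represented by cusp forms, and a cusp form has vanishing constant term along every proper parabolic $P$; since the restriction map $H^\pkt(\BSC,\M_\lambda)\to H^\pkt(\PPBSC,\M_\lambda)$ is computed (at the transcendental level, via the de~Rham/relative Lie algebra description of Sect.\,\ref{sec:de-rham} and the constant-term maps $\cF^P$ of Sect.\,\ref{sec:spectral-seq-trans}) by applying $\cF^P$ to the representing form, the image in $H^\pkt(\partial\SGK,\M_\lambda)$ is zero. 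Hence the restriction is the zero map and, by exactness, $H^\pkt_!(\SGK,\M_\lambda)(\pi_f)=H^\pkt_{(2)}(\SGK,\M_\lambda)(\pi_f)$.

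For the converse I would argue by contraposition: suppose the square-integrable isotypic component $H^\pkt_{(2)}(\SGK,\M_\lambda)(\pi_f)$ is \emph{not} cuspidal. By Borel--Garland (Sect.\,\ref{sec:borel-garland}) and the Moeglin--Waldspurger description of $L^2_{\rm disc}$, the underlying automorphic representation $\pi={}^\iota\pi_\infty\otimes{}^\iota\pi_f$ is a residual representation: it is the Langlands quotient $A_\q(\lambda)\otimes J_{\sigma_f}$ built from a cuspidal representation $\sigma$ of a Levi $M=R_{F/\Q}(\GL_u\times\cdots\times\GL_u)$ with $N=uv$, $v\neq 1$, where $\sigma_f=\sigma_{1,f}\times\cdots\times\sigma_{v,f}$ is forced to be a \emph{segment}, i.e., $\sigma_{i+1,f}=\sigma_{i,f}\otimes|\delta|$, precisely as in Sect.\,\ref{sec:coh-res} (the residue is nonzero exactly in the segment case, because the factor $C(\sigma,\ul s)$ acquires its pole along $s_i=0$ only then). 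Now I invoke the commutative diagram of Sect.\,\ref{sec:coh-res} relating the embedding $\Aql\otimes J_{\sigma_f}^{K_f}\into L^2_{\rm disc}$ to the induced module $(\aInd_{P(\A)}^{G(\A)}V_\sigma)^{K_f}$ via the constant-term map $\cF^P$ and the operators $T^{\rm loc}$: evaluating at $\ul s=\ul 0$, the global operator $T^{\rm loc}(\sigma,\ul 0)$ is nonzero (its local factors are holomorphic for $\Re(s_i)\ge 0$ and are isomorphisms at almost all finite places, with the archimedean factor being the nonzero operator $T^{(\rm loc)}(\D_\mu)$ of Thm.\,\ref{thm:lowest-degree}), so the constant term $\cF^P$ does not kill the class. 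Passing to cohomology and using the injectivity of Thm.\,\ref{thm:lowest-degree} in the lowest degree $q=v\lfloor u^2/4\rfloor+n(u-1)(v-1)/4$, we obtain the nonzero map (\ref{inj-low-degree})
$$
H^q(\g,K_\infty;\Aql\otimes\M_\lambda)\otimes J_{\sigma_f}^{K_f}\ \into\ H^q(\PPBSC,\M_\lambda),
$$
which shows that the restriction of the $\pi_f$-component of square-integrable cohomology to the boundary stratum $\PPBSC$ — hence to all of $\partial\SGK$, after checking the $P$-component survives in the spectral sequence of Sect.\,\ref{sec:spectral-seq-arith} — is nonzero. This contradicts the hypothesis, completing the contrapositive, and once more the reformulation in terms of $H^\pkt_!=H^\pkt_{(2)}$ follows from the long exact sequence.

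The main obstacle I anticipate is the last bookkeeping point in the converse: knowing that the image in $H^q(\PPBSC,\M_\lambda)$ is nonzero is not quite the same as knowing the image in $H^q(\partial\SGK,\M_\lambda)$ is nonzero, because a priori a class on a single stratum could be a coboundary in the spectral sequence built from the strata $\PQBSC$. One must check that the segment-type class under consideration is genuinely detected at the $P$-stratum and is not in the image of $d_1$ from larger parabolics; the cleanest way is to identify the infinitesimal character / Hecke eigensystem of $\pi_f$ on the boundary, note (via Kostant, Prop.\,\ref{prop:bdry-coh-2}, together with the strong multiplicity one input of Sect.\,\ref{sec:w-2}) that it pins down exactly which $(P,w)$ can carry it, and verify that the relevant Kostant representative $w_{u,v}$ is of the length $q-l$ needed for a nonzero permanent cycle — this is where the ``balanced''/bottom-degree numerics of Thm.\,\ref{thm:lowest-degree} do the work. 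Everything else is an application of results already recalled in the excerpt.
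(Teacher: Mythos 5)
Your proposal follows essentially the same route as the paper: the easy direction via the vanishing of the constant terms of cusp forms, and the converse via the M\oe glin--Waldspurger segment description of the residual spectrum combined with the lowest-degree injectivity of Thm.\,\ref{thm:lowest-degree} and the diagram relating $\Aql\otimes J_{\sigma_f}$ to the induced module through $\cF^P$. The spectral-sequence bookkeeping you flag at the end is exactly the point the paper leaves implicit when it passes from the single-stratum injection (\ref{inj-low-degree}) to the full boundary, and your proposed resolution via the Hecke eigensystem and Jacquet--Shalika is the intended one.
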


Combining the results of Borel--Garland \cite{borel-garland} 
and M\oe glin--Waldspurger  \cite{moeglin-waldspurger} we get that the homomorphism
\begin{align}
\label{eqn:square-int-cohomology}
\bigoplus_{u|n} \bigoplus_{\sigma_f : \rm segment} H^\pkt(\g,K_\infty; \Aql\otimes  \M_\lambda)\otimes J_{\sigma_f}\to
H^\pkt_{(2)}(\SGK,\M_\lambda)
\end{align}
is surjective. This gives us the decomposition into isotypical spaces of $H^\pkt_{(2)}(\SGK, \M_\lambda).$ We separate 
the cuspidal part ($v=1$) from the residual part and get
\begin{align*}
H^\pkt_{(2)}(\SGK,\M_\lambda) = \bigoplus_{\pi_f : \rm cuspidal} H^\pkt_{\rm cusp}(\SGK, \M_\lambda)(\pi_f) 
\ \oplus \
\bigoplus_{\substack{u|n \\ u < n}} \bigoplus_{\sigma_f : \rm segment}  \overline{ H^\pkt(\g,K_\infty; \Aql\otimes  \M_\lambda)}\otimes J_{\sigma_f}, 
\end{align*}
where the bar on top means we have gone to its image via the map in (\ref{eqn:square-int-cohomology}). 
It follows from the theorem of Jacquet--Shalika \cite{jacquet-shalika-II} that there are no intertwining operators between the summands.

In the extremal case $u=n, v =1$ the parabolic  subgroup $P$ is all of $G$ and 
$\Aql=\D_\lambda.$ In this case and only  this case the representation
$\Aql$ is tempered, and the lowest degree of nonvanishing cohomology is the number $b_n^F.$
An easy computation shows that in the case $v>1$ the number $q <b_n^F.$  Then (\ref{inj-low-degree})  
implies that in degree $q$ 
 $$
 H^q(\g, K_\infty; \Aql\otimes \M_\lambda)\otimes J_{\sigma_f} \to  H^q( \SGK,\M_\lambda)
$$ 
is injective. This has also been proved by Grobner ~\cite{grobner}.
The above result, which we announced earlier (\ref{eqn:cuspidal-claim}),  can be  sharpened as in the following theorem. During the induction argument we use Thm.\,\ref{thm:cuspidal-vs-square-int} for
  the reductive quotients $M$ of the parabolic subgroups.

\begin{thm}
\label{thm:cuspidal-vs-boundary}  
If $\pi_f $ is cuspidal  then $\pi_f$ does not occur as an isomorphism type in the Jordan--H\"older filtration of 
$H^\bullet(\PBSC,\tM_\lambda).$
\end{thm}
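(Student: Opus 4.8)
The plan is to argue by induction on $n$, proving simultaneously for all $G_n = R_{F/\Q}(\GL_n/F)$ that a cuspidal $\pi_f \in \Coh_{\rm cusp}(G_n,\lambda,K_f)$ cannot appear as a subquotient of $H^\bullet(\partial\SGnK, \tM_{\lambda,E})$. The base case $n=1$ is trivial since there is no boundary. For the inductive step, I would use the spectral sequence of \S\ref{sec:spectral-seq-arith} converging to $H^\bullet(\partial\SGK,\tM_{\lambda,E})$, whose $E_1$-page is built from the strata $H^\bullet(\ppBSC,\tM_{\lambda,E})$ for the standard maximal parabolics $P = P_i$. By Prop.\ \ref{prop:bdry-coh-2}, each such stratum's cohomology is an algebraic induction from $\pi_0(P(\R))\times P(\A_f)$ of $H^{\bullet - l(w)}(\uSMP,\tM_{w\cdot\lambda,E})$ over $w \in W^P$. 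Since $\pi_f$ is cuspidal and $\GL_n$ satisfies multiplicity one and strong multiplicity one (Jacquet--Shalika, M\oe glin--Waldspurger), its associated Hecke character $\pi_f^\place$ at the unramified places is rigid; so $\pi_f$ can occur in an induced module $\aInd_P^G(\tau_f)$ only if the cuspidal support of $\tau_f$ (a tuple of representations of the $\GL$-factors of $M_P$) matches the cuspidal support of $\pi_f$ itself. But $\pi_f$ is cuspidal on $\GL_n$, so its cuspidal support is $\pi_f$ living on the full Levi $\GL_n$ — it cannot be the cuspidal support of any representation of a proper Levi $M_P$. This is the Satake-parameter/cuspidal-support obstruction that rules out $\pi_f$ at the level of the $E_1$-page.

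The subtlety is that the $E_1$-page contains more than the cuspidal cohomology of the Levis: the spaces $H^\bullet(\uSMP,\tM_{w\cdot\lambda,E})$ themselves contain residual Eisenstein contributions coming from proper parabolics of $M_P$. This is precisely where the inductive hypothesis, together with Thm.\ \ref{thm:cuspidal-vs-square-int} and the residual-cohomology analysis of \S\ref{sec:coh-res}, is needed. Concretely: for each proper maximal parabolic $P$, I would decompose $H^\bullet(\uSMP,\tM_{w\cdot\lambda,E})$ (working at a transcendental level via some $\iota : E \to \C$, which is harmless for computing Jordan--H\"older factors since the system is defined over $\Q$) into its cuspidal, residual, and genuinely-Eisenstein parts for the group $M_P$. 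For a cuspidal $\pi_f$ of $\GL_n$ to appear as a subquotient of the induced module, by strong multiplicity one its restriction $\pi_f^\place$ would have to be realized inside the corresponding restriction of one of these pieces; tracing cuspidal supports shows the only possibility would force $\pi_f$ to be a subquotient of a principal-series-type induction from the Borel, contradicting cuspidality (a cuspidal representation is never a constituent of such an induction, again by Jacquet--Shalika). The residual pieces are handled by Thm.\ \ref{thm:cuspidal-vs-boundary} applied inductively to the Levis $M = \GL_u \times \cdots$ appearing in \S\ref{sec:coh-res}: their cuspidal constituents do not leak into the boundary of $\SMp$, and their residual constituents are not cuspidal on any $\GL$-factor, so neither can match $\pi_f$.

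Having shown that $\pi_f$ does not appear in the $E_1$-page of the spectral sequence, it does not appear in the abutment either, since every Jordan--H\"older factor of a subquotient of the abutment is a Jordan--H\"older factor of $E_1$ (the spectral sequence is Hecke-equivariant and the Hecke action is semisimple on the relevant isotypic pieces by strong multiplicity one). This gives the claim. The main obstacle, as indicated, is the careful bookkeeping in the second paragraph: one must check that across \emph{all} maximal parabolics $P$, \emph{all} Kostant representatives $w \in W^P$, and \emph{all} the (cuspidal, residual, Eisenstein) constituents of the Levi cohomology $H^\bullet(\uSMP,\tM_{w\cdot\lambda,E})$, there is no way to reconstruct the unramified Hecke character of a cuspidal representation of the full group $\GL_n$. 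This amounts to a clean cuspidal-support argument, but it has to be organized so that the inductive hypothesis on the Levis is invoked correctly for the residual contributions; handling that interface is where the real work lies.
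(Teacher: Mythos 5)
Your proposal is correct and follows essentially the same route as the paper: reduce via the boundary spectral sequence and Kostant to the Levi cohomology of the strata, descend iteratively (using Thm.\,\ref{thm:cuspidal-vs-square-int} for the reductive quotients, which is exactly the induction the paper flags) until the isotypic piece on some Levi is cuspidal, and then derive the contradiction from the Jacquet--Shalika classification theorem, since a cuspidal $\pi_f$ cannot be almost everywhere equivalent to a representation induced from cuspidal data on a proper parabolic. The only cosmetic difference is that you phrase the descent as an induction on $n$ and restrict the $E_1$-page discussion to maximal parabolics, whereas the paper's $E_1$-page ranges over all standard parabolics and the descent is phrased as ``repeat the argument''; neither affects the substance.
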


\begin{proof} Let $\pi_f$ be cuspidal and if possible let 
  $V_{\pi_f}\subset H^\pkt(\partial\SGK,M_\lambda)(\pi_f) $ be an irreducible submodule. It    follows from the spectral sequence considered in Sect.\,\ref{sec:spectral-seq-arith} and Sect.\,\ref{sec:spectral-seq-trans} that it occurs a submodule in some $H^{q - l(w)}(\SMP, \tM_{w \cdot \lambda, E})^{\pi_0(K^{M_P}_\infty)}$ and if we restrict even further we may assume that it occurs in the inner cohomology and hence in some
   $H_!^{q - l(w)}(\SMP, \tM_{w \cdot \lambda, E})^{\pi_0(K^{M_P}_\infty)}(\sigma_f).$ If the $\sigma_f$ is not cuspidal then 
  $ H^{q - l(w)}(\SMP, \tM_{w \cdot \lambda, E})^{\pi_0(K^{M_P}_\infty)}(\sigma_f) $ contains a copy of $V_{\sigma_f}$
  which survives if we restrict it to the boundary $\partial\SMP$ by 
  Thm.\,\ref{thm:cuspidal-vs-square-int} above. We can repeat our argument
  until we find it as a submodule in the cuspidal cohomology of a boundary stratum, i.e., we may assume our $\sigma_f$
  is cuspidal.  But since $\pi_f$ is cuspidal it cannot be a submodule  of the finite part
of  a representation parabolically induced from a cuspidal representation of a proper parabolic subgroup; such a representation cannot be almost everywhere equivalent to a cuspidal representation by the well-known classification theorem of Jacquet and Shalika \cite[Thm.\,4.4]{jacquet-shalika-II}. 
\end{proof}

\bigskip
\section{\bf The strongly inner spectrum and applications}
\label{sec:coh-!!-periods}

\medskip
\subsection{The strongly inner spectrum: definition and properties}
\label{sec:coh-!!}

\medskip
\subsubsection{\bf Identifying the cuspidal spectrum within the inner cohomology }
\label{sec:cuspidal-inside-inner}
We return to the arithmetic context, i.e., our coefficient systems are vector spaces over $E.$  
Consider the isotypical decomposition
  $$
  H^\pkt_!(\SGK, \M_{\lambda, E}) \ = \ \bigoplus_{\pi_f\in \Coh_!(G,\lambda,K_f)}  H^\pkt_!(\SGK, \M_{\lambda, E})(\pi_f). 
    $$ 
For an embedding $\iota: E\to \C$, Thm.\,\ref{thm:cuspidal-vs-boundary} above says that the isomorphism type
${}^\iota\pi_f$ is cuspidal if and only if $H^q_!( \SGK, \tM_{{}^\iota\!\lambda})({}^\iota\pi_f) = 
H^q( \SGK,\M_{{}^\iota\!\lambda})({}^\iota\pi_f).$
This motivates us to define the set 
 $$
 \Coh_{!!}(G,\lambda,K_f) \ = \ 
 \{\pi_f\in \Coh_{!}(G,\lambda,K_f) \ \vert \  
 H^\pkt_!(\SGK, \tM_{\lambda,E} )(  \pi_f) =  H^\pkt (\SGK, \tM_{\lambda,E} )(  \pi_f)\}
 $$
 and then define the {\it strongly inner cohomology} as
 \begin{align}
\label{eqn:strongly-inner}
H_{!!}(\SGK, \tM_{\lambda,E}) \ := \ 
\bigoplus_{\pi_f\in  \Coh_{!!}(G,\lambda,K_f)}H^\pkt (\SGK, \tM_{\lambda,E})(  \pi_f). 
\end{align}
Then we get an arithmetic characterization of the cuspidal cohomology
  \begin{align}
 H^\bullet_{!!}(\SGK, \tM_{\lambda, E}) \otimes_{E,\iota} \C \ = \ H^\bullet_{\rm cusp}(\SGK, \tM_{{}^\iota\!\lambda, \C}). 
\end{align}

\medskip
\subsubsection{\bf Cuspidality of $\pi_f$ in terms of estimates on the Satake parameters} 
Let $\pi_f \in \Coh_{!}(G,\lambda,K_f).$ Let $\place$ be a finite set of places including all the archimedean places such that for any $\p \notin \place$, we have $K_\p = \GL_n(\cO_\p).$ Consider the 
local factor $\pi_\p$ at $\p,$ which is an algebra homomorphism $ \pi_\p : \HH_{K_\p} \to E$ 
of the local Hecke algebra 
$\HH_{K_\p}= \cC^\infty_c(\GL_n(F_\p)/\!\!/\GL_n(\cO_\p)).$ 
Let $\varpi_\p$ be a uniformizer at $\p$ and $q_\p$ the cardinality of the residue field $\cO_\p/\p\cO_\p.$
The Satake isomorphism says that $\pi_\p$ 
is  obtained from a homomorphism  $\vartheta_\p :T(F_\p)   \to  \bar E^\times $, which is unique modulo
the action of the Weyl group.  For $1 \leq i \leq n$, we have the cocharacter $\eta_i :\Gm  \to T_0 $ given by 
the diagonal matrix $ \eta_i(t) = {\rm diag}(1, 1, \dots, t, \dots , 1)$, 
where the $t$ is in the $i$-th place. Then $\vartheta_\fp $ is determined  by the values
 $\vartheta_{i, \p}=\vartheta_\fp (\eta_i(\varpi_\p)). $  The {\it Satake parameter} of $\pi_\p$ is 
the diagonal matrix 
$$ 
\vartheta_{\p}(\pi_\fp)=\vartheta_{\p}= 
\begin{pmatrix} 
\vartheta_{1,\fp} & & & & \cr
& \ddots & & &  \cr
 & & \vartheta_{i, \p} & & \cr
 & && \ddots& \cr
 & &   &  &\vartheta_{n, \p}
 \end{pmatrix} 
$$ 
modulo conjugation by the Weyl group. The parameters $\vartheta_{i, \p}$, for a fixed $\p$, lie in some finite algebraic extension $\tilde  E /E$ of $E$. Let $\bar{E}$ be an algebraic closure of $E.$ 
Cuspidality of $\pi_f$ can be recognized by the size of the Satake parameters: 

\begin{prop} 
\label{prop:cuspidality-satake}
Let $\pi_f\in \Coh_{!}(G,\lambda,K_f)$ and $\iota: E\to \C.$ The following are equivalent: 
\begin{enumerate}
\item ${}^\iota\pi_f$ is cuspidal.
\item For all $\p\not\in S,$ any  $\bar \iota: \bar E \to \C$ extending $\iota : E \to \C$, and any $i$ we have 
$$  q_\p^{d-\frac{1}{2}} \ < \ \vert \bar\iota(\vartheta_{i,\fp}) \vert  \ < \ q_\p^{d+\frac{1}{2}}. $$
\end{enumerate}
\end{prop}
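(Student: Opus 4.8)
The plan is to deduce both implications from the characterization of cuspidality via boundary cohomology (Thm.~\ref{thm:cuspidal-vs-boundary}) together with Kostant's description of the boundary (Prop.~\ref{prop:bdry-coh-2}) and the purity numerology for $\lambda$. Throughout, fix $\bar\iota : \bar E \to \C$ extending $\iota$, and let $^\iota\pi_p$ have Satake parameter $\vartheta_\p = (\vartheta_{1,\p},\dots,\vartheta_{n,\p})$ (modulo the Weyl group). The point is that the complex absolute values $|\bar\iota(\vartheta_{i,\p})|$ are exactly the Satake parameters of the archimedean-normalized automorphic representation $^\iota\pi = {}^\iota\pi_\infty \otimes {}^\iota\pi_f \in \Coh_{(2)}(G,\lambda,K_f)$ attached to $^\iota\pi_f$ by Borel--Garland, and by the Mœglin--Waldspurger description of the discrete spectrum of $\GL_n$ such a $^\iota\pi$ is either cuspidal or a residual (Speh-type) representation induced from a cuspidal segment on a proper Levi $\GL_u \times \cdots \times \GL_u$.

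For the implication (1)$\Rightarrow$(2): if $^\iota\pi_f$ is cuspidal, then by the generalized Ramanujan--Petersson bounds that are \emph{known} for cohomological cuspidal representations of $\GL_n$ over a totally real field — one uses that such $^\iota\pi$ is, up to the central twist $|\det|^{-nd}$ recorded in \S\ref{sec:epsilon}, unitary cuspidal, hence its local unramified components at $\p \notin \place$ are tempered after unitary normalization (this is Jacquet--Shalika's bound $q_\p^{-1/2} < |\bar\iota(\vartheta_{i,\p})| q_\p^{-d} < q_\p^{1/2}$, the non-trivial ingredient but purely a citation) — we get precisely $q_\p^{d-1/2} < |\bar\iota(\vartheta_{i,\p})| < q_\p^{d+1/2}$. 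I would phrase this carefully: the cohomological normalization shifts the unitary center of symmetry by $d$, which is why the inequalities are centered at $q_\p^d$ rather than at $1$; the purity numerology of \S\ref{sec:pure}--\S\ref{sec:motivic-weight} is what pins down this shift.

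For the converse (2)$\Rightarrow$(1): argue by contraposition. If $^\iota\pi_f$ is \emph{not} cuspidal, then by the residual-spectrum decomposition recorded after Thm.~\ref{thm:cuspidal-vs-square-int}, $^\iota\pi$ is a Speh representation built from a cuspidal segment $\sigma, \sigma\otimes|\det|, \dots, \sigma\otimes|\det|^{u-1}$ on a Levi $M_0 = \GL_v \times \cdots \times \GL_v$ with $u | n$, $u > 1$ (after the global twist by $|\det|$ dictated by \eqref{dminusd}). At an unramified place $\p \notin \place$, the Satake parameters of $^\iota\pi_\p$ are then the union, over the $u$ blocks, of those of $\sigma_\p$ scaled by $q_\p^{0}, q_\p^{-1}, \dots, q_\p^{-(u-1)}$ (up to the normalization convention). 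Since $\sigma_\p$ is unramified unitary cuspidal on $\GL_v$, its own parameters satisfy the Jacquet--Shalika bound, and the extreme scalings $q_\p^{0}$ and $q_\p^{-(u-1)}$ with $u \geq 2$ force at least one $|\bar\iota(\vartheta_{i,\p})|$ to fall outside the open annulus $(q_\p^{d-1/2}, q_\p^{d+1/2})$ — indeed the spread of exponents has width $u-1 \geq 1 > 1/2 + 1/2 - 1 = 0$ once one accounts correctly for the $d$-shift, so for suitable $i$ the estimate in (2) fails. Hence (2) implies cuspidality.

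The main obstacle is bookkeeping the normalizations: one must track precisely how the cohomological (arithmetic) normalization of the Hecke action on $H^\bullet(\SGK,\tM_{\lambda,E})$ relates to the unitary automorphic normalization, so that the center of the annulus comes out as $q_\p^d$ and the half-width as $q_\p^{1/2}$; this is exactly where the parity relations \eqref{eqn:parity}, \eqref{eqn:2d} and the formula $\omega_\lambda(x) = N_{F/\Q}(x)^d$ of \S\ref{sec:sheaves} enter. Once the normalization is fixed, (1)$\Rightarrow$(2) is the Jacquet--Shalika bound applied to a cuspidal datum, and (2)$\Rightarrow$(1) is an elementary estimate on the exponents appearing in a Speh representation of length $u \geq 2$.
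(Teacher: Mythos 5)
Your proof is correct and follows essentially the same route as the paper: Jacquet--Shalika's bound for the unitary twist ${}^\iota\pi_f\otimes|\ |^{d}$ gives (1)$\Rightarrow$(2), and for the converse one uses Borel--Garland plus the M\oe glin--Waldspurger classification to write a non-cuspidal ${}^\iota\pi_f\otimes|\ |^{d}$ as the Speh quotient of a segment of length $\geq 2$ and then inspects the resulting block-diagonal Satake parameter. The only wobble is your closing justification ``$u-1\geq 1 > 1/2+1/2-1=0$'', which does not by itself establish the claim; the clean finish is either the paper's dichotomy (if some $|\vartheta_{j,\p}|<1$ the lowest block drops below $q_\p^{d-1/2}$, otherwise the highest block exceeds $q_\p^{d+1/2}$) or the observation that a fixed $\vartheta_{j,\p}$ appears scaled by both extreme exponents, whose ratio $q_\p^{\,u-1}\geq q_\p$ equals or exceeds the multiplicative width of the open annulus, so both cannot lie strictly inside.
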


\begin{proof}
(1)$\implies$(2) follows from the well-known estimates, due to Jacquet and Shalika \cite[Cor.\,2.5]{jacquet-shalika-I}, for the Satake parameters of a 
{\it unitary} cuspidal automorphic representation of $\GL_n$ over a number field; note that ${}^\iota\pi_f \otimes |\ |^{d}$ would be the finite part of a unitary cuspidal representation. 

For (2)$\implies$(1), we prove the contrapositive statement as follows: Since $\pi_f \in \Coh_!$, we know that 
${}^\iota\pi_f \in \Coh_{(2)}$; hence from the discussion in Sect.\,\ref{sec:borel-garland}, 
${}^\iota\pi_f \otimes |\ |^d$ is the finite part of an automorphic representation appearing in the discrete spectrum. Suppose ${}^\iota\pi_f$ is not cuspidal then  
by M\oe glin-Waldspurger \cite{moeglin-waldspurger}, there exists $a,b$ with $n=ab$ and $b > 1$, 
a unitary cuspidal representation $\sigma$ of $R_{F/\Q}(\GL_a/F)$ such that ${}^\iota\pi_f \otimes |\ |^d$ is the finite part of the unique irreducible quotient of the representation of $G(\A)$ parabolically induced from 
$$
\sigma[(b-1)/2] \times \sigma[(b-3)/2] \times \cdots \times \sigma[-(b-1)/2].
$$
(Here, $\sigma[s] := \sigma \otimes |{\rm det}|^s.$) For $\p \notin \place$, let the Satake parameter of $\sigma_\p$ be the diagonal matrix 
$\vartheta_\p(\sigma_\p) = {\rm diag}(\vartheta_{1,\fp}, \dots, \vartheta_{a, \p}).$
Since $\sigma$ is unitary cuspidal, by Jacquet and Shalika {\it loc.\,cit.}\,, we have 
$q_\p^{-\frac{1}{2}} < \vert \vartheta_{i,\fp} \vert  <  q_\p^{\frac{1}{2}}.$
Then the Satake parameter for ${}^\iota\pi_f$ is a block-diagonal matrix of the form: 
$$
\begin{pmatrix} 
\vartheta_\p(\sigma_\p) q_\p^{-(b-1)/2+d}& &  & \cr
& \vartheta_\p(\sigma_\p) q_\p^{-(b-3)/2+d} & &  \cr
& & \ddots &   \cr
 & &  & \vartheta_\p(\sigma_\p) q_\p^{(b-1)/2+d}
 \end{pmatrix}.  
$$
If for some $j$ we have $|\vartheta_{j,\p}| < 1$ then for the parameter at the $j$-th place in first block above, we have the inequality: 
$$
\left|\vartheta_{j,\p}  \, q_\p^{-(b-1)/2+d} \right| \ < \  q_\p^{-(b-1)/2+d} \ < \ q_\p^{-1/2+d}.  
$$
On the other hand, if for every $j$ we have $|\vartheta_{j,\p}| \geq 1$, then for any of the parameters in the last block we have: 
$$
\left|\vartheta_{j,\p}  \, q_\p^{(b-1)/2+d} \right| \ \geq  \  q_\p^{(b-1)/2+d} \ \geq \ q_\p^{1/2+d}.
$$
Hence the estimates in (2) are always violated. 
\end{proof}

\medskip
\subsubsection{\bf Cuspidal cohomology for $\Gl_n$}
Consider the cohomology group $H^\bullet_?(\SGK, \tM_\lambda),$
where $? = \{c, !, \mbox{empty}, \partial\}.$ 
Define $\Coh(H^\pkt_?(\SGK, \tM_\lambda))$ to be the set of isomorphism classes of absolutely irreducible 
$\HK$-modules which occur in a Jordan-H\"older series; this definition makes sense even if the module is not semi-simple. 
For an isomorphism class $\pi_f$ let  $\pi_f^{(S)}$ be the restriction
to the central subalgebra $\HH_{K_f}^{(S)}.$ The various criteria for $\pi_f$ to be cuspidal are collected together in the following 

\begin{thm} 
\label{thm:cuspidal-cohomology}
Let $\lambda \in X_0^*(T)$ be a pure weight and $\pi_f\in \Coh(H^\pkt_!(\SGK, \M_{\lambda,E})) = 
\Coh_!(G,\lambda,K_f).$ 
The following are equivalent: 
\begin{enumerate}
\smallskip  
\item  There exists  $\iota : E \to \C$  such that ${}^\iota\pi_f$ is cuspidal. 

\smallskip  
  
\item For all $\iota : E \to \C$,   ${}^\iota\pi_f$ is cuspidal. 

\smallskip  
  
  \item The isomorphism type $\pi_f^{(S)}$ does not occur in
  $\Coh(H^\pkt(\partial\SGK, \tM_{\lambda, E})).$
\smallskip  
  
 \item We have $H^\bullet(\SGK,\tM_{\lambda, E})(\pi_f)\neq (0)$ if and only if $b_n^F \leq  \bullet \leq  t_n^F.$ 

\smallskip  
  
  \item  There exists an $\iota : E \to \C$  such that  
  $(\D_{{}^\iota\!\lambda} \otimes \varepsilon_\infty({}^\iota\!\pi_f)) \otimes {}^\iota\pi_f$ is automorphic; 
  see (\ref{eqn:epsilon-infinity}). 
  
\smallskip    
  
 \item For $\iota : E \to \C$ and any  $\bar \iota: \bar E \to \C$ extending $\iota$, the  Satake parameters satisfy the estimates 
 $$  
 q_\p^{d - \frac{1}{2}} \ < \ \vert  \bar\iota(\vartheta_{i,\fp}) \vert  \ < \  q_\p^{d+\frac{1}{2}}.
 $$
 \end{enumerate} 
\end{thm}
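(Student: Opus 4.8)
The plan is to prove the cyclic chain of implications $(1)\Rightarrow(5)\Rightarrow(4)\Rightarrow(3)\Rightarrow(2)\Rightarrow(6)\Rightarrow(1)$, drawing on the structural results already assembled. First I would observe that $(1)\Leftrightarrow(6)$ is exactly Prop.~\ref{prop:cuspidality-satake}, so the real content is to weave the remaining conditions into the loop. For $(1)\Rightarrow(5)$: if ${}^\iota\pi_f$ is cuspidal and contributes to inner cohomology with $\M_{{}^\iota\!\lambda}$-coefficients, then by the description of the cohomological cuspidal spectrum in \S\ref{sec:w-cusp-pi}--\S\ref{sec:epsilon}, the representation at infinity of the associated cuspidal automorphic representation must be generic, hence isomorphic to $\D_{{}^\iota\!\lambda}\otimes\varepsilon_\infty({}^\iota\!\pi_f)$ by (\ref{eqn:epsilon-infinity}); so $(\D_{{}^\iota\!\lambda}\otimes\varepsilon_\infty({}^\iota\!\pi_f))\otimes{}^\iota\pi_f$ is automorphic (indeed cuspidal). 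Conversely $(5)\Rightarrow(1)$ would also be quick since $\D_{{}^\iota\!\lambda}\otimes\varepsilon_\infty$ is tempered (Prop.~\ref{prop:d-lambda}(1)) and has nonzero $(\g,K_\infty^0)$-cohomology (Prop.~\ref{prop:d-lambda}(3)), forcing the global representation into the discrete spectrum with cohomology, but temperedness at infinity rules out the residual (Speh) contributions classified via $\Aql$ with $v>1$ in \S\ref{sec:coh-res}; alternatively route through $(5)\Rightarrow(4)$.

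Next, $(1)\Rightarrow(4)$ and $(4)\Rightarrow(1)$: if ${}^\iota\pi_f$ is cuspidal then by the arithmetic characterization $H^\bullet_{!!}(\SGK,\tM_{\lambda,E})(\pi_f)=H^\bullet(\SGK,\tM_{\lambda,E})(\pi_f)$ from \S\ref{sec:cuspidal-inside-inner}, and cuspidal cohomology (equivalently, the $(\g,K_\infty^0)$-cohomology of $\D_{{}^\iota\!\lambda}\otimes\varepsilon$) is nonzero exactly in degrees $b_n^F\le\bullet\le t_n^F$ by Prop.~\ref{prop:cohomology-degree} — note the range here is $[b_n^F,t_n^F]$ not $[b_n^F,\tilde t_n^F]$ because in the Manin--Drinfeld/cuspidal framework for $\GL_n/F$ over the arithmetic field the abelian degrees from $\z/\s$ either get absorbed or are excluded by the pure-weight bookkeeping; I would double-check this degree statement against Prop.~\ref{prop:cohomology-degree} and state it carefully. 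For $(4)\Rightarrow(1)$: if $\pi_f$ is not cuspidal, then by Thm.~\ref{thm:cuspidal-vs-square-int} and the residual decomposition in \S\ref{sec:coh-res}, $\pi_f$ comes from a genuine Speh-type ($v>1$) piece, whose cohomology sits in degree $q=v\lfloor u^2/4\rfloor+n(u-1)(v-1)/4<b_n^F$ by the computation following Thm.~\ref{thm:lowest-degree}, so $H^\bullet(\SGK,\tM_{\lambda,E})(\pi_f)$ is nonzero below $b_n^F$, contradicting $(4)$.

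For the boundary condition $(3)$: $(1)\Rightarrow(3)$ is Thm.~\ref{thm:cuspidal-vs-boundary}, strengthened to the level of $\pi_f^{(S)}$ using strong multiplicity one for $\GL_n/F$ (Jacquet--Shalika) as recalled in \S\ref{sec:w-2}, so that the restriction $\pi_f^{(S)}$ already determines $\pi_f$ and the nonoccurrence in $\Coh(H^\bullet(\partial\SGK,\tM_{\lambda,E}))$ follows. For $(3)\Rightarrow(2)$: if $\pi_f^{(S)}$ does not occur in boundary cohomology, then in the long exact sequence of \S\ref{sec:long-e-seq} the $\pi_f^{(S)}$-isotypic part of $H^\bullet_c$ maps isomorphically onto that of $H^\bullet$, so $H^\bullet_!(\pi_f)=H^\bullet(\pi_f)$, which by the characterization (\ref{eqn:cuspidal-claim})/\S\ref{sec:cuspidal-inside-inner} means ${}^\iota\pi_f$ is cuspidal for every $\iota$; this also uses that the whole package $\{H^\bullet_?(\SGK,\tM_{{}^\eta\lambda,E})\}$ is defined over $\Q$ (\S\ref{sec:change-field-E}), so cuspidality is Galois-stable, giving $(2)$, and trivially $(2)\Rightarrow(1)$. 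I would then close with $(2)\Rightarrow(6)$ via Prop.~\ref{prop:cuspidality-satake} applied to every $\iota$.

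The main obstacle I anticipate is pinning down the exact cohomological degree bounds and, more subtly, upgrading ``$\pi_f$ does not occur in boundary cohomology'' to the level of the restriction $\pi_f^{(S)}$ to $\HH^{(S)}_{K_f}$: one must know that a cuspidal $\pi_f^{(S)}$ cannot coincide almost everywhere with any Hecke module appearing in $\Coh(H^\bullet(\partial\SGK,\tM_{\lambda,E}))$, i.e.\ with the finite part of anything parabolically induced from cuspidal data on a proper Levi; this is precisely the classification theorem of Jacquet--Shalika \cite[Thm.~4.4]{jacquet-shalika-II} invoked in the proof of Thm.~\ref{thm:cuspidal-vs-boundary}, but applying it uniformly across all the strata $\partial_P$ and all Kostant translates $w\cdot\lambda$, while keeping track of which characters $\varepsilon$ of $\pi_0$ survive the induction (Rem.~\ref{rem:ind-infinity}), requires care. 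Everything else is assembling cited results; no genuinely new argument is needed beyond this bookkeeping.
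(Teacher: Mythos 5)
Your assembly is essentially the proof the paper intends: the theorem is presented as a compilation, and every link you use --- Prop.~\ref{prop:cuspidality-satake} for the Satake estimates, Thm.~\ref{thm:cuspidal-vs-square-int} and Thm.~\ref{thm:cuspidal-vs-boundary} for the boundary condition, the residual computation after Thm.~\ref{thm:lowest-degree} for classes below $b_n^F$, and Jacquet--Shalika \cite[Thm.\,4.4]{jacquet-shalika-II} for the upgrade to the level of $\pi_f^{(S)}$ --- is exactly what the paper has already set up. One genuinely different (and worthwhile) feature: you obtain $(1)\Leftrightarrow(2)$ internally from the loop $(1)\Rightarrow(3)\Rightarrow(2)$, whereas the paper simply cites Clozel for that equivalence; your route is legitimate because Thm.~\ref{thm:cuspidal-vs-boundary} needs cuspidality for only one $\iota$ while condition $(3)$ is $\iota$-independent, and there is no circularity since the proof of Thm.~\ref{thm:cuspidal-vs-boundary} does not invoke Clozel. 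Your $(5)\Rightarrow(1)$ is stated too loosely, though: ``automorphic'' is strictly broader than ``discrete spectrum,'' so ``temperedness at infinity rules out the residual contributions'' must be backed up by combining $\pi_f\in\Coh_!\subset\Coh_{(2)}$ with M\oe glin--Waldspurger and a comparison of cuspidal supports (or simply with the Satake estimates of Prop.~\ref{prop:cuspidality-satake}) to exclude the Speh alternative.

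The one real error is your treatment of the top degree in $(4)$. You assert the range is $[b_n^F,t_n^F]$ and explain the tension with Prop.~\ref{prop:cohomology-degree} by saying the $\z/\s$-degrees ``get absorbed or are excluded by the pure-weight bookkeeping.'' No such mechanism exists: by Borel's theorem the $\pi_f$-isotypic component of cuspidal cohomology is $W^{\rm cusp}_{\pi_\infty\otimes\pi_f}\otimes H^\bullet(\g,K_\infty^0;(\D_{{}^\iota\!\lambda}\otimes\varepsilon)\otimes\M_{{}^\iota\!\lambda})\otimes V_{\pi_f}$, and Prop.~\ref{prop:cohomology-degree} says this is nonzero precisely for $b_n^F\le\bullet\le\tilde t_n^F=t_n^F+{\sf r}_F-1$; the extra ${\sf r}_F-1$ degrees coming from $\Lambda^\bullet(\z/\s)$ genuinely survive in $H^\bullet(\SGK,\tM_{\lambda,E})$ (they correspond to the compact torus built from the units of $F$). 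So for ${\sf r}_F>1$ a cuspidal $\pi_f$ has nonzero cohomology in degree $\tilde t_n^F>t_n^F$, and the upper bound in $(4)$ must be read as $\tilde t_n^F$ --- consistent with the extremal degrees $b_n^F$ and $\tilde t_n^F$ used everywhere else in the paper, e.g.\ in the construction of the relative periods. With that correction, your $(1)\Rightarrow(4)$ and your $(4)\Rightarrow(1)$ (nonvanishing of the full isotypic component in a degree $q<b_n^F$ for residual $\pi_f$, via the injectivity (\ref{inj-low-degree}) into global cohomology) are both sound.
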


The equivalence of $(1)$ and $(2)$ is a theorem of Clozel \cite{clozel}.

 \medskip
\subsubsection{\bf A Manin--Drinfeld principle for global cohomology}    
We consider the global cohomology 
$H^\bullet(\SGK,\tM_{\lambda, E})$ as a module under the commutative Hecke $\HH_{K_f}^{(S)}.$ Then 
the linear algebra recalled in Sect.\,\ref{sec:lin-alg} gives that strongly inner cohomology splits off within global cohomology, 
i.e., we have a canonical decomposition
\begin{align}
\label{eqn:manin-drinfeld-easy}
H^\bullet(\SGK,\tM_{\lambda, E}) \ = \ 
H_{!!}^\bullet(\SGK, \tM_{\lambda, E}) \oplus H^\pkt_{\rm Eis}(\SGK, \tM_{\lambda, E}), 
\end{align} 
which defines the Eisenstein cohomology denoted $ H^\pkt_{\rm Eis}(\SGK, \tM_{\lambda, E})$. Such a decomposition, and a finer one at that, is also given by Franke--Schwermer~\cite{franke-schwermer}; but their decomposition is only given at a transcendental level.

\medskip
\subsection{Definition of the relative periods}
\label{sec:rel-periods}
In this Sect.\,\ref{sec:rel-periods} we will only consider the group $G = R_{F/\Q}(\GL_n/F)$ when $n$ is an 
 even positive integer.   
The purpose of this section then is to define and analyze the relative periods $\Omega^\varepsilon({}^\iota\!\pi_f) \in \C^\times$, where $\lambda = (\lambda^\tau)_{\tau : F \to E}$ is a pure  weight, 
$\pi_f \in {\rm Coh}_{!!}(G,K_f,\lambda)$ for some level structure $K_f,$ $\iota : E \to \C $ and 
$\varepsilon = (\varepsilon_v)_{v \in \place_\infty}$ is a character of $\pi_0(G(\R))$.

\medskip
\subsubsection{\bf The arithmetic identification}
Consider inner cohomology in bottom degree $b_n^F$, and take the field $E$ large enough so that $\pi_f$ appears paired with every possible sign character $\varepsilon$; this is assured by (\ref{eqn:character-component}). 
Fix an $E$-linear isomorphism of $\HK$-modules
$$
T^\varepsilon_{\rm arith}(\lambda, \pi_f) \ : \ 
H^{b_n^F}_!(\SGK, \tM_{\lambda, E})(\pi_f \times \varepsilon) \ \longrightarrow \ 
H^{b_n^F}_!(\SGK, \tM_{\lambda, E})(\pi_f \times -\varepsilon), 
$$
which we call an arithmetic identification between these modules; they are both abstractly isomorphic to $\pi_f.$ 
The choice of $T^\varepsilon_{\rm arith}(\lambda, \pi_f)$ is well-defined up to $E(\pi_f)^\times$-multiplies. 
We may and will ask that these isomorphisms be compatibly chosen, i.e., for any $\iota : E \to E'$ we ask for the commutativity of
\begin{equation}
\label{eqn:t-arith}
\xymatrix{
H^{b_n^F}_!(\SGK, \tM_{\lambda, E})(\pi_f \times \varepsilon) \ar[rrr]^{T^\varepsilon_{\rm arith}(\lambda,\, \pi_f)} 
\ar[d]^{\iota^\bullet}
& & & H^{b_n^F}_!(\SGK, \tM_{\lambda, E})(\pi_f \times -\varepsilon) \ar[d]^{\iota^\bullet} \\ 
H^{b_n^F}_!(\SGK, \tM_{{}^\iota\!\lambda,\, E'})({}^\iota\pi_f \times \varepsilon) 
\ar[rrr]^{T^{\varepsilon}_{\rm arith}({}^\iota\!\lambda,\, {}^\iota\pi_f)}  
& & & H^{b_n^F}_!(\SGK, \tM_{{}^\iota\!\lambda,\, E')})({}^\iota\pi_f \times - \varepsilon) 
}
\end{equation}

\medskip
\subsubsection{\bf The transcendental identification}
\label{sec:T-trans}
Given the isomorphism type $\pi_f \in {\rm Coh}_{!!}(G,K_f,\lambda)$, it follows from 
from Thm.\,\ref{thm:cuspidal-cohomology} that for any $\iota : E \to \C$
there is a unique (up to isomorphism) irreducible admissible $(\g_\infty, K_\infty)$-module $\D_{{}^\iota\!\lambda}$ so that  $W^{\rm cusp}_{\D_{{}^\iota\!\lambda} \otimes {}^\iota\pi_f} \neq 0;$
see Sect.\,\ref{sec:w-cusp-pi}. 
Appealing to the multiplicity one result for cusp forms for $\GL_n/F$, fix a basis: \begin{equation}
\label{eqn:embed-into-cusp-forms}
W^{\rm cusp}_{\D_{{}^\iota\!\lambda} \otimes {}^\iota\pi_f} \ = \ \C \cdot \Phi(\lambda, \pi_f, \iota).
\end{equation}
 
\smallskip 
 
Define the map $\Phi^\varepsilon(\lambda, \pi_f, \iota)$ as the compositum of the three maps:

\begin{equation}
\begin{array}{ccl} 
V_{\pi_f} \otimes_{E,\iota}\C & \xrightarrow{\ \ \ \ \ \ \ \ \ \ } & 
\left(H^{b_n^F}\left(\g_\infty, K_\infty^0; \D_{{}^\iota\!\lambda} \otimes \M_{{}^\iota\!\lambda, \C} \right)(\varepsilon) \right)
\otimes V_{\pi_f} \otimes_{E,\iota}\C \\
& & \\
& \xrightarrow{\ \ \ \ \ \ \ \ \ \ } &  
H^{b_n^F}\left(\g_\infty, K_\infty^0; \, \left(\D_{{}^\iota\!\lambda} \otimes (V_{\pi_f} \otimes_{E,\iota} \C) \right) 
\otimes \M_{{}^\iota\!\lambda, \C} \right)(\varepsilon) \\
& & \\
& \xrightarrow{\Phi(\lambda, \pi_f, \iota)^\bullet} &
H^{b_n^F}\left(\g_\infty, K_\infty^0; \, 
\cC^\infty_{\rm cusp}\left(G(\Q)\backslash G(\A)/K_f,\, \omega_{\infty}^{-1}\right)
\otimes \M_{{}^\iota\!\lambda, \C}\right)(\varepsilon), 
 \end{array}
\end{equation}
where, 

\begin{enumerate}

\item[(i)] the first map is the isomorphism given by tensoring by 
$H^{b_n^F}\left(\g_\infty, K_\infty^0; \D_{{}^\iota\!\lambda} \otimes \M_{{}^\iota\!\lambda, \C} \right)(\varepsilon)$ which is a one-dimensional space; this map therefore depends on a choice of a basis for this one-dimensional space: 
$H^{b_n^F}\left(\g_\infty, K_\infty^0; \D_{{}^\iota\!\lambda} \otimes \M_{{}^\iota\!\lambda, \C} \right)(\varepsilon) = 
\C w^\varepsilon(\lambda);$ see Sect.\,\ref{para:interlude-basis} below for a special choice of $w^\varepsilon(\lambda)$;  

\item[(ii)] the second map is tautological; and 

\item[(iii)] the third is an injection, since there are only finitely-many irreducible 
$(\g_\infty, K_\infty) \times \HK$-summands in 
$\cC^\infty_{\rm cusp}\left(G(\Q)\backslash G(\A)/K_f,\, \omega_{\infty}^{-1}\right)$ 
and relative Lie algebra  cohomology commutes with direct sums. 
\end{enumerate}
Hence, $\Phi^\varepsilon(\lambda, \pi_f, \iota)$ gives an isomorphism from 
$V_{\pi_f} \otimes_{E,\iota}\C$ onto the image of $\Phi(\lambda, \pi_f, \iota)^\bullet$ and we denote this as: 
\begin{equation}
\label{eqn:Phi}
\Phi^\varepsilon(\lambda, \pi_f, \iota) \ : \ 
V_{\pi_f} \otimes_{E,\iota}\C  
\ \xrightarrow{ \ \ \ \approx \ \ \ } \ 
H^{b_n^F}_{\rm cusp}(\SGK, \tM_{{}^\iota\!\lambda, \C})({}^\iota\pi_f \times \varepsilon).
\end{equation}

\medskip

Now define 
\begin{equation}
T^\varepsilon_{\rm trans}({}^\iota\!\lambda, {}^\iota\!\pi_f) 
\ := \ 
\Phi^{-\varepsilon}(\lambda, \pi_f, \iota) \circ \Phi^\varepsilon(\lambda, \pi_f, \iota)^{-1}
\end{equation}
which is an isomorphism defined at a transcendental level, i.e., after going to $\C$ via $\iota$: 
$$
T^\varepsilon_{\rm trans}({}^\iota\!\lambda, {}^\iota\!\pi_f) \ : \ 
H^{b_n^F}_{\rm cusp}(\SGK, \M_{{}^\iota\!\lambda, \C})({}^\iota\pi_f \times \varepsilon) \ \longrightarrow \ 
H^{b_n^F}_{\rm cusp}(\SGK, \M_{{}^\iota\!\lambda, \C})({}^\iota\pi_f \times -\varepsilon).
$$

\medskip
\paragraph{\bf Interlude on $T^\varepsilon_{\rm trans}({}^\iota\!\lambda, {}^\iota\!\pi_f)$ being independent of choice of basis 
$w^\varepsilon(\lambda)$}
\label{para:interlude-basis}
We will now discuss the choice of basis $w^\varepsilon({}^\iota\!\lambda).$ Suppose, as before, 
${}^\iota\!\lambda = ({}^\iota\!\lambda^\tau)_{\tau : F \to E} = (\lambda^\nu)_{\nu : F \to \C}$; here $\nu = \iota \circ \tau.$
Since $n$ is even, the restriction to $\GL_n(\R)^0$ of $\D_{\lambda^\nu}$ breaks up as 
$\D_{\lambda^\nu} = \D_{\lambda^\nu}^+ \oplus \D_{\lambda^\nu}^-, $
and the nontrivial element of $\pi_0(\GL_n(\R))$ switches these two summands. Implicit in the proof of 
Prop.\,\ref{prop:cohomology-degree}, is the fact that  
$H^{b_n}(\gl_n,  \SO(n)\R^\times_+; \, \D_{\lambda^\nu}^+ \otimes \M_{\lambda^\nu})$ is one-dimensional, and we fix a basis for this space, say $w^+(\lambda^\nu).$ Apply K\"unneth theorem, to get that 
$w^{+\!+}({}^\iota\lambda) = \otimes_\nu \, w^+(\lambda^\nu)$
generates the one-dimensional space 
$H^{b_n^F}(\g, K_\infty^0; \,  \D_{{}^\iota\lambda}^{+\!+} \otimes \M_{{}^\iota\lambda}),$ 
where $\D_{{}^\iota\lambda}^{+\!+} = \otimes_\nu \D_{\lambda^\nu}^+.$
For any character $\varepsilon$ of $\pi_0(G_n(\R))$, our chosen basis element for  
$H^{b_n^F}\left(\g, K_\infty^0; \, \D_{{}^\iota\!\lambda} \otimes \M_{{}^\iota\!\lambda, \C} \right)(\varepsilon)$ is given by: 
\begin{equation}
\label{eqn:basis-w-epsilon-lambda}
w^\varepsilon({}^\iota\lambda) \ = \   
\sum_{a \in \pi_0(G_n(\R))} \varepsilon(a) a \cdot w^{+\!+}({}^\iota\lambda). 
\end{equation}
Consider the linear map of one-dimensional spaces 
\begin{equation}
\label{eqn:switch-infinity}
T^\varepsilon({}^\iota\lambda) : \C w^\varepsilon({}^\iota\lambda) \ \longrightarrow \ 
\C w^{-\varepsilon}({}^\iota\lambda) 
\end{equation}
defined by 
\begin{equation}
\label{eqn:correct-power-i}
T^\varepsilon({}^\iota\lambda)(w^\varepsilon({}^\iota\lambda)) \ = \ i^{{\sf r}n/2}w^{-\varepsilon}({}^\iota\lambda).
\end{equation}
It is clear that $T^\varepsilon({}^\iota\lambda)$ is independent of the choice of basis  
$w^{+\!+}({}^\iota\lambda);$ because, if we change $w^{+\!+}({}^\iota\lambda)$
by a nonzero scalar, then from (\ref{eqn:basis-w-epsilon-lambda}), 
the same scalar appears in both $w^{\pm \varepsilon}({}^\iota\lambda)$.  The scaling factor of $i^{{\sf r}n/2}$ in the right hand side of (\ref{eqn:correct-power-i}) makes $T^\varepsilon({}^\iota\lambda)$ defined over $\Q$; see \cite{harder-arithmetic}. Since the transcendental identification may be parsed as: 
 $$
 \xymatrix{
 &  V_{\pi_f} \otimes_{E,\iota}\C w^\varepsilon({}^\iota\lambda) \ar[dd]^{1 \otimes T^\varepsilon({}^\iota\lambda)} \ar[rr]
& & H^{b_n^F}_{\rm cusp}(\SGK, \M_{{}^\iota\!\lambda, \C})({}^\iota\pi_f \times \varepsilon) 
  \ar[dd]^{T^\varepsilon_{\rm trans}({}^\iota\!\lambda, {}^\iota\!\pi_f)} \\ 
  V_{\pi_f} \otimes_{E,\iota}\C \ar[ru] \ar[rd] & & & \\
 &  V_{\pi_f} \otimes_{E,\iota}\C w^{-\varepsilon}({}^\iota\lambda) \ar[rr] 
& & H^{b_n^F}_{\rm cusp}(\SGK, \M_{{}^\iota\!\lambda, \C})({}^\iota\pi_f \times -\varepsilon), 
 }
 $$
 we deduce that $T^\varepsilon_{\rm trans}({}^\iota\!\lambda, {}^\iota\!\pi_f)$ is independent of any choice of basis elements.

\medskip
\subsubsection{\bf The relative periods}
 Consider the diagram of irreducible $\HK \times \pi_0(G(\R))$-modules$/\C$: 
 $$
 \xymatrix{
 &  H^{b_n^F}_{\rm cusp}(\SGK, \M_{{}^\iota\!\lambda, \C})({}^\iota\pi_f \times \varepsilon) 
 \ar[dd]^{T^\varepsilon_{\rm trans}({}^\iota\!\lambda, {}^\iota\!\pi_f)} \ar[rr]^{\approx}
 & & H^{b_n^F}_!(\SGK, \tM_{\lambda, E})(\pi_f \times \varepsilon) \otimes_{E,\iota} \C 
 \ar[dd]^{T^\varepsilon_{\rm arith}(\lambda,\pi_f) \otimes_{E,\iota} {\bf 1}} \\
V_{\pi_f} \otimes_{E,\iota}\C \ar[ru] \ar[rd] & & & \\
 &  H^{b_n^F}_{\rm cusp}(\SGK, \M_{{}^\iota\!\lambda, \C})({}^\iota\pi_f \times -\varepsilon)  \ar[rr]^{\approx}
 & & H^{b_n^F}_!(\SGK, \tM_{\lambda, E})(\pi_f \times -\varepsilon) \otimes_{E,\iota} \C  
} $$
 
\smallskip 
 
\begin{defn}
\label{defn:relative-periods}
There exists $\Omega^\varepsilon({}^\iota\!\lambda, {}^\iota\!\pi_f) \in \C^\times$ such that 
$$
\Omega^\varepsilon({}^\iota\!\lambda, {}^\iota\!\pi_f) \, T^\varepsilon_{\rm trans}({}^\iota\!\lambda, {}^\iota\!\pi_f) \ = \ 
T^\varepsilon_{\rm arith}(\lambda,\pi_f) \otimes_{E,\iota} {\bf 1}. 
$$
If we change the choice of $T^\varepsilon_{\rm arith}(\lambda,\pi_f)$ to 
$\alpha \, T^\varepsilon_{\rm arith}(\lambda,\pi_f)$ for an $\alpha \in E(\pi_f)^\times$ then 
$\Omega^\varepsilon({}^\iota\!\lambda, {}^\iota\!\pi_f)$ changes to $\iota(\alpha) \, \Omega^\varepsilon({}^\iota\!\lambda, {}^\iota\!\pi_f),$ i.e.,  
we have an array of nonzero complex numbers 
$$
\{\dots, \Omega^\varepsilon({}^\iota\!\lambda, {}^\iota\!\pi_f), \dots \}_{\iota : E \to \C}
$$
well-defined up to multiplication by $E(\pi_f)^\times.$ 

In other words, we have defined a period 
$\Omega^\varepsilon(\lambda, \pi_f) \in (E \otimes \C)^\times/E(\pi_f)^\times.$ Sometimes, we suppress the $\lambda$ in the notation, and will just write $\Omega^\varepsilon(\pi_f)$.
\end{defn}

The reader is referred to Raghuram~\cite{raghuram-comparison} to see how the relative periods 
$\Omega^\varepsilon({}^\iota\!\pi_f)$ are related to other periods attached to ${}^\iota\!\pi_f$ which have played an 
important role in recent results on the special values of $L$-functions.

\medskip
\subsubsection{\bf Period relations under Tate twists}
\label{sec:T-Tate}
The periods $\Omega^\varepsilon({}^\iota\!\pi_f)$ have a simple behavior under Tate twists. 
For the moment, let $n$ be any positive integer. 
Let $\lambda = (\lambda^\tau)_{\tau : F \to E}$ be a pure weight, where, as before, 
$\lambda^\tau  = 
\sum_{i=1}^{n-1} (a^\tau_i-1) \bfgreek{gamma}_i + d \cdot \bfgreek{delta}_n
= (b^\tau_1,\dots, b^\tau_n).$ For $m \in \Z$, define a pure weight 
$\lambda- m\bfgreek{delta}_n := (\lambda^\tau - m\bfgreek{delta}_n)_{\tau : F \to E},$ where 
$$
\lambda^\tau - m\bfgreek{delta}_n   \ = \ 
\sum_{i=1}^{n-1} (a^\tau_i-1) \bfgreek{gamma}_i + (d-m) \cdot \bfgreek{delta}_n
\ = \ (b^\tau_1-m,\dots, b^\tau_n-m).
$$

Also, let $m {\delta}_n$ stand for the weight $(m \bfgreek{delta}_n)_{\tau : F \to E}.$  We have a class $e_{\delta_n}\in H^0(\SGK,\Q[\delta_n])$ (see \cite[Chap.\,3]{harder-book}) and the cup product by this class yields an isomorphism
\begin{align*}
T_{\rm Tate}^\bullet(m):H^\pkt_?(\SGK, \M_{\lambda,E}) \ \ppfeil{\cup e_{\delta_n^m}} \  
H^\pkt_?(\SGK, \M_{\lambda-m\delta_n,E}) 
\end{align*}
called  the $m$-th Tate twist. 
 If $\pi_f \in \Coh_?(G, \lambda, K_f)$ with $? \in \{!, !!\}$ then it is clear that 
\begin{equation}
\label{eqn:tate-twist}
T_{\rm Tate}^\bullet(m)(\pi_f \times \varepsilon) \ = \ \pi_f(m) \times (-1)^m \varepsilon, 
\end{equation}
where, $\pi_f(m) := \pi_f \otimes |\ |^m$ stands for 
$\ul g_f \mapsto  |{\rm det}(\ul g_f)|^m \pi_f(\ul g_f)$, and $\varepsilon$ is a character of $\pi_0(G(\R)).$

\smallskip

For later use, we introduce the notation $T^{\varepsilon}_{\rm Tate}(\lambda, \pi_f, m)$ for the $m$-th Tate twist on the module $\pi_f \times \varepsilon$ appearing in  ${\rm Coh}_{!!}(G,K_f,\lambda).$

\smallskip
\begin{prop}\label{prop:period-tate-twists}
If $\pi_f \in {\rm Coh}_{!!}(G,K_f,\lambda)$ then  
$\pi_f(m) \in {\rm Coh}_{!!}(G,K_f,\lambda - m\bfgreek{delta}_n)$ for $m \in \Z.$ Assume now that $n$ is even, then, for any $\varepsilon$ we have
$$
\Omega^\varepsilon({}^\iota\!\pi_f(m)) \ = \ 
\Omega^{(-1)^m\varepsilon}({}^\iota\!\pi_f). 
$$
\end{prop}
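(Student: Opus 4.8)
The plan is to deduce the period relation from the naturality of the Tate-twist isomorphism $T^\bullet_{\rm Tate}(m)$ with respect to the two identifications $T^\varepsilon_{\rm arith}$ and $T^\varepsilon_{\rm trans}$ out of which the periods are defined. First I would record the easy part: since $T^\bullet_{\rm Tate}(m)$ is an isomorphism of $\HK$-modules (cup product with the invertible class $e_{\delta_n}$ on $H^0$) that sends inner cohomology to inner cohomology and, by (\ref{eqn:tate-twist}), carries the isotypic piece $\pi_f\times\varepsilon$ in degree $b_n^F$ isomorphically onto the isotypic piece $\pi_f(m)\times(-1)^m\varepsilon$, it follows immediately that $\pi_f(m)\in{\rm Coh}_{!!}(G,K_f,\lambda-m\bfgreek{delta}_n)$ whenever $\pi_f\in{\rm Coh}_{!!}(G,K_f,\lambda)$. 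This gives the first assertion of the proposition.

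For the period identity, the strategy is to check that $T^{\varepsilon}_{\rm Tate}(\lambda,\pi_f,m)$ is simultaneously compatible with both identifications, up to the sign flip $\varepsilon\mapsto(-1)^m\varepsilon$ in the source/target sign characters. On the arithmetic side, one is free to \emph{choose} the arithmetic identifications compatibly: declare $T^{(-1)^m\varepsilon}_{\rm arith}(\lambda-m\bfgreek{delta}_n,\pi_f(m))$ to be the conjugate of $T^\varepsilon_{\rm arith}(\lambda,\pi_f)$ by the Tate twist, i.e. the square
\begin{equation*}
\xymatrix{
H^{b_n^F}_!(\SGK,\tM_{\lambda,E})(\pi_f\times\varepsilon) \ar[r]^{T^\varepsilon_{\rm arith}} \ar[d]_{T^\bullet_{\rm Tate}(m)} & H^{b_n^F}_!(\SGK,\tM_{\lambda,E})(\pi_f\times-\varepsilon) \ar[d]^{T^\bullet_{\rm Tate}(m)} \\
H^{b_n^F}_!(\SGK,\tM_{\lambda-m\bfgreek{delta}_n,E})(\pi_f(m)\times(-1)^m\varepsilon) \ar[r]^{T^{(-1)^m\varepsilon}_{\rm arith}} & H^{b_n^F}_!(\SGK,\tM_{\lambda-m\bfgreek{delta}_n,E})(\pi_f(m)\times-(-1)^m\varepsilon)
}
\end{equation*}
commutes by definition; this is legitimate because each $T^\varepsilon_{\rm arith}$ is only well-defined up to $E(\pi_f)^\times$ anyway and the class $e_{\delta_n}$ is defined over $\Q$, so the choice remains compatible with the Galois-equivariance diagram (\ref{eqn:t-arith}).

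The heart of the matter is the transcendental side: I must show that, after tensoring with $\C$ via $\iota$, the Tate twist also intertwines $T^\varepsilon_{\rm trans}({}^\iota\!\lambda,{}^\iota\!\pi_f)$ with $T^{(-1)^m\varepsilon}_{\rm trans}({}^\iota\!(\lambda-m\bfgreek{delta}_n),{}^\iota\!\pi_f(m))$. For this I would unwind the construction in \S\ref{sec:T-trans}: the cup product with $e_{\delta_n}^m$ corresponds, at the level of the de Rham / relative Lie algebra complex, to multiplication by the character $|\det|^m$ on automorphic forms, which identifies $\cC^\infty_{\rm cusp}$ for $\omega_\infty^{-1}$ with that for the shifted central character, and $\M_{{}^\iota\!\lambda}$ with $\M_{{}^\iota\!(\lambda-m\bfgreek{delta}_n)}$ as a sheaf; under this identification the cuspidal representation ${}^\iota\!\pi_\infty\otimes{}^\iota\!\pi_f$ goes to its twist by $|\det|^m$, the basis $\Phi(\lambda,\pi_f,\iota)$ of the one-dimensional $W^{\rm cusp}$-space is carried to a basis of the corresponding space for the twist, and — crucially — the archimedean auxiliary map $T^\varepsilon({}^\iota\!\lambda)$ on the one-dimensional $(\g,K_\infty)$-cohomology space is unchanged because, $n$ being even, $\D_{\lambda^\nu}$ only depends on the semisimple part $\lambda^{(1)}$, which is fixed under $\lambda\mapsto\lambda-m\bfgreek{delta}_n$, and the normalizing factor $i^{{\sf r}n/2}$ in (\ref{eqn:correct-power-i}) is likewise independent of $d$. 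The only change is the relabelling of sign characters $\varepsilon\mapsto(-1)^m\varepsilon$ forced by (\ref{eqn:tate-twist}). Granting this compatibility of $T_{\rm trans}$, the two commuting squares fit together into one diagram relating the four period-defining triangles; comparing the scalars $\Omega$ on the two vertical faces and cancelling the (invertible) Tate-twist maps yields $\Omega^\varepsilon({}^\iota\!\pi_f(m))=\Omega^{(-1)^m\varepsilon}({}^\iota\!\pi_f)$, as desired.

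The main obstacle, and the step that needs genuine care rather than bookkeeping, is precisely verifying that the transcendental identification $T^\varepsilon_{\rm trans}$ is compatible with the Tate twist: one must be sure that the chosen generator $w^\varepsilon({}^\iota\!\lambda)$ of the relative Lie algebra cohomology and the map $T^\varepsilon({}^\iota\!\lambda)$ are literally transported to $w^{(-1)^m\varepsilon}({}^\iota\!(\lambda-m\bfgreek{delta}_n))$ and $T^{(-1)^m\varepsilon}$ under twisting by $|\det|^m$, so that no spurious complex scalar is introduced along the archimedean fibre. This reduces to the observation — already implicit in Prop.\,\ref{prop:cohomology-degree} and the discussion of \S\ref{para:interlude-basis} — that for even $n$ the whole package $(\D_\lambda,\ \D_\lambda^{+\!+},\ w^{+\!+},\ i^{{\sf r}n/2})$ depends only on $\lambda^{(1)}$ and not on the abelian part, so the twist acts purely through the central character and hence only through the component-group sign.
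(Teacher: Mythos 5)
Your argument is correct and is exactly the intended one: the paper's proof of this proposition is a one-line citation of Thm.\,\ref{thm:cuspidal-cohomology}, Def.\,\ref{defn:relative-periods} and (\ref{eqn:tate-twist}), and what you have written is the careful unwinding of that citation — Tate twist preserves strong innerness, the arithmetic identifications may be chosen compatibly because of the $E(\pi_f)^\times$-ambiguity, and the transcendental identifications transport because for even $n$ the archimedean normalizations see only the sign $(-1)^m$ on the component group. (One small phrasing slip: $\D_{\lambda^\nu}$ itself does depend on $d$ through the $|\cdot|_\R^{-d}$ twists in (\ref{eqn:rep-infinity-1}); what is true, and what you actually use, is that twisting by $|\det|^m$ carries $\D_{{}^\iota\!\lambda}$ to $\D_{{}^\iota\!\lambda-m\bfgreek{delta}_n}$ and the choices $w^{\pm\varepsilon}$ and the factor $i^{{\sf r}n/2}$ transport under this identification without introducing any further scalar.)
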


\begin{proof}
Follows from Thm.\,\ref{thm:cuspidal-cohomology}, Def.\,\ref{defn:relative-periods} and  (\ref{eqn:tate-twist}). 
\end{proof}

\medskip
\subsection{The strongly inner cohomology of the boundary }
\label{sec:manin-drinfeld}

\medskip 
\subsubsection{\bf The restriction to the maximal strata}
We have the restriction from the cohomology of the boundary to the cohomology of the maximal boundary strata
\begin{align}
\label{restriction}
H^\pkt(\PBSC, \tM_{\lambda,E}) \ \longrightarrow \  \bigoplus_{P: \rm maximal} H^\pkt(\PPBSC, \tM_{\lambda,E})
\end{align}
and for the cohomology of each boundary strata $\PPBSC$, after passing to the limit over all $K_f$, from 
Prop.\,\ref{prop:bdry-coh-2} we have:  
$$
H^\pkt(\ppBSC, \tM_{\lambda,E}) \ = \ 
\bigoplus_{w\in W^P} \aIndPG \, H^{\pkt-l(w)}(\SMP, \tM_{w \cdot \lambda,E}). 
$$ 
Our previous results applied to the quotients $M_P$  yield a  decomposition
 \begin{align*}
H^{\pkt-l(w)}(\SMP, \tM_{w \cdot \lambda,E}) \ \ = \ \ 
H_{!!}^{\pkt-l(w)}(\SMP, \tM_{w \cdot \lambda,E}) \ \oplus \ H_{\rm Eis}^{\pkt-l(w)}(\SMP, \tM_{w \cdot \lambda,E})
\end{align*}
and therefore we get homomorphisms between Hecke modules
\begin{multline}
r_{\rm max}:  H^\pkt(\pBSC, \tM_{\lambda,E})  \ \longrightarrow \  \bigoplus_{P}\bigoplus_{w\in W^P} 
\aIndPG \left(H_{!!}^{\pkt-l(w)}(\SMP, \tM_{w \cdot \lambda,E} )\right) \\ 
\oplus \aIndPG \left(H_{\rm Eis}^{\pkt-l(w)}(\SMP, \tM_{w \cdot \lambda,E} )\right).
\end{multline}
On the right hand side we project to the first set of summands to get
\begin{align*}
r_{\rm max,!}: H^\pkt(\pBSC, \tM_{\lambda,E})  \ \longrightarrow \   
\bigoplus_{P}\bigoplus_{w\in W^P} 
\aIndPG H_{!!}^{\pkt-l(w)}(\SMP, \tM_{w \cdot \lambda,E}). 
\end{align*}
This homomorphism is now surjective because the right hand side is in the kernel of all differentials in the 
spectral sequence. 
Take $K_f$-invariants and if $\place$ is the finite set of places outside of which $K_f$ is the full maximal compact, then we restrict the action of the Hecke algebra $\HK$ 
to the commutative subalgebra $\HGS = \otimes_{p \notin \place} \HKp.$ 
Then, as in the proof of Thm.\,\ref{thm:cuspidal-vs-boundary}, once again applying Jacquet and Shalika \cite[Thm.\,4.4]{jacquet-shalika-II}, we have: 
\begin{multline}
\Spec_{\HGS} \left(
\bigoplus_{P}\bigoplus_{w\in W^P} \left(\aIndPG  \, H_{!!}^{\pkt-l(w)}(\SMP, \tM_{w\cdot\lambda, E})\right)^{K_f}
\right)  \\ 
\ \cap \ \Spec_{\HGS} \left(\ker(r_{\rm max,!})^{K_f} \right)
 \ =\ \emptyset. 
\end{multline}

\medskip
\subsubsection{\bf An interlude on Kostant's representatives}

Let the notations be as in Sect.\,\ref{sec:kostant-theorem}, but take $P_0$ to be a maximal parabolic subgroup of $G_0.$ 
Let $\bfpi_{M_{P_0}} = \bfpi_{G_0} - \{\alpha_{P_0}\}$. 
 Let $w_{P_0}$ be the unique element of $W_0 = W_{G_0}$ such that $w_{P_0}(\bfpi_{M_{P_0}}) \subset \bfpi_{G_0}$ and 
$w_{P_0}(\alpha_{P_0}) < 0,$ it is the longest Kostant representative for $W^{P_0}.$ Let $Q_0$ be the parabolic 
subgroup associate to $P_0$; we have
\begin{enumerate}
\item[(i)] $w_{P_0}(\bfpi_{M_{P_0}}) = \bfpi_{M_{Q_0}}$;  
\item[(ii)] $w_{P_0}(\bfdelta_{U_{P_0}}) = -\bfdelta_{U_{Q_0}}$. \\
(Here $\bfdelta_{U_{P_0}}$ is the set of those positive roots whose root spaces are in $U_{P_0}$; similarly,  $\bfdelta_{U_{Q_0}}.$)
\end{enumerate}
For (ii), observe that if $\alpha \in \bfdelta_{U_{P_0}}$ then in the expression for $\alpha$ in terms of simple roots, the root $\alpha_P$ has to appear with a positive integral coefficient, and since $w_{P_0}(\alpha_P) < 0$, there is a negative coefficient in the expression for $w_{P_0}(\alpha)$; 
but all coefficients have the same sign, hence $w_{P_0}(\alpha) < 0$.

\begin{lemma}
\label{lem:kostant-P-Q}
With notations as above, we have: 
\begin{enumerate}
\item The map $w \mapsto w^\prime:= w_{P}w$ gives a bijection $W^P \to W^Q$. If $w= (w^\tau)_{\tau : F \to E}$, then by definition, 
$w_{P}w = (w_{P_0}w^\tau)_{\tau:F \to E}.$ 
\item This bijection has the property that $l(w^\tau) + l(w^\tau) = \dim{(U_{P_0^\tau})}$. Hence $l(w) + l(w^\prime) = \dim(U_P).$
\end{enumerate}
\end{lemma}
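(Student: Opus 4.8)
The plan is to prove both assertions by first reducing everything, via the product decomposition $W = \prod_{\tau : F \to E} W_0^\tau$, to the split case over $E$; that is, it suffices to establish the statement for a single factor $W_0$ with its maximal parabolic $P_0$ and associate $Q_0$, and then take products over $\tau$. Once the statement holds for each $W_0^\tau$, both (1) and (2) follow by taking products: $l(w) = \sum_\tau l(w^\tau)$ and $\dim(U_P) = \sum_\tau \dim(U_{P_0^\tau})$, so I only need to treat $G_0 = \GL_n/F$ with $P_0$ a standard maximal parabolic.

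For part (1), I would use the characterization $W^{P_0} = \{ w \in W_0 : w^{-1}\alpha > 0 \ \forall \alpha \in \bfpi_{M_{P_0}} \}$, and similarly for $W^{Q_0}$ with $\bfpi_{M_{Q_0}} = w_{P_0}(\bfpi_{M_{P_0}})$ (this is property (i) stated just above the lemma). Given $w \in W^{P_0}$, I want to check $w_{P_0} w \in W^{Q_0}$, i.e.\ that $(w_{P_0}w)^{-1}\beta > 0$ for all $\beta \in \bfpi_{M_{Q_0}}$. Writing $\beta = w_{P_0}(\alpha)$ with $\alpha \in \bfpi_{M_{P_0}}$, we get $(w_{P_0}w)^{-1}\beta = w^{-1}w_{P_0}^{-1}w_{P_0}\alpha = w^{-1}\alpha > 0$ since $w \in W^{P_0}$. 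This shows the map lands in $W^{Q_0}$. Applying the same argument with the roles of $P_0$ and $Q_0$ interchanged (note $w_{Q_0} = w_{P_0}^{-1}$ and $w_{P_0}^{-1}(\bfpi_{M_{Q_0}}) = \bfpi_{M_{P_0}}$, which needs a small check but follows from $w_{P_0}$ being an involution-like longest Kostant element, or more carefully: $w_{Q_0}$ is by definition the longest Kostant representative for $W^{Q_0}$ and one verifies $w_{Q_0} = w_{P_0}^{-1}$) gives the inverse map $w' \mapsto w_{P_0}^{-1}w'$, proving bijectivity.

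For part (2), I would use the standard length formula for Kostant representatives: for $w \in W^{P_0}$, $l(w) = \#\{\alpha \in \bfdelta_{U_{P_0}} : w^{-1}\alpha < 0\}$ — equivalently $l(w)$ counts the positive roots of $U_{P_0}$ sent to negative roots by $w^{-1}$, but since $w \in W^{P_0}$ sends no positive root of $M_{P_0}$ to a negative root under $w^{-1}$... I should be careful here: the cleaner statement is that a Kostant representative $w \in W^{P_0}$ satisfies $l(w) = \#(w(\bfdelta^-_{G_0}) \cap \bfdelta^+_{U_{P_0}})$, or dually $l(w) = \#\{\gamma \in \bfdelta^+_{U_{P_0}} : w^{-1}\gamma \in \bfdelta^-_{G_0}\}$. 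Now for $w' = w_{P_0}w$, since left multiplication by $w_{P_0}$ and the inversion are involved, I compute: the positive roots made negative by $(w')^{-1} = w^{-1}w_{P_0}^{-1}$ among $\bfdelta^+_{U_{Q_0}}$. Using property (ii), $w_{P_0}(\bfdelta_{U_{P_0}}) = -\bfdelta_{U_{Q_0}}$, so $w_{P_0}^{-1}(\bfdelta^+_{U_{Q_0}}) = -\bfdelta_{U_{P_0}}$, i.e.\ $w_{P_0}^{-1}$ sends $\bfdelta^+_{U_{Q_0}}$ bijectively onto $\bfdelta^-_{U_{P_0}}$. Then $\gamma \in \bfdelta^+_{U_{Q_0}}$ has $(w')^{-1}\gamma < 0$ iff $w^{-1}(w_{P_0}^{-1}\gamma) < 0$, and $w_{P_0}^{-1}\gamma = -\delta$ for $\delta \in \bfdelta^+_{U_{P_0}}$, so this is iff $w^{-1}\delta > 0$. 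Hence $l(w') = \#\{\delta \in \bfdelta^+_{U_{P_0}} : w^{-1}\delta > 0\} = \dim(U_{P_0}) - l(w)$, since $l(w) = \#\{\delta \in \bfdelta^+_{U_{P_0}} : w^{-1}\delta < 0\}$ and $\#\bfdelta^+_{U_{P_0}} = \dim(U_{P_0})$.

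The main obstacle I anticipate is bookkeeping the precise sign/direction conventions: whether $l(w)$ counts roots in $U_P$ sent to negative roots by $w$ or by $w^{-1}$, and correctly using property (ii) ($w_{P_0}$ reverses $U_{P_0} \leftrightarrow U_{Q_0}$ with a sign) together with the fact that $l(w_{P_0}) = \dim(U_{P_0})$. Once these conventions are pinned down — ideally by citing Kostant \cite{kostant} or the review in Sect.\,\ref{sec:kostant-theorem} for the length formula — the argument is a short direct computation, and the identity $l(w) + l(w') = \dim(U_{P_0})$ drops out. Everything then globalizes by the product-over-$\tau$ remark, giving $l(w) + l(w') = \sum_\tau \dim(U_{P_0^\tau}) = \dim(U_P)$ as claimed.
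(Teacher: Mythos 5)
Your proposal is correct and follows essentially the same route as the paper: part (1) via the characterization of $W^{P}$ and $W^{Q}$ by simple roots of the Levi together with $w_{P_0}(\bfpi_{M_{P_0}})=\bfpi_{M_{Q_0}}$, and part (2) via the length formula counting roots of $\bfdelta_{U_{P_0}}$ made negative by $w^{-1}$, transported by the sign-reversing bijection coming from $w_{P_0}(\bfdelta_{U_{P_0}})=-\bfdelta_{U_{Q_0}}$; the reduction to a single $\tau$-component is also exactly what the paper does. The only cosmetic difference is that the paper runs the part (1) computation as a chain of equivalences, which yields bijectivity without needing to identify $w_{Q_0}$ with $w_{P_0}^{-1}$.
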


\begin{proof}  There are several ways to prove this lemma. We briefly sketch a proof below and for another proof the reader is referred to \cite[Chap.\,3]{harder-book}. 
The proof is the same for every component indexed by $\tau.$ We will suppress $\tau$ from the notation and just work over the group $G_0/F$ and its subgroups. 
Let $w_0 \in W_0$. (In our simplified notation, this $w_0$ is any of the $w^\tau$ for the original $w$ in $(1)$ above.) 
To prove the first statement of the lemma:  
\begin{eqnarray*}
w_{P_0} w_0 \in W^{Q_0} 
& \iff & w_0^{-1} w_{P_0}^{-1}(\beta) > 0,  \ \forall \beta \in \bfpi_{M_{Q_0}}\ \ \mbox{(now put $\beta = w_{P_0}\alpha$ which is possible by (i))} \\
& \iff & w_0^{-1}(\alpha) > 0,  \ \forall \alpha \in \bfpi_{M_P} \\
& \iff & w_0 \in W^{P_0}.
\end{eqnarray*}

To prove $l(w_0) + l(w'_0) =   \dim{(U_{P_0})}$, partition $\bfdelta_{U_{P_0}}$ as 
$\bfdelta_{U_{P_0}, w_0^{-1}}^+  \cup \bfdelta_{U_{P_0}, w_0^{-1}}^-$, the union being disjoint, where
$\bfdelta_{U_{P_0}, w_0^{-1}}^+ := \{\alpha \in \bfdelta_{U_{P_0}} : w_0^{-1}(\alpha) > 0\}$ and 
$\bfdelta_{U_{P_0}, w_0^{-1}}^- := \{\alpha \in \bfdelta_{U_{P_0}} : w_0^{-1}(\alpha) < 0\}.$
Since $w_0 \in W^{P_0}$, it follows that 
$\{\alpha \in \bfdelta_{G_0}^+ : w_0^{-1}\alpha < 0\}  = 
\{\alpha \in \bfdelta_{U_{P_0}} : w_0^{-1}\alpha < 0\} = \bfdelta_{U_{P_0}, w_0^{-1}}^-.$ 
Hence, $l(w_0) = l(w_0^{-1}) = |\bfdelta_{U_P, w_0^{-1}}^-|$. Next, one observes that the map 
$\alpha \mapsto -  w_{P_0}\alpha$ gives a bijection 
$\bfdelta_{U_{P_0}, w_0^{-1}}^+ \to \bfdelta_{U_{Q_0}, w'^{-1}_0}^-$
since $w'^{-1}_0(-w_{P_0}(\alpha)) = -w'^{-1}_0 w_{P_0}(\alpha) = -w_0^{-1}\alpha < 0$. We have 
\begin{eqnarray*}
\dim{(U_{P_0})} = |\bfdelta_{U_{P_0}}| 
& = &  |\bfdelta_{U_{P_0}, w_0^{-1}}^-| + |\bfdelta_{U_{P_0}, w_0^{-1}}^+| \\
& = & |\bfdelta_{U_{P_0}, w_0^{-1}}^-| + |\bfdelta_{U_{Q_0}, w'^{-1}_0}^-| \\
& = & l(w_0^{-1}) + l(w'^{-1}_0) = l(w_0) + l(w'_0).
\end{eqnarray*}
\end{proof}

Like the bijection between $W^P$ and $W^Q$ that was described in Lem.\,\ref{lem:kostant-P-Q} above, similarly, we have the following self-bijection of $W^P.$ 

\begin{lemma}
\label{lem:kostant-P-P}
Let the notations be as in Lem.\,\ref{lem:kostant-P-Q}. 
Let $w_G$ be the element of longest length in the Weyl group $W_G$ of $G$, and similarly, 
let $w_{M_P}$ be the element of longest length in the Weyl group $W_{M_P}$ of the Levi quotient $M_P$ of $P$. 
Then: 
\begin{enumerate}
\item The map $w \mapsto w^{\sf v}:= w_{M_P}\, w \, w_G$ gives a bijection $W^P \to W^P$. 
\item This bijection has the property that $l(w) + l(w^{\sf v}) = \dim(U_P).$
\end{enumerate}
\end{lemma}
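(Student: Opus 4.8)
\textbf{Proof plan for Lemma~\ref{lem:kostant-P-P}.}
As in the proof of Lem.\,\ref{lem:kostant-P-Q}, everything factors over the embeddings $\tau : F \to E$, so I would first reduce to the split group $G_0/F$ and suppress $\tau$ from the notation; thus $W_G$ becomes $W_0 = \perm_n$, $w_G$ becomes the longest element $w_{P_0}^{\,G} $ of $W_0$, and $w_{M_P}$ the longest element of $W_{M_{P_0}}$. The key structural fact I would use is the well-known characterization: $w \in W^{P_0}$ if and only if $w$ is the minimal-length element in the coset $W_{M_{P_0}} w$, equivalently $w^{-1}(\alpha) > 0$ for all $\alpha \in \bfpi_{M_{P_0}}$; and length in a parabolic quotient is tracked by the inversion set, $l(w) = |\{\alpha \in \bfdelta_{G_0}^+ : w^{-1}\alpha < 0\}|$, which for $w \in W^{P_0}$ equals $|\{\alpha \in \bfdelta_{U_{P_0}} : w^{-1}\alpha < 0\}|$.

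For part (1), I would check that $w^{\sf v} := w_{M_P}\, w\, w_G$ lands in $W^{P_0}$: using that $w_G$ sends $\bfpi_{G_0}$ to $-\bfpi_{G_0}$ and permutes $\bfdelta^+_{G_0}$ with $-\bfdelta^+_{G_0}$, and that $w_{M_P}$ does the analogous thing inside $M_{P_0}$, one computes $(w^{\sf v})^{-1}(\alpha) = w_G\, w^{-1}\, w_{M_P}(\alpha)$ for $\alpha \in \bfpi_{M_{P_0}}$; since $w_{M_P}(\alpha) \in -\bfpi_{M_{P_0}}$, then $w^{-1}$ of it is a negative root (as $w \in W^{P_0}$ means $w^{-1}$ is non-negative on $\bfpi_{M_{P_0}}$, so on $-\bfpi_{M_{P_0}}$ it gives negative roots, but I must be careful it stays within the span where $w_G$ flips sign) — the cleanest route is to verify directly that $\alpha \mapsto (w^{\sf v})^{-1}\alpha > 0$ for $\alpha \in \bfpi_{M_{P_0}}$ by a sign-chasing argument, and that $w \mapsto w^{\sf v}$ is an involution (since $w_{M_P}^2 = w_G^2 = 1$ and conjugation-type bookkeeping), hence a bijection. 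Concretely for $\GL_n$ one can also just write $w_G$ and $w_{M_P}$ as explicit permutations (the longest element reverses $\{1,\dots,n\}$, resp. reverses each block) and see the map on permutation words directly.

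For part (2), I would compute $l(w^{\sf v})$ via its inversion set. The identity $l(w_0 u) = l(w_0) - l(u)$ for the longest element $w_0$, applied twice (once in $M_P$ and once in $G$), gives $l(w_{M_P} w w_G) = l(w_{M_P}) + \big(l(w w_G)\big) - 2\,(\text{correction})$ — better to use the clean statement $l(w_G w') = l(w_G) - l(w')$ and $l(w_{M_P} w'') = l(w_{M_P}) - l(w'')$ valid when the products are "reduced in the right direction". The upshot: if $w \in W^{P_0}$ with its unique factorization relative to $M_{P_0}$, one gets $l(w^{\sf v}) = l(w_G) - l(w_{M_P}) - l(w) = \dim(U_{P_0}) - l(w)$, using $l(w_G) - l(w_{M_P}) = |\bfdelta^+_{G_0}| - |\bfdelta^+_{M_{P_0}}| = |\bfdelta_{U_{P_0}}| = \dim(U_{P_0})$. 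Summing with $l(w)$ gives the claimed $l(w) + l(w^{\sf v}) = \dim(U_P)$ after reassembling over all $\tau$ (recall $\dim U_P = \sum_\tau \dim U_{P_0^\tau}$ and $l(w) = \sum_\tau l(w^\tau)$).

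The main obstacle is getting the length-additivity bookkeeping exactly right: the naive identities $l(w_0 u) = l(w_0) - l(u)$ require the product to be a "reduced" decomposition in the appropriate sense, and here we are composing a longest element of a Levi with an element of $W^{P_0}$ (which is a minimal coset representative) and then with the longest element of $W_G$, so one must carefully track which inversions are gained and lost. I expect the cleanest and least error-prone argument is the explicit one for $\GL_n$: realize $W^{P_0}$ as the set of "shuffles" compatible with the partition defining $P_0$, write $w_G$ and $w_{M_P}$ as the obvious reversal permutations, and verify both statements by inspecting the resulting permutation — this sidesteps the abstract length calculus entirely and makes the involution property transparent.
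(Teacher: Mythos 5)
Your plan is correct and would yield a complete proof; note that the paper itself only says ``similar to the proof of Lem.\,\ref{lem:kostant-P-Q}'', whose argument runs through inversion sets ($l(w)=|\{\alpha\in\bfdelta_{U_{P_0}}: w^{-1}\alpha<0\}|$ and an explicit bijection between pieces of two inversion sets), so your route via the length calculus of minimal coset representatives is a genuinely different, and arguably cleaner, way to organize part (2). Your part (1) is fine as written: for $\alpha\in\bfpi_{M_{P_0}}$ one has $w_{M_P}(\alpha)=-\beta$ with $\beta\in\bfpi_{M_{P_0}}$, then $w^{-1}(-\beta)=-w^{-1}(\beta)<0$ since $w\in W^P$, and $w_G$ sends \emph{every} negative root to a positive one, so the worry you voice about ``staying within the span where $w_G$ flips sign'' is moot; the involution $(w^{\sf v})^{\sf v}=w$ then gives bijectivity.

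The one slip is in part (2): the identity you propose to lean on, $l(w_{M_P}w'')=l(w_{M_P})-l(w'')$, is the wrong tool here — it holds for $w''\in W_{M_P}$, whereas $w\,w_G$ is not in $W_{M_P}$ (nor in $W^P$, since $(w\,w_G)^{-1}\alpha=w_G(w^{-1}\alpha)<0$ for $\alpha\in\bfpi_{M_{P_0}}$). The clean way to finish is to use part (1) \emph{first}: since $w^{\sf v}\in W^P$ is a minimal-length representative of its coset $W_{M_P}w^{\sf v}$, the additivity $l(u\,v)=l(u)+l(v)$ for $u\in W_{M_P}$, $v\in W^P$ applies to $w_{M_P}\cdot w^{\sf v}=w\,w_G$, giving $l(w\,w_G)=l(w_{M_P})+l(w^{\sf v})$; combined with $l(w\,w_G)=l(w_G)-l(w)$ this yields $l(w)+l(w^{\sf v})=l(w_G)-l(w_{M_P})=|\bfdelta_{U_{P_0}}|=\dim(U_{P_0})$, exactly your claimed formula. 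So the gap is only in which length identity you cite, not in the strategy, and your proposed fallback — writing $w_G$ and $w_{M_P}$ as explicit reversal permutations for $\GL_n$ and checking everything on shuffles — is also a perfectly valid (if less illuminating) way to close it.
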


\begin{proof}
Similar to the proof of Lem.\,\ref{lem:kostant-P-Q}. 
\end{proof}

If $\Ml$ is a highest weight module  and $\M_{\lambda^{\sf v}}$ is its dual, the we have a  nondegenerate pairing
\begin{align}
H^q(\u_P,\M_{\lambda,E}) \ \times \ H^{d_U-q}(\u_P,\M_{\lambda^{\sf v},E}) \  \ \longrightarrow \ \ E.
\end{align}

If we decompose the two cohomology modules according to (\ref{eqn:kostant}) then the pairing 
becomes a direct sum over $w\in W^P$ of nondegenerate pairings
\begin{align}
\M_{\wl, E} \ \times \ \M_{w^{\sf v}\cdot \lambda^{\sf v}, E} \ \ \longrightarrow \ \ E. 
\end{align}

\medskip 
\subsubsection{\bf The strongly inner cohomology of the boundary}
  
 We define 
  \begin{align*}
H_{!!}^\pkt(\partial\SG, \tM_{\lambda,E})  \ = \ 
\bigoplus_{P: \rm maximal}  \bigoplus_{w\in W^P} 
\aIndPG 
\left( H_{!!}^{\pkt-l(w)}(\SMP, \tM_{w \cdot \lambda,E}) \right)
\end{align*}
where each term decomposes further into isotypic components: 
\begin{align*}
H_{!!}^{\pkt-l(w)}(\SMP, \tM_{w \cdot \lambda,E}) \ = \ 
\bigoplus_{\tsigma_f\in \Coh_{!!} (M_P, w\cdot\lambda)}
H_{!!}^{\pkt-l(w)}(\SMP, \tM_{w \cdot \lambda,E})(\tsigma_f). 
\end{align*}

\smallskip

Given two maximal parabolic subgroups  $P$ and $Q,$ with reductive quotients $M_P$ and $M_Q$, respectively, 
we have to understand under what conditions we have nontrivial intertwining operators between the induced 
modules 

\begin{multline}
\label{Twwprime}
\aIndPG H_{!!}^{\pkt-l(w)}(\SMP, \tM_{w \cdot \lambda,E} )(\tsigma_f) 
\ \text{ and } \\
\aIndQG H_{!!}^{\pkt-l(w_1)}(\SMQ, \tM_{w_1 \cdot \lambda,E})(\tsigma_{f,1}), 
\end{multline}
where $w\in W^P, w_1\in W^Q,$ $\sigma_f\in \Coh_{!!} (M_P, w\cdot\lambda)$ and 
$\tsigma_{f,1}\in \Coh_{!!} (M_Q, w_1\cdot\lambda).$
It is again the theorem of Jacquet--Shalika which tells us that this will almost never be  the case unless 
we are in a special situation which is summarized in the 

\begin{prop}
We have a nontrivial intertwining  between two such modules as in (\ref{Twwprime}) if and only if 
\begin{enumerate}
\item $P=Q$ or $P$ and $Q$ are associate, and furthermore, if this condition is fulfilled, then 
\item under the bijection $w\mapsto w^\prime$ from $W^P\to W^Q$ (Lem.\,\ref{lem:kostant-P-Q}) which 
gives a bijection $\tsigma_f \to \tsigma_f^\prime$ between $ \Coh_{!!} (M_P, w\cdot\lambda)$
and $\Coh_{!!} (M_Q, w^\prime\cdot\lambda)$, we have $w_1= w^\prime$ and $(\tsigma_{f,1})=\tsigma_f^\prime$.
\end{enumerate}
\end{prop}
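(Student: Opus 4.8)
The plan is to reduce the statement to the structure of the induced Hecke modules as $\HGS$-modules and then invoke strong multiplicity one for $\GL_n/F$ in the form of the Jacquet--Shalika classification theorem \cite[Thm.\,4.4]{jacquet-shalika-II}, exactly as was done in the proof of Thm.\,\ref{thm:cuspidal-vs-boundary}. First I would observe that a nontrivial intertwining operator between the two modules in (\ref{Twwprime}) forces their Satake parameters to agree at almost all places; so it suffices to compute, for each maximal parabolic $P$ and each $w \in W^P$, the $\HGS$-isomorphism type (equivalently, the collection of Satake parameters at $\p \notin \place$) of the module $\aIndPG H_{!!}^{\pkt - l(w)}(\SMP, \tM_{w\cdot\lambda, E})(\tsigma_f)$. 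Since $\tsigma_f$ is strongly inner for $M_P$, by Thm.\,\ref{thm:cuspidal-cohomology} it is cuspidal; write $M_{P_0}$ as $\GL_{n_1} \times \cdots \times \GL_{n_k}/F$ and correspondingly $\tsigma_f = \tsigma_{1,f} \times \cdots \times \tsigma_{k,f}$ with each $\tsigma_{j,f}$ cuspidal for $\GL_{n_j}/F$. The unnormalized induction, after accounting for the shift coming from $\u_P$-cohomology — i.e.\ the difference between $w\cdot\lambda$ as an $M_P$-weight and the half-sum of roots in $U_P$ — produces at each good prime the Satake parameter that is the concatenation of the (appropriately twisted) Satake parameters of the $\tsigma_{j,f}$.

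Next I would carry out the comparison. Given such an isomorphism of $\HGS$-modules between the $P$-module and the $Q$-module, we get an isomorphism of the associated inputs after inducing, hence an equivalence of the underlying admissible representations of $G(\A_f)$ parabolically induced from cuspidal data on $M_P(\A_f)$ and on $M_Q(\A_f)$ respectively. By the Jacquet--Shalika classification, two representations irreducibly (generically) induced from cuspidal representations of Levi subgroups which are almost everywhere equivalent must have the same inducing Levi up to conjugacy and the same cuspidal inducing data up to permutation of the blocks. For maximal $P$ and $Q$ this says precisely that $P = Q$ or $P$ and $Q$ are associate; this is conclusion (1). Having fixed that $P$ and $Q$ are associate, the matching of the cuspidal data block-by-block, combined with the bookkeeping of the twists, pins down $w_1 \in W^Q$ as the reflected Kostant representative $w' = w_P w$ of Lem.\,\ref{lem:kostant-P-Q} and identifies $\tsigma_{f,1}$ with $\tsigma_f'$; this is conclusion (2). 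For the converse direction one simply notes that when $P, Q$ are associate and $w_1 = w'$, $\tsigma_{f,1} = \tsigma_f'$, the standard intertwining operator $T_{\rm st}$ between $\aIndPG(\cdots)$ and $\aIndQG(\cdots)$ is nonzero — again because the induced representations are irreducible (Speh's results at infinity, as in Prop.\,\ref{prop:d-lambda}, and the finite-place analysis via M\oe glin--Waldspurger \cite{moeglin-waldspurger}) — giving the required nontrivial intertwiner.

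The main obstacle I anticipate is the bookkeeping that identifies $w_1$ with the reflected Kostant representative $w'$: one must check that the particular twists by powers of $|\det|$ carried by the blocks of the induced module on the $P$-side, which come from the $\rho_{U_P}$-shift in Kostant's theorem, match exactly the twists on the $Q$-side coming from $w'$, so that the Jacquet--Shalika permutation between the two sets of cuspidal data is forced to be the specific one realizing $\tsigma_f \mapsto \tsigma_f'$ rather than some other rearrangement. This is the content encoded in relations such as (\ref{dminusd}) in the segment discussion, and here one needs the analogous computation for general maximal $P$; concretely it amounts to verifying that $w\cdot\lambda$ and $w'\cdot\lambda$, restricted to the centers of the two $\GL$-blocks, differ by the predicted shift. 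I would also need the fact — guaranteed since $\tsigma_f$ and $\tsigma_f'$ are cuspidal and the combinatorics of $W^P$ is rigid — that there is at most one such matching, so that no spurious extra intertwining operators appear. Everything else (the reduction to Satake parameters, the appeal to Jacquet--Shalika, the nonvanishing of $T_{\rm st}$ for the converse) is routine given the tools already assembled in Sect.\,\ref{sec:coh-res} and Sect.\,\ref{sec:bdry-cohomology}.
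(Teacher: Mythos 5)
Your proposal is correct and follows the same route as the paper, which states this proposition as a direct consequence of the Jacquet--Shalika classification theorem \cite[Thm.\,4.4]{jacquet-shalika-II}: the paper offers no further argument beyond noting that strongly inner classes on the Levi are cuspidal and that weak equivalence of the two induced modules forces the parabolics to be associate and the (twisted) cuspidal data to match, with the twist bookkeeping carried out in the interlude on induced representations where $w'\cdot\lambda = (\mu'-n\bfgreek{delta}_{n'})\otimes(\mu+n'\bfgreek{delta}_n)$ is computed. Your additional care about the $\rho_{U_P}$-shifts and the converse via nonvanishing of the standard intertwining operator fills in exactly the details the paper leaves implicit.
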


Therefore we see that the strongly inner cohomology of the boundary has an isotypical decomposition
\begin{multline}
\label{decobdry} 
H_{!!}^\pkt(\partial\SG, \tM_{\lambda,E}) \ = \ 
\bigoplus_{\{P,Q\}} \bigoplus_{\{w,w^\prime\}} \bigoplus_{\{\tsigma_f, \tsigma_f^\prime\}}
\aIndPG ( H_{!!}^{\pkt-l(w)}(\SMP, \tM_{w \cdot \lambda,E})(\tsigma_f) \\ 
\oplus    \aIndQG 
H_{!!}^{\pkt-l(w^\prime)}(\SMQ, \tM_{w' \cdot \lambda,E})(\tsigma^\prime_{f }). 
\end{multline}

\smallskip

Observe that in the above decomposition the cohomology in the two summands is possibly living in different degrees
because of the shifts by $l(w)$ and $l(w').$ This motivates the 

\begin{defn}
An $w = (w^\tau)_{\tau : F \to E} \in W$ is called a \underline{balanced} Kostant representative for $P$ if 
$w \in W^P$ and
$$
l(w^\tau) \ = \ \dim{(U_{P_0})}/2, \quad \forall \tau : F \to E.
$$
\end{defn}

It follows from Lem.\,\ref{lem:kostant-P-Q} 
that if $w$ is a balanced Kostant representative for $P$ then $w^\prime$ is also balanced as a Kostant representative for $Q.$ An obvious necessary condition for the existence of balanced elements in $W^P$ is that $\dim{(U_{P_0})} = nn'$ is even. 
In this paper we are only interested in the contribution coming from
balanced Kostant representatives.

\medskip
\subsubsection{\bf Interlude on induced representations}
\label{sec:interlude-induced}
Let $\tsigma_f \in \Coh_{!!}(M_P, w\cdot \lambda)$ be as above. Write $M_P = G_n \times G_{n'}$, 
$\tsigma_f = \sigma_f \otimes \sigma'_f$ with $\sigma_f \in \Coh_{!!}(G_n, \mu)$ and 
$\sigma'_f \in \Coh_{!!}(G_{n'}, \mu')$ with pure weights $\mu$ and $\mu'$ such that 
$w \cdot \lambda = \mu \otimes \mu'.$ We will henceforth assume that $\mu$ and $\mu'$ are such that there is a balanced $w \in W^P$ for which $w^{-1}(\mu \otimes \mu')$ is a dominant weight. This condition on the weights $\mu$ and $\mu'$ has many interesting and crucial consequences which are captured by the {\it combinatorial lemma}; see 
Sect.\,\ref{sec:com-lemma}.

Consider now the associate parabolic subgroup $Q$ with reductive quotient $M_Q = G_{n'} \times G_n$. 
The Kostant representative $w' \in W^Q$ given 
as in Lem.\,\ref{lem:kostant-P-Q} is also balanced, and furthermore it is easy to see that 
$$
w'\cdot\lambda \ = \ w' \cdot (w^{-1} \cdot (\mu \otimes \mu')) \ = \ 
(\mu'-n\bfgreek{delta}_{n'}) \otimes (\mu + n'\bfgreek{delta}_n).
$$
If $\sigma_f  \in \Coh_{!!}(G_n, \mu)$ then $\sigma_f(-n')  \in \Coh_{!!}(G_n, \mu +n'\bfgreek{delta}_n)$, and similarly, 
$\sigma'_f  \in \Coh_{!!}(G_{n'}, \mu')$ implies $\sigma'_f (n) \in \Coh_{!!}(G_{n'}, \mu'-n\bfgreek{delta}_{n'}).$ 
Hence the module $\tsigma_f'$ is nothing but $\sigma_f'(n) \times \sigma_f(-n').$ 
Consider the corresponding algebraically induced representations appearing as in (\ref{decobdry})  
\begin{equation}
\label{eqn:aind-P-ind}
{}^{\rm a}{\rm Ind}_{P(\A_f)}^{G(\A_f)}\left( \sigma_f \otimes \sigma_f' \right) \quad {\rm and} \quad 
{}^{\rm a}{\rm Ind}_{Q(\A_f)}^{G(\A_f)}\left( \sigma_f'(n) \otimes \sigma_f(-n')\right). 
\end{equation}
By Jacquet and Shalika \cite[(4.3)]{jacquet-shalika-II}, they are weakly-equivalent. 
Take any open-compact subgroup $K_f$, and an associated finite set of finite places $\place$ such that $K_f$ is unramified outside $\place$, 
 and consider the $K_f$-invariants as an 
$\HGS$-module. There is only one isomorphism type of simple $\HGS$-module in the $K_f$-invariants of the 
induced representations in (\ref{eqn:aind-P-ind}); 
denote this particular isomorphism type as $I^\place(\sigma_f,\sigma_f').$

\smallskip

Taking contragredients we have:
\begin{equation}
\label{eqn:ind-dual}
{}^{\rm a}{\rm Ind}_{P(\A_f)}^{G(\A_f)}\left( \sigma_f \otimes \sigma_f' \right)^\v  
\ = \ 
{}^{\rm a}{\rm Ind}_{P(\A_f)}^{G(\A_f)}\left( \sigma_f^\v(n') \otimes \sigma_f'^\v(-n) \right), 
\end{equation}
and $\sigma_f^\v(n')  \in \Coh_{!!}(G_n, \mu^\v- n' \bfgreek{delta}_n)$, and similarly, 
$\sigma_f'^\v(-n)  \in \Coh_{!!}(G_{n'}, \mu'^\v+n \bfgreek{delta}_{n'}).$
As a notational artifice, given weights $\mu \in X^*(T_n)$ and $\mu' \in X^*(T_{n'})$, we will denote the weight 
$\mu \otimes \mu' \in X^*(T_N)$ also as:  
$$
\mu \otimes \mu' \ = \ \left[\begin{array}{cc} \mu & \\ & \mu' \end{array}\right].
$$
Note that the inducing data $\sigma_f^\v(n') \otimes \sigma_f'^\v(-n)$ for the contragredient representation considered above has cohomology with respect to the weight
$$
\left[\begin{array}{cc} \mu^\v - n' & \\ & \mu'^\v+n \end{array}\right]. 
$$

\begin{lemma}
\label{lem:w-sf-v}
Let $\mu \in X^*_0(T_n)$ be a pure weight, and similarly, $\mu' \in  X^*_0(T_n).$ Assume that
there exists a balanced Kostant representative $w \in W^P$ such that 
$\lambda := w^{-1}\cdot(\mu \otimes \mu')$ is dominant. Let $w^{\sf v} \in W^P$ be the Kostant representative associated to $w$ given by Lem.\,\ref{lem:kostant-P-P}. Then $w^{\sf v}$ is also balanced and furthermore 
$$
w^{\sf v} \cdot \lambda^\v = 
\left[\begin{array}{cc} \mu^\v - n' & \\ & \mu'^\v +n \end{array}\right].
$$
\end{lemma}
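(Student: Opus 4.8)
The plan is to reduce the statement about $w^{\sf v}$ to an explicit computation with Kostant representatives, exploiting the fact that everything factors over the embeddings $\tau : F \to E$, so it suffices to work over $G_0 = \GL_n/F$ one $\tau$-component at a time. The assertion that $w^{\sf v}$ is balanced is immediate from Lem.\,\ref{lem:kostant-P-P}(2): since $l(w) + l(w^{\sf v}) = \dim(U_P)$ and $l(w) = \dim(U_P)/2$ by hypothesis (and the analogous equalities hold $\tau$-componentwise), we get $l((w^{\sf v})^\tau) = \dim(U_{P_0})/2$ for every $\tau$. So the real content is the identity for $w^{\sf v} \cdot \lambda^\v$.

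First I would unwind the definitions. We have $\lambda = w^{-1}\cdot(\mu \otimes \mu')$, equivalently $w \cdot \lambda = \mu \otimes \mu'$, and by definition $w^{\sf v} = w_{M_P}\, w\, w_G$ where $w_G$ is the longest element of $W_G$ and $w_{M_P}$ the longest element of $W_{M_P}$. The key algebraic fact I would use is that the dual weight satisfies $\mu^\v = -w_{M_P}(\mu)$ for any dominant weight $\mu$ of $M_P$ — more precisely, for $\GL_n$ the longest Weyl element acts as $-1$ followed by the flip, and the dual of $\M_\mu$ has highest weight $-w_0^{M}(\mu)$. Applying this componentwise: the contragredient of the $G_n$-factor and the $G_{n'}$-factor, together with the determinant twists, should be matched by the action of $w_{M_P}$ on $w \cdot \lambda$. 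Concretely I would compute
$$
w^{\sf v} \cdot \lambda^\v \ = \ w_{M_P}\, w\, w_G \cdot \lambda^\v \ = \ w_{M_P} \cdot \big( w \cdot (w_G \cdot \lambda^\v) \big),
$$
and then use that $w_G \cdot \lambda^\v$ is, up to the appropriate twisting, the dual of $\lambda$ in a way compatible with the twisted action (this is the standard fact that $H^q(\u_P, \M_\lambda)$ and $H^{d_U - q}(\u_P, \M_{\lambda^\v})$ are dual, decomposed via Kostant — already recorded just above the lemma in the excerpt as the pairing $\M_{w\cdot\lambda} \times \M_{w^{\sf v}\cdot\lambda^\v} \to E$). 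That pairing statement essentially \emph{is} the lemma at the level of $\u_P$-cohomology; what remains is to identify $w^{\sf v}\cdot\lambda^\v$ explicitly as the displayed block weight rather than just knowing it is dual to $w\cdot\lambda = \mu\otimes\mu'$.

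So the concrete step is: given that $\M_{w^{\sf v}\cdot\lambda^\v}$ is the $M_P$-dual of $\M_{w\cdot\lambda} = \M_\mu \boxtimes \M_{\mu'}$, I must pin down the determinant twists. The dual of $\M_\mu \boxtimes \M_{\mu'}$ as an $M_P = G_n \times G_{n'}$-module is $\M_{\mu^\v} \boxtimes \M_{\mu'^\v}$, but this is not quite what is claimed — the claim involves $\mu^\v - n'\bfgreek{delta}_n$ and $\mu'^\v + n\bfgreek{delta}_{n'}$. The shift comes from the fact that the twisted action $w^{\sf v}\cdot(-) = w^{\sf v}(- + \rho) - \rho$ interacts with $\lambda^\v = \lambda - 2d\bfgreek{delta}_N$ (recall $\M_\lambda^\v = \M_\lambda \otimes \det^{-2d}$ from the discussion of pure weights), and $w^{\sf v}$ moves the central-character contributions between the two blocks exactly as $w$ does in the formula $w'\cdot\lambda = (\mu' - n\bfgreek{delta}_{n'})\otimes(\mu + n'\bfgreek{delta}_n)$ already computed in Sect.\,\ref{sec:interlude-induced}. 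I would therefore compute the abelian parts (the $\bfgreek{delta}$-coefficients) on each block directly using the explicit description of $w^{\sf v}$ on the standard basis (as in the formulas \eqref{WMU}, \eqref{dminusd} for the $w'_{u,v}$ type computation), check they come out as $-n'$ on the $G_n$-block and $+n$ on the $G_{n'}$-block, and verify the semisimple parts match $\mu^\v$ and $\mu'^\v$ via $\mu^\v = -w_0^{\GL_n}(\mu)$.

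The main obstacle I expect is bookkeeping the determinant/central-character twists correctly: the paper uses the convention $\lambda^\v = \lambda - 2d\bfgreek{delta}$ together with the twisted dot-action, and keeping track of which $\rho$-shift ($\rho_N$ versus $\rho$ on the Levi) lands where, and matching signs on the two blocks, is where errors creep in. A clean way to sidestep most of it is to note that $w^{\sf v}$ is characterized by $w_P w^{\sf v} = w^{\sf v}_?$... more honestly: I would verify the claim by checking it is consistent with the $P \leftrightarrow Q$ duality already established — i.e. that $w^{\sf v}$ relates to the $w'$ of Lem.\,\ref{lem:kostant-P-Q} applied to the dual situation — since the formula $w'\cdot\lambda = (\mu'-n\bfgreek{delta}_{n'})\otimes(\mu+n'\bfgreek{delta}_n)$ is already in hand and the displayed target weight in the lemma is precisely the ``swap-and-dualize'' of that. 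So the proof reduces to: (i) $w^{\sf v}$ balanced, from Lem.\,\ref{lem:kostant-P-P}(2); (ii) the weight identity, from the Kostant-duality pairing $\M_{w\cdot\lambda}\times\M_{w^{\sf v}\cdot\lambda^\v}\to E$ plus a direct check of the $\bfgreek{delta}$-coefficients on each block; I would present (ii) as the one short explicit computation worth writing out.
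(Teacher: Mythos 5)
Your proposal is correct and follows essentially the same route as the paper's proof: balancedness of $w^{\sf v}$ is read off from Lem.\,\ref{lem:kostant-P-P}\,(2), and the weight identity is obtained by the direct computation $w^{\sf v}\cdot\lambda^\v = (w_{M_P}\,w\,w_G)\cdot(-w_G\lambda)$, carried out one $\tau$-component at a time using $\mu^\v=-w_n(\mu)$ and $\mu'^\v=-w_{n'}(\mu')$. One correction to your heuristic: the asymmetric shifts $-n'$ and $+n$ do not arise from the purity twist $\lambda^\v=\lambda-2d\bfgreek{delta}_N$ (that term is central and $W$-invariant, so it shifts both blocks equally and is already absorbed into $\mu^\v$ and $\mu'^\v$); they come from the $\rho$-shifts in the dot action, specifically the term $-w_{M_P}\bfgreek{rho}_N-\bfgreek{rho}_N=-2\bfgreek{rho}_P$, which equals $-n'\bfgreek{delta}_n$ on the $G_n$-block and $+n\bfgreek{delta}_{n'}$ on the $G_{n'}$-block --- your planned direct check of the $\bfgreek{delta}$-coefficients would surface exactly this, so the slip does not derail the argument.
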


\begin{proof}
That $w^{\sf v}$ is balanced follows from Lem.\,\ref{lem:kostant-P-P}. 
The rest of the proof may be parsed over the embeddings $\tau: F \to E$ and for each $\tau$ it is the same calculation, and so we just suppress $\tau$ from notation. Next, as a notational artifice, let  $w_N$ be the element of longest length in the Weyl group of $G_0 := \GL_N/F$. As a permutation matrix, we have $w_N(i,i) = \delta_{i,N-j+1}.$ For $\tau : F \to E$, let $w_N^\tau$ be the same element but now thought of as the element of longest length in $G_0 \times_{F,\tau} E.$ Then, $w_G = (w_N^\tau)_{\tau: F \to E}.$ Similarly, representing $M_{P_0}$ as block diagonal matrices,  
permits us to think of $w_{M_P}$ as an array indexed by $\tau$ with each entry being the block diagonal matrix: 
$$
w_{M_{P_0}} \ = \ \left(\begin{array}{cc} w_n& \\ & w_{n'} \end{array}\right), 
$$
where $w_n(i,j) = \delta_{i,n-j+1}$, etc. Note that $-w_N \lambda = \lambda^\v$, $-w_n \mu = \mu^\v$ and 
$-w_{n'}\mu' = \mu'^\v.$ We have:
\begin{eqnarray*}
w^{\sf v} \cdot \lambda^\v & = & 
\left( \left(\begin{array}{cc} w_n& \\ & w_{n'} \end{array}\right) w w_N\right) \cdot (-w_N \lambda) \\
& = & \left( \left(\begin{array}{cc} w_n& \\ & w_{n'} \end{array}\right) w \right) \cdot (-\lambda -2\bfgreek{rho}_N) \\
& = & \left(\begin{array}{cc} w_n& \\ & w_{n'} \end{array}\right) 
\cdot \left(\left[\begin{array}{cc} -\mu & \\ & -\mu' \end{array}\right] -2\bfgreek{rho}_N \right) \\ 
& = & \left[\begin{array}{cc} \mu^\v & \\ & \mu'^\v \end{array}\right] - 
\left(\begin{array}{cc} w_n& \\ & w_{n'} \end{array}\right)\bfgreek{rho}_N - \bfgreek{rho}_N \\ 
& = & \left[\begin{array}{cc} \mu^\v - n' & \\ & \mu'^\v +n \end{array}\right]. 
\end{eqnarray*}
\end{proof}

Again, there is only one isomorphism type of simple $\HGS$-module in the $K_f$-invariants of the 
induced representation in (\ref{eqn:ind-dual}) and let $I^\place(\sigma_f,\sigma'_f)^\v$
stand for corresponding isomorphism type of $\HGS$-module.

\medskip
\subsubsection{\bf The character of the component group II}
The assumption of wanting a balanced Kostant representative necessitates $nn'$ to be even. Without loss of generality, we take $n$ to be even and let $n'$ be any positive integer.  Given $\sigma_f \in \Coh_{!!}(G_n,\mu)$ and 
$\sigma'_f \in \Coh_{!!}(G_{n'},\mu')$ we begin by taking a character $\varepsilon'$ of $\pi_0(G_{n'}(\R))$ as: 
\begin{equation}
\label{eqn:epsilon'-n'-cases}
\varepsilon' \ = \ \left\{ 
\begin{array}{ll}
\varepsilon(\sigma'_f ) &  \mbox{if $n'$ is odd, and} \\
\mbox{any character of $\pi_0(G_{n'}(\R))$} &  \mbox{if $n'$ is even,}
\end{array} \right.
\end{equation}
where $\varepsilon(\sigma'_f )$ is as in (\ref{eqn:character-at-infinity}). Note that 
$\varepsilon(\sigma'_f (n)) = \varepsilon(\sigma'_f )$ since $n$ is an even integer. 
Now take any 
character $\varepsilon$ of $\pi_0(G_n(\R)).$
From the short exact sequence of the group of connected components in (\ref{eqn:ses-infinity}) it follows that: 
$$
\mbox{The character 
$\varepsilon \times \varepsilon'$ of $\pi_0(P(\R))$ is trivial on $\pi_0(K^{M_P}_\infty)$} 
\ \iff \ 
\varepsilon = \varepsilon'.
$$
Hence
$
(\varepsilon' \times \sigma_f) \times (\varepsilon' \times \sigma_f') 
\ \hookrightarrow \ 
H^\bullet_{!!}(\cS^{M_P}, \tM_{\mu \otimes \mu', E})^{\pi_0(K^{M_P}_\infty)}.
$
The character $\varepsilon' \times \varepsilon'$ of $\pi_0(P(\R))$ maps to $\varepsilon'$ on $\pi_0(G(\R)).$  
From  Sect.\,\ref{sec:ind-pi-0} it follows that: 
$$
{}^{\rm a}{\rm Ind}_{\pi_0(P(\R)) \times P(\A_f)}^{\pi_0(G(\R)) \times G(\A_f)}
\left((\varepsilon' \otimes \sigma_f) \otimes (\varepsilon' \otimes \sigma_f') \right) \ = \ 
\varepsilon' \otimes 
{}^{\rm a}{\rm Ind}_{P(\A_f)}^{G(\A_f)}
\left( \sigma_f \otimes \sigma_f' \right). 
$$

\medskip
\subsubsection{\bf A summary of our notation with some simplifications}
\label{sec:simple-notation}
The ambient group will be  $G = R_{F/\Q}(\GL_N/F).$ Let $n$ and $n'$ be positive integers so that 
$N = n+n'$. Let $G_0 = \GL_N/F$ and 
let $P_0$ be the standard maximal parabolic subgroup of $G_0$ of type $(n,n')$, $U_{P_0}$ its unipotent radical and $M_{P_0} = P_0/U_{P_0} = \GL_n \times \GL_{n'}/F$ is the Levi quotient. Let $Q_0$ be the standard maximal parabolic subgroup of $G_0$ of type $(n',n),$ and $U_{Q_0}$ and $M_{Q_0}$ are similarly defined. 
Let $G$, $P$, $U_P,$ $M_P,$ $Q,$ $U_Q,$ and $M_Q$ be the restriction of scalars from $F$ to $\Q$ of $G_0$, $P_0$, $U_{P_0},$ $M_{P_0},$ $Q_0,$ $U_{Q_0},$ and $M_{Q_0},$ respectively. If $n = n'$ then $Q_0 = P_0$, $Q = P$, etc.; we will call this the self-associate case. 

Let $\mu \in X^*_0(T_n)$ be a pure weight, and similarly, $\mu' \in  X^*_0(T_{n'}).$ Assume there exists a balanced Kostant representative $w \in W^P$ such that 
$\lambda := w^{-1}\cdot(\mu \otimes \mu')$ is dominant.  So $nn'$ is even, and 
without loss of generality, we take $n$ to be even, and $n'$ may be even or odd. (See Appendix 2 for the case $nn'$ is odd.) 
Let $w^{\sf v} \in W^P$ correspond to $w$ as in Lem.\,\ref{lem:kostant-P-P}. Let $w'$ (resp., $w^{\sf v}{}'$) be in $W^Q$ corresponding to $w$ (resp., $w^{\sf v}$) via 
Lem.\,\ref{lem:kostant-P-Q}. All the elements $w, w', w^{\sf v}$ and $w^{\sf v}{}'$ are balanced, i.e., their lengths are 
${\rm dim}(U_P)/2 = {\rm dim}(U_Q)/2.$ In the self-associate case ($n = n'$, $P = Q$), we note that $w' \neq w$ and 
$w^{\sf v} \neq w^{\sf v}{}'.$

Let $\sigma_f  \in \Coh_{!!}(G_n, \mu)$ and  $\sigma'_f  \in \Coh_{!!}(G_{n'}, \mu').$  Take a large enough finite set of finite places 
$\place$ containing all the places where either $\sigma_f$ or $\sigma'_f$ is ramified. Take $K_f = \prod_\p K_\p$ an open-compact subgroup of $G(\A_f)$ such that $K_\p = \GL_N(\cO_\p)$ for $\p \notin \place.$ By $\HGS$ we mean the product of local spherical Hecke-algebras outside of $\place.$ 

Take $\varepsilon'$ as in (\ref{eqn:epsilon'-n'-cases}), and for brevity, we also denote 
$\tilde\varepsilon'$ for the character $\varepsilon' \otimes \varepsilon'$ of $\pi_0(M_P(\R))$ or of $\pi_0(M_Q(\R))$.

\smallskip

In the bottom degree, we observe $b_n^F + b_{n'}^F + \tfrac12\dim(U_P) = b_N^F,$ and 
write 
$$
I^\place_b(\sigma_f,\sigma^\prime_{f},\varepsilon)_{P, w} \ := \  
 {}^{\rm a}{\rm Ind}_{\pi_0(P(\R)) \times P(\A_f)}^{\pi_0(G(\R)) \times G(\A_f)}
\left(H^{b_n^F+b_{n'}^F}_{!!}(\SMP, \tM_{w \cdot \lambda, E})^{\pi_0(K^{M_P}_\infty)}
(\tilde\varepsilon' \otimes  (\sigma_f \otimes \sigma'_f)) \right)^{K_f}, 
$$
and similarly  
\begin{equation*}
\begin{split}
I^\place_b(\sigma_f,\sigma^\prime_{f},\varepsilon)_{Q, w^\prime} & :=  \\ 
& {}^{\rm a}{\rm Ind}_{\pi_0(Q(\R)) \times Q(\A_f)}^{\pi_0(G(\R)) \times G(\A_f)}
\left(H^{b_n^F+b_{n'}^F}_{!!}(\SMQ, \tM_{w' \cdot \lambda, E})^{\pi_0(K^{M_Q}_\infty)}
(\tilde\varepsilon' \otimes ( \sigma'_f(n) \otimes \sigma_f(-n')) \right)^{K_f} . 
\end{split}
\end{equation*}

\smallskip

In top-degree we work with the contragredient modules. The module 
$\sigma_f^\v(n') \otimes \sigma_f'^\v(-n)$ is strongly inner for $M_P$ with respect to the weight 
$w^{\sf v} \cdot \lambda^{\sf v}.$ Likewise, $\sigma_f^{\sf v}{}' \otimes \sigma_f^{\sf v}$ is strongly inner 
for $M_Q$ for the weight $w^{\sf v}{}' \cdot \lambda^{\sf v}.$ 
We also have $\tilde t_n^F + \tilde t_{n'}^F + \tfrac12\dim(U_P) = \tilde t_N^F -1.$ 
In this case we write: 
\begin{equation*}
\begin{split}
I^\place_t(\sigma_f,\sigma^\prime_{f},\varepsilon')^{\sf v}_{P, w^{\sf v}}  & := \\  
& {}^{\rm a}{\rm Ind}_{\pi_0(P(\R)) \times P(\A_f)}^{\pi_0(G(\R)) \times G(\A_f)}
\left(H^{\tilde t_n^F+ \tilde t_{n'}^F}_{!!}(\SMP, \tM_{w^{\sf v} \cdot \lambda^{\sf v}, E})^{\pi_0(K^{M_P}_\infty)}
(\tilde\varepsilon' \otimes (\sigma_f^\v(n') \otimes \sigma_f'^\v(-n))) \right)^{K_f} \!\!, 
\end{split}
\end{equation*}

\noindent
and similarly  
\begin{equation*}
\begin{split}
I^\place_t(\sigma_f,\sigma^\prime_{f},\varepsilon')^{\sf v}_{Q, w^{\sf v}{}'} & := \\ 
& {}^{\rm a}{\rm Ind}_{\pi_0(Q(\R)) \times Q(\A_f)}^{\pi_0(G(\R)) \times G(\A_f)}
\left(H^{\tilde t_n^F+ \tilde t_{n'}^F}_{!!}(\SMQ, \tM_{w^{\sf v}{}' \cdot \lambda^{\sf v}, E})^{\pi_0(K^{M_Q}_\infty)}
(\tilde\varepsilon' \otimes (\sigma_f'^\v \otimes \sigma_f^\v))\right)^{K_f} . 
\end{split}
\end{equation*}

\medskip
\subsubsection{\bf A strong form of Manin--Drinfeld principle}

Our main theorem on the structure of boundary cohomology as a Hecke module 
is the following theorem which is a culmination of the entire discussion in this section.

\begin{thm}
\label{thm:manin-drinfeld-strong}
Let the notations be as in Sect.\,\ref{sec:simple-notation}. Then:
 
\begin{enumerate}

\item The $\pi_0(G(\R)) \times \HGS$-modules $I^\place_b(\sigma_f,\sigma^\prime_{f},\varepsilon')_{P, w}$ and  
$I^\place_b(\sigma_f,\sigma^\prime_{f},\varepsilon')_{Q, w^\prime},$ and similarly, the modules 
 $I^\place_t(\sigma_f,\sigma^\prime_{f},\varepsilon')^{\sf v}_{P, w^{\sf v}}$ and 
$I^\place_t(\sigma_f,\sigma^\prime_{f},\varepsilon')^{\sf v}_{Q, w^{\sf v}{}'}$  
are finite-dimensional $E$-vector spaces, all of which have the same dimension; denote this common dimension as 
${\sf k}.$ 

\smallskip

\item The sum 
$$
I^\place_b(\sigma_f,\sigma^\prime_{f},\varepsilon' )_{P, w} \ \oplus \ 
I^\place_b(\sigma_f,\sigma^\prime_{f},\varepsilon' )_{Q, w^\prime}
$$
is a $2{\sf k}$-dimensional $E$-vector space that is isotypic in $H^{b_N^F}(\pBSC, \tM_{\lambda, E})^{K_f}$. 
(When $P = Q$, note that $w' \neq w.$)
Furthermore, 
there is a $\pi_0(G(\R)) \times \HGS$-equivariant projection:  
$$
\fR_{\sigma_f, \sigma_f', \varepsilon}^b : H^{b_N^F}(\pBSC, \tM_{\lambda, E})^{K_f} \ \longrightarrow \ 
I^\place_b(\sigma_f,\sigma^\prime_{f},\varepsilon')_{P, w}\oplus 
I^\place_b(\sigma_f,\sigma^\prime_{f},\varepsilon')_{Q, w^\prime}.
$$

\smallskip

\item The sum 
$$
I^\place_t(\sigma_f,\sigma^\prime_{f},\varepsilon')^{\sf v}_{P, w^{\sf v}} \ \oplus \ 
I^\place_t(\sigma_f,\sigma^\prime_{f},\varepsilon')^{\sf v}_{Q, w^{\sf v}{}'}
$$
is a $2{\sf k}$-dimensional $E$-vector space that is isotypic in 
$H^{\tilde t_N^F-1}(\pBSC, \tM_{\lambda^{\sf v}, E})^{K_f}$. 
(When $P = Q$, note that $w^{\sf v}{}' \neq w^{\sf v}.$) 
Furthermore, 
there is a $\pi_0(G(\R)) \times \HGS$-equivariant projection:  
$$
\fR_{\sigma_f, \sigma_f', \varepsilon}^t : H^{\tilde t_N^F-1}(\pBSC, \tM_{\lambda^{\sf v}, E})^{K_f} \ \longrightarrow \ 
I^\place_t(\sigma_f,\sigma^\prime_{f},\varepsilon')^{\sf v}_{P, w^{\sf v}} \oplus
I^\place_t(\sigma_f,\sigma^\prime_{f},\varepsilon')^{\sf v}_{Q, w^{\sf v}{}'}.
$$
\end{enumerate}
\end{thm}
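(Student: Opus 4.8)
The plan is to derive Theorem \ref{thm:manin-drinfeld-strong} from the description of strongly-inner boundary cohomology in (\ref{decobdry}) together with the linear-algebra splitting principle of Sect.\,\ref{sec:lin-alg} applied over the commutative Hecke algebra $\HGS.$ I will treat the three items in order, since (2) and (3) both rest on (1), and (3) is the Poincar\'e-dual mirror of (2).

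\textbf{Step 1: finite-dimensionality and equality of dimensions.} First I would observe that each of the four modules is defined as the $K_f$-invariants of an algebraically induced representation from a parabolic subgroup, induced from a strongly-inner isotypic component of $M_P$ (resp.\ $M_Q$) cohomology, which is itself cuspidal cohomology (over $\C$) and hence finite-dimensional over $E.$ Finite-dimensionality is then automatic. For the equality of the four dimensions: $\aInd$ from $P$ and from $Q$ of weakly-equivalent data gives $\HGS$-modules with the same (unique, by Sect.\,\ref{sec:interlude-induced}) simple constituent $I^\place(\sigma_f,\sigma_f')$, and taking $K_f$-invariants of these globally-induced representations produces spaces of the same dimension ${\sf k}$ by Jacquet--Shalika \cite[(4.3)]{jacquet-shalika-II} and Mackey theory — the spherical vectors outside $\place$ match up, and inside $\place$ the induced representations are abstractly isomorphic as $\prod_{\p \in \place} G(F_\p)$-modules. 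For the top-degree modules one uses Lem.\,\ref{lem:w-sf-v}, which identifies the inducing data for the contragredients with $w^{\sf v}\cdot\lambda^{\sf v}$ (resp.\ $w^{\sf v}{}'\cdot\lambda^{\sf v}$) cohomology of $M_P$ (resp.\ $M_Q$); since dualizing and the Tate twists in $\sigma_f^\v(n') \otimes \sigma_f'^\v(-n)$ preserve strong-innerness and dimensions of isotypic components, we get the same ${\sf k}.$

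\textbf{Step 2: isotypicity and the projection.} For item (2), I would note that by Prop.\,\ref{prop:bdry-coh-2} and (\ref{decobdry}), the $2{\sf k}$-dimensional space $I^\place_b(\sigma_f,\sigma^\prime_f,\varepsilon')_{P,w}\oplus I^\place_b(\sigma_f,\sigma^\prime_f,\varepsilon')_{Q,w'}$ sits inside $H^{b_N^F}(\pBSC,\tM_{\lambda,E})^{K_f}$ as exactly one of the summands in the isotypic decomposition — here one uses the degree bookkeeping $b_n^F + b_{n'}^F + \tfrac12\dim(U_P) = b_N^F$ recorded in Sect.\,\ref{sec:simple-notation} together with the fact that $w, w'$ are balanced, so both $\aInd$-summands land in the \emph{same} cohomological degree $b_N^F$ and contribute to the $I^\place(\sigma_f,\sigma_f')$-isotypic part. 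It is isotypic in the strict sense because $I^\place(\sigma_f,\sigma_f')$ is a simple $\HGS$-module; then the key point is that its $\HGS$-spectrum is disjoint from the $\HGS$-spectrum of the complementary part of $H^{b_N^F}(\pBSC,\tM_{\lambda,E})^{K_f}$. This disjointness is the content of the inequality argument already used (the displayed intersection-empty statement following $r_{\rm max,!}$, and Thm.\,\ref{thm:cuspidal-vs-boundary}): any other constituent of boundary cohomology in this degree that is $\HGS$-isomorphic to $I^\place(\sigma_f,\sigma_f')$ would, by Jacquet--Shalika's classification \cite[Thm.\,4.4]{jacquet-shalika-II}, have to arise from the same cuspidal support $\{\sigma_f,\sigma_f'\}$ on an associate parabolic — i.e.\ be one of the two summands we already isolated. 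Having established $\Spec_{\HGS}$-disjointness, Sect.\,\ref{sec:lin-alg} furnishes the $\HGS$-equivariant projection $\fR^b_{\sigma_f,\sigma_f',\varepsilon}$; the $\pi_0(G(\R))$-equivariance is built in because the whole decomposition (\ref{decobdry}) is one of $\pi_0(G(\R))\times\HK$-modules and the $\varepsilon'$-component is cut out by a $\pi_0(G(\R))$-idempotent commuting with $\HGS.$

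\textbf{Step 3: the top-degree statement.} For item (3), I would run the identical argument with $\lambda$ replaced by $\lambda^{\sf v}$ and the degree $b_N^F$ replaced by $\tilde t_N^F - 1$, using $\tilde t_n^F + \tilde t_{n'}^F + \tfrac12\dim(U_P) = \tilde t_N^F - 1$ and Lem.\,\ref{lem:w-sf-v} to identify the relevant inducing data; the balancedness of $w^{\sf v}, w^{\sf v}{}'$ (from Lem.\,\ref{lem:kostant-P-P} and Lem.\,\ref{lem:kostant-P-Q}) again forces both $\aInd$-summands into the single degree $\tilde t_N^F - 1.$ The module $I^\place(\sigma_f,\sigma_f')^\v$ is simple, and its $\HGS$-spectrum, being the contragredient spectrum, is again disjoint from the rest by the same Jacquet--Shalika input, giving the projection $\fR^t_{\sigma_f,\sigma_f',\varepsilon}$ from Sect.\,\ref{sec:lin-alg}.

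\textbf{Main obstacle.} I expect the delicate point to be Step 2's $\Spec_{\HGS}$-disjointness in the precise cohomological degree: one must be sure that no \emph{other} pair of parabolics $R$ and Kostant representatives of the appropriate (not necessarily balanced) length contribute a constituent weakly equivalent to $I^\place(\sigma_f,\sigma_f')$ in degree exactly $b_N^F$. This requires combining the length constraints coming from Kostant's theorem (\ref{eqn:kostant}) with the fact that strongly-inner cohomology of $M_R$ lives only in the band $[b^F, t^F]$ for each $\GL$-factor — so that the total degree $l(w) + \sum(\text{bottom degrees of factors})$ can equal $b_N^F$ only when $w$ is balanced and $R$ is associate to $P$ — and only then does one get the clean two-summand picture. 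Making this degree-counting airtight, over the totally real field $F$ with all the $\tau$-components simultaneously constrained, is where the real work lies; everything else is bookkeeping on top of the already-established Theorems \ref{thm:cuspidal-vs-boundary} and the Jacquet--Shalika classification.
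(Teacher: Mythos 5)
Your proposal follows essentially the same route as the paper, which gives no separate proof but presents the theorem as the culmination of the discussion in Sect.\,\ref{sec:manin-drinfeld}: the isotypic decomposition (\ref{decobdry}), the Jacquet--Shalika classification forcing the two-summand picture (so that only the associate pair $(P,w)$, $(Q,w')$ with the matched $\tsigma_f'$ can intertwine), the degree bookkeeping $b_n^F+b_{n'}^F+\tfrac12\dim(U_P)=b_N^F$ and $\tilde t_n^F+\tilde t_{n'}^F+\tfrac12\dim(U_P)=\tilde t_N^F-1$ made possible by the balancedness of $w,w',w^{\sf v},w^{\sf v}{}'$, Lem.\,\ref{lem:w-sf-v} for the dual data, and the splitting principle of Sect.\,\ref{sec:lin-alg} for the projections. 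The only minor imprecision is in your Step 1: at the places in $\place$ the induced representations from $P$ and from $Q$ need not be abstractly isomorphic (they are merely related by the standard intertwining operator and share Jordan--H\"older constituents), but that already gives equality of the dimensions of $K_f$-invariants, so the conclusion stands.
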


\bigskip
\section{\bf Eisenstein cohomology}
\label{sec:eisenstein-coh}

Recall  from Sect.\,\ref{sec:long-e-seq} the long exact sequence of $\pi_0(G(\R)) \times \HK$-modules: 
$$
\cdots  \longrightarrow H^\bullet_c(\SGK, \tM_{\lambda, E}) 
\stackrel{\mathfrak{i}^*}{\longrightarrow}   H^\bullet(\BSC, \tM_{\lambda,E}) 
\stackrel{\mathfrak{r}^*}{\longrightarrow } H^\bullet(\PBSC, \tM_{\lambda,E}) 
\stackrel{\fd^*}{\longrightarrow} H^{\bullet+1}_c(\SGK, \tM_{\lambda,E}) \longrightarrow \cdots
$$
Eisenstein cohomology is defined as: 
\begin{equation}
\label{eqn:eis-coh-with-K_f}
H^\bullet_{\rm Eis}(\PBSC, \tM_{\lambda,E}) \ := \ 
{\rm Image}\left( H^\bullet(\BSC, \tM_{\lambda,E}) 
\stackrel{\mathfrak{r}^*}{\longrightarrow } H^\bullet(\PBSC, \tM_{\lambda,E}) \right). 
\end{equation}
We may pass to the limit over all $K_f$ and also look at $\pi_0(G(\R)) \times G(\A_f)$-modules
\begin{equation}
\label{eqn:eis-coh}
H^\bullet_{\rm Eis}(\pBSC, \tM_{\lambda,E}) \ := \ 
{\rm Image}\left( H^\bullet(\bar{\mathcal{S}}^G, \tM_{\lambda,E}) 
\stackrel{\mathfrak{r}^*}{\longrightarrow } H^\bullet(\pBSC, \tM_{\lambda,E}) \right). 
\end{equation}

The reader should note the subtle difference between $H^\bullet_{\rm Eis}(\pBSC, \tM_{\lambda,E})$ as above, and 
a complement to strongly inner cohomology in global cohomology denoted $H^\bullet_{\rm Eis}(\SG, \tM_{\lambda,E})$. Since strongly inner cohomology does not intertwine with boundary cohomology, we get a surjective map: 
$$
\xymatrix{
H^\bullet_{\rm Eis}(\SG, \tM_{\lambda,E})  \ar@{>>}[r] & 
H^\bullet_{\rm Eis}(\pBSC, \tM_{\lambda,E}).
}$$

\medskip
\subsection{Poincar\'e duality and maximal isotropic subspace of boundary cohomology}

\medskip
\subsubsection{\bf Poincar\'e duality}

Let the notations be as in Sect.\,\ref{sec:simple-notation}. Let $\d := \d_N^F = \dim(\SGK).$
We have the following Poincar\'e duality pairing for sheaf cohomology on $\SGK$:  
\begin{equation}
\label{eqn:poincare-1}
H^\bullet(\SGK, \tM_{\lambda, E}) \times H^{\d-\bullet}_c(\SGK, \tM_{\lambda^\v, E})\  \longrightarrow \ E.  
\end{equation}
The boundary $\PBSC$ is a compact manifold with corners with $ \dim(\PBSC) = \d-1.$ We have: 
\begin{equation}
\label{eqn:poincare-2}
H^\bullet(\PBSC, \tM_{\lambda, E}) \times H^{\d-1-\bullet}(\PBSC, \tM_{\lambda^\v, E}) \ \longrightarrow \ E
\end{equation}
We will denote either of the two dualities simply by $(\ , \ )$. (The reader is referred to \cite[Chap.\,3]{harder-book} 
for a discussion of Poincar\'e duality for manifolds with corners.)

\medskip
\subsubsection{\bf Compatibility of duality isomorphisms with connecting homomorphism}

Consider the following diagram: 
\begin{equation}
\label{eqn:poincare-compatible}
\xymatrix{
H^\bullet(\SGK, \tM_{\lambda, E}) \ar[d]^{\r^*} & \times & H^{\d-\bullet}_c(\SGK, \tM_{\lambda^\v, E}) & 
\longrightarrow & E \\
H^\bullet(\PBSC, \tM_{\lambda, E}) & \times &  H^{\d-1-\bullet}(\PBSC, \tM_{\lambda^\v, E}) \ar[u]^{\fd^*} & \longrightarrow & E
}
\end{equation}
where the horizontal arrows are the Poincar\'e duality pairings (\ref{eqn:poincare-1}) and (\ref{eqn:poincare-2}), 
and the vertical arrows $\r^*$ and $\fd^*$ are as in the long exact sequence. 
For any class $\xi \in H^\bullet(\SGK, \tM_{\lambda, E})$ and 
any $\varsigma \in  H^{\d-1-\bullet}(\PBSC, \tM_{\lambda^\v, E})$ we have (see \cite[Chap.\,3]{harder-book}): 
\begin{equation}
\label{eqn:duality-compatible}
(\r^*(\xi), \, \varsigma) \ = \ (\xi, \, \fd^*(\varsigma)).
\end{equation}

\medskip
\subsubsection{\bf Maximal isotropic subspaces}

The following proposition says that Eisenstein cohomology is a maximal isotropic subspace of boundary cohomology under Poincar\'e duality.

\begin{prop}
\label{prop:isotropic-subspace}
Under the duality pairing (\ref{eqn:poincare-2}) for boundary cohomology, we have: 
$$
H^\bullet_{\rm Eis}(\PBSC, \tM_{\lambda, E}) \ = \ 
H^{\d-1-\bullet}_{\rm Eis}(\PBSC, \tM_{\lambda^\v, E})^\perp.
$$
\end{prop}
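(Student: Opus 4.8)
The plan is to derive the statement from the long exact sequence together with the compatibility of Poincar\'e duality with the connecting homomorphism, as recorded in (\ref{eqn:duality-compatible}). Concretely, write $d = \d_N^F$ and abbreviate $H^i := H^i(\SGK, \tM_{\lambda,E})$, $H^i_c := H^i_c(\SGK, \tM_{\lambda,E})$, $H^i_\partial := H^i(\PBSC, \tM_{\lambda,E})$, and similarly with $\lambda^\v$. By definition $H^\bullet_{\rm Eis}(\PBSC, \tM_{\lambda,E}) = \mathrm{Image}(\r^* : H^\bullet \to H^\bullet_\partial)$, and by exactness this image equals $\ker(\fd^* : H^\bullet_\partial \to H^{\bullet+1}_c)$. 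Likewise $H^{d-1-\bullet}_{\rm Eis}(\PBSC, \tM_{\lambda^\v,E}) = \mathrm{Image}(\r^{*,\v} : H^{d-1-\bullet,\v} \to H^{d-1-\bullet}_{\partial,\v}) = \ker(\fd^{*,\v})$.

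First I would prove the inclusion $H^\bullet_{\rm Eis}(\PBSC, \tM_{\lambda,E}) \subseteq H^{d-1-\bullet}_{\rm Eis}(\PBSC, \tM_{\lambda^\v,E})^\perp$. Take $\alpha = \r^*(\xi)$ with $\xi \in H^\bullet$, and take $\beta \in H^{d-1-\bullet}_{\rm Eis}(\PBSC, \tM_{\lambda^\v,E})$, so $\beta = \r^{*,\v}(\zeta)$ for some $\zeta \in H^{d-1-\bullet,\v}$; equivalently $\fd^{*,\v}(\beta) = 0$. Then by (\ref{eqn:duality-compatible}) applied in the $\lambda^\v$-direction, $(\alpha, \beta) = (\r^*(\xi), \beta) = (\xi, \fd^{*,\v}(\beta)) = (\xi, 0) = 0$. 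Hence the Eisenstein subspace pairs trivially against its $\lambda^\v$-counterpart, giving the claimed containment in the perp.

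For the reverse inclusion — which is where the real content lies — I would use a dimension count. The pairing (\ref{eqn:poincare-2}) is a perfect pairing between $H^\bullet_\partial$ and $H^{d-1-\bullet}_{\partial,\v}$, so $\dim H^{d-1-\bullet}_{\rm Eis}(\PBSC, \tM_{\lambda^\v,E})^\perp = \dim H^\bullet_\partial - \dim H^{d-1-\bullet}_{\rm Eis}(\PBSC, \tM_{\lambda^\v,E})$. Thus it suffices to show
$$
\dim H^\bullet_{\rm Eis}(\PBSC, \tM_{\lambda,E}) + \dim H^{d-1-\bullet}_{\rm Eis}(\PBSC, \tM_{\lambda^\v,E}) \ = \ \dim H^\bullet_\partial.
$$
Using $H^\bullet_{\rm Eis} = \mathrm{Image}(\r^*) \cong H^\bullet / \mathrm{Image}(\mathfrak{i}^*) = H^\bullet / H^\bullet_!$ (from exactness of the sequence in Sect.\,\ref{sec:long-e-seq}), and the dual statement, the identity to prove becomes
$$
\bigl(\dim H^\bullet - \dim H^\bullet_!\bigr) + \bigl(\dim H^{d-1-\bullet,\v} - \dim H^{d-1-\bullet,\v}_!\bigr) \ = \ \dim H^\bullet_\partial.
$$
Now I would invoke Poincar\'e duality (\ref{eqn:poincare-1}) in the form $\dim H^{d-1-\bullet,\v} = \dim H^{d-1-\bullet}_{c,\v} + (\text{correction})$ — more precisely, the perfect pairing $H^i \times H^{d-i}_{c,\v} \to E$ gives $\dim H^\bullet = \dim H^{d-\bullet}_{c,\v}$, and the long exact sequence for $\lambda^\v$ gives $\dim H^{d-1-\bullet}_{\partial,\v} = \dim H^{d-1-\bullet,\v} + \dim H^{d-\bullet}_{c,\v} - 2\dim H^{d-1-\bullet,\v}_!$ after rearrangement (since the image of $\r^{*,\v}$ equals $H^{d-1-\bullet,\v}/H^{d-1-\bullet,\v}_!$ and its kernel is built from $H^{d-\bullet}_{c,\v}$ modulo interior cohomology). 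Substituting $\dim H^{d-\bullet}_{c,\v} = \dim H^\bullet$ and $\dim H^{d-1-\bullet}_{\partial,\v} = \dim H^\bullet_\partial$ (self-duality of boundary cohomology from (\ref{eqn:poincare-2})) yields exactly the required identity. The main obstacle is doing this bookkeeping cleanly: one must be careful that ``interior cohomology is the image of compact-support in full cohomology'' matches on both sides under Poincar\'e duality, i.e., that the duality isomorphism carries $H^\bullet_!$ for $\lambda$ onto $H^{d-\bullet}_!$ for $\lambda^\v$ — this is standard but needs to be stated, and it is the one input beyond pure linear algebra and the compatibility (\ref{eqn:duality-compatible}). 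Once that symmetry is in hand, the equality of dimensions forces the isotropic subspace $H^\bullet_{\rm Eis}$ to be maximal isotropic, completing the proof.
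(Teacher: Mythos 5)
Your proof is correct, and your forward inclusion is exactly the paper's. For the reverse inclusion, however, you take a longer route than the paper does. The paper simply applies the adjunction (\ref{eqn:duality-compatible}) a second time with the roles of $\lambda$ and $\lambda^{\sf v}$ exchanged: if $\xi'\in H^\bullet(\PBSC,\tM_{\lambda,E})$ pairs to zero with every $\r^*(\varsigma')$, $\varsigma'\in H^{\d-1-\bullet}(\SGK,\tM_{\lambda^\v,E})$, then $0=(\xi',\r^*(\varsigma'))=(\fd^*(\xi'),\varsigma')$ for all $\varsigma'$, and nondegeneracy of the interior pairing (\ref{eqn:poincare-1}) forces $\fd^*(\xi')=0$, i.e.\ $\xi'\in\ker(\fd^*)=\mathrm{Im}(\r^*)$; no dimension count is needed. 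Your counting argument does close, but it requires the extra input you rightly isolate (that Poincar\'e duality matches interior cohomology with interior cohomology, i.e.\ rank of a map equals rank of its adjoint), and your intermediate formula has a slip: the two interior-cohomology corrections in the expansion of $\dim H^{\d-1-\bullet}(\PBSC,\tM_{\lambda^\v,E})$ live in the adjacent degrees $\d-1-\bullet$ and $\d-\bullet$, so the term should be $-\dim H^{\d-1-\bullet}_{!}-\dim H^{\d-\bullet}_{!}$ rather than $-2\dim H^{\d-1-\bullet}_{!}$. With that fixed, the identity $\dim\mathrm{Im}(\r^*)+\dim\mathrm{Im}(\r^{*,{\sf v}})=\dim H^\bullet(\PBSC,\tM_{\lambda,E})$ holds and your argument is valid. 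What the paper's route buys is economy — only the two nondegenerate pairings and the adjunction are used; what yours buys is the explicit complementary-dimension statement for the two Eisenstein subspaces, which the paper only extracts later in the linear-algebra step of \S\,\ref{sec:app-poincare}.
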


\begin{proof}
This is an exercise in using (\ref{eqn:duality-compatible}). 
Let $\r^*(\xi) \in H^\bullet_{\rm Eis}(\PBSC, \tM_{\lambda, E})$. Then 
$$
(\r^*(\xi), \r^*(\varsigma')) = (\xi, \fd^*\r^*(\varsigma')) = (\xi,0) = 0, 
\quad \forall r^*(\varsigma') \in H^{\d-1-\bullet}_{\rm Eis}(\PBSC, \tM_{\lambda^\v, E}). 
$$
Hence the left hand side is contained in the right hand side. 
For the reverse inclusion, suppose $\xi' \in H^\bullet(\PBSC, \tM_{\lambda, E})$ is orthogonal to 
$H^{\d-1-\bullet}_{\rm Eis}(\PBSC, \tM_{\lambda^\v, E})$, then 
$$
0 = (\xi' , \r^*(\varsigma')) = (\fd^*(\xi'), \varsigma'), 
\quad \forall  \varsigma' \in H^{\d-1-\bullet}(\SGK, \tM_{\lambda^\v, E}).  
$$
Nondegeneracy of the duality pairing (\ref{eqn:poincare-1}) in degree $\d-1-\bullet$ 
implies $\xi' \in {\rm Ker}(\fd^*) = {\rm Im}(\r^*).$ Hence, 
$\xi' \in H^\bullet_{\rm Eis}(\PBSC, \tM_{\lambda, E}).$ 
\end{proof}

\medskip
\subsection{The main result on rank-one Eisenstein cohomology}
Notations are as in  Thm.\,\ref{thm:manin-drinfeld-strong}. 
Consider the following maps  
starting from global cohomology $H^{b_N^F}( \SG,\tM_{\lambda,E})^{K_f}$ 
and ending with an isotypic component in boundary cohomology:

\begin{equation}
\label{eqn:eis-coh-image}
\xymatrix{
H^{b_N^F}( \SG,\tM_{\lambda,E})^{K_f}  \ar[d]^{\r^*} \\ 
H^{b_N^F}(\pBSC,\tM_{\lambda,E})^{K_f}  \ar[d]^{\fR_{\sigma_f, \sigma'_f, \varepsilon'}^b}  \\ 
I^\place_b(\sigma_f,\sigma_f', \varepsilon')_{P,w} \ \oplus \ I^\place_b(\sigma_f,\sigma_f', \varepsilon')_{Q, w'}
}\end{equation}

Recall, from Thm.\,\ref{thm:manin-drinfeld-strong},  that the target space 
$I^\place_b(\sigma_f,\sigma_f', \varepsilon')_{P,w} \oplus I^\place_b(\sigma_f,\sigma_f', \varepsilon')_{Q,w'}$  
is a $E$-vector space of dimension $2{\sf k}$.  In the self-associate case just change the $Q$ to $P$. 
Our main result on Eisenstein cohomology (see Thm.\,\ref{thm:rank-one-eis}
below) says that the image of 
$H^{b_N^F}_{\rm Eis}(\pBSC,\tM_{\lambda,E})^{K_f} = {\rm Im}(\r^*)$ under $\fR_{\sigma_f, \sigma'_f, \varepsilon'}^b$ is 
a middle-dimensional (i.e., ${\sf k}$-dimensional) subspace of this $2{\sf k}$-dimensional space. 
(It helps to have a mental picture of when ${\sf k} = 1$, i.e., of a line in an ambient two-dimensional space; we will see later that the ``slope" of this line contains arithmetic information about $L$-values.) 
The proof of this main result also needs the analogue of (\ref{eqn:eis-coh-image}) in top-degree. We now state and prove this main result on Eisenstein cohomology.

\medskip
\subsubsection{\bf The image of Eisenstein cohomology under $\fR_{\sigma_f, \sigma'_f, \varepsilon'}^\bullet$}

\begin{thm}
\label{thm:rank-one-eis}
Let the notations be as in \ref{sec:simple-notation}. 
Furthermore, for brevity, let 
\begin{eqnarray*}
\fI^b(\sigma_f, \sigma_f', \varepsilon') \ & :=  &\ 
\fR_{\sigma_f, \sigma'_f, \varepsilon'}^b(H^{b_N^F}_{\rm Eis}(\pBSC,\tM_{\lambda,E})^{K_f}), \\
\fI^t(\sigma_f, \sigma_f', \varepsilon')^\v \ & := & \ 
\fR_{\sigma_f, \sigma'_f, \varepsilon'}^t(H^{\tilde t_N^F-1}_{\rm Eis}(\pBSC,\tM_{\lambda^\v,E})^{K_f}).
\end{eqnarray*}

\begin{enumerate}
\item In the non-self-associate cases ($n \neq n'$) we have: 

\smallskip

\begin{enumerate}
\item $\fI^b(\sigma_f, \sigma_f', \varepsilon')$ is a ${\sf k}$-dimensional $E$-subspace of 
$I^\place_b(\sigma_f,\sigma_f', \varepsilon')_{P,w} \oplus I^\place_b(\sigma_f,\sigma_f', \varepsilon')_{Q, w'}.$ 

\smallskip

\item  $\fI^t(\sigma_f, \sigma_f', \varepsilon')^\v$ is a ${\sf k}$-dimensional $E$-subspace of 
$I^\place_t(\sigma_f,\sigma_f', \varepsilon')^\v_{P, w^{\sf v}} \oplus 
I^\place_t(\sigma_f,\sigma_f', \varepsilon')^\v_{Q, w^{\sf v}{}'}.$
\end{enumerate}

\medskip

\item In the self-associate case ($n=n'$) the same assertions hold by putting $Q = P.$ 
\end{enumerate}
\end{thm}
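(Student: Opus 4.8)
\textbf{Proof plan for Theorem \ref{thm:rank-one-eis}.}

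The plan is to combine the purely cohomological input (Prop.\,\ref{prop:isotropic-subspace}) with the transcendental/automorphic input coming from Langlands's constant term theorem (Thm.\,\ref{thm:langlands}), and then to bootstrap from the non-self-associate case to the self-associate case. First I would establish the \emph{upper bound}: the image $\fI^b(\sigma_f,\sigma_f',\varepsilon')$ has dimension at most ${\sf k}$. This follows because $\fR_{\sigma_f,\sigma_f',\varepsilon'}^b$ applied to $H^{b_N^F}_{\rm Eis}(\pBSC,\tM_{\lambda,E})^{K_f}$ lands in the $2{\sf k}$-dimensional isotypic summand of Thm.\,\ref{thm:manin-drinfeld-strong}(2), and by Prop.\,\ref{prop:isotropic-subspace} the Eisenstein part of boundary cohomology is isotropic under the Poincar\'e duality pairing (\ref{eqn:poincare-2}); restricting that pairing to the isotypic summand — using that the pairing on $H^q(\u_P,\M_{\lambda,E})\times H^{d_U-q}(\u_P,\M_{\lambda^{\sf v},E})$ decomposes over Kostant representatives and pairs $\M_{w\cdot\lambda}$ with $\M_{w^{\sf v}\cdot\lambda^{\sf v}}$ — yields a nondegenerate pairing between the bottom-degree $2{\sf k}$-dimensional space and the top-degree $2{\sf k}$-dimensional space. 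An isotropic subspace of a space of dimension $2{\sf k}$ equipped with a nondegenerate pairing into its dual has dimension $\le {\sf k}$, which bounds both $\dim\fI^b$ and $\dim\fI^t$ by ${\sf k}$; and the isotropy argument forces $\fI^b$ and $\fI^t{}^{\sf v}$ to be mutual annihilators, so once one is shown to have dimension exactly ${\sf k}$ so is the other.

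Next I would establish the \emph{lower bound}: $\dim\fI^b(\sigma_f,\sigma_f',\varepsilon')\ge{\sf k}$. Here I base-change to $\C$ via an embedding $\iota:E\to\C$ (dimensions of $E$-vector spaces are unchanged) and invoke the theory of Eisenstein series. Given a section $f$ in the induced representation from $P$ corresponding to a class $\xi\in I^\place_b(\sigma_f,\sigma_f',\varepsilon')_{P,w}$, one forms the Eisenstein series $\Eis(f,s)$ and evaluates at the relevant point $s=-N/2$; the balancedness of $w$ (via the combinatorial lemma, \S\ref{sec:com-lemma}) guarantees this is exactly the cohomological point where the induced module sits in degree $b_N^F$. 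Langlands's constant term theorem (Thm.\,\ref{thm:langlands}) says the constant term along $Q$ of $\Eis(f,s)$ is $T_{\rm st}(f,s)$, the standard intertwining operator. Interpreting this in cohomology (compare diagrams (\ref{eqn:diagram-without-coh}) and (\ref{eqn:diagram-with-coh})) shows that $\r^*$ of the cohomology class attached to such an Eisenstein series, followed by $\fR_{\sigma_f,\sigma_f',\varepsilon'}^b$, produces the class $(\xi, T_{\rm st}^*\xi)$; letting $\xi$ range over the ${\sf k}$-dimensional space $I^\place_b(\sigma_f,\sigma_f',\varepsilon')_{P,w}$ gives a ${\sf k}$-dimensional subspace of the image (the first-coordinate projection is already injective on it). If the Eisenstein series from $P$ has a pole at $s=-N/2$, I would instead run the symmetric argument from $Q$: by Prop.\,\ref{prop:at-least-one-holomorphic} (whose proof uses the combinatorial lemma and Shahidi's local factors) the Eisenstein series from the associate side is holomorphic at $s=N/2$, yielding classes $(T^Q_{\rm Eis}(\psi),\psi)$ and again a ${\sf k}$-dimensional image. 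Combining with the upper bound gives equality, proving (1a); then the duality/annihilator relation from Prop.\,\ref{prop:isotropic-subspace} transfers this to $\fI^t(\sigma_f,\sigma_f',\varepsilon')^{\sf v}$, proving (1b).

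Finally, for part (2), the self-associate case $n=n'$: here $P=Q$, and the key point recorded in \S\ref{sec:simple-notation} is that the two balanced Kostant representatives $w$ and $w'$ in $W^P=W^Q$ are still \emph{distinct} (and likewise $w^{\sf v}\ne w^{\sf v}{}'$), so the two summands $I^\place_b(\sigma_f,\sigma_f',\varepsilon')_{P,w}$ and $I^\place_b(\sigma_f,\sigma_f',\varepsilon')_{Q,w'}$ genuinely occupy different pieces of boundary cohomology and the $2{\sf k}$-dimensional isotypic statement of Thm.\,\ref{thm:manin-drinfeld-strong} still holds verbatim. Consequently the entire argument above — the isotropy bound, the nondegenerate duality between bottom and top degree, and the construction of the ${\sf k}$-dimensional image via Eisenstein series (one only needs care that the standard intertwining operator is the self-intertwining operator of the induced representation, with its point of evaluation and possible pole handled exactly as in the non-self-associate case) — goes through after the cosmetic replacement $Q\leadsto P$. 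The main obstacle I anticipate is not any one of these steps in isolation but the cohomological interpretation of Langlands's constant term theorem: making precise that the cohomology class of an Eisenstein series restricts on the boundary to the class of its constant term, and that this matches $\fR^b\circ\r^*$ on the nose — this requires carefully tracking the identification of $I^\place_b(\cdots)_{P,w}$ with a space of sections, the degeneration of the relevant Leray spectral sequences, and the holomorphy of the local intertwining operators at finite places (M\oe glin--Waldspurger) and at infinity (Speh, Shahidi), so that the evaluation at $s=-N/2$ is legitimate and nonzero.
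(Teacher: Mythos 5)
Your overall strategy is the same as the paper's: produce a ${\sf k}$-dimensional subspace of the image via Langlands's constant term theorem (switching from $P$ to $Q$ via Prop.\,\ref{prop:at-least-one-holomorphic} if the Eisenstein series from $P$ has a pole at $s=-N/2$), and then cap the dimension using Prop.\,\ref{prop:isotropic-subspace}. However, there is a genuine gap in how you combine the two halves. Your ``upper bound'' step asserts that $\dim_E \fI^b(\sigma_f,\sigma_f',\varepsilon')\le{\sf k}$ because ``an isotropic subspace of a space of dimension $2{\sf k}$ has dimension $\le{\sf k}$.'' That dimension count is valid for a bilinear form on a \emph{single} space, but here the Poincar\'e pairing (\ref{eqn:poincare-2}) pairs two \emph{different} spaces: degree $b_N^F$ with coefficients $\tM_{\lambda,E}$ against degree $\tilde t_N^F-1$ with coefficients $\tM_{\lambda^\v,E}$. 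What Prop.\,\ref{prop:isotropic-subspace}, together with the Hecke-equivariant splitting of the pairing over the isotypic summands, actually gives is $(\fI^b,\fI^t{}^\v)=0$ inside a nondegenerate pairing of two $2{\sf k}$-dimensional spaces, hence only $\dim_E\fI^b+\dim_E\fI^t{}^\v\le 2{\sf k}$. This does not bound either summand by ${\sf k}$ separately, and for the same reason your plan to deduce (1b) from (1a) ``by the duality/annihilator relation'' does not close: knowing $\dim_E\fI^b={\sf k}$ and the orthogonality only yields $\dim_E\fI^t{}^\v\le{\sf k}$, not equality.

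The repair is exactly what the paper does: you must run the Eisenstein-series/constant-term argument \emph{independently in the top degree} as well, i.e.\ with the contragredient inducing data $\sigma_f^\v(n')\otimes\sigma_f'^\v(-n)$ and coefficients $\tM_{\lambda^\v,E}$ in degree $\tilde t_N^F-1$, to get the second lower bound $\dim_E\fI^t(\sigma_f,\sigma_f',\varepsilon')^\v\ge{\sf k}$. Only then does the inequality $\dim_E\fI^b+\dim_E\fI^t{}^\v\le 2{\sf k}$ force both dimensions to equal ${\sf k}$. The rest of your outline — the identification of the image with the graph of $T_{\rm st}^*$ via the diagrams (\ref{eqn:diagram-without-coh}) and (\ref{eqn:diagram-with-coh}), the pole-switching, and the observation that the self-associate case goes through because $w\ne w'$ even when $P=Q$ — matches the paper's proof.
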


\smallskip
\subsubsection{\bf Proof of Thm.\,\ref{thm:rank-one-eis}} 
The proof of (2) is almost identical to the proof of (1), and so we give the details only for (1). 
The proof of (1) involves two steps: 
\begin{enumerate}
\item[(i)] The first step is to show that both $\fI^b(\sigma_f, \sigma_f', \varepsilon')$ and 
$\fI^t(\sigma_f, \sigma_f', \varepsilon')^\v$ are at least ${\sf k}$-dimensional; this is achieved by going to a transcendental level and appealing to Langlands's constant term theorem and producing enough cohomology classes in the image. 

\item[(ii)] The second step, after invoking properties of the Poincar\'e duality pairing, is to show that both 
$\fI^b(\sigma_f, \sigma_f', \varepsilon')$ and $\fI^t(\sigma_f, \sigma_f', \varepsilon')^\v$ have dimension exactly 
${\sf k}.$ 
\end{enumerate}

We take up these two arguments in the paragraphs \ref{sec:langlands-constant-term} and \ref{sec:app-poincare} below.

\medskip
\paragraph{\bf The cohomological meaning of the constant term theorem of Langlands}
\label{sec:langlands-constant-term}
Take an embedding $\iota : E \to \C$ and pass to a transcendental level via $\iota.$ To show that 
$\fI^b(\sigma_f, \sigma_f', \varepsilon')$ or $\fI^t(\sigma_f, \sigma_f', \varepsilon')^\v$ 
has dimension at least ${\sf k}$ as an $E$-vector space, it suffices to show that their base-change to 
$\C$ via $\iota$ has dimension at least ${\sf k}$ as a $\C$-vector space, i.e., we would like to show: 
\begin{multline}
\label{eqn:nonzero-via-Langlands}
\dim_\C\left(
\fR_{{}^\iota\!\sigma_f, {}^\iota\!\sigma'_f, \varepsilon'}^b
(H^{b_N^F}_{\rm Eis}(\pBSC,\tM_{{}^\iota\!\lambda})^{K_f}) 
\right) \geq {\sf k}
\quad {\rm and} \quad \\
\dim_\C\left(
\fR_{{}^\iota\!\sigma_f, {}^\iota\!\sigma'_f, \varepsilon'}^t(H^{\tilde{t}_N^F}_{\rm Eis}(\pBSC,\tM_{{}^\iota\!\lambda})^{K_f})
\right) \geq {\sf k}. 
\end{multline}
In Sect.\,\ref{sec:aut-l-fns} we briefly introduce the $L$-functions at hand, recall the celebrated theorem of Lanlgands on the constant term of an Eisenstein series, and then we will come back to the proof of this part in 
Sect.\,\ref{sec:conclude-proof-nonzero-image}.

\medskip
\paragraph{\bf Application of Poincar\'e duality}
\label{sec:app-poincare}
The proofs of (1)(a) and (1)(b), assuming that we have proved (\ref{eqn:nonzero-via-Langlands}), is an exercise involving properties of Poincar\'e duality; especially that it is non-degenerate, Hecke-equivariant,  and that the Eisenstein part is maximal isotropic. 
The bare-bones linear algebra looks like: suppose we have decompositions 
$V = V_P \oplus V_Q$ and $W = W_P \oplus W_Q,$
where $V_P,V_Q,W_P,$ and $W_Q$ are all ${\sf k}$-dimensional vector spaces over $E$; suppose also that 
we have a non-degenerate pairing $( \ ,\ ) : V \times W \to E$ such that $(V_P, W_Q) = (V_Q , W_P) = 0$ and the pairing is non-degenerate on $V_P \times W_P$ and $V_Q \times W_Q;$ furthermore, suppose we are given subspaces 
$\fI \subset V$ and 
$\fJ \subset W$ such that $\dim_E(\fI) \geq {\sf k},$ $\dim_E(\fJ) \geq {\sf k},$ and $(\fI, \fJ) = 0.$ Then it is easy to see 
that $\dim_E(\fI) = {\sf k} = \dim_E(\fJ).$

This concludes the proof of Thm.\,\ref{thm:rank-one-eis} under the assumption 
that we have proved (\ref{eqn:nonzero-via-Langlands}).

\medskip
\subsection{A theorem of Langlands on the constant term of an Eisenstein series}
\label{sec:aut-l-fns}

We will need some details from the Langlands--Shahidi method in our context. 
The reader is referred to Kim \cite{kim-fields} and Shahidi \cite{shahidi-pcmi} \cite{shahidi-book} for details, 
proofs and further references.

\medskip
\subsubsection{\bf $\bfgreek{alpha}_P$, $\bfgreek{gamma}_P$, $\bfgreek{rho}_P$, $\bfgreek{delta}_P$} 
Let the notations be as in Sect.\,\ref{sec:simple-notation}. In particular, 
$P_0 = M_{P_0}U_{P_0}$ is the standard $(n,n')$ parabolic subgroup of $\GL_N/F,$ and $P = R_{F/\Q}(P_0)$, etc.  
We write 
$$
M_{P_0} = \left\{m = {\rm diag}(h,h') =   
\left(\begin{array}{cc} h & 0 \\ 0 & h' \end{array}\right) \ :\ 
h \in \GL_n, \ h' \in \GL_{n'} \right\},
$$
and let 
$$
A_{P_0} := Z_{M_{P_0}} = 
\left\{a =  
\left(\begin{array}{cc} t 1_n & 0 \\ 0 & t' 1_{n'} \end{array}\right) \ :\ 
t,t' \in \GL_1 \right\}.
$$

Fix an identification $X^*(A_{P_0}) = \Z^2$, by letting $(k,k') \in \Z^2$ correspond to the character 
that sends $a$ to $t^kt'{}^{k'}$. We have 
$$
X^*(A_P \times E) = \bigoplus_{\tau: F \to E} X^*(A_{P_0} \times_\tau E) = \bigoplus_{\tau :F \to E} \Z^2.
$$ 
Similarly, fix $X^*(M_{P_0}) = \Z^2$, by letting $(k,k') \in \Z^2$ correspond to the character 
that sends ${\rm diag}(h,h')$ to ${\rm det}(h)^{k}{\rm det}(h')^{k'}$. 
Restriction from $M_{P_0}$ to $A_{P_0}$ gives an inclusion $X^*(M_{P_0}) \hookrightarrow X^*(A_{P_0})$ which is given by $(k,k') \mapsto (nk,n'k')$. Clearly, $X^*(M_{P_0}) \otimes \Q = X(A_{P_0}) \otimes \Q$, which fixes an identification $X^*(M_{P_0}) \otimes \Q = \Q^2$ via $X(A_{P_0}) \otimes \Q = \Q^2.$ 
Similarly, $X^*(M_{P_0}) \otimes \R = X^*(A_{P_0}) \otimes \R$ and fix 
$X^*(M_{P_0}) \otimes \R = \R^2$. This fixes $X^*(M_P) \otimes \R = \oplus_{\tau:F \to E} \R^2.$

\smallskip

Let $\bfgreek{rho}_{P_0}$ be half the sum of positive roots whose root 
spaces appear in $U_{P_0}$. The restriction of $\bfgreek{rho} _{P_0}$ to $A_{P_0}$ is in 
$X^*(A_{P_0}) \otimes \Q \hookrightarrow \mathfrak{a}_{P_0}^* := X(A_{P_0}) \otimes \R = X^*(M_{P_0}) \otimes \R$ and under the above identification of the latter with $\R^2$, one has $\bfgreek{rho}_{P_0} = (n'/2, -n/2)$, or using the notations of \ref{sec:standard-fundamental}, we have
$$
\bfgreek{rho}_{P_0} = \frac12\sum_{\stackrel{1\leq i \leq n}{n+1 \leq j \leq N}} \e_i - \e_j = 
\frac{n'}{2}(\e_1 + \cdots + \e_n) - \frac{n}{2}(\e_{n+1} + \cdots + \e_{n+n'}). 
$$
And $\bfgreek{rho}_P = (\bfgreek{rho}_{P_0^\tau})_{\tau : F \to E},$ with each $\bfgreek{rho}_{P_0^\tau}$ given as above. 
 
\smallskip

Let $\bfgreek{alpha}_{P_0} =  \bfgreek{alpha}_n = \e_n - \e_{n+1}$ be the unique simple root of $G_0$ that is not amongst the roots of $M_{P_0}.$ Consider the corresponding fundamental weight 
$\bfgreek{gamma}_{P_0} = \bfgreek{gamma}_n =  \langle \bfgreek{rho}_{P_0}, \bfgreek{alpha}_{P_0} \rangle^{-1} 
\bfgreek{rho}_{P_0}.$ 
Identify $X^*(T_{N,0}) \otimes \R$ with $\R^N$ using the $\e_i$'s, and let $(\ ,\ )$ be the usual euclidean 
inner product on $\R^N$. It is easy to see that 
$\langle \bfgreek{rho}_{P_0}, \bfgreek{alpha}_{P_0} \rangle = 
\frac{2 (\bfgreek{rho}_{P_0}, \bfgreek{alpha}_{P_0})}{(\bfgreek{alpha}_{P_0},\bfgreek{alpha}_{P_0})} = N/2$, and hence 
$$
\bfgreek{gamma}_{P_0} =  \frac{2}{N}\bfgreek{rho}_{P_0}.
$$
We also have $\bfgreek{alpha}_P = (\bfgreek{alpha}_{P_0^\tau})_{\tau: F \to E}$ and 
$\bfgreek{gamma}_P = (\bfgreek{gamma}_{P_0^\tau})_{\tau: F \to E}$, with 
$N\, \bfgreek{gamma}_{P_0^\tau}=  2\, \bfgreek{rho}_{P_0^\tau}.$ 

\smallskip

Let $\bfgreek{delta}_{P_0}$ be the modular character of $M_{P_0}(k)$, where $k$ is any local field (such as $k=\R$ or 
$k = F_v$ any $p$-adic completion of $F$), which is defined as 
$\bfgreek{delta}_{P_0}(m) = |{\rm det}({\rm Ad}_{\mathfrak{u}_P}(m))|$ for $m \in M_{P_0}(k)$. If 
$m = {\rm diag}(h, h')$ with $h \in \GL_n(k)$ and $h' \in \GL_{n'}(k)$ then 
$$
\bfgreek{delta}_{P_0}({\rm diag}(h,h')) = |{\rm det}(h)|^{n'}|{\rm det}(h')|^{-n}.
$$
Note that that $|2\bfgreek{rho}_{P_0}(\cdot)| = \bfgreek{delta}_{P_0}(\cdot)$. Also, 
$\bfgreek{delta}_P$ the modular character of $M_P(k)$, for $k = \R$ or $k=\Q_p$,  is defined via the various completions of $F$ over that place; for example, $\bfgreek{delta}_P$ on $M_P(\Q_p)$ is $\prod_{\p | p} \bfgreek{delta}_{P_0, \p},$ 
where by $\bfgreek{delta}_{P_0, \p}$ we mean the character as above on $M_{P_0}(F_\p).$

\medskip
\subsubsection{\bf Induced representations}
Let $\sigma$ (resp., $\sigma'$) be a cuspidal automorphic representation of $G_n(\A)$ (resp., $G_{n'}(\A)$). The relation with our previous `arithmetic' notation is that given $\sigma_f \in \Coh_{!!}(G_n,\mu)$ and given $\iota : E \to \C$, think of 
${}^\iota\sigma_f$ to be the finite part of a cuspidal automorphic representation ${}^\iota\sigma$, etc. The $\iota$ is fixed, and we suppress it until otherwise mentioned. Consider the induced representation $I_P^G(s, \sigma \otimes \sigma')$ consisting of all smooth functions 
$f : G(\A) \to V_{\sigma} \otimes V_{\sigma'}$ such that 
\begin{equation}
\label{eqn:f-in-induced-repn}
f(mug) \ = \ 
\bfgreek{delta}_P(m)^{\frac{1}{2}} \bfgreek{delta}_P(m)^{\frac{s}{N}} \, (\sigma \otimes \sigma')(m) \, f(g)
\end{equation}
for all $m \in M_P(\A)$, $u \in U_P(\A)$, and $g \in G(\A)$; where $V_\sigma$ (resp., $V_{\sigma'}$) is the subspace inside the space of cusp forms on $G_n(\A)$ (resp., $G_{n'}(\A)$) realizing the representation $\sigma$ (resp., 
$\sigma'$). In other words,
$$
I_P^G(s, \sigma \otimes \sigma') = 
{\rm Ind}_{P(\A)}^{G(\A)}((\sigma  \otimes |\ |^{\frac{n'}{N}s}) \otimes (\sigma' \otimes |\ |^{\frac{-n}{N}s})), 
$$
where ${\rm Ind}_P^G$ denotes the normalized parabolic induction. In terms of 
algebraic or un-normalized induction, we have
\begin{equation}
\label{eqn:a-ind-s=-N/2}
I_P^G(s,\sigma \otimes \sigma') = {}^{\rm a}{\rm Ind}_{P(\A)}^{G(\A)}
((\sigma \otimes |\ |^{\frac{n'}{N}s + \frac{n'}{2}}) \otimes 
(\sigma'  \otimes |\ |^{\frac{-n}{N}s - \frac{n}{2}})). 
\end{equation}

\medskip
\subsubsection{\bf Standard intertwining operators}
\label{sec:T_st}

There is an element $w_{P_0} \in W_G$, the Weyl group of $G$, which is uniquely determined by the property
$w_{P_0}(\bfpi_G - \{\bfgreek{alpha}_P\}) \subset \bfpi_G$ and $w_{P_0}(\alpha) < 0.$ This element looks like 
$w_{P_0} = (w_{P_0}^\tau)_{\tau:F \to E}$, where for each $\tau$, as a permutation matrix in $\GL_N$, we have 
$$
w_{P_0}^\tau = \left[\begin{array}{ll} & 1_n \\ 1_{n'} & \end{array}\right].
$$

The parabolic subgroup $Q$, which is associate to $P$, corresponds to 
$w_{P_0}(\bfpi_G - \{\bfgreek{alpha}_P\}).$ Since 
$w_{P_0}^\tau{}^{-1} {\rm diag}(h,h') w_{P_0}^\tau = {\rm diag}(h',h)$ for all $ {\rm diag}(h,h') \in M_{P_0^\tau}$, we get  
$w_{P_0}(\sigma \otimes \sigma') =  \sigma' \otimes \sigma$ as a representation of $M_{Q}(\A)$. 
The global  standard intertwining operator:
$$
T_{\rm st}^{PQ}(s, \sigma \otimes \sigma') :  I_P^G(s, \sigma \otimes \sigma') \ \longrightarrow \  
I_Q^G(-s, \sigma' \otimes \sigma)
$$
is given by the integral
$$
(T_{\rm st}^{PQ}(s, \sigma \otimes \sigma')f)(g) = \int_{U_{Q}(\A)}f(w_{P_0}^{-1}ug)\, du. 
$$
Often, we will abbreviate $T_{\rm st}^{PQ}(s, \sigma \otimes \sigma')$ as $T_{\rm st}(s, \sigma \otimes \sigma').$
The global standard intertwining operator factorizes as a product of local standard intertwining operators:
$T_{\rm st}(s, \sigma \otimes \sigma') = 
\otimes_v T_{\rm st}(s, \sigma_v \otimes \sigma'_v)$
where the local operator 
$$
T_{\rm st}(s, \sigma_v \otimes \sigma'_v) : 
I_P^G(s, \sigma_v \otimes \sigma'_v) \ \longrightarrow \  
I_Q^G(-s,  \sigma_v' \otimes \sigma_v)
$$
is given by a similar local integral.

\medskip
\subsubsection{\bf Ratios of $L$-functions}
\label{sec:ratio-l-fns}
A fundamental observation of Langlands \cite{langlands-euler} says that if $v$ is a finite place of $F$ where both 
$\sigma$ and $\sigma'$ are unramified, and suppose $f_v^0$ (resp., $\tilde f_v^0$) is 
the normalized spherical vector of $I_P^G(s, \sigma_v \otimes \sigma'_v)$ (resp., 
$I_Q^G(-s, \sigma'_v \otimes \sigma_v)$) then 
$$
T_{\rm st}(s, \sigma_v \otimes \sigma'_v)f_v^0 \ = \ 
\frac{L(s, \sigma_v \times \sigma_v'^{\sf v})}{L(s+1, \sigma_v \times \sigma_v'^{\sf v})} \, \tilde f_v^0. 
$$
If ${\rm diag}(\vartheta_{1,v},\dots,\vartheta_{n,v})$ (resp., ${\rm diag}(\vartheta'_{1,v},\dots,\vartheta'_{n',v})$) is the 
Satake parameter of $\sigma_v$ (resp., $\sigma'_v$) then the local `Rankin--Selberg' $L$-function above is given by:
$$
L(s, \sigma_v \times \sigma_v'^{\sf v}) \ = \ \prod_{\substack{1\leq i \leq n \\ 1 \leq j \leq n'}}
(1 - \vartheta_{i,v}\vartheta_{j,v}'^{-1} q_v^{-s})^{-1}.
$$

Now, if $f \in I_P^G(s, \sigma \otimes \sigma')$ is a pure-tensor $f = \otimes_v f_v$, with $f_v = f_v^0$ outside a finite set $\place$ of places including the archimedean ones, then 
\begin{equation}
\label{eqn:T-stand-ratio-L-fns}
T_{\rm st}(s, \sigma \otimes \sigma') f \ = \ 
\frac{L^\place(s, \sigma \times \sigma'^{\sf v})}{L^\place(s+1, \sigma \times \sigma'^{\sf v})} \, 
\otimes_{v \notin \place} \tilde f_v^0 \otimes_{v \in \place} 
T_{\rm st}(s, \sigma_v \otimes \sigma'_v)f_v,
\end{equation}
where $L^\place(s, \sigma \times \sigma'^{\sf v}) = \prod_{v \notin \place} L(s, \sigma_v \times \sigma_v'^{\sf v})$ is the partial $L$-function. 

\smallskip

Starting from \ref{sec:eff-motives-coh-l-fns}, we will work with the cohomological $L$-function which is better adapted for studying arithmetic properties of $L$-values. To contrast then the above $L$-function will be referred to as `automorphic' $L$-function.

\medskip
\subsubsection{\bf Eisenstein series}
\label{sec:eisenstein-series}

Let $f \in I_P^G(s, \sigma \times \sigma')$ be as in (\ref{eqn:f-in-induced-repn}). For $\ul g \in G(\A)$, the value 
$f(\ul g)$ is a cusp form on $M_P(\A).$ By the defining equivariance property of $f$, the complex number 
$f(\ul g)(\ul m)$ determines and is determined by $f(\ul m \ul g)(\ul 1)$ for any $\ul m \in M_P(\A).$ 
Henceforth, we identify 
$f \in I_P^G(s, \sigma \times \sigma')$ with the complex valued function $\ul g \mapsto f(\ul g)(\ul 1),$ i.e., we have embedded: 
$$
I_P^G(s, \sigma \times \sigma') \ \hookrightarrow \ 
\cC^\infty\left(U_P(\A)M_P(\Q)\backslash G(\A), \omega_\infty^{-1}\right) 
\ \subset \ \cC^\infty\left(P(\Q)\backslash G(\A), \omega_\infty^{-1}\right), 
$$
where $\omega^{-1}_\infty$ is a simplified notation for the central character of $\sigma \otimes \sigma'$ restricted to 
$S(\R)^\circ.$ If $\sigma_f \in \Coh(G_n, \mu)$, $\sigma_f' \in  \Coh(G_{n',} \mu')$ and $\iota : E \to \C$ then 
$\omega_\infty$ is the product of the central characters $\omega_{\M_{{}^\iota\! \mu}} \omega_{\M_{{}^\iota\! \mu'}}$ restricted to $S(\R).$ 

Given $f \in I_P^G(s, \sigma \times \sigma')$, thought of as a complex-valued function on $P(\Q)\backslash G(\A)$, define the corresponding Eisenstein series $\Eis_P(s, f) \in \cC^\infty\left(G(\Q)\backslash G(\A), \omega_\infty^{-1}\right)$ by averaging over $P(\Q)\backslash G(\Q)$: 
\begin{equation}
\label{eqn:Eis-P}
\Eis_P(s, f)(\ul g) \ := \ \sum_{\gamma \in P(\Q)\backslash G(\Q)} f(\gamma \ul g), 
\end{equation}
as a formal sum, and we will discuss below holomorphy of this Eisenstein series at a particular point of evaluation.

\medskip
\subsubsection{\bf Constant term of an Eisenstein series}
\label{sec:constant-term-eisenstein-series}
For the parabolic subgroup $Q$, recall the constant term map from Sect.\,\ref{sec:spectral-seq-trans} denoted
$\cF^Q : \cC^\infty(G(\Q)\backslash G(\A), \omega_\infty^{-1}) 
\to \cC^\infty(M_Q(\Q)U_Q(\A)\backslash G(\A), \omega_\infty^{-1}),$ and given by: 
$$
\cF^Q(\phi)(\ul g) \ = \ 
\int\limits_{U_Q(\Q)\backslash U_Q(\A)} \phi(\ul u \, \ul g)\, d\ul u.
$$
Consider the following diagram of maps: 
$$
\xymatrix{
I_P^G(s, \sigma \times \sigma') \ar@{^{(}->}[rr] \ar[dd]_{T_{\rm st}(s, \sigma \times \sigma')}& & 
\cC^\infty\left(P(\Q)\backslash G(\A), \omega_\infty^{-1}\right) \ar[d]^{\Eis_P} \\
& & \cC^\infty\left(G(\Q)\backslash G(\A), \omega_\infty^{-1}\right) \ar[d]^{\cF^Q} \\
I_Q^G(-s, \sigma' \times \sigma) \ar@{^{(}->}[rr] & & 
\cC^\infty\left(Q(\Q)\backslash G(\A), \omega_\infty^{-1}\right)
}$$

\begin{thm}[Langlands]
\label{thm:langlands}
Let $f  \in I_P^G(s, \sigma \times \sigma').$ 
\begin{enumerate}
\item In the non-self-associate cases ($n \neq n'$), we have: 
\smallskip
   \begin{enumerate}
   \item $\cF^P \circ \Eis_P(s, f) \ = \ f.$ 
   \smallskip

   \item $\cF^Q \circ \Eis_P(s, f) \ = \ T_{\rm st}(s, \sigma \times \sigma')(f).$ 
   \end{enumerate}
   \smallskip

\item In the self-associate cases ($n = n'$ and $P = Q$), we have: 
$$\cF^P \circ \Eis_P(s, f) \ = \ f + T_{\rm st}(s, \sigma \times \sigma')(f).$$ 

\end{enumerate}

\end{thm}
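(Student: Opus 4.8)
The statement is Langlands's constant term theorem in the present concrete setting: $G = \mathrm{R}_{F/\Q}(\mathrm{GL}_N/F)$, $P$ and $Q$ the standard maximal parabolics of type $(n,n')$ and $(n',n)$, and an Eisenstein series $\mathrm{Eis}_P(s,f)$ attached to a section $f$ of the induced representation from a cuspidal $\sigma\otimes\sigma'$. I would prove it by a direct unfolding computation of the two constant terms $\cF^P\circ\mathrm{Eis}_P(s,f)$ and $\cF^Q\circ\mathrm{Eis}_P(s,f)$, working in a region of the complex variable $s$ where the Eisenstein series converges absolutely (so all interchanges of sum and integral are legitimate) and then invoking meromorphic continuation. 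The only genuinely global input needed is the Bruhat decomposition of $G(\Q)$ together with the fact that $\sigma$ and $\sigma'$ are \emph{cuspidal}, which kills all but the relevant Weyl-cell contributions.

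\textbf{Key steps.} First I would recall that $\mathrm{Eis}_P(s,f)(\ul g)=\sum_{\gamma\in P(\Q)\bs G(\Q)}f(\gamma\ul g)$, substitute this into $\cF^R(\phi)(\ul g)=\int_{U_R(\Q)\bs U_R(\A)}\phi(\ul u\ul g)\,d\ul u$ for $R\in\{P,Q\}$, and decompose the sum over $P(\Q)\bs G(\Q)$ using the Bruhat decomposition relative to $P$ and $R$: $P(\Q)\bs G(\Q)/R(\Q)$ is indexed by a set of double-coset representatives coming from $W_{M_P}\bs W_G/W_{M_R}$. Since $M_P=\mathrm{R}_{F/\Q}(\mathrm{GL}_n\times\mathrm{GL}_{n'})$ and $M_R$ is of the same or associate shape, this double-coset set is small; the relevant representatives are the identity (the ``open'' or ``big'' cell giving $f$ itself, present exactly when $R$ is associate to $P$ in the appropriate sense, i.e. $R=P$ or the $w_{P_0}$-twist lands in $Q$) and the element $w_{P_0}$ of Section~\ref{sec:T_st}. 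For each representative $w$, the standard computation collapses the corresponding piece of the constant term to an integral $\int f(w^{-1}\ul u\ul g)\,d\ul u$ over the appropriate unipotent, and \emph{cuspidality of $\sigma\otimes\sigma'$} forces every non-trivial, non-$w_{P_0}$ cell to contribute zero because it produces a constant term of a cusp form along a proper parabolic of $M_P$. This is exactly the content of \cite[Chap.\,3]{harder-book}, so I would cite that for the vanishing lemma. The surviving pieces are: the identity coset, yielding $f$; and the $w_{P_0}$ coset, yielding precisely the integral defining the standard intertwining operator $T_{\rm st}^{PQ}(s,\sigma\otimes\sigma')f$ of Section~\ref{sec:T_st}, $(T_{\rm st}f)(g)=\int_{U_Q(\A)}f(w_{P_0}^{-1}ug)\,du$. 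In the non-self-associate case, the identity coset contributes to $\cF^P$ (giving (1)(a)) and the $w_{P_0}$ coset, landing in $Q\neq P$, contributes to $\cF^Q$ (giving (1)(b)), with no overlap. In the self-associate case $P=Q$, both cosets feed into the single $\cF^P$, giving the sum $f+T_{\rm st}(s,\sigma\times\sigma')(f)$ of part (2).

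\textbf{Main obstacle.} The conceptually delicate step — and the one I expect to occupy most of the work — is the careful bookkeeping of the Bruhat cells together with the cuspidality vanishing: one must verify that among all double cosets in $P(\Q)\bs G(\Q)/R(\Q)$, the only ones surviving the integration against a cuspidal inducing datum are the two named above, and that the associated unipotent integrals are convergent and rearrange correctly to the claimed $T_{\rm st}$ integral (with the right normalization of measures so that $T_{\rm st}$ is exactly the operator of Section~\ref{sec:T_st}, not a scalar multiple). Since all of this is classical and is treated in detail in \cite[Chap.\,3]{harder-book} and in the standard references (Mœglin--Waldspurger, Langlands), I would present the argument in a streamlined form, doing the Bruhat decomposition explicitly for the $(n,n')$ versus $(n',n)$ parabolics — where the Weyl-group combinatorics is transparent — and quoting the cuspidal-vanishing and convergence facts rather than reproving them. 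Everything then continues meromorphically in $s$, which gives the identity at the point of evaluation $s=-N/2$ used later, granting holomorphy of $\mathrm{Eis}_P(s,f)$ there (or, when there is a pole, the modification via $Q$ as in Prop.\,\ref{prop:at-least-one-holomorphic}).
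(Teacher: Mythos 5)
Your outline is correct, but note that the paper itself offers no proof of Thm.\,\ref{thm:langlands}: it is quoted as Langlands's constant term theorem, with the reader sent to \cite{langlands-eisenstein}, \cite{kim-fields}, \cite{shahidi-book} and \cite[Chap.\,3]{harder-book} for proofs. What you sketch is the standard unfolding argument behind those references, and it is the right one: insert the definition of $\Eis_P(s,f)$ into $\cF^R$ for $\Re(s)\gg 0$, decompose $P(\Q)\backslash G(\Q)/R(\Q)$ by Bruhat via $W_{M_P}\backslash W_G/W_{M_R}$, and use cuspidality of $\sigma\otimes\sigma'$ to kill all cells except the identity cell (present only when $R=P$) and the $w_{P_0}$-cell (landing in $Q$), the latter collapsing, after the substitution $\delta\in U_Q(\Q)$ combined with the integral over $U_Q(\Q)\backslash U_Q(\A)$, to exactly $\int_{U_Q(\A)}f(w_{P_0}^{-1}ug)\,du=T_{\rm st}(s,\sigma\otimes\sigma')f(g)$ in the paper's normalization. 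The one place to be precise is your phrase that the double-coset set is ``small'' with only two relevant representatives: for the $(n,n')$ versus $(n,n')$ (resp.\ $(n',n)$) parabolics there are in fact $\min(n,n')+1$ double cosets, parametrized by $2\times 2$ matrices of non-negative integers with the prescribed row and column sums, and it is precisely the cuspidality vanishing — a nontrivial cell forces a constant term of the cusp form along a proper parabolic of $M_P$ — that eliminates all but the cells whose matrix has a single nonzero entry in each row and column; you do state this, but in a write-up the enumeration should be carried out explicitly since it is where the case distinction $n\neq n'$ versus $n=n'$ (one surviving cell for each of $\cF^P,\cF^Q$, versus two cells feeding the single $\cF^P$) actually comes from. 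With that bookkeeping done, your argument reproduces the theorem as stated.
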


Using (\ref{eqn:T-stand-ratio-L-fns}), for $f = \otimes f_v \in I_P^G(s, \sigma \times \sigma')$, we get
in the non-self-associate cases ($n \neq n'$): 
\begin{equation}
\label{eqn:langlands-ratio-L-fns}
\cF^Q (\Eis_P(s, f)) \ = \ 
\frac{L^\place(s, \sigma \times \sigma'^{\sf v})}{L^\place(s+1, \sigma \times \sigma'^{\sf v})} \, 
\otimes_{v \notin \place} \tilde f_v^0 \otimes_{v \in \place} 
T_{\rm st}^{PQ}(s, \sigma_v \otimes \sigma'_v)f_v. 
\end{equation}
For future reference, let's record the same statement if we started from a representation induced from $Q$. 
Let $\tilde f = \otimes \tilde f_v \in I_Q^G(w, \sigma' \times \sigma)$, we get
\begin{equation}
\label{eqn:langlands-ratio-L-fns-Q-to-P}
\cF^P (\Eis_Q(w, \tilde f)) \ = \ 
\frac{L^\place(w, \sigma' \times \sigma^{\sf v})}{L^\place(w+1, \sigma' \times \sigma^{\sf v})} \, 
\otimes_{v \notin \place}  f_v^0 \otimes_{v \in \place} 
T_{\rm st}^{QP}(w, \sigma'_v \otimes \sigma_v) \tilde f_v. 
\end{equation}
We leave it to the reader to formulate a similar statement in the self-associate case.

\medskip
\subsubsection{\bf Evaluating at $s = -N/2$ and $w = N/2$}
\label{sec:-n/2}
Recall the induced representations which appear in boundary cohomology as in Sect.\,\ref{sec:simple-notation}, 
and recall also how the complex variable $s$ appears in an induced representation as 
in (\ref{eqn:a-ind-s=-N/2}). Putting the two together, we see: 
$$
{}^{\rm a}{\rm Ind}_{P(\A)}^{G(\A)}
\left( \sigma \otimes \sigma' \right) \ = \ 
I_P^G(s, \sigma \otimes \sigma')|_{s = -N/2}. 
$$
Similarly, we have
$$
{}^{\rm a}{\rm Ind}_{Q(\A)}^{G(\A)}
\left( \sigma'(n) \otimes \sigma(-n') \right) \ = \ 
I_Q^G(w, \sigma' \otimes \sigma)|_{w = N/2}. 
$$
It might be helpful to bear in mind the following picture: 
$$
\xymatrix{
 & & & & \\
 & & & & \\
{}^{\rm a}{\rm Ind}_{P(\A)}^{G(\A)}
\left( \sigma \otimes \sigma' \right) \ar@/^4pc/[rrrr]^{T^{PQ}_{\rm st}(s, \sigma \otimes \sigma')|_{s = -N/2}} & & & &
{}^{\rm a}{\rm Ind}_{Q(\A)}^{G(\A)}
\left( \sigma'(n) \otimes \sigma(-n') \right) \ar@/^4pc/[llll]^{T^{QP}_{\rm st}(w, \sigma' \otimes \sigma)|_{w = N/2}}\\
 & & & & \\
 & & & & 
}$$

\medskip
\subsubsection{\bf Holomorphy of one of the Eisenstein series at point of evaluation} 
\label{sec:holomorphy-eisenstein}

The aim of this section is to show that it is not possible for $\Eis_P(s,f)$ to have a pole at $s = -N/2$  \underline{and} 
$\Eis_Q(w, \tilde f)$ to have a pole at $w = N/2.$ But before stating the result 
(see Prop.\,\ref{prop:at-least-one-holomorphic} below) we need to briefly review some well-known analytic properties of Rankin--Selberg $L$-functions.

\medskip
\paragraph{\bf An interlude on analytic properties of Rankin--Selberg $L$-functions}
\label{sec:analytic-rankin-selberg} 
(We refer the reader to Shahidi's book \cite[10.1]{shahidi-book} for further details and references.) 
Let $\sigma$ (resp., $\sigma'$) be a cuspidal automorphic representation of 
$G_n(\A)$ (resp., $G_{n'}(\A)$). For any place $v$ the local $L$-function 
$L(s, \sigma_v \times \sigma_v'^{\sf v})$ is defined as a local Artin L-function via the local Langlands correspondence. For a finite unramified place $v$ we gave this definition in  Sect.\,\ref{sec:ratio-l-fns} using the Satake parameters. There are other ways of defining the local factor, for example, via the approach of zeta integrals and integral representations; it is a fact that we end up with the same local factor; see {\it loc.\,cit.} The global $L$-function is defined for $\Re(s) \gg 0$ as an Euler product: 
$L(s, \sigma \times \sigma'^{\sf v}) = \prod_v L(s, \sigma_v \times \sigma'^{\sf v}_v). $
For any finite set of places $\place$, the partial $L$-function is defined for $\Re(s) \gg 0$ as
$L^\place(s, \sigma \times \sigma'^{\sf v}) = \prod_{v \notin \place} L(s, \sigma_v \times \sigma'^{\sf v}_v).$ These Euler products, defined 
{\it a priori} only in a half-plane, admit a meromorphic continuation to all of $\C.$ 
We need the following important results (see \cite[Thm.\,10.1.1]{shahidi-book}): 

\begin{thm}
\label{thm:rankin-selberg-analytic}
Let $\sigma$ (resp., $\sigma'$) be a cuspidal automorphic representation of 
$G_n(\A)$ (resp., $G_{n'}(\A)$). 
\begin{enumerate}
\item Suppose $n \neq n'$ then $L(s, \sigma \times \sigma'^{\sf v})$ extends to an entire function of $s$. 

\smallskip
\item Suppose $n = n'$ then $L(s, \sigma \times \sigma'^{\sf v})$ extends to an entire function of $s$, unless
$\sigma \simeq \sigma'$, and in which case $L(s, \sigma \times \sigma^{\sf v})$
extends to a meromorphic function of $s$ with only one pole, which is located at $s=1$ and is 
a simple pole. 

\smallskip
\item Functional equation: $L(s, \sigma \times \sigma'^{\sf v}) = \varepsilon(s, \sigma \times \sigma'^{\sf v})
L(1-s, \sigma^{\sf v} \times \sigma').$ (The epsilon factor on the right hand side is an exponential function.)

\smallskip
\item Suppose that $\sigma$ and $\sigma'$ are  \underline{unitary}  then for any finite set of 
places $\place$ we have: 
$$
L^\place(s, \sigma \times \sigma'^{\sf v}) \neq 0, \quad \Re(s) \geq 1.
$$

\end{enumerate}
\end{thm}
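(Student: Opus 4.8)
\textbf{Proof plan for Theorem \ref{thm:rankin-selberg-analytic}.}
The plan is to deduce all four assertions from the theory of Eisenstein series attached to the maximal parabolic $P$ of $\GL_N/F$ with Levi $M_P = G_n \times G_{n'}$, $N = n+n'$, together with the Langlands--Shahidi machinery; this is exactly the setting of Shahidi's book \cite[Chap.\,10]{shahidi-book}, so the proof is largely a matter of importing those results and tracking that the single completed Rankin--Selberg $L$-function $L(s,\sigma\times\sigma'^{\sf v})$ is the one (and only) $L$-function appearing in the constant term of the relevant Eisenstein series. First I would twist so that $\sigma$ and $\sigma'$ are unitary cuspidal (replacing $\sigma$ by $\sigma\otimes|\det|^d$ etc.), which only shifts $s$ and does not affect meromorphy, the location of poles relative to the shifted variable, or the functional equation; I record this reduction explicitly because part (4) is only claimed in the unitary case anyway.

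For parts (1) and (2): form $\Eis_P(s,f)$ for $f \in I_P^G(s,\sigma\otimes\sigma')$ and invoke Langlands's general theory of Eisenstein series (constant term theorem \ref{thm:langlands}) to see that $\Eis_P(s,f)$ is holomorphic for $\Re(s) > 0$ and its poles in $\Re(s)\ge 0$ coincide with the poles of its constant term, hence with the poles of $T_{\rm st}(s,\sigma\otimes\sigma')f$, which by \eqref{eqn:T-stand-ratio-L-fns} are governed by $L^\place(s,\sigma\times\sigma'^{\sf v})/L^\place(s+1,\sigma\times\sigma'^{\sf v})$ times finitely many local intertwining operators. One then knows, from the Langlands--Shahidi method applied to the maximal parabolic $P$, that (i) these local intertwining operators contribute no poles in $\Re(s)\ge 0$ beyond those absorbed into local $L$-factors, so that the global completed $L(s,\sigma\times\sigma'^{\sf v})$ is holomorphic and nonzero for $\Re(s)\ge 1$ (this is the normalization result underlying part (4)), and (ii) the Eisenstein series $\Eis_P(s,f)$ is holomorphic on the unitary axis $\Re(s)=0$ when $\sigma \not\simeq \sigma'$, and has at most a simple pole at $s=0$ when $\sigma\simeq\sigma'$ (equivalently $n=n'$ and $\sigma\simeq\sigma'$), coming from the pole of the constant-term intertwining operator. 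Translating the location $s=0$ of this pole back through the identity $L^\place(s,\sigma\times\sigma'^{\sf v})/L^\place(s+1,\ldots)$ and the functional equation places the pole of $L(s,\sigma\times\sigma^{\sf v})$ at $s=1$ (and, by the functional equation, a pole at $s=0$), both simple; when $n\ne n'$ or $\sigma\not\simeq\sigma'$ there is no pole on the axis and, combined with nonvanishing for $\Re(s)\ge 1$ and the functional equation, $L(s,\sigma\times\sigma'^{\sf v})$ is entire. Part (3), the functional equation with exponential (monomial) $\varepsilon$-factor, follows from the functional equation of the Eisenstein series relating $\Eis_P(s,f)$ and $\Eis_Q(-s,\cdot)$, i.e.\ from $T_{\rm st}^{QP}\circ T_{\rm st}^{PQ}$ being scalar (Langlands's theory plus the Shahidi normalization identifying the scalar with a ratio of completed $L$-functions), together with the fact that archimedean and ramified local $\varepsilon$-factors are exponential in $q_v^{-s}$.

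The main obstacle is not any single hard estimate but rather the bookkeeping that the one object $L(s,\sigma\times\sigma'^{\sf v})$ we want to control is precisely the one that appears, undivided and unentangled, in the constant term of $\Eis_P$ — i.e.\ verifying that for the $(n,n')$ parabolic of $\GL_N$ the relevant $L$-function in Shahidi's list is just the single Rankin--Selberg $L$-function (with no companion factors, since the adjoint action of $M_P$ on $\mathfrak u_P$ is irreducible), and that the local normalized intertwining operators are holomorphic and nonzero in the closed right half-plane $\Re(s)\ge 0$. For the archimedean places this is Speh's irreducibility together with Shahidi's archimedean results \cite{shahidi-duke85}; for the finite ramified places one uses the non-vanishing and holomorphy of local intertwining operators in the Langlands--Shahidi method. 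Since all of these are available as cited theorems in \cite{shahidi-book} (and summarized in \cite{kim-fields}, \cite{shahidi-pcmi}), the proof proper is short: reduce to the unitary case, quote \cite[Thm.\,10.1.1]{shahidi-book} (or assemble it from the constant-term computation \eqref{eqn:langlands-ratio-L-fns} plus Langlands's holomorphy and functional equation for Eisenstein series), and translate poles/zeros through the ratio identity and functional equation.
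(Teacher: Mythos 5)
The paper does not prove this theorem: it is quoted as background, with an explicit pointer to Shahidi's book \cite[Thm.\,10.1.1]{shahidi-book}, so there is no internal argument to compare yours against. Your plan is, in substance, a sketch of the proof assembled in that reference (and in \cite{kim-fields}): unitary normalization, the constant term of $\Eis_P(s,f)$ for the maximal $(n,n')$ parabolic of $\GL_N$, Langlands's pole analysis of the intertwining operator, the Shahidi normalization by local $L$- and $\varepsilon$-factors, and the functional equation of the Eisenstein series — so as a reading of the cited source it is accurate. Two cautions if you intend this as more than a citation. First, the step ``holomorphy of $\Eis_P$ off the pole locus $\Rightarrow$ entireness of the completed $L$-function in the full critical strip'' is not a one-line translation: the constant term only exhibits the ratio $L(s,\cdot)/L(s+1,\cdot)$, so one must bootstrap (holomorphy and nonvanishing for $\Re(s)\ge 1$ first, then descend to $\Re(s)\ge 1/2$ via the pole analysis of $M(s)$, then reflect with the functional equation), and in the self-associate case the precise statement that the only pole is simple and at $s=1$ exactly when $\sigma\simeq\sigma'$ ultimately rests on the integral representations of Jacquet--Piatetskii-Shapiro--Shalika \cite{jacquet-ps-shalika-AJM} and Jacquet--Shalika \cite{jacquet-shalika-I} together with M\oe glin--Waldspurger \cite{moeglin-waldspurger}; Shahidi's Theorem 10.1.1 combines both methods rather than running on Eisenstein series alone. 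Second, part (4) as stated is for \emph{partial} $L$-functions $L^\place$ with $\place$ arbitrary finite, so besides nonvanishing of the completed $L$-function on $\Re(s)\ge 1$ one needs nonvanishing of each omitted local factor there, which is where the local bounds on Satake/Langlands parameters for unitary generic representations enter. Since the paper itself treats all of this as a black box, none of this affects its correctness, but your write-up should either keep it as a citation or fill in these two points.
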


Let us also recall the following definition adapted from Deligne \cite{deligne}. Given $\sigma$, we can write the representation at infinity as 
$\sigma_\infty = \otimes_{v \in \place_\infty} \sigma_v;$
the tensor product is over all the archimedean places of $F.$ The $L$-factor at infinity is 
$L(s, \sigma_\infty \times \sigma'_\infty) = \prod_{v \in \place_\infty}  L(s, \sigma_v \times \sigma'_v)$ is 
a product of $\Gamma$-factors times exponential functions.

\begin{defn}
\label{def:critical-aut-l-fn}
Let $\sigma$ (resp., $\sigma'$) be a cuspidal automorphic representation of 
$G_n(\A)$ (resp., $G_{n'}(\A)$). Assume they are of cohomological type, i.e., $\sigma_f \in \Coh(G_n, \mu)$  
for some $\mu \in X^*_0(T_n)$ (resp., $\sigma'_f \in \Coh(G_{n'},\mu')$ for some $\mu' \in X^*_0(T_{n'})$). 
\begin{enumerate}
\item If $n \equiv n' \pmod{2}$ then take $m \in \Z.$
\item If $n \not\equiv n' \pmod{2}$ then take $m \in \tfrac12 + \Z.$
\end{enumerate}
We say that such an $m$ is critical for the Rankin--Selberg $L$-function $L(s, \sigma \times \sigma'^{\sf v})$ if the local factors at infinity on both sides of the functional equation are regular (holomorphic) at $s=m,$ 
i.e., $L(s, \sigma_\infty \times \sigma'^{\sf v}_\infty)$ and 
$L(1-s, \sigma^{\sf v}_\infty \times \sigma'_\infty)$ are finite at $s=m.$
\end{defn}

At this moment the reader can ignore the possibly half-integral nature of the critical point $m$ which is caused by the so-called motivic normalization; see Sect.\,\ref{sec:shift-s-variable}. This amongst several related arithmetic issues will be discussed in Sect.\,\ref{sec:eff-motives-coh-l-fns} and Sect.\,\ref{sec:critical-points-comb-lemma} 
on cohomological $L$-functions and effective motives.

\medskip
\paragraph{\bf Statement of holomorphy concerning certain Eisenstein series}

\begin{prop}
\label{prop:at-least-one-holomorphic}
Notations are as in Sect.\,\ref{sec:simple-notation}. Let $\sigma \in \Coh(G_n, \mu)$ 
(resp., $\sigma' \in \Coh(G_{n'}, \mu')$) be a cohomological cuspidal automorphic representation of 
$G_n(\A)$ (resp., $G_{n'}(\A)$). Suppose $nn'$ is even. 
 Assume that the points $-N/2$ and $1-N/2$ are critical for the Rankin--Selberg $L$-function 
 $L(s, \sigma \times \sigma'^{\sf v})$. 
Let $f = \otimes f_v \in I_P^G(s, \sigma \times \sigma')$ and 
$\tilde f = \otimes \tilde f_v \in I_Q^G(w, \sigma' \times \sigma).$
Then: 
\begin{enumerate}
\item If $\tfrac{N}{2} + d - d' \leq 0$ then $\Eis_P(s,f)$ is holomorphic at $s = -N/2$, 
\smallskip
\item If $\tfrac{N}{2} + d - d' \geq 0$  then $\Eis_Q(w, \tilde f)$ is holomorphic at $w = N/2.$
\end{enumerate}
\end{prop}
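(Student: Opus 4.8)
The strategy is to locate the potential poles of the two Eisenstein series in terms of the analytic behaviour of the Rankin--Selberg $L$-functions governing their constant terms, and then to show that the criticality hypothesis on $-N/2$ and $1-N/2$, together with the sign of $\tfrac{N}{2}+d-d'$, rules out one of the two possible poles. First I would recall that by Langlands's constant term theorem (Thm.~\ref{thm:langlands}) the only obstruction to holomorphy of $\Eis_P(s,f)$ at $s=-N/2$ comes from the constant term $\cF^Q\circ\Eis_P$, which by (\ref{eqn:langlands-ratio-L-fns}) is essentially the ratio $L^\place(s,\sigma\times\sigma'^{\sf v})/L^\place(s+1,\sigma\times\sigma'^{\sf v})$ times local intertwining operators; the other constant term $\cF^P\circ\Eis_P$ returns $f$ itself and contributes no pole. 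Likewise $\Eis_Q(w,\tilde f)$ at $w=N/2$ can only have a pole coming from $\cF^P\circ\Eis_Q$, which by (\ref{eqn:langlands-ratio-L-fns-Q-to-P}) is governed by $L^\place(w,\sigma'\times\sigma^{\sf v})/L^\place(w+1,\sigma'\times\sigma^{\sf v})$.

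\textbf{Key steps.} Next I would make precise the translation between the automorphic normalization (where the variable is $s$, and the relevant $L$-values sit at $s=-N/2$, $s=1-N/2$) and the unitary normalization needed to apply Thm.~\ref{thm:rankin-selberg-analytic}. Writing $\sigma^u:=\sigma\otimes|\ |^{d}$ and $\sigma'^u:=\sigma'\otimes|\ |^{d'}$ for the unitary cuspidal representations, one has $L(s,\sigma\times\sigma'^{\sf v})=L(s+d-d',\sigma^u\times\sigma'^{u,{\sf v}})$. By Thm.~\ref{thm:rankin-selberg-analytic}(1)--(2), the completed $L$-function $L(s,\sigma^u\times\sigma'^{u,{\sf v}})$ is entire except possibly, in the self-associate case with $\sigma^u\simeq\sigma'^u$, for a simple pole at $s=1$; and by part (4) it is nonzero for $\Re(s)\geq 1$. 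The numerator $L^\place(s,\sigma\times\sigma'^{\sf v})$ evaluated at $s=-N/2$ becomes $L^\place(-\tfrac{N}{2}+d-d',\sigma^u\times\sigma'^{u,{\sf v}})$, and the denominator becomes $L^\place(1-\tfrac{N}{2}+d-d',\ \cdots)$. The crucial point is: $\Eis_P(s,f)$ can have a pole at $s=-N/2$ only if the numerator ratio has a pole there, which (since the $L$-function is entire away from $s=1$ in the appropriate variable) forces the shifted argument $1-\tfrac{N}{2}+d-d'$ in the \emph{denominator} to be $\geq 1$ — equivalently $\tfrac{N}{2}+d'-d\leq 0$, i.e. $\tfrac{N}{2}+d-d'\geq 2(d-d')$; more carefully, a pole of $\Eis_P$ at $s=-N/2$ requires the denominator to vanish there, and by part (4) nonvanishing holds whenever $1-\tfrac{N}{2}+d-d'\geq 1$, i.e. $\tfrac{N}{2}\leq d-d'$, i.e. $\tfrac{N}{2}+d-d'\geq 2(d-d')$. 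I would carry this bookkeeping out symmetrically for $\Eis_Q(w,\tilde f)$ at $w=N/2$: here the roles of $\sigma$ and $\sigma'$ are swapped, so the relevant shift is by $d'-d$ rather than $d-d'$, and a pole at $w=N/2$ requires the corresponding denominator $L^\place(1+\tfrac{N}{2}+d'-d,\sigma'^u\times\sigma^{u,{\sf v}})$ to vanish, which part (4) forbids as soon as $1+\tfrac{N}{2}+d'-d\geq 1$, i.e. $\tfrac{N}{2}+d'-d\geq 0$, i.e. $\tfrac{N}{2}+d-d'\leq 0$. Thus if $\tfrac{N}{2}+d-d'\leq 0$ the denominator in the $Q$-to-$P$ constant term is nonvanishing at $w=N/2$ and $\Eis_Q$ is holomorphic there — but this gives statement (2) under hypothesis (1)'s inequality, the wrong pairing; so I would instead track the pole structure of $\Eis_P$ directly: the pole of $\Eis_P(s,f)$ at $s=-N/2$ would have to come from a pole of the numerator $L^\place(-\tfrac N2+d-d',\sigma^u\times\sigma'^{u,{\sf v}})$, which by parts (1)--(2) happens only if $\sigma^u\simeq\sigma'^u$ (self-associate, $n=n'$) \emph{and} $-\tfrac N2+d-d'=1$; but then $d-d'=\tfrac N2+1>0$, contradicting $\tfrac N2+d-d'\le 0$. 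Combining the two mechanisms — numerator poles (controlled by parts (1)--(2)) and denominator zeros (controlled by part (4)) — and using that $-N/2$ and $1-N/2$ are critical (so the archimedean $L$-factors, which are the only other source of poles in the \emph{completed} normalization, are finite there), I would conclude that $\tfrac N2+d-d'\le 0$ forces $\Eis_P(s,f)$ holomorphic at $s=-N/2$, and symmetrically $\tfrac N2+d-d'\ge 0$ forces $\Eis_Q(w,\tilde f)$ holomorphic at $w=N/2$.

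\textbf{Where the criticality hypothesis enters, and the main obstacle.} The hypothesis that $-N/2$ and $1-N/2$ are critical for $L(s,\sigma\times\sigma'^{\sf v})$ is precisely what guarantees the archimedean $L$-factors $L(s,\sigma_\infty\times\sigma'^{\sf v}_\infty)$ occurring in the completed $L$-functions on both sides of the functional equation are holomorphic at the evaluation points; without this, a pole could sneak in from infinity independently of the global analytic behaviour. I would also invoke the combinatorial lemma (\S\ref{sec:com-lemma}) to certify that the balanced Kostant representative assumption is equivalent to $-N/2$ and $1-N/2$ being critical, so the hypothesis is exactly the one available in our setting. The main obstacle I anticipate is bookkeeping the precise relation between $s$ in the automorphic normalization, the shift by $d-d'$ (respectively $d'-d$) passing to the unitary picture, and the location $s=1$ of the unique possible pole — getting the signs and the direction of the inequality right so that it is genuinely $\tfrac N2+d-d'\le 0$ (not $\ge 0$) that kills the pole of $\Eis_P$. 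A secondary subtlety is the self-associate case $n=n'$: there $\Eis_P(s,f)$ genuinely picks up the residual contribution from $T_{\rm st}$ and one must check that the potential pole of $L(s,\sigma\times\sigma^{\sf v})$ at the shifted point $s=1$ cannot coincide with both evaluation points simultaneously — which the inequality hypotheses exactly prevent, since $-N/2=1$ and $N/2=1$ are incompatible for $N\ge 1$. Once these are pinned down, the argument is a short deduction from Thm.~\ref{thm:rankin-selberg-analytic} and Thm.~\ref{thm:langlands}.
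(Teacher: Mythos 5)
Your overall route --- reduce poles of the Eisenstein series to the behaviour of the constant term via Thm.\,\ref{thm:langlands}, pass to the unitary normalization, and invoke Thm.\,\ref{thm:rankin-selberg-analytic} --- is the paper's route, but a sign error derails the execution. With ${}^\circ\!\sigma=\sigma\otimes|\ |^{d}$ and ${}^\circ\!\sigma'=\sigma'\otimes|\ |^{d'}$ one has $\sigma\times\sigma'^{\sf v}=|\ |^{d'-d}\otimes({}^\circ\!\sigma\times{}^\circ\!\sigma'^{\sf v})$, so $L(s,\sigma\times\sigma'^{\sf v})=L(s+d'-d,\,{}^\circ\!\sigma\times{}^\circ\!\sigma'^{\sf v})$, not $L(s+d-d',\cdots)$ as you wrote. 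With the correct shift, the denominator of the constant term of $\Eis_P$ at $s=-N/2$ is $L^\place\bigl(1-(\tfrac N2+d-d'),\,{}^\circ\!\sigma\times{}^\circ\!\sigma'^{\sf v}\bigr)$, and hypothesis (1), $\tfrac N2+d-d'\le 0$, places this point in the region $\Re(s)\ge 1$ where the $L$-function does not vanish by Thm.\,\ref{thm:rankin-selberg-analytic}\,(4); symmetrically, hypothesis (2) puts the denominator $L^\place(1+\tfrac N2+d-d',\,{}^\circ\!\sigma'\times{}^\circ\!\sigma^{\sf v})$ relevant for $\Eis_Q$ at $w=N/2$ in that region. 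So the ``wrong pairing'' you perceived is an artifact of the sign error, and the denominator-vanishing mechanism you then abandoned is exactly the proof. Your replacement argument --- that a pole of $\Eis_P$ at $s=-N/2$ ``would have to come from a pole of the numerator'' --- is false: the numerator is entire (or has at most the single pole in the self-associate case), and the genuine danger is a zero of the denominator, which your final argument for (1) never excludes; as written the proposal proves neither assertion.

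There is a second gap: the constant term is the ratio of partial $L$-functions \emph{times} the local standard intertwining operators at the places in $\place$, and you never rule out poles of those local operators at the point of evaluation (your remark that criticality makes the archimedean $L$-factors finite does not address the intertwining operators). The paper's Lemmas \ref{lem:eis-p-pole} and \ref{lem:eis-q-pole} handle this via Shahidi's local coefficients: by (\ref{eqn:local-constant}) and (\ref{eqn:local-constant-gamma}), a pole of order $k_v$ of $T_{\rm st}(s,\sigma_v\otimes\sigma'_v)$ forces a pole of order $k_v$ of $L(s,\sigma_v\times\sigma_v'^{\sf v})$, which, since the completed $L$-function is entire, must be cancelled by a zero of order at least $k_v$ of the partial $L$-function; hence a pole of the Eisenstein series still forces $L^\place(1-\tfrac N2,\sigma\times\sigma'^{\sf v})=0$. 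Finally, to contradict that vanishing by nonvanishing on $\Re(s)\ge 1$ one must pass from the partial to the completed $L$-function, which requires finiteness of the omitted local factors at $1-(\tfrac N2+d-d')$: this is Lem.\,\ref{lem:local-rankin-selberg} at the finite places and is where the criticality hypothesis is used at the archimedean places --- a step your outline gestures at but does not carry out.
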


\begin{proof}[Proof of Prop.\,\ref{prop:at-least-one-holomorphic}]
The proof needs a few lemmas: 
\ref{lem:eis-p-pole}, \ref{lem:eis-p-pole} and \ref{lem:local-rankin-selberg}. We first state these lemmas and then prove them.

\medskip
\paragraph{\bf Some lemmas}

\begin{lemma}
\label{lem:eis-p-pole}
If $\Eis_P(s,f)$ has a pole at $s = -N/2$ then $L^\place(1 - \tfrac{N}{2}, \sigma \times \sigma'^{\sf v}) = 0,$ where 
$\place = \place_\infty \cup \place_f$ is a finite set of places which is 
the union of the set $\place_\infty$ of all the archimedean places and the set $\place_f$ of all the finite places where either $\sigma$ or $\sigma'$ is ramified.
\end{lemma}

\begin{lemma}
\label{lem:eis-q-pole}
If $\Eis_Q(w, \tilde f)$ has a pole at $w = N/2$ then $L^\place(1 + \tfrac{N}{2}, \sigma' \times \sigma^{\sf v}) = 0,$ 
where $\place$ is as in the previous lemma. 
\end{lemma}

\begin{lemma}
\label{lem:local-rankin-selberg}
Let $v$ be a finite place of $F$, and let $\pi$ and $\pi'$ be irreducible admissible unitary generic representations of 
$\GL_n(F_v)$ and $\GL_{n'}(F_v)$, respectively. Suppose $L(s, \pi \times \pi')$ has a pole at $s = t_0 \in \R$, then 
$t_0 < 1.$ 
\end{lemma}

\medskip
\paragraph{\bf Proofs of Lemmas \ref{lem:eis-p-pole} and \ref{lem:eis-q-pole}}
\label{sec:proofs-using-local-const}
Recall (\ref{eqn:langlands-ratio-L-fns})
$$
\cF^Q (\Eis_P(s, f)) \ = \ 
\frac{L^\place(s, \sigma \times \sigma'^{\sf v})}{L^\place(s+1, \sigma \times \sigma'^{\sf v})} \, 
\otimes_{v \notin \place} \tilde f_v^0 \otimes_{v \in \place} 
T_{\rm st}(s, \sigma_v \otimes \sigma'_v)f_v. 
$$
The constant term map $\cF^Q$ is an integration over a compact space and will not affect the analytic discussion below. 
If $\Eis_P(s, f)$ has a pole at $s = -N/2$, then looking at the right hand side, either
\begin{enumerate}
\item $L^\place(s, \sigma \times \sigma'^{\sf v})/L^\place(s+1, \sigma \times \sigma'^{\sf v})$ has a pole at $-N/2$, or 
\item for some $v \in \place$, the local standard intertwining operator $T_{\rm st}(s, \sigma_v \otimes \sigma'_v)$ has a pole at $-N/2.$ 
\end{enumerate}

\smallskip

In case (1), since the numerator $L^\place(s, \sigma \times \sigma'^{\sf v})$ is entire, the only way the ratio can have a pole is if the denominator vanishes at that point, i.e., $L^\place(1-N/2, \sigma \times \sigma'^{\sf v}) = 0.$ 

\smallskip

In case (2), if $T_{\rm st}(s, \sigma_v \otimes \sigma'_v)$ has a pole at $s=-N/2$ of order $k_v \geq 0$, and if $k = \sum_{v \in \place} k_v$, then we claim that the numerator $L^\place(s, \sigma \times \sigma'^{\sf v})$ will have a zero 
of order at least $k$, i.e., all the poles of the local standard intertwining operators at bad places will cancel against zeros of the partial $L$-function appearing in the numerator; hence the only way $\Eis_P(s, f)$ can have a pole at $s=-N/2$ is 
when $L^\place(1-N/2, \sigma \times \sigma'^{\sf v}) = 0,$ which will prove the lemma. 

For the proof of the claim made above: suppose  $T_{\rm st}(s, \sigma_v \otimes \sigma'_v)$ has a pole
at $s=-N/2$ of order $k_v \geq 0.$ We appeal to Shahidi's results on local constants, which we briefly review here. 
Fix a nontrivial additive character $\psi_v$ of the base field and let $\lambda_{\psi_v}(s, \sigma_v \times \sigma_v')$ be the standard Whittaker functional (defined as a certain integral) on $I(s, \sigma_v \otimes \sigma'_v)$. We are using the fact here that cuspidal representations of $\GL_n$ are globally generic, and hence locally generic everywhere. We know that 
$\lambda_{\psi_v}(s, \sigma_v \times \sigma_v')$ extends to an entire function which is nonzero for all $v$; see 
Shahidi~\cite[Prop.\,3.1]{shahidi-duke80} for archimedean $v$ and \cite[Prop.\,3.1]{shahidi-ajm81} for finite $v$. By multiplicity one for Whittaker models there is a complex number $C_{\psi_v}(s, \sigma_v \times \sigma'_v)$, called a `local constant',  such that 
\begin{equation}
\label{eqn:local-constant}
C_{\psi_v}(s, \sigma_v \times \sigma'_v) \left(\lambda_{\psi_v}(-s, \sigma'_v \times \sigma_v) \circ 
T_{\rm st}(s, \sigma_v \otimes \sigma'_v) \right) \ = \ \lambda_{\psi_v}(s, \sigma_v \times \sigma_v').
\end{equation}
Furthermore, the local constant is related to local $L$- and $\varepsilon$-factors as: 
\begin{equation}
\label{eqn:local-constant-gamma}
C_{\psi_v}(s, \sigma_v \times \sigma'_v) \ = \ \varepsilon(s, \psi_v, \sigma_v \times \sigma'^{\sf v}_v) \,
\frac{L(1-s, \sigma_v^{\sf v} \times \sigma'_v)}{L(s, \sigma_v \times \sigma'^{\sf v}_v)}.
\end{equation}
See Shahidi~\cite[(5.1.4)]{shahidi-book}. Back to our proof of the above claim: suppose 
$T_{\rm st}(s, \sigma_v \otimes \sigma'_v)$ has a pole
at $s=-N/2$ of order $k_v \geq 0$, since the right hand side of (\ref{eqn:local-constant}) is holomorphic, the local constant $C_{\psi_v}(s, \sigma_v \times \sigma'_v)$ has a zero at $s=-N/2$ of order $k_v.$ The epsilon factor is an exponential function (hence entire and nonvanishing), and local $L$-factors are nowhere vanishing, hence the denominator of the right hand side of (\ref{eqn:local-constant-gamma}), i.e., $L(s, \sigma_v \times \sigma'^{\sf v}_v)$ has a pole 
at $s=-N/2$ of order $k_v$. We conclude that 
$L_\place(s, \sigma \times \sigma'^{\sf v}) = \prod_{v \in \place}L(s, \sigma_v \times \sigma_v'^{\sf v})$
has a pole at $s=-N/2$ of order $k = \sum_{v \in \place}k_v.$ But, 
$L(s, \sigma \times \sigma'^{\sf v}) = 
L_\place(s, \sigma \times \sigma'^{\sf v})L^\place(s, \sigma \times \sigma'^{\sf v})$
is entire (Thm.\,\ref{thm:rankin-selberg-analytic}), hence $L^\place(s, \sigma \times \sigma'^{\sf v})$ must have a zero 
at $s=-N/2$ of order at least $k$ to cancel against the pole at $s=-N/2$ of order $k$ of $L_\place(s, \sigma \times \sigma'^{\sf v})$--as claimed above.  
Proof of Lem.\,\ref{lem:eis-q-pole} is identical to the above proof of Lem.\,\ref{lem:eis-p-pole}.

\medskip
 \paragraph{\bf Proof of Lemma \ref{lem:local-rankin-selberg}}
 This is well-known and follows from the theory of zeta integrals. For example, in the case $n \neq n'$, see 
 Prop.\,6.2(i) and Prop.\,6.3  in Cogdell \cite{cogdell}. More generally, it is embedded in the work of 
 Jacquet, Piatetskii-Shapiro and Shalika \cite{jacquet-ps-shalika-AJM}.

\medskip
\paragraph{\bf Conclusion of proof of Prop.\,\ref{prop:at-least-one-holomorphic}}
To prove (1), 
suppose $-N/2+d-d' \leq 0,$ and if possible suppose also that ${\rm Eis}_P(s,f)$ has a pole at $s = -N/2.$ 
From Lem.\,\ref{lem:eis-p-pole} we know that 
$$
L^\place(1-\tfrac{N}{2}, \sigma \times \sigma'^{\sf v}) = 
L^\place(1-(\tfrac{N}{2}+d -d'),  {}^\circ\!\sigma \times {}^\circ\!\sigma'^{\sf v}) = 0, 
$$
where $\sigma = |\ |^{-d} \otimes {}^\circ\!\sigma$ with ${}^\circ\!\sigma$ being a unitary cuspidal representation; and 
similarly, $\sigma' = |\ |^{-d'} \otimes {}^\circ\!\sigma'.$
Since $N/2+d-d' \leq 0,$ from Lem.\,\ref{lem:local-rankin-selberg}, for any place $v$ we know that  
$L(1-(\tfrac{N}{2}+d -d'),  {}^\circ\!\sigma_v \times {}^\circ\!\sigma_v'^{\sf v})$ is finite. Hence, for the completed $L$-function we get $L(1-(\tfrac{N}{2}+d -d'),  {}^\circ\!\sigma \times {}^\circ\!\sigma'^{\sf v}) = 0$; 
but this is not possible by 
(4) of Thm.\,\ref{thm:rankin-selberg-analytic} since $1-(N/2+d-d') \geq 1;$ whence we conclude that 
${\rm Eis}_P(s,f)$ is finite at $s = -N/2.$ The proof of (2) is identical, and is left to the reader.
\end{proof}

\medskip
\subsubsection{\bf Conclusion of proof of Thm.\,\ref{thm:rank-one-eis}}
\label{sec:conclude-proof-nonzero-image}
We now prove the claim made in Sect.\,\ref{sec:langlands-constant-term}. The notations are now as in the 
statement of Thm.\,\ref{thm:rank-one-eis}, and we would like to show  
$$
\dim_E(\fI^b(\sigma_f, \sigma_f', \varepsilon')) \geq {\sf k} \quad {\rm and} \quad 
\dim_E(\fI^t(\sigma_f, \sigma_f', \varepsilon')^\v) \geq {\sf k}.
$$
The argument for top-degree is the same as that for the bottom degree, so we give the details of the proof only in bottom-degree.  
Take an embedding $\iota : E \to \C$ and pass to a transcendental level via $\iota.$ To show 
$\dim_E(\fI^b(\sigma_f, \sigma_f', \varepsilon')) \geq {\sf k},$ it suffices to show that its base-change to 
$\C$ via $\iota$ has dimension at least ${\sf k}$, i.e., it suffices to prove 
$$
\dim_\C(\fI^b({}^\iota\!\sigma_f, {}^\iota\!\sigma_f',  \varepsilon')) \geq {\sf k}, 
$$
where, for brevity, we have 
$$
\fI^b({}^\iota\!\sigma_f, {}^\iota\!\sigma_f',  \varepsilon') \ := \  
\fR_{{}^\iota\!\sigma_f, {}^\iota\!\sigma'_f, \varepsilon'}^b
(H^{b_N^F}_{\rm Eis}(\pBSC,\tM_{{}^\iota\!\lambda})^{K_f}).
$$

Given $\iota$, we know that ${}^\iota\!\sigma_f$ (resp., ${}^\iota\!\sigma_f'$) 
is the finite part of a cuspidal automorphic representation ${}^\iota\!\sigma$ (resp., ${}^\iota\!\sigma'$) of 
$G_n(\A)$ (resp., $G_{n'}(\A)$). 
Suppose $T^{PQ}_{\rm st}(s, {}^\iota\!\sigma \otimes {}^\iota\!\sigma')$ is holomorphic at $s = -N/2$, which is the same as saying that $\Eis_P(s,f)$ is holomorphic at $s = -N/2$ for an $f \in I_P^G(s, {}^\iota\!\sigma \otimes {}^\iota\!\sigma')$. 
Then, consider the following diagram: 

{\Small
\begin{equation}
\label{eqn:diagram-without-coh}
\xymatrix{
 & \cC^\infty\left(G(\Q)\backslash G(\A), \omega_\infty^{-1}\right) \ar[dd]^{\cF^P \oplus \cF^Q}  & \\
 & & \\
 \cC^\infty\left(P(\Q)\backslash G(\A), \omega_\infty^{-1}\right) \ar[uur]^{\Eis_P} \ar@{^{(}->}[r] \ &   
 \cC^\infty\left(P(\Q)\backslash G(\A), \omega_\infty^{-1}\right) \oplus 
 \cC^\infty\left(Q(\Q)\backslash G(\A), \omega_\infty^{-1}\right)
 & \ \cC^\infty\left(Q(\Q)\backslash G(\A), \omega_\infty^{-1}\right) \ar@{_{(}->}[l] \\
 I_P^G(-N/2, {}^\iota\!\sigma \otimes {}^\iota\!\sigma') \ar@{^{(}->}[u] 
 \ar[rr]^{T_{\rm st}(-N/2, {}^\iota\!\sigma \otimes {}^\iota\!\sigma')}  
 &  & I_Q^G(N/2, {}^\iota\!\sigma'  \otimes {}^\iota\!\sigma) \ar@{^{(}->}[u]
}
\end{equation}}

\noindent where, for brevity, we denote $\omega_\infty^{-1}$ for the inverse of the central character of $\M_{{}^\iota\!\lambda}$ restricted to 
$S(\R)^0.$ Tensor the above diagram by $\tM_{{}^\iota\!\lambda}$ and take cohomology, i.e., apply the functor 
$H^{b_N^F}(\g, K_\infty^0; -).$ Then take the $\varepsilon'$-isotypic component for the action of $\pi_0(K_\infty)$ 
and also take $K_f$-invariants to get: 

{\Small
\begin{equation}
\label{eqn:diagram-with-coh}
\xymatrix{
 & H^{b_N^F}( \SG,\tM_{{}^\iota\!\lambda})^{K_f} \ar[dd]^{(\cF^P \oplus \cF^Q)^*}  & \\
 & & \\
 H^{b_N^F}_{!!}(\ppBSC, \tM_{{}^\iota\!\lambda})^{K_f} \ar[uur]^{\Eis_P^*} \ar@{^{(}->}[r] \ &   
 H^{b_N^F}_{!!}(\ppBSC, \tM_{{}^\iota\!\lambda})^{K_f} \oplus 
 H^{b_N^F}_{!!}(\pqBSC, \tM_{{}^\iota\!\lambda})^{K_f}
 & \  H^{b_N^F}_{!!}(\pqBSC, \tM_{{}^\iota\!\lambda})^{K_f} \ar@{_{(}->}[l] \\
I^\place_b({}^\iota\!\sigma_f, {}^\iota\!\sigma_f', \varepsilon')_{P, {}^\iota\!w} \ar@{^{(}->}[u] 
 \ar[rr]^{T_{\rm st}(-N/2, {}^\iota\!\sigma \otimes {}^\iota\!\sigma', \varepsilon')^*}  
 &  & 
 I^\place_b({}^\iota\!\sigma_f, {}^\iota\!\sigma_f', \varepsilon')_{Q, {}^\iota\!w'} \ar@{^{(}->}[u]
}
\end{equation}}

\noindent 
Also the map $(\cF^P \oplus \cF^Q)^*$ is the same as 
the restriction $\r^*$ to boundary cohomology followed by a projection afforded by the Manin--Drinfeld principle. 
It is clear now that the 
required image $\fI^b({}^\iota\!\sigma_f, {}^\iota\!\sigma_f', \varepsilon')$ contains the image of 
$(\cF^P \oplus \cF^Q)^* \circ {\rm Eis}_P^*$, i.e., after applying Thm\,\ref{thm:langlands} we see that
\begin{equation}
\label{eqn:image-description-PQ}
\fI^b({}^\iota\!\sigma_f, {}^\iota\!\sigma_f', \varepsilon') \ \supset \ 
\left\{ \left(\xi \, , \, T_{\rm st}(-\tfrac{N}{2}, {}^\iota\!\sigma \otimes {}^\iota\!\sigma')^*\xi \right) \ : \ 
\xi \in I^\place_b({}^\iota\!\sigma_f, {}^\iota\!\sigma_f', \varepsilon')_{P, {}^\iota\!w}
 \right\}.
\end{equation}
Hence $\dim_\C(\fI^b({}^\iota\!\sigma_f, {}^\iota\!\sigma_f', \varepsilon')) \geq 
\dim_\C(I^\place_b({}^\iota\!\sigma_f, {}^\iota\!\sigma_f', \varepsilon')_{P, {}^\iota\!w}) = 
{\sf k}.$ 

\smallskip

Suppose $\Eis_P(s,f)$ has a pole at $s = -N/2$ then we start from the parabolic subgroup $Q$ while bearing in mind that necessarily $\Eis_Q(w,\tilde f)$ and $T_{\rm st}^{QP}(w, \sigma' \times \sigma)$ are holomorphic at $w=N/2$, by 
Prop.\,\ref{prop:at-least-one-holomorphic}.  In this case, we get 
\begin{equation}
\label{eqn:image-description-QP}
\fI^b({}^\iota\!\sigma_f, {}^\iota\!\sigma_f', \varepsilon') \ \supset \ 
\left\{ \left(T_{\rm st}(N/2, {}^\iota\!\sigma' \otimes {}^\iota\!\sigma)^*\tilde\xi  \, ,\, \tilde\xi \right) \ : \ 
\tilde\xi \in  I^\place_b({}^\iota\!\sigma_f, {}^\iota\!\sigma_f', \varepsilon')_{Q, {}^\iota\!w'}
 \right\}.
\end{equation}
Hence, $\dim_\C(\fI^b({}^\iota\!\sigma_f, {}^\iota\!\sigma_f', \varepsilon')) \geq 
\dim_\C(I^\place_b({}^\iota\!\sigma_f, {}^\iota\!\sigma_f', \varepsilon')_{Q, {}^\iota\!w'}) = 
{\sf k}.$ 
This concludes the proof of Thm.\,\ref{thm:rank-one-eis}.

\bigskip
\section{\bf $L$-functions}
\label{sec:l-function}

\medskip
\subsection{Cohomological $L$-functions and effective motives}
\label{sec:eff-motives-coh-l-fns}

There is a well-known conjectural dictionary between cohomological cuspidal automorphic representations of 
$\GL_n$ and pure rank $n$ motives. We briefly review this dictionary while recasting it in the the context of strongly inner Hecke summands on the one hand and pure effective motives on the other. In the discussion below,  
we consider the motives only via their Betti, de~Rham and $\ell$-adic realizations as in Deligne \cite{deligne}.

Let $\lambda \in X^*_0(T_n \times E)$ be a pure weight and $\sigma_f \in \Coh_{!!}(G_n, \lambda)$ be a strongly inner absolutely irreducible Hecke module. Recall that we have a Galois extension $E/\Q$ which contains a copy of $F$; furthermore, $\lambda = (\lambda^\tau)_{\tau : F \to E}$ and $E/\Q$ is taken large enough so that $\sigma_f$ is defined over $E$.
Then it is conjectured that we can attach a motive  $\Mot(\sigma_f)$ which is  defined over $F$ has coefficients in 
$E(\sigma_f)$. 
There is also an effective motive $\Mot_{\rm eff}(\sigma_f)$ which is a   Tate twist of $\Mot(\sigma_f)$. 
This Tate twist is chosen so that the eigenvalues of the Frobenius $\Phi_p^{-1}$ in any $\l-$adic realization
 $\Mot_{\rm eff}(\sigma_f)_{\acute et,\l}$ are algebraic integers. 
The motivic 
$L$-function attached to $\Mot_{\rm eff}(\sigma_f)$ is the same as a the  cohomological $L$-function attached to 
$\sigma_f.$

\medskip
\subsubsection{\bf Langlands parameters and Hodge pairs for effective motives}
Fix $\iota : E \to \C$, then ${}^\iota\!\sigma := {}^\iota\!\sigma_f \otimes {}^\iota\!\sigma_\infty$ 
is a cohomological cuspidal automorphic representation. Identify the sets $\Hom(F,\C) = \Hom(F,\R) = \place_\infty$; say, $v \in \place_\infty$ corresponds to $\nu \in \Hom(F,\C)$; we also write $v \mapsto \nu_v$ or 
$\nu \mapsto v_\nu.$ 
As in \ref{sec:change-field-E}, 
the map $\tau \mapsto \iota \circ \tau$ identifies $\Hom(F,E)$ with $\Hom(F,\C)$, and the weight 
${}^\iota\!\lambda \in X^*_0(T_n \times \C)$ is written as ${}^\iota\!\lambda = ({}^\iota\!\lambda^\nu)_{\nu:F \to \C}$ where 
${}^\iota\!\lambda^\nu = \lambda^{\iota^{-1}\circ \nu}.$  
The representation ${}^\iota\!\sigma_\infty$ factors as: 
$\D_{{}^\iota\!\lambda} = \otimes_{v \in \place_\infty}  \D_{{}^\iota\!\lambda^{\nu_v}}$ up to a signature character 
$\epsilon_\infty(\sigma_f)$ which is relevant when $n$ is odd. 
For each $\nu$, let $\varrho({}^\iota\!\lambda^\nu) := \varrho(\D_{{}^\iota\!\lambda^\nu})$ be the Langlands parameter of $\D_{{}^\iota\!\lambda^\nu}$, which is an  
$n$-dimensional semi-simple representation of the Weil group $W_\R$ of $\R$, as given by the local Langlands correspondence for $\GL_n(F_{v_\nu}) = \GL_n(\R);$ see Knapp \cite{knapp}. The restriction to 
$\C^\times \simeq W_\C \subset W_\R$ up to a half-integral twist (see (\ref{eqn:langlands-betti}) below) 
is given by: 
$$
|\ |^{\frac{(1-n)}{2}}\varrho({}^\iota\!\lambda^\nu)|_{\C^\times} \ = \ 
\bigoplus_{j=1}^n \, (z \mapsto z^{p_j^\tau}\bar z^{q_j^\tau}), 
\quad p_j^\tau = p_j({}^\iota\!\lambda^\nu), q_j^\tau = q_j({}^\iota\!\lambda^\nu).
$$
(Here $\tau = \iota^{-1} \circ \nu.$) When we restrict from $W_\R$ to $W_\C$ the signature $\epsilon_\infty(\sigma_f)$ disappears. One can read off the $p_j^\tau$ and $q_j^\tau$ 
from the cuspidal parameters; we have: 
\begin{equation}
\label{eqn:langlands-parameters}
p_j^\tau = \frac{\ell_j^\tau-2d+1-n}{2}, \quad q_j = \frac{-\ell_j^\tau-2d+1-n}{2}
\end{equation}
The parity condition in (\ref{eqn:parity-cuspidal-parameter}) gives that $p_j^\tau, q_j^\tau \in  \Z.$ 

\smallskip

On the other hand, for $\iota : E \to \C$, we have a rank $n$ motive ${}^\iota\Mot = \Mot({}^\iota\!\sigma)$ over $F$ with coefficients in $\iota(E) \subset \C.$ For $\nu : F \to \C$, the Betti realization 
$H_B(\nu, {}^\iota\Mot)$ is an $n$-dimensional $\iota(E)$-vector space together with a Hodge decomposition:
$$
H_B(\nu, {}^\iota\Mot) \otimes_{\iota(E)} \C \ = \ \bigoplus_{(p,q) \in \Hod(\nu, {}^\iota\Mot)} H^{p,q}(\nu, {}^\iota\Mot), 
$$
which is a representation of $W_\C = \C^\times$, where $z \in \C^\times$ acts on 
$H^{p,q}(\nu, {}^\iota\Mot)$ via $z^{-p}\bar z^{-q}.$ The set of Hodge pairs for ${}^\iota\Mot$ is 
$$
\Hod({}^\iota\Mot) \ = \ \bigcup_{\nu : F \to \C} \Hod(\nu, {}^\iota\Mot).
$$ 
The motive $\Mot(\sigma_f)$ is pure, i.e., there exists an integer $\tilde \w$ such that for any $\iota$  
if $(p,q) \in \Hod({}^\iota\Mot)$ then $p+q = \tilde \w.$ (If ${}^\iota\Mot$ is a piece of $H^\bullet(X, \iota(E))$ for a variety $X/F$,  then $H_B(\nu, {}^\iota\Mot)$ is the corresponding piece in $H^\bullet_{\rm Betti}(X \times_\nu \C, \iota(E)).$)

\smallskip

Under the conjectural correspondence $\sigma_f \leftrightarrow \Mot(\sigma_f)$, the exponents in the Langlands parameter, up to a specific half-integral twist, and the Hodge numbers determine each other: 
\begin{equation}
\label{eqn:langlands-betti}
|\ |^{\frac{(1-n)}{2}}\varrho({}^\iota\!\lambda^\nu)|_{\C^\times} \ \simeq \ 
H_B(\nu, {}^\iota\Mot) \otimes_{\iota(E)} \C,  
\end{equation}
where the isomorphism is as representations of $W_\C.$ Now (\ref{eqn:langlands-parameters}) and (\ref{eqn:langlands-betti}) give the Hodge pairs: 
{\small
\begin{equation}
\label{eqn:hodge-numbers-M}
\begin{split}
\Hod(\nu, {}^\iota\Mot) =  
\left\{\left(\frac{\ell_1^\tau+2d+n-1}{2}, \frac{-\ell_1^\tau+2d+n-1}{2} \right) \right., &  
\left(\frac{\ell_2^\tau+2d+n-1}{2}, \frac{-\ell_2^\tau+2d+n-1}{2} \right), \dots \\ 
& \dots, \left. \left(\frac{\ell_n^\tau+2d+n-1}{2}, \frac{-\ell_n^\tau+2d+n-1}{2} \right) \right\}. 
\end{split}
\end{equation}}

\noindent The purity weight $\tilde \w$ of $\Mot(\sigma_f)$ is given by 
$$
\tilde \w  \ = \ 2d + n-1.
$$

\smallskip

We will consider a suitable Tate twist of $\Mot(\sigma_f)$ to make it effective. 
(Suppose a motive $\Mot$ defined over $F$ with coefficients in $E$ is effective then for some 
$\iota : E \to \C$ and $\nu : F \to \C$ the Hodge pairs in $\Hod(\nu, {}^\iota\Mot)$ are of the form 
$\{(\w, 0), (\w-a_1, a_1), (\w-a_1-a_2, a_1+a_2), \dots, (0,\w)\},$ 
where $\w = a_1+\dots + a_{n-1}$ with $a_j \geq 1.$)  For $m \in \Z$, let $\Mot(m) := \Mot \otimes \Q(m)$ where 
$\Q(m)$ is a Tate motive; see, for example, \cite[\S\,2.1]{raghuram-riemann}. 

If $(p,q)$ is a Hodge pair for $\Mot$ then $(p-m,q-m)$ is a Hodge pair for $\Mot(m).$ Recall the definition of the motivic weight from \ref{sec:motivic-weight}: 
$\w_\lambda = {\rm max} \{\w_{\lambda^\tau} \ | \ \tau : F \to E\}$, where
$\w_{\lambda^\tau} = a_1^\tau+\dots +a_{n-1}^\tau = \ell_1^\tau.$ Define: 
\begin{equation}
\label{eqn:mot-eff-mot}
\Mot_\eff(\sigma_f) \ = \ \Mot(\sigma_f)\left(\frac{2d+n-1-\w_\lambda}{2}\right). 
\end{equation}
Suppose $\tau_0$ is such that $\w_\lambda = \w_{\lambda^{\tau_0}}.$ Take any $\iota$ and $\nu$ as above. 
The Hodge pairs in $\Hod(\nu, {}^\iota\Mot_{\rm eff}(\sigma_f))$ are of the form: 
$$
\left\{
\left( \frac{\ell_1^\tau+\ell_1^{\tau_0}}{2}, \frac{-\ell_1^\tau+\ell_1^{\tau_0}}{2} \right), 
\left( \frac{\ell_2^\tau+\ell_1^{\tau_0}}{2}, \frac{-\ell_2^\tau+\ell_1^{\tau_0}}{2} \right), \dots, 
\left( \frac{\ell_n^\tau+\ell_1^{\tau_0}}{2}, \frac{-\ell_n^\tau+\ell_1^{\tau_0}}{2} \right)
\right\}, 
$$ 
where $\tau = \iota^{-1}\circ \nu.$ In particular, for a pair $\iota_0$ and $\nu_0$ such that 
$\iota_0^{-1}\circ \nu_0 = \tau_0$ we get: 
$$
\Hod(\nu_0, {}^{\iota_0}\Mot_{\rm eff}(\sigma_f)) \ = \ 
\{(\w_\lambda, 0), (\w_\lambda-a_1^{\tau_0}, a_1^{\tau_0}), 
(\w-a_1^{\tau_0}-a_2^{\tau_0}, a_1^{\tau_0}+a_2^{\tau_0}), \dots, (0,\w_\lambda)\}. 
$$

\smallskip

Let's record a consequence of (\ref{eqn:langlands-betti}) for the behavior of the correspondences 
$\sigma_f \leftrightarrow \Mot(\sigma_f)$ and $\sigma_f \leftrightarrow \Mot_{\rm eff}(\sigma_f)$ under taking duals. 
Given $\sigma_f \in \Coh_{!!}(G_n, \lambda)$, the contragredient module $\sigma_f^{\sf v}$ is in 
$\Coh_{!!}(G_n, \lambda^{\sf v})$, where $\lambda^{\sf v} = -w_{G_n}(\lambda)$ is the dual weight. Further, the local Langlands correspondence commutes with taking contragredients. On the motivic side, given a motive $\Mot$ over $F$ with coefficients in $E$, the dual motive $\Mot^{\sf v}$ is also defined over $F$ with coefficients in $E$ whose realizations are the respective duals  of the realizations of $\Mot.$ In particular, if $(p,q)$ is a Hodge pair for $\Mot$ then $(-p,-q)$ is a Hodge pair for $\Mot^{\sf v}$. Putting these observations together with (\ref{eqn:langlands-betti}), we leave it to the reader to check that: 
\begin{equation}
\label{eqn:rep-mot-duals}
\Mot(\sigma_f^{\sf v}) \ = \ \Mot(\sigma_f)^{\sf v}(1-n).  
\end{equation}
For effective motives, from (\ref{eqn:mot-eff-mot}) and (\ref{eqn:rep-mot-duals}), we get: 
\begin{equation}
\label{eqn:rep-eff-mot-duals}
\Mot_{\rm eff}(\sigma_f^{\sf v}) \ = \ \Mot_{\rm eff}(\sigma_f)^{\sf v}(-\w_\lambda). 
\end{equation}
(The reader who wishes to check this will need to use that $d(\lambda^{\sf v}) = -d(\lambda)$ and 
$\w_{\lambda^{\sf v}} = \w_\lambda.$)

\medskip
\subsubsection{\bf Motivic $L$-functions} 

Let $\Mot$ be a motive defined over $F$ with coefficients in $E$. Its 
$\ell$-adic realization $H_\ell(\Mot)$ admits an action of the absolute Galois group of $F$, and for each $\iota : E \to \C$, 
we can take the associated Artin $L$-function. The coefficients appearing in the local Euler factors at all finite places 
initially take values in $E_{\mathfrak{l}}$ for $\mathfrak{l} | \ell$; the usual expectation of $\ell$-independence make them take values in $E$, and after applying $\iota$ we get a $\C$-valued local $L$-function. Taking the Euler product over all finite places, we get the finite part of the $L$-function which we denote as 
$L_f(\iota, \Mot, s).$ (See Deligne~\cite[2.2]{deligne} for a precise definition.) 

At the archimedean places there is a recipe given by Serre to define the local factors, which we will discuss in detail 
in \ref{sec:critical-points-comb-lemma}, and this gives the $L$-factor at infinity $L_\infty(\iota, \Mot, s)$ and 
$\L_\infty(\Mot, s).$ The completed $L$-function defined as: 
$$
L(\iota, \Mot, s) := L_\infty(\iota, \Mot, s)L_f(\iota, \Mot, s), 
$$
conjecturally extends to a meromorphic function to all of $\C$ and satisfies a functional equation of the form
$$
L(\iota, \Mot, s) \ = \ \varepsilon(\iota, \Mot, s)L(\iota, \Mot^{\sf v}, 1-s), 
$$
where the $\varepsilon$-factor on the right hand side is a nonzero constant times an exponential function. Taking the product over all $\iota$ we define $\L(\Mot, s)$ and it has a similar functional equation. All these comments will be applied to $L$-functions for $\Mot(\sigma_f)$ and $\Mot_{\rm eff}(\sigma_f)$.

\smallskip

On the other hand, for $\iota : E \to \C$ we have the standard automorphic $L$-function of the cuspidal automorphic representation ${}^\iota\!\sigma.$ This may be defined in terms of integral representations as, for example, in 
Jacquet~\cite{jacquet-corvallis}, or may be defined as Langlands $L$-functions as in 
\ref{sec:analytic-rankin-selberg}. It is known that one gets the same $L$-function irrespective of which definition is used. 
The product of local $L$-functions over all finite places is denoted  
$L^{\rm aut}_f(s, {}^\iota\!\sigma) = L^{\rm aut}(s, {}^\iota\!\sigma_f)$, and the product 
over all archimedean places giving the $L$-factor at infinity is denoted
$L^{\rm aut}_\infty(s, {}^\iota\!\sigma) = L^{\rm aut}(s, {}^\iota\!\sigma_\infty).$ The completed $L$-function is defined as: 
$$
L^{\rm aut}(s, {}^\iota\!\sigma) \ = \ L^{\rm aut}_\infty(s, {}^\iota\!\sigma)L^{\rm aut}_f(s, {}^\iota\!\sigma). 
$$
For automorphic $L$-functions it is known that $L^{\rm aut}(s, {}^\iota\!\sigma)$ admits an analytic continuation to an entire function (unless $n=1$ and $\sigma_f$ is the trivial representation whence the $L$-function is the Dedekind zeta function of $F$ which has a pole at $s=1$), and satisfies a functional equation: 
$$
L^{\rm aut}(s, {}^\iota\!\sigma) \ = \ \varepsilon(s, {}^\iota\!\sigma) L^{\rm aut}(1-s, {}^\iota\!\sigma^{\sf v}).
$$ 
As with motivic $L$-functions, take the product over all $\iota$ to define: 
$$
\L^{\rm aut}_f(s, \sigma) \ = \ \prod_{\iota: E \to \C} L^{\rm aut}_f(s, {}^\iota\!\sigma).
$$
Similarly, $\L^{\rm aut}_\infty(s, \sigma)$ and 
$\L^{\rm aut}(s, \sigma) := \L^{\rm aut}_\infty(s, \sigma) \L^{\rm aut}_f(s, \sigma).$

\smallskip

The key point in the expected comparison between the motivic $L$-functions and automorphic $L$-function is that
there is an analogue at finite places of (\ref{eqn:langlands-betti}), i.e., for any rational prime $p$ and a prime ideal $\p$ of $F$ above $p$, the Langlands parameter $\varrho({}^\iota\!\sigma_\p)$ (which is an $n$-dimensional semi-simple representation of the Weil group $W_{F_\p}$ of $F_\p$) is related to the local Galois representation at $\p$ on the $\ell$-adic realization $H_\ell({}^\iota\Mot(\sigma_f))$ via: 
\begin{equation}
\label{eqn:langlands-ell}
|\ |^{\frac{(1-n)}{2}}\varrho({}^\iota\!\sigma_\p) \ \simeq \ 
H_\ell({}^\iota\Mot(\sigma_f)). 
\end{equation}
Taking the corresponding Artin $L$-factors on both sides and then taking a product over all $\p$  gives: 
\begin{equation}
\label{eqn:motivic-auto-l-fn}
L\left(\iota, \Mot(\sigma_f), s\right) \ = \ L^{\rm aut}\left(s + \tfrac{1-n}{2}, {}^\iota\sigma_f\right).
\end{equation}
Using (\ref{eqn:mot-eff-mot}) we get: 
\begin{equation}
\label{eqn:eff-motivic-auto-l-fn}
L\left(\iota, \Mot_{\rm eff}(\sigma_f), s\right) \ = \ L^{\rm aut}\left(s + \tfrac{2d-\w}{2}, {}^\iota\!\sigma_f\right).
\end{equation}
We have similar relations for $\L$-functions by taking a product over all $\iota.$ The shift $(2d-\w)/2$ in the $s$-variable can be seen by working intrinsically in the context of the cohomology of arithmetic groups by considering a cohomological $L$-function attached to $\sigma_f$ which we now discuss.

\medskip
\subsubsection{\bf Cohomological $L$-functions}
We briefly review certain characteristic properties of the cohomological $L$-functions while referring the reader to Harder~\cite[Chap.\,3]{harder-book} for the precise definition and further details. 

For simplicity we deal with the special case of 
$F = \Q.$ Given $\sigma_f \in \Coh_{!!}(\GL_n/\Q, \lambda)$, let $p$ be a rational prime where $\sigma_f$ is unramified. 
The weight $\lambda$ gives an integral sheaf $\M_{\lambda, \Z}$ and we consider the integral cohomology by which we mean the image of the cohomology of $\M_{\lambda, \Z}$ in the cohomology of $\M_{\lambda}.$
Let $\chi : \bG_m \to T$ be a dominant co-character also thought of as a dominant character of the dual torus $T^\vee.$ 
The characteristic function of $\GL_n(\Z_p) \chi(p) \GL_n(\Z_p)$, denoted ${\bf ch}(\chi),$ is an element of the local Hecke algebra which does not in general preserve integral cohomology, however, 
$$
p^{\langle \chi, \lambda^{(1)} \rangle - \langle \chi, \lambda_{\rm ab} \rangle} {\bf ch}(\chi)
$$ 
does indeed preserve the integral cohomology; the exponent of $p$ appearing above is optimal for this purpose. 
Let $r_\chi$ be the algebraic irreducible representation of the dual group 
$G^\vee = \GL_n(\C)$ with highest weight $\chi$. Let $\vartheta_p \in T^\vee \subset G^\vee$ be the Satake parameter of $\sigma_p.$ There is a well-known expression for $r_\chi(\vartheta_p)$ in terms of dominant characters $\chi' < \chi$ of $T^\vee$ 
from which it follows that the polynomial appearing in the denominator of an Euler factor at $p$ can be normalized so that we see only integral coefficients: 
\begin{equation}
\det({\rm Id} - p^{c(\chi,\lambda)} r_\chi(\vartheta_p)X) \in \ringO_E[X], \quad {\rm where} \quad
c(\chi, \lambda) \ := \ \langle \chi, \lambda^{(1)} \rangle - \langle \chi, \lambda_{\rm ab} \rangle + 
\langle \chi, \bfgreek{rho}_n \rangle. 
\end{equation}
One defines the local Euler factor at $p$ for the cohomological $L$-function to be the formal expression: 
\begin{equation}
L^{\rm coh}_p(\sigma_f, r_\chi, s) \ = \ 
\frac{1}{\det({\rm Id} - p^{c(\chi,\lambda)} r_\chi(\vartheta_p)p^{-s})}.
\end{equation}
Any $\iota : E \to \C$ induces an embedding $\iota : \ringO_E[X] \hookrightarrow \C[X]$ giving an honest-to-goodness complex valued function: 
\begin{equation}
L^{\rm coh}_p(\iota, \sigma_f, r_\chi, s) \ = \ 
\frac{1}{\iota(\det({\rm Id} - p^{c(\chi,\lambda)} r_\chi(\vartheta_p)X))|_{X = p^{-s}}}.
\end{equation}

We now explicate the relation between the cohomological $L$-function and the automorphic $L$-function. 
We are interested in only the standard $L$-function of $\GL_n$ which corresponds to the standard representation of 
the dual group $\GL_n(\C)$ whose highest weight is $\e_1$ (see  \ref{sec:standard-fundamental}). 
It is easy to see that
\begin{equation}
\label{eqn:c-e1-lambda}
c(\e_1, \lambda)  
\ = \ \langle \e_1, \lambda^{(1)} \rangle - \langle \e_1, \lambda_{\rm ab} \rangle + \langle \e_1, \bfgreek{rho}_n \rangle  
\ = \  \frac{\w+1-n}{2} - d + \frac{n-1}{2}  
\ = \ \frac{\w - 2d}{2}.
\end{equation}
For the standard $L$-function of $\GL_n$, in the notations for the data going in to defining the $L$-function, 
we drop the representation of the dual group $r_\chi = r_{\e_1}$. From (\ref{eqn:c-e1-lambda}) we get that 
the entries in the diagonal matrix 
$$
p^{\tfrac{\w - 2d}{2}} \vartheta(\sigma_p) 
$$
are in $\ringO_E.$ We deduce: 
$$
p^{\tfrac{n-1}{2}}  \vartheta(\sigma_p) \ = \  p^{\tfrac{n-1-\w+2d}{2}} p^{\tfrac{\w - 2d}{2}}\vartheta(\sigma_p) \ 
\in \ p^{\Z} \ringO_E \ \subset \ E. 
$$ 
Hence it makes sense to apply the embedding $\iota : E \to \C$ to $p^{(n-1)/2}  \vartheta(\sigma_p).$ 
Next, it is well-known that taking the Satake parameter of an unramified (generic) representation $\sigma_p$ is rational up to a half-integral twist stemming from the fact that such normalized parabolic induction is not a rational operation due to the presence of $\bfgreek{rho}_n$ (see, for example, Gross~\cite{gross}); we have: 
$$
\iota \left(p^{\tfrac{n-1}{2}}  \vartheta(\sigma_p)\right) \ = \  p^{\tfrac{n-1}{2}} \vartheta({}^\iota\!\sigma_p). 
$$
It follows that 
\begin{equation*}
\begin{split}
\iota\left(p^{\tfrac{\w-2d}{2}}  \vartheta(\sigma_p) \right) 
& \ = \ \iota\left(p^{\tfrac{\w-2d+1-n}{2}} p^{\tfrac{n-1}{2}}   \vartheta(\sigma_p) \right) 
\ = \ p^{\tfrac{\w-2d+1-n}{2}}  \iota\left(p^{\tfrac{n-1}{2}}   \vartheta(\sigma_p) \right) \\
& \ = \ 
p^{\tfrac{\w-2d+1-n}{2}}  p^{\tfrac{n-1}{2}}   \vartheta({}^\iota\!\sigma_p)  
\ = \ p^{\tfrac{\w-2d}{2}} \vartheta({}^\iota\!\sigma_p). 
\end{split}
\end{equation*}
We then get the following relation between 
the standard cohomological $L$-function and the standard automorphic $L$-function: 
\begin{equation}
\label{eqn:coh-aut-l-fn-p}
\begin{split}
L^{\rm coh}_p(\iota, \sigma_f, s) 
& = \ \det\left({\rm Id} - \iota\left(p^{\tfrac{\w-2d}{2}}  \vartheta(\sigma_p)\right) p^{-s}\right)^{-1} \\
& =  \ \det\left({\rm Id} - p^{\tfrac{\w-2d}{2}}  \vartheta({}^\iota\sigma_p) p^{-s}\right)^{-1} \\
& =  \ \det\left({\rm Id} - \vartheta({}^\iota\sigma_p) p^{-(s + \tfrac{2d-\w}{2})} \right)^{-1} \\ 
& = \ L^{\rm aut}_p \left(s + \tfrac{2d - \w}{2}, {}^\iota\!\sigma_f \right). 
\end{split}
\end{equation}
When $\sigma_f$ is ramified at $p$, we {\it define} the cohomological $L$-function by (\ref{eqn:coh-aut-l-fn-p}), i.e., 
$$
L^{\rm coh}_p(\iota, \sigma_f, s) \ := \ L^{\rm aut}_p \left(s + \tfrac{2d - \w}{2}, {}^\iota\!\sigma_f \right),
\quad \mbox{when $\sigma_p$ is ramified}.
$$
Similarly, at any archimedean place $v \in \place_\infty$ we define
$$
L^{\rm coh}_v(\iota, \sigma, s) \ := \ L^{\rm aut}_v \left(s + \tfrac{2d - \w}{2}, {}^\iota\!\sigma_v \right). 
$$

All the local factors are pieced together to deduce the following relation between $L$-functions: 
\begin{equation}
\label{eqn:coh-aut-l-fns-finite}
L^{\rm coh}_f(\iota, \sigma_f, s)  \ = \  L^{\rm aut} \left(s + \tfrac{2d - \w}{2}, {}^\iota\!\sigma_f \right). 
\end{equation}
Similarly, we get the following relation between the completed $L$-functions: 
\begin{equation}
\label{eqn:coh-aut-l-fns-completed}
L^{\rm coh}(\iota, \sigma, s)  \ = \  L^{\rm aut} \left(s + \tfrac{2d - \w}{2}, {}^\iota\!\sigma \right). 
\end{equation}

From (\ref{eqn:eff-motivic-auto-l-fn}) and (\ref{eqn:coh-aut-l-fns-finite}) we get that the cohomological $L$-function attached to $\sigma_f$ and the motivic $L$-function attached to $\Mot_{\rm eff}(\sigma_f)$ coincide: 
\begin{equation}
\label{eqn:coh-eff-mot-l-fns-finite}
L^{\rm coh}_f(\iota, \sigma_f, s)  \ = \ 
L\left(\iota, \Mot_{\rm eff}(\sigma_f), s\right). 
\end{equation}
Similarly, for the completed $L$-functions.

\smallskip

A fundamental property about the cohomological $L$-function is that it is invariant under Tate twists, i.e., 
suppose $\sigma_f \in \Coh_{!!}(G_n, \lambda)$ and $m \in \Z$, then 
$\sigma_f(m) \in \Coh_{!!}(G_n, \lambda - m \delta_n) .$ and 
\begin{equation}
\label{eqn:coh-l-fn-tate-twist}
L^{\rm coh}_f(\iota, \sigma_f(m), s)  \ = \  L^{\rm coh}_f(\iota, \sigma_f, s). 
\end{equation}
This may be seen using (\ref{eqn:coh-aut-l-fns-finite}) after noting that 
$d(\lambda - m \delta_n) = d(\lambda) - m$ and 
$\w_{\lambda - m {\delta}_n} = \w_{\lambda}.$ 

\smallskip

Finally, let's comment that the above discussion of cohomological $L$-functions was when the base field was $\Q$, however, everything goes through {\it mutatis mutandis} for a totally real field $F.$  In particular, 
(\ref{eqn:coh-aut-l-fns-finite}), (\ref{eqn:coh-aut-l-fns-completed}) and (\ref{eqn:coh-eff-mot-l-fns-finite}) are true for a totally real $F.$

\medskip
\subsection{Critical points and the combinatorial lemma}
\label{sec:critical-points-comb-lemma}

In this subsection we write down the set of critical points of Rankin--Selberg $L$-functions for $G_n \times G_{n'}.$  
The discussion about critical points may be carried out by working entirely in the automorphic set-up, or granting the dictionary between motives and automorphic representations, the same discussion may be carried out 
working entirely in the motivic world. We present both the approaches. We also see that the condition on highest 
weights about the existence of a balanced Kostant representative, which guaranteed that a certain induced 
representation appeared in boundary cohomology in a special degree, turns out to be exactly equivalent to the 
condition that the $L$-values which intervene in rank-one Eisenstein cohomology to be critical values; 
this we state as a {\it combinatorial lemma}.

\medskip
\subsubsection{\bf The motivic $\Gamma$-factors at infinity and the critical set} 

We briefly present the calculation of the critical set of motivic  Rankin--Selberg $L$-functions.

\medskip
\paragraph{\bf Serre's $\Gamma$-factors} 
Suppose $\Mot$ is a pure regular rank $n$ motive defined over $F$ with coefficients in $E.$ Let $\w = \w(\Mot)$ be the purity weight of $\Mot,$ i.e., for any Hodge type $(p,q)$ of $\Mot$ we have $p+q = \w.$ By regularity we mean that any nonzero Hodge number is $1.$ Fix $\iota : E \to \C.$ For any $\nu : F \to \C$ suppose the Hodge types are:
$$
\Hod(\nu, {}^\iota\Mot) \ = \ \{(p_1^\tau,q_1^\tau), \dots, (p_n^\tau,q_n^\tau) \}, \quad (\tau = \iota^{-1}\circ \nu : F \to E).
$$
Further, we suppose that there is no middle Hodge type, i.e., for every $\tau$ and $j$, $p_j^\tau \neq \w/2.$ In this situation, the $\Gamma$-factor at the infinite place $v_\nu$ corresponding to $\nu$ is given by:
$$
L_{v_\nu}(\iota, \Mot, s) \ = \ 
\prod_{p_j^\tau < \tfrac{\w}{2}} \Gamma_\C(s - p_j^\tau).
$$
See Serre~\cite[(25)]{serre}; except that for us $\Gamma_\C(s) = 2(2\pi)^{-s}\Gamma(s).$ 
Taking the product over all the archimedean places, we get: 
\begin{equation}
\label{eqn:serre-gamma}
L_\infty(\iota, \Mot, s) \ = \ 
\prod_{\nu : F \to \C} 
\prod_{p_j^\tau < \tfrac{\w}{2}} \Gamma_\C(s - p_j^\tau) \ = \ 
\prod_{\tau : F \to E} 
\prod_{p_j^\tau < \tfrac{\w}{2}} \Gamma_\C(s - p_j^\tau). 
\end{equation}

\medskip
\paragraph{\bf $\Gamma$-factors and functional equations} 
Continuing with the notations above, supposing $\Mot = h^i(X, E)(m)$ by which we mean cohomology in degree $i$ with coefficients in $E$ of a smooth projective variety $X$ over $F$ together with a Tate twist by an integer $m.$ 
Then $\Mot$ is pure of weight 
$\w = i - 2m.$ By Poincar\'e duality and the hard Lefschetz theorem, which are known to be true in all the three realizations, we get: $\Mot^{\sf v}(1) \ = \ \Mot(\w+1)$; see, for example, Nekov\'ar \cite[(1.3.1)]{nekovar}. This kind of an essential self-duality of $\Mot$ is however not true for a general pure motive: for example, it is known by Ramakrishnan and Wang \cite{ramakrishnan-wang} 
that there is a cohomological cuspidal representation of $\GL_6/\Q$ which is not essentially self-dual; hence the corresponding motive could not possibly satisfy the above relation. However, the Betti realization does satisfy an analogous relation which may be checked using the Hodge types, and in particular, we have: 
$$
L_\infty(\iota, \Mot^{\sf v}, 1-s) \ = \ 
L_\infty(\iota, \Mot^{\sf v}(1), -s) \ = \ 
L_\infty(\iota, \Mot(\w+1)-s) \ = \ 
L_\infty(\iota, \Mot, \w+1-s). 
$$
The functional equation may be written as: 
$$
L_\infty(\iota, \Mot, s)L_f(\iota, \Mot, s) \ = \ 
\varepsilon(\iota, \Mot, s)  L_\infty(\iota, \Mot, \w+1-s) L_f(\iota, \Mot^{\sf v}, 1-s). 
$$

\medskip
\paragraph{\bf The critical set for motivic $L$-functions}
\label{sec:critical-mot-l-fn}
An integer $m$ is said to be critical for $L(\iota, \Mot, s)$ if the $\Gamma$-factors on either side of the functional equation are regular, i.e., do not have poles at $s=m.$ This means that both 
$$
L_\infty(\iota, \Mot, s) \quad {\rm and} \quad L_\infty(\iota, \Mot, \w+1-s)
$$
are regular at $s = m.$ Define
\begin{equation}
\label{eqn:p0-q0}
p_{\rm max} \ := \ {\rm max}\{p_j^\tau : p_j^\tau < \w/2\}, \ \ {\rm and} \ \ 
q_{\rm min} = \w - p_{\rm max} = {\rm min}\{q_j^\tau : q_j^\tau > \w/2\}. 
\end{equation}
It is an easy exercise using (\ref{eqn:serre-gamma}) to see that 
\begin{equation}
\label{eqn:critical-set-mot-l-fn}
\begin{split}
\mbox{The critical set for $L(\iota, \Mot, s)$} & = \ \{ m \in \Z : p_{\rm max} < m \leq q_{\rm min}\} \\
& = \ \{p_{\rm max}+1, p_{\rm max}+2, \dots, q_{\rm min}-1, q_{\rm min}\}.
\end{split}
\end{equation}
The critical set is centered around $(\w+1)/2$ and has $q_{\rm min} - p_{\rm max}$ integers. The assumption that there is no middle Hodge type assures us that the critical set is nonempty. The length $q_{\rm min}-p_{\rm max}$ of the critical set may also be interpreted as the smallest possible positive difference between the holomorphic and anti-holomorphic parts of the Hodge types, i.e., 
$$
q_{\rm min} - p_{\rm max} = {\rm min}\{\ |p-q| \ : \ (p,q) \in \Hod({}^\iota\Mot)\}.
$$
It follows from (\ref{eqn:p0-q0}) and (\ref{eqn:critical-set-mot-l-fn}) that the critical set for $L(\iota, \Mot, s)$ is independent of $\iota.$

\medskip
\paragraph{\bf The critical set for $L(\iota, \Mot_{\eff}(\sigma_f) \times \Mot_{\eff}(\sigma_f'^{\sf v}), s)$}
Let $\sigma_f \in \Coh_{!!}(G_n, \mu)$ and $\sigma_f' \in \Coh_{!!}(G_{n'}, \mu')$, with  
$\mu = (\mu^\tau)_{\tau : F \to E}$ and $\mu' = (\mu'^\tau)_{\tau : F \to E}.$ Then 
$\sigma_f'^{\sf v} \in \Coh_{!!}(G_{n'}, \mu'^{\sf v}).$ Let the motivic weights be $\w = \w(\mu)$ and 
$\w' = \w(\mu') = \w(\mu'^{\sf v}).$ Similarly, let the cuspidal parameters for $\mu$ be 
$\ell = (\ell^\tau)_{\tau : F \to E}$ with $\ell^\tau = (\ell_i^\tau)_{1 \leq i \leq n}.$ The cuspidal parameters for 
$\mu'$ are the same as the cuspidal parameters for dual weight $\mu'^{\sf v}$ and these we denote by 
$\ell' = (\ell'{}^{\tau})_{\tau : F \to E}$ with $\ell'^\tau = (\ell_j'\!{}^\tau)_{1 \leq j \leq n'}.$ For $\iota : E \to \C$ and 
$\nu : F \to \C$, the Hodge types are given by: 
$$
\Hod(\nu, {}^\iota\Mot_{\rm eff}(\sigma_f)) \ = \ 
\left\{ \left(\frac{\ell_i^\tau + \w}{2}, \frac{-\ell_i^\tau + \w}{2}\right)_{1 \leq i \leq n} \right\}. 
$$
Similarly, 
$$
\Hod(\nu, {}^\iota\Mot_{\rm eff}(\sigma_f'^{\sf v})) \ = \ 
\left\{ \left(\frac{\ell_j'\!{}^\tau + \w'}{2}, \frac{-\ell_j'\!{}^\tau + \w'}{2}\right)_{1 \leq j \leq n'} \right\}. 
$$
The Hodge types of the tensor product motive are given by: 
\begin{equation}
\label{eqn:hodge-tensor-product-eff-mot}
\Hod(\nu, {}^\iota\Mot_{\rm eff}(\sigma_f) \otimes {}^\iota\Mot_{\rm eff}(\sigma_f'^{\sf v})) \ = \ 
\left\{ \left(\frac{\ell_i^\tau + \ell_j'\!{}^\tau + \w + \w'}{2}, 
\frac{-\ell_i^\tau - \ell_j'\!{}^\tau+ \w + \w'}{2}\right)_{\substack{1 \leq i \leq n \\ 1\leq j \leq n'}} \right\}. 
\end{equation}
We would like to see under what condition there is no middle Hodge type in such a tensor product motive. This begets the 
\begin{defn}
\label{def:cuspidal-width}
We say that $\sigma_f \in \Coh_{!!}(G_n, \mu)$ and $\sigma_f' \in \Coh_{!!}(G_{n'}, \mu')$ have disjoint cuspidal parameters if $\ell_i^\tau \neq \ell_j'\!{}^\tau$ for all $1 \leq i \leq n$, $1\leq j \leq n'$ and $\tau : F \to E.$ Also, we define the cuspidal width between $\sigma_f$ and $\sigma_f'$ to be 
$$
\ell(\sigma_f, \sigma_f') = \ell(\mu, \mu') = 
{\rm min}\{\ |\ell_i^\tau - \ell_j'\!{}^\tau| \ : \  1 \leq i \leq n,\ 1\leq j \leq n',\ {\rm and}\ \tau : F \to E \}.
$$
\end{defn}
Note that the cuspidal parameters are disjoint if and only if $\ell(\sigma_f, \sigma_f') > 0.$ Also, the cuspidal width depends only on the semi-simple parts of the weights, and furthermore, since the cuspidal parameters of a weight and its dual weight are equal, we deduce: 
$$
\ell(\mu, \mu') \ = \ \ell(\mu^{(1)}, \mu'^{(1)}) \ = \ \ell(\mu, \mu'^{\sf v}), \ \mbox{etc.} 
$$

\begin{prop}
\label{prop:disjoint-param-middle-hodge}
The tensor product motive ${}^\iota\Mot_{\rm eff}(\sigma_f) \otimes {}^\iota\Mot_{\rm eff}(\sigma_f'^{\sf v})$ has no middle Hodge type if and only if $\sigma_f$ and $\sigma_f'$ have disjoint cuspidal parameters. Furthermore, cuspidal parameters being disjoint implies that  $nn'$ is even. 
\end{prop}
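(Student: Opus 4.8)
The plan is to prove both assertions of Proposition~\ref{prop:disjoint-param-middle-hodge} by unwinding the explicit description of the Hodge types in (\ref{eqn:hodge-tensor-product-eff-mot}) together with the parity information on cuspidal parameters recorded in (\ref{eqn:parity-cuspidal-parameter}). First I would recall that, by purity of the tensor product motive ${}^\iota\Mot_{\rm eff}(\sigma_f) \otimes {}^\iota\Mot_{\rm eff}(\sigma_f'^{\sf v})$, the common weight is $\tilde\w = \w + \w'$ (where $\w = \w_\mu$ and $\w' = \w_{\mu'}$ are the motivic weights, coinciding with those of the dual weights), so a Hodge type $(p,q)$ with $p$-coordinate $\tfrac{\ell_i^\tau + \ell_j'\!{}^\tau + \w + \w'}{2}$ is a \emph{middle} Hodge type precisely when $p = \tilde\w/2$, i.e. when $\ell_i^\tau + \ell_j'\!{}^\tau + \w + \w' = \w + \w'$, which is equivalent to $\ell_i^\tau = -\ell_j'\!{}^\tau$. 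Now using the symmetry $\ell_j'\!{}^\tau = -\ell_{n'-j+1}'\!{}^\tau$ from the bulleted properties following (\ref{eqn:cuspidal-parameter}), the condition $\ell_i^\tau = -\ell_j'\!{}^\tau$ for some $i,j,\tau$ is equivalent to $\ell_i^\tau = \ell_{n'-j+1}'\!{}^\tau$ for some $i,j,\tau$, which is exactly the negation of disjointness of cuspidal parameters as in Def.\,\ref{def:cuspidal-width}. This gives the first ``if and only if'' directly.

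Next I would prove the parity consequence: disjoint cuspidal parameters forces $nn'$ to be even. The key input is the parity statement (\ref{eqn:parity-cuspidal-parameter}): every $\ell_i^\tau$ is congruent mod $2$ to $\w_\mu \equiv 2d_\mu + n - 1$, and every $\ell_j'\!{}^\tau$ is congruent mod $2$ to $\w_{\mu'} \equiv 2d_{\mu'} + n' - 1$. I would argue by contraposition: suppose $nn'$ is odd, so both $n$ and $n'$ are odd. Then from the purity lemma, $d_\mu, d_{\mu'} \in \Z$ (the case ``$n$ odd implies $d \in \Z$''), and (\ref{eqn:parity-cuspidal-parameter}) shows that all $\ell_i^\tau$ and all $\ell_j'\!{}^\tau$ are \emph{even} integers. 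Moreover the bulleted observations after (\ref{eqn:cuspidal-parameter}) tell us that when $n$ is odd, $0$ is itself one of the cuspidal parameters (namely $\ell_{(n+1)/2}^\tau = 0$), and likewise $\ell_{(n'+1)/2}'\!{}^\tau = 0$; hence $\ell_{(n+1)/2}^\tau = \ell_{(n'+1)/2}'\!{}^\tau = 0$ for every $\tau$, which violates disjointness. Therefore disjointness of cuspidal parameters is impossible when $nn'$ is odd, i.e. disjointness forces $nn'$ even.

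For the write-up I would present these as two short paragraphs inside the \verb|proof| environment: one reducing ``no middle Hodge type'' to ``$\ell_i^\tau \ne -\ell_j'\!{}^\tau$'' via (\ref{eqn:hodge-tensor-product-eff-mot}) and then to disjointness via the symmetry $\ell_j'\!{}^\tau = -\ell_{n'-j+1}'\!{}^\tau$; and one proving the parity claim by the contrapositive argument above, invoking (\ref{eqn:parity-cuspidal-parameter}) and the vanishing of the central cuspidal parameter when the rank is odd. I do not anticipate a serious obstacle here — the statement is essentially combinatorial once the dictionary between Hodge types and cuspidal parameters in (\ref{eqn:langlands-parameters}), (\ref{eqn:hodge-numbers-M}), (\ref{eqn:hodge-tensor-product-eff-mot}) is in hand. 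The one point requiring a little care is making sure the weight $\tilde\w$ of the tensor product really is $\w + \w'$ and that ``middle Hodge type'' is being applied to the \emph{effective} motives with the normalization fixed in (\ref{eqn:mot-eff-mot}); I would state this explicitly at the start of the proof so that the equivalence $p = \tilde\w/2 \iff \ell_i^\tau + \ell_j'\!{}^\tau = 0$ is transparent. A secondary bookkeeping subtlety is that disjointness in Def.\,\ref{def:cuspidal-width} is phrased as $\ell_i^\tau \ne \ell_j'\!{}^\tau$ rather than $\ell_i^\tau \ne -\ell_j'\!{}^\tau$, so I must remember to apply the $j \mapsto n'-j+1$ reindexing, which is harmless since it is a bijection on the index set.
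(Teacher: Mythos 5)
Your proposal is correct and follows essentially the same route as the paper: reduce ``middle Hodge type'' to $\ell_i^\tau + \ell_j'\!{}^\tau = 0$ using the weight $\w+\w'$ of the tensor product, convert that to a coincidence of cuspidal parameters via the symmetry $\ell_j'\!{}^\tau = -\ell_{n'-j+1}'\!{}^\tau$, and obtain the parity claim from the vanishing of the central cuspidal parameters $\ell_{(n+1)/2}^\tau = \ell_{(n'+1)/2}'\!{}^\tau = 0$ when both $n$ and $n'$ are odd. The extra parity discussion via (\ref{eqn:parity-cuspidal-parameter}) is harmless but not needed; the vanishing of the middle parameters already settles it.
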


\begin{proof}
The weight of the tensor product motive is $\w + \w'$; hence there is a middle Hodge type if and only if 
$\ell_i^\tau + \ell_j'\!{}^\tau = 0$ or that $\ell_j'\!{}^\tau = \ell_{n-i+1}^\tau$  
for some $i,j,\tau.$ 
Suppose $n$ and $n'$ are both odd, then for each $\tau$, we have $\ell_i^\tau = 0$ for 
$i = (n+1)/2$ and $\ell_j'\!{}^\tau = 0$ for $j = (n'+1)/2$; hence $\ell(\sigma_f, \sigma_f') = 0.$ 
\end{proof}

For the next proposition, let's note the parity condition: $\ell(\mu, \mu') \equiv  \w + \w' \pmod{2}.$ 

\begin{prop}
\label{prop:critical-set-tensor-product-eff-mot}
Suppose $\sigma_f \in \Coh_{!!}(G_n, \mu)$ and $\sigma_f' \in \Coh_{!!}(G_{n'}, \mu')$ have disjoint cuspidal parameters. 
Then the critical set for $L(\iota, \Mot_{\eff}(\sigma_f) \times \Mot_{\eff}(\sigma_f'^{\sf v}), s)$ is the finite set: 
$$
\left\{ m \in \Z \ : \ 
\frac{-\ell(\mu, \mu') + \w + \w'}{2} < m \leq \frac{\ell(\mu, \mu') + \w + \w'}{2} \right\}.
$$
The critical set is centered around $(1+\w+\w')/2$ and the number of critical points is the cuspidal width 
$\ell(\mu, \mu').$  
\end{prop}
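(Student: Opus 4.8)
The plan is to deduce the statement directly from the general computation of the critical set of a motivic $L$-function carried out in (\ref{eqn:serre-gamma})--(\ref{eqn:critical-set-mot-l-fn}), applied to the pure tensor-product motive $\Mot := {}^\iota\Mot_\eff(\sigma_f) \otimes {}^\iota\Mot_\eff(\sigma_f'^{\sf v})$. By (\ref{eqn:hodge-tensor-product-eff-mot}) this motive is pure of weight $\w + \w'$, and by Prop.\,\ref{prop:disjoint-param-middle-hodge} the hypothesis that $\sigma_f$ and $\sigma_f'$ have disjoint cuspidal parameters is exactly the statement that $\Mot$ has no middle Hodge type. This is the only hypothesis actually needed for the derivation of (\ref{eqn:serre-gamma}) and (\ref{eqn:critical-set-mot-l-fn}): even though $\Mot$ need not be regular (the numbers $\ell_i^\tau + \ell_j'^\tau$ may repeat), the archimedean factor $L_\infty(\iota, \Mot, s)$ is a product of $\Gamma_\C(s - p_j^\tau)$ over the holomorphic Hodge exponents below the motivic weight, counted with multiplicity, and its poles — hence the regularity locus on both sides of the functional equation — depend only on the \emph{set} of those exponents, not on multiplicities. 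So I would first record this remark, which allows (\ref{eqn:critical-set-mot-l-fn}) to be invoked verbatim.

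Next I would identify the quantities $p_{\rm max}$ and $q_{\rm min}$ of (\ref{eqn:p0-q0}) for $\Mot$. From (\ref{eqn:hodge-tensor-product-eff-mot}) the holomorphic parts of the Hodge types are the numbers $\tfrac12(\ell_i^\tau + \ell_j'^\tau + \w + \w')$, and such a number lies strictly below the motivic weight $\tfrac12(\w + \w')$ precisely when $\ell_i^\tau + \ell_j'^\tau < 0$. Using the symmetry $\ell_j'^\tau = -\ell_{n'-j+1}'^\tau$ of the cuspidal parameters of $\mu'$ recorded just after (\ref{eqn:cuspidal-parameter}), the substitution $j \mapsto n'-j+1$ turns the multiset $\{\ell_i^\tau + \ell_j'^\tau\}_{i,j}$ into $\{\ell_i^\tau - \ell_j'^\tau\}_{i,j}$; hence $\min_{i,j,\tau}|\ell_i^\tau + \ell_j'^\tau| = \ell(\mu,\mu')$ in the notation of Def.\,\ref{def:cuspidal-width}, and this minimum is attained with a negative sign as well. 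Disjointness says the minimum is $> 0$, so the value $0$ is never attained, and the largest value of $\ell_i^\tau + \ell_j'^\tau$ that is still negative equals $-\ell(\mu,\mu')$. Therefore $p_{\rm max} = \tfrac12(-\ell(\mu,\mu') + \w + \w')$, and, since the weight is $\w + \w'$, one gets $q_{\rm min} = (\w+\w') - p_{\rm max} = \tfrac12(\ell(\mu,\mu') + \w + \w')$. Both are integers because $\ell_i^\tau \equiv \w$ and $\ell_j'^\tau \equiv \w'$ modulo $2$.

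Plugging these two values into (\ref{eqn:critical-set-mot-l-fn}) gives the critical set $\{m \in \Z : p_{\rm max} < m \le q_{\rm min}\}$, which is exactly the displayed set in the statement; its cardinality is $q_{\rm min} - p_{\rm max} = \ell(\mu,\mu')$, and, as in the general discussion, it is centered around $\tfrac12(1 + \w + \w')$. This finishes the argument.

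There is essentially no serious obstacle here; the one point requiring a little care is the equality $\min_{i,j,\tau}|\ell_i^\tau + \ell_j'^\tau| = \ell(\mu,\mu')$, which rests on the symmetry of the cuspidal parameters and on disjointness to exclude the value $0$. As a cross-check — and an alternative proof avoiding the motivic dictionary — one can run the same computation purely automorphically: by (\ref{eqn:coh-aut-l-fns-completed}) and (\ref{eqn:coh-eff-mot-l-fns-finite}) the relevant $L$-function is, up to the shift in the $s$-variable, the completed Rankin--Selberg $L$-function of ${}^\iota\!\sigma \times {}^\iota\!\sigma'^{\sf v}$, whose archimedean factor is that of the corresponding product of the representations $\D$ at infinity; feeding the cuspidal parameters into the resulting $\GL_2$-type $\Gamma_\C$-factors and applying Def.\,\ref{def:critical-aut-l-fn} reproduces the same set of critical integers.
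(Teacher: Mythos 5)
Your proposal is correct and follows the same route as the paper: read off $p_{\rm max} = \tfrac12(-\ell(\mu,\mu') + \w + \w')$ and $q_{\rm min} = \tfrac12(\ell(\mu,\mu') + \w + \w')$ from the Hodge types in (\ref{eqn:hodge-tensor-product-eff-mot}) and then invoke the general description (\ref{eqn:critical-set-mot-l-fn}) of the critical set. The extra details you supply — the symmetry $\ell_j'^\tau = -\ell_{n'-j+1}'^\tau$ identifying $\min|\ell_i^\tau + \ell_j'^\tau|$ with the cuspidal width, the parity check for integrality, and the remark that the argument only uses the set of Hodge exponents rather than regularity — are exactly the points the paper leaves implicit in its one-line proof.
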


\begin{proof}
From (\ref{eqn:hodge-tensor-product-eff-mot}) it follows that 
$p_{\rm max} = (-\ell(\mu, \mu') + \w + \w')/2$ and $q_{\rm min} = (\ell(\mu, \mu') + \w + \w')/2.$ The rest is clear from the discussion in \ref{sec:critical-mot-l-fn}.
\end{proof}

\medskip
\subsubsection{\bf The automorphic $\Gamma$-factors at infinity and the critical set} 

We briefly present the calculation of the critical set of automorphic  Rankin--Selberg $L$-functions.

\medskip
\paragraph{\bf The Rankin--Selberg $\Gamma$-factors}
Suppose $\sigma_f \in \Coh_{!!}(G_n, \mu)$ and $\sigma_f' \in \Coh_{!!}(G_{n'}, \mu').$ 
Assume that $n$ is even. For $\iota : E \to \C$, 
we know from Thm.\,\ref{thm:cuspidal-cohomology} 
that the representations at infinity of ${}^\iota\!\sigma$ (resp., ${}^\iota\!\sigma'$) are given by 
$\D_{{}^\iota\!\mu}$ (resp., $\D_{{}^\iota\!\mu'}$ up to a signature character when $n'$ is odd). For each archimedean place $v$ and corresponding 
$\nu_v : F \to \C$ we know the local archimedean Langlands parameters
$\varrho({}^\iota\!\sigma_v) = \varrho({}^\iota\!\mu^{\nu_v})$ (resp., $\varrho({}^\iota\!\sigma'_v) = 
\varrho({}^\iota\!\mu^{\nu_v})$). For notational brevity, for any integer $k$, let's denote by $I(k)$ the $2$-dimensional irreducible representation of $W_\R$ obtained by inducing the unitary character 
$z \mapsto (z/\bar z)^{k/2}$, i.e., the character $r e^{i\theta} \mapsto e^{i k\theta}$, of $\C^\times = W_\C.$ 
Using the local Langlands correspondence for $\GL_n(\R)$ and $\GL_{n'}(\R)$ we have: 

\begin{equation}
\label{eqn:langlands-parameters-infinity}
\varrho({}^\iota\!\sigma_v) \ = \ \bigoplus_{i = 1}^{n/2} I(\ell_i^\tau) \otimes |\ |^{-d}, \quad {\rm and} \quad 
\varrho({}^\iota\!\sigma_v'^{\sf v}) \ = \ \left\{
\begin{array}{ll}
\bigoplus_{j = 1}^{(n'-1)/2} I(\ell_j'\!{}^\tau) \otimes |\ |^{d'} \, 
\oplus \, {\rm sgn}^{\epsilon'_v} |\ |^{d'}, & \mbox{if $n'$ is odd}, \\
& \\
\bigoplus_{j = 1}^{n'/2} I(\ell_j'\!{}^\tau) \otimes |\ |^{d'}, & \mbox{if $n'$ is even.}
\end{array}\right.
\end{equation}
(Here $\iota \circ \tau = \nu_v.$) The following properties are readily checked: 
(i) $I(k)$ is irreducible for $k \neq 0$, and $I(0) = 1\!\!1 \oplus {\rm sgn};$  
(ii) $I(k) \simeq I(-k);$ 
(iii) $I(k) \otimes {\rm sgn} \simeq I(k);$  and 
(iv) $I(k) \otimes I(k') = I(k+k') \oplus I(|k-k'|).$

Under the assumption that $\sigma_f$ and $\sigma_f'$ have disjoint cuspidal parameters, if $n'$ is odd then 
\begin{equation}
\label{eqn:langlands-parameters-tensor-product-odd}
\varrho({}^\iota\!\sigma_v) \otimes \varrho({}^\iota\!\sigma_v'^{\sf v}) \ = \ 
\bigoplus_{i = 1}^{n/2} I(\ell_i^\tau) \otimes |\ |^{d'-d} \ 
\bigoplus_{\substack{1 \leq i \leq n/2 \\ 1 \leq j \leq (n'-1)/2}} 
\left(I(\ell_i^\tau + \ell_j'\!{}^\tau) \otimes |\ |^{d'-d} \oplus I(|\ell_i^\tau-\ell_j'\!{}^\tau|) \otimes |\ |^{d'-d} \right), 
\end{equation}
and if $n'$ is even then 
\begin{equation}
\label{eqn:langlands-parameters-tensor-product-even}
\varrho({}^\iota\!\sigma_v) \otimes \varrho({}^\iota\!\sigma_v'^{\sf v}) \ = \ 
\bigoplus_{\substack{1 \leq i \leq n/2 \\ 1 \leq j \leq n'/2}} 
\left(I(\ell_i^\tau + \ell_j'\!{}^\tau) \otimes |\ |^{d'-d} \oplus I(|\ell_i^\tau-\ell_j'\!{}^\tau|) \otimes |\ |^{d'-d} \right). 
\end{equation}

From the matching up of local $L$-factors on either side of the local Langlands correspondence 
(see Knapp~\cite[(3.6), (3.8)]{knapp}), if $n'$ is odd then from (\ref{eqn:langlands-parameters-tensor-product-odd}) we get 
 \begin{equation}
 \label{eqn:L-factor-infinity-automorphic-odd}
 \begin{split}
 L_v(s, \iota, \sigma \times \sigma'^{\sf v}) \ = \ 
& \prod_{i = 1}^{n/2} \Gamma_\C \left(s+d'-d+ \tfrac{\ell_i^\tau}{2}\right) \cdot \\
& \prod_{\substack{1 \leq i \leq n/2 \\ 1 \leq j \leq (n'-1)/2}} 
 \Gamma_\C\left(s+d'-d+ \tfrac{\ell_i^\tau+\ell_j'\!{}^\tau}{2}\right) 
 \Gamma_\C\left(s+d'-d+ \tfrac{|\ell_i^\tau-\ell_j'\!{}^\tau|}{2}\right), 
 \end{split}
 \end{equation}
and if $n'$ is even then from (\ref{eqn:langlands-parameters-tensor-product-even}) we get: 
\begin{equation}
 \label{eqn:L-factor-infinity-automorphic-even}
 L_v(s, \iota, \sigma \times \sigma'^{\sf v}) \ = \ 
 \prod_{\substack{1 \leq i \leq n/2 \\ 1 \leq j \leq n'/2}} 
 \Gamma_\C\left(s+d'-d+ \tfrac{\ell_i^\tau+\ell_j'\!{}^\tau}{2}\right) 
 \Gamma_\C\left(s+d'-d+ \tfrac{|\ell_i^\tau-\ell_j'\!{}^\tau|}{2}\right). 
 \end{equation}

\medskip
\paragraph{\bf The critical set for automorphic Rankin--Selberg $L$-functions}

\begin{prop}
\label{prop:critical-set-tensor-product-aut}
Suppose $\sigma_f \in \Coh_{!!}(G_n, \mu)$ and $\sigma_f' \in \Coh_{!!}(G_{n'}, \mu')$ have disjoint cuspidal parameters. 
Assume that $nn'$ is even. If $N = n+n'$ then let $\epsilon_N \in \{0,1\}$ be such that $N \equiv \epsilon_N \pmod{2}.$ 
Then the critical set for $L(s, \iota, \sigma_f \times \sigma_f'^{\sf v})$ is the finite set of half-integers:  
$$
\left\{ m \in \tfrac{\epsilon_N}{2} + \Z \ : \ 
\frac{2-\ell(\mu, \mu') + 2(d-d')}{2} \ \leq \ m \ \leq \ \frac{\ell(\mu, \mu') + 2(d-d')}{2} \right\}.
$$
The critical set is centered around $\tfrac12 + d-d'$ and has cardinality equal to the cuspidal width 
$\ell(\mu, \mu').$   
\end{prop}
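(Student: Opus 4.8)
The plan is to compute the critical set of $L(s, \iota, \sigma_f \times \sigma_f'^{\sf v})$ directly from the $\Gamma$-factor formulas \eqref{eqn:L-factor-infinity-automorphic-odd} and \eqref{eqn:L-factor-infinity-automorphic-even}, and then match the answer against the motivic critical set already determined in Prop.\,\ref{prop:critical-set-tensor-product-eff-mot} after the shift in the $s$-variable that relates the cohomological/effective-motivic normalization to the automorphic one. First I would recall from \eqref{eqn:coh-aut-l-fns-completed} and \eqref{eqn:eff-motivic-auto-l-fn} that $L(\iota, \Mot_{\rm eff}(\sigma_f) \otimes \Mot_{\rm eff}(\sigma_f'^{\sf v}), s)$ and $L^{\rm aut}(s + \tfrac{2d-\w}{2} + \tfrac{2(-d')-\w'}{2}, {}^\iota\!\sigma \times {}^\iota\!\sigma'^{\sf v})$ agree, where I use that $\Mot_{\rm eff}(\sigma_f'^{\sf v})$ has purity weight $\w'$ and "abelian parameter" $-d'$ (via \eqref{eqn:rep-eff-mot-duals} and the fact that $d(\mu'^{\sf v}) = -d(\mu')$). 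So the automorphic critical set is obtained from the motivic one of Prop.\,\ref{prop:critical-set-tensor-product-eff-mot} by subtracting $\tfrac{2d-\w}{2} + \tfrac{-2d'-\w'}{2} = d - d' - \tfrac{\w+\w'}{2}$; applying this shift to $\{m \in \Z : \tfrac{-\ell(\mu,\mu')+\w+\w'}{2} < m \leq \tfrac{\ell(\mu,\mu')+\w+\w'}{2}\}$ yields exactly $\{m : \tfrac{2-\ell(\mu,\mu')+2(d-d')}{2} \leq m \leq \tfrac{\ell(\mu,\mu')+2(d-d')}{2}\}$, which is the claimed set; centered around $\tfrac12 + d - d'$, with $\ell(\mu,\mu')$ elements.

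Alternatively, and as a cross-check, I would argue intrinsically on the automorphic side: by Def.\,\ref{def:critical-aut-l-fn}, $m$ is critical iff both $L_v(s, \iota, \sigma \times \sigma'^{\sf v})$ (a product of $\Gamma_\C$-factors $\Gamma_\C(s + d' - d + c)$ with $c$ running over $\{\ell_i^\tau/2\}$ when $n'$ is odd, together with $\{(\ell_i^\tau+\ell_j'^\tau)/2, |\ell_i^\tau - \ell_j'^\tau|/2\}$ in both parities) and the analogous factor $L_v(1-s, \iota, \sigma^{\sf v} \times \sigma')$ (obtained by $s \mapsto 1 - s$ and $d \leftrightarrow d'$, i.e.\ $\Gamma_\C(1-s + d - d' + c)$ with the same multiset of shifts $c \geq 0$, using $I(k) \simeq I(-k)$ and that cuspidal parameters of a weight and its dual coincide) are finite at $s = m$. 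Since $\Gamma_\C(z) = 2(2\pi)^{-z}\Gamma(z)$ has poles precisely at $z \in \Z_{\leq 0}$, finiteness of the first factor forces $m + d' - d + c > 0$ for all shifts $c$, i.e.\ $m > d - d' - c_{\min}$ where $c_{\min} = \tfrac12\ell(\mu,\mu')$ is the smallest shift occurring (this is where disjointness of cuspidal parameters matters: it guarantees all the shifts $|\ell_i^\tau - \ell_j'^\tau|/2$ are strictly positive, so $c_{\min} = \ell(\mu,\mu')/2 > 0$ and the minimum is attained among the "difference" shifts). Symmetrically, finiteness of the second factor gives $1 - m + d - d' - c > 0$ for all $c$ provided $m$ lies in the correct $\tfrac{\epsilon_N}{2} + \Z$ coset — here one checks the parity $\ell(\mu,\mu') \equiv \w + \w' \pmod 2$ and $\w \equiv n - 1 \pmod 2$, $\w' \equiv n'-1 \pmod 2$ together force the critical points to be integers when $N$ is even and half-integers when $N$ is odd — so $m \leq d - d' + \tfrac12 + c_{\min} = \tfrac{\ell(\mu,\mu') + 2(d-d')}{2}$. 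Combining the two inequalities gives the asserted interval.

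The routine parts are the bookkeeping of the $\Gamma_\C$-shifts and checking that the minimum over all shifts equals $\ell(\mu,\mu')/2$ — this is immediate once one notes that the "sum" shifts $(\ell_i^\tau + \ell_j'^\tau)/2$ dominate the "difference" shifts $|\ell_i^\tau - \ell_j'^\tau|/2$, and that in the odd-$n'$ case the extra shifts $\ell_i^\tau/2$ arising from the $|\ |^{d'}$ summand of $\varrho({}^\iota\sigma_v'^{\sf v})$ are $= |\ell_i^\tau - 0|/2$ which is consistent with a "virtual" zero cuspidal parameter of the odd-dimensional $\sigma'$, but disjointness (which by Prop.\,\ref{prop:disjoint-param-middle-hodge} forbids $n, n'$ both odd — here $n$ is even so this is automatic) ensures $\ell_i^\tau \neq 0$, hence these shifts are also $\geq \ell(\mu,\mu')/2$. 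The main obstacle, and the step I would be most careful about, is the parity/coset analysis: one must verify precisely that the endpoints $\tfrac{2 - \ell(\mu,\mu') + 2(d-d')}{2}$ and $\tfrac{\ell(\mu,\mu') + 2(d-d')}{2}$ differ by $\ell(\mu,\mu') - 1$ as members of $\tfrac{\epsilon_N}{2} + \Z$, i.e.\ that there are exactly $\ell(\mu,\mu')$ lattice points of the right coset in the closed interval, which hinges on showing $\ell(\mu,\mu') + 2(d-d') \equiv \epsilon_N \pmod 2$; this follows from the parity relation \eqref{eqn:parity} applied to both $\mu$ and $\mu'$, but it is the one place where the combinatorics of the normalizations genuinely has to be reconciled, and I would present that congruence computation explicitly rather than leave it to the reader.
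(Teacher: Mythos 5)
Your second, ``cross-check'' argument is precisely the paper's proof: one reads off from (\ref{eqn:L-factor-infinity-automorphic-odd}) and (\ref{eqn:L-factor-infinity-automorphic-even}) that regularity of $L_\infty(s,\iota,\sigma\times\sigma'^{\sf v})$ at $m$ amounts to $2m-2(d-d')+\ell(\mu,\mu')\geq 1$, regularity of $L_\infty(1-s,\iota,\sigma^{\sf v}\times\sigma')$ to the mirror inequality, and the one-line congruence $\ell_i\equiv 2d+n-1\pmod 2$ gives $\ell(\mu,\mu')+2(d-d')\equiv\epsilon_N\pmod 2$ --- exactly the parity point you single out as needing explicit verification. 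Two caveats. First, your \emph{primary} route, transporting the critical set of Prop.\,\ref{prop:critical-set-tensor-product-eff-mot} through the shift (\ref{eqn:eff-motivic-auto-l-fn-tensor-product}), is logically the reverse of what the paper does: to move the critical set across the shift you need the identity of \emph{archimedean} factors under that shift, and checking that Serre's $\Gamma$-recipe applied to the Hodge types (\ref{eqn:hodge-tensor-product-eff-mot}) reproduces (\ref{eqn:L-factor-infinity-automorphic-odd})--(\ref{eqn:L-factor-infinity-automorphic-even}) after the shift is the same computation you are trying to avoid. The paper accordingly proves the two propositions independently from their respective $\Gamma$-factors and only afterwards records their compatibility (Sect.\,\ref{sec:shift-s-variable}); so your first route does not stand on its own without the second. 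Second, a small arithmetic slip: from $1-m+(d-d')+c_{\min}>0$ together with $m-(d-d')-c_{\min}\in\Z$ one concludes $m\leq (d-d')+c_{\min}$, not $m\leq (d-d')+\tfrac12+c_{\min}$; the endpoint $\tfrac{\ell(\mu,\mu')+2(d-d')}{2}$ you then write down is the correct one, so the extra $\tfrac12$ is inconsistent with your own final expression. Neither issue affects the validity of the direct argument.
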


\begin{proof}
The parity issue is dictated by Def.\,\ref{def:critical-aut-l-fn}. We remind the reader that 
$\ell_i \equiv 2d + n-1 \pmod{2}$, hence $\ell(\mu,\mu') + 2(d-d') \equiv \epsilon_N \pmod{2}.$ 

The proof follows from (\ref{eqn:L-factor-infinity-automorphic-odd}) and (\ref{eqn:L-factor-infinity-automorphic-even});  
we briefly sketch the details in the case $n$ is even and $n'$ is odd, leaving the other case to the reader. Then 
$\epsilon_N = 1$. Suppose $m = \tfrac12 + m_0$. Note that 
$L_\infty(s, \iota, \sigma \times \sigma'^{\sf v})$ is regular at $\tfrac12 + m_0$ if and only if for all 
$\tau : F \to E$, and for all indices $i,j$ with $1 \leq i \leq n/2$ and $1 \leq j \leq (n'+1)/2$, bearing in mind that 
$\ell'_{(n'+1)/2} = 0$, we have: 
\begin{itemize}
\item[] $\frac{1 + 2m_0 + 2(d'-d) + \ell_i^\tau+\ell_j'\!{}^\tau}{2} \geq 1 \ \iff \ 
2m_0 - 2(d-d') + \ell_i^\tau+\ell_j'\!{}^\tau \geq 1$, and 
\medskip
\item[] $\frac{1 + 2m_0 + 2(d'-d) + |\ell_i^\tau - \ell_j'\!{}^\tau|}{2} \geq 1 \ \iff \ 
2m_0 - 2(d-d') +  |\ell_i^\tau - \ell_j'\!{}^\tau| \geq 1$;
\end{itemize}
putting both together we have 
\begin{verse}
$L_\infty(s, \iota, \sigma \times \sigma'^{\sf v})$ is regular at 
$\tfrac12 + m_0 \ \ \iff \ \  2m_0 - 2(d-d') + \ell(\mu,\mu') \geq 1.$
\end{verse}
Similarly, 
\begin{verse}
$L_\infty(1-s, \iota, \sigma^{\sf v} \times \sigma')$ is regular at 
$\tfrac12 + m_0 \ \ \iff \ \  -2m_0 + 2(d-d') + \ell(\mu,\mu') \geq 1.$
\end{verse}
Putting both conditions together concludes the proof. 
\end{proof}

\medskip
\begin{cor}
\label{cor:comb-lem-easy-critical}
Suppose $\sigma_f \in \Coh_{!!}(G_n, \mu)$ and $\sigma_f' \in \Coh_{!!}(G_{n'}, \mu')$ have disjoint cuspidal parameters. 
Assume that $nn'$ is even, and let $N = n+n'$. The points 
$-\tfrac{N}{2}$ and $1  -\tfrac{N}{2}$ are critical for $L(s, \iota, \sigma_f \times \sigma_f'^{\sf v})$ if and only if 
$$
-\frac{N}{2} + 1 - \frac{\ell(\mu,\mu')}{2} \ \leq \ (d-d') \ \leq \ 
-\frac{N}{2} - 1 + \frac{\ell(\mu,\mu')}{2}. 
$$
\end{cor}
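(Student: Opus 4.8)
The plan is to deduce Corollary~\ref{cor:comb-lem-easy-critical} directly from Prop.\,\ref{prop:critical-set-tensor-product-aut}, which has already identified the critical set for $L(s, \iota, \sigma_f \times \sigma_f'^{\sf v})$. First I would recall that, by that proposition, the critical set is
$$
\left\{ m \in \tfrac{\epsilon_N}{2} + \Z \ : \
\frac{2-\ell(\mu, \mu') + 2(d-d')}{2} \ \leq \ m \ \leq \ \frac{\ell(\mu, \mu') + 2(d-d')}{2} \right\},
$$
so asking for the two points $-\tfrac{N}{2}$ and $1-\tfrac{N}{2}$ to both be critical is the same as asking that both lie inside this interval. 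Since $-\tfrac{N}{2}$ and $1-\tfrac{N}{2}$ are two consecutive elements of $\tfrac{\epsilon_N}{2}+\Z$ (note $N \equiv \epsilon_N \pmod 2$ forces $-\tfrac{N}{2} \in \tfrac{\epsilon_N}{2}+\Z$), the membership of both is equivalent to the two inequalities
$$
\frac{2-\ell(\mu,\mu') + 2(d-d')}{2} \ \leq \ -\frac{N}{2}
\qquad \text{and} \qquad
1 - \frac{N}{2} \ \leq \ \frac{\ell(\mu,\mu') + 2(d-d')}{2}.
$$

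Next I would simply rearrange each inequality to isolate $d-d'$. The first gives $2(d-d') \leq -N - 2 + \ell(\mu,\mu')$, i.e. $d-d' \leq -\tfrac{N}{2} - 1 + \tfrac{\ell(\mu,\mu')}{2}$; the second gives $2 - N \leq \ell(\mu,\mu') + 2(d-d')$, i.e. $d-d' \geq -\tfrac{N}{2} + 1 - \tfrac{\ell(\mu,\mu')}{2}$. Combining the two yields exactly
$$
-\frac{N}{2} + 1 - \frac{\ell(\mu,\mu')}{2} \ \leq \ (d-d') \ \leq \
-\frac{N}{2} - 1 + \frac{\ell(\mu,\mu')}{2},
$$
which is the assertion of the corollary. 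One should also check the converse direction is immediate: if $d-d'$ satisfies this sandwich, then reversing the algebra shows both $-\tfrac{N}{2}$ and $1-\tfrac{N}{2}$ satisfy the two defining inequalities of the critical interval, and since they are in the correct coset $\tfrac{\epsilon_N}{2}+\Z$ they are critical. There is essentially nothing to obstruct here; the only point requiring a moment's care is the bookkeeping of the parity/coset condition, namely that $-\tfrac{N}{2}$ and $1-\tfrac{N}{2}$ genuinely belong to $\tfrac{\epsilon_N}{2}+\Z$ and that consecutiveness lets us test the two endpoints of the pair rather than an arbitrary pair of critical points — but this follows at once from $N \equiv \epsilon_N \pmod 2$ and from the fact that the critical set is a set of consecutive elements of $\tfrac{\epsilon_N}{2}+\Z$.

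In writing this up I would keep it to a few lines: state what Prop.\,\ref{prop:critical-set-tensor-product-aut} gives, observe that ``$-\tfrac N2$ and $1-\tfrac N2$ critical'' unwinds to the two displayed inequalities, and present the two-line algebraic rearrangement producing the claimed bounds on $d-d'$. The main (very minor) obstacle is purely notational housekeeping around $\epsilon_N$ and half-integers; the mathematical content is a direct substitution into the previously established description of the critical set.
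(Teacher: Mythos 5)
Your proposal is correct and is exactly the paper's argument: the paper proves the corollary in one line by citing Prop.\,\ref{prop:critical-set-tensor-product-aut}, and your write-up simply spells out the substitution of $-\tfrac{N}{2}$ and $1-\tfrac{N}{2}$ into that proposition's interval together with the routine rearrangement and parity check.
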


\begin{proof}
Follows easily from Prop.\,\ref{prop:critical-set-tensor-product-aut}. 
\end{proof}

\medskip
\begin{cor}
\label{cor:ratio-l-values-infty}
The ratio of local $L$-values at any archimedean place $v \in \place_\infty$  is given by: 
$$
\frac{L_v\left(-\tfrac{N}{2}, \iota, \sigma \times \sigma'^{\sf v}\right)}
{L_v\left(1-\tfrac{N}{2}, \iota, \sigma \times \sigma'^{\sf v}\right)} 
\ = \ 
\frac{(2\bfgreek{pi})^{\tfrac{nn'}{2}}}
{\prod_{i=1}^{n/2} \prod_{j=1}^{n'}\left(\tfrac{-N+2(d'-d)+|\ell_i^\tau - \ell_j^\tau|}{2}\right)}. 
$$
In particular, the ratio of $L$-values at infinity is given by:  
$$
\frac{L_\infty\left(-\tfrac{N}{2}, \iota, \sigma \times \sigma'^{\sf v}\right)}
{L_\infty\left(1-\tfrac{N}{2}, \iota, \sigma \times \sigma'^{\sf v}\right)} 
\ \approx_{\Q^\times} \ 
\bfgreek{pi}^{[F:\Q]nn'/2} \ = \ \bfgreek{pi}^{{\rm dim}(U_P)/2}, 
$$
where $\approx_{\Q^\times}$ means up to a nonzero rational number. 
\end{cor}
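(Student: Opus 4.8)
\textbf{Proof proposal for Cor.~\ref{cor:ratio-l-values-infty}.}
The plan is to reduce the whole statement to the elementary identity $\Gamma_\C(z)/\Gamma_\C(z+1) = 2\pi/z$ (valid for $z\notin\Z_{\le 0}$), which is immediate from $\Gamma_\C(s) = 2(2\pi)^{-s}\Gamma(s)$ and $\Gamma(z+1) = z\,\Gamma(z)$, combined with a bookkeeping of the arguments of the $\Gamma_\C$-factors in $L_v(s,\iota,\sigma\times\sigma'^{\sf v})$. Throughout we keep the notation of Sect.~\ref{sec:simple-notation}, with $n$ even and $n'$ of either parity, and for a fixed archimedean place $v\in\place_\infty$ we write $\tau=\iota^{-1}\circ\nu_v$ and $\ell_j'\!{}^\tau$ for the cuspidal parameters of $\mu'$.

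First I would rewrite the local factor at $v$ in a parity-uniform way. Using the reflection symmetry $\ell_j'\!{}^\tau = -\,\ell_{n'+1-j}'\!{}^\tau$ of the cuspidal parameters recorded after (\ref{eqn:cuspidal-parameter}) (together with $\ell_{(n'+1)/2}'\!{}^\tau=0$ when $n'$ is odd, and the positivity $\ell_i^\tau,\ell_j'\!{}^\tau>0$ for $i\le n/2$, $j\le[n'/2]$), one checks that the multiset of $\Gamma_\C$-arguments appearing in (\ref{eqn:L-factor-infinity-automorphic-odd}), namely $\{\ell_i^\tau\}\cup\{\ell_i^\tau+\ell_j'\!{}^\tau,\ |\ell_i^\tau-\ell_j'\!{}^\tau|\}$, as well as the multiset $\{\ell_i^\tau+\ell_j'\!{}^\tau,\ |\ell_i^\tau-\ell_j'\!{}^\tau|\}$ appearing in (\ref{eqn:L-factor-infinity-automorphic-even}), is in each case exactly $\{\,|\ell_i^\tau-\ell_j'\!{}^\tau|\ :\ 1\le i\le n/2,\ 1\le j\le n'\,\}$. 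Hence in both cases
$$
L_v(s,\iota,\sigma\times\sigma'^{\sf v}) \ = \
\prod_{i=1}^{n/2}\prod_{j=1}^{n'}\Gamma_\C\!\left(s+d'-d+\tfrac{|\ell_i^\tau-\ell_j'\!{}^\tau|}{2}\right),
$$
a product of $nn'/2$ factors. This combinatorial identification of the multiset of arguments is the one step I expect to require genuine care; once it is in place the rest is formal.

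Next I would invoke criticality: since $-N/2$ and $1-N/2$ are critical (the hypothesis of Cor.~\ref{cor:comb-lem-easy-critical}), the factor $L_\infty(s,\iota,\sigma\times\sigma'^{\sf v})$ is regular at both points, so none of the arguments $-\tfrac N2+d'-d+\tfrac{|\ell_i^\tau-\ell_j'\!{}^\tau|}{2}$ lies in $\Z_{\le 0}$; and the parity relation $\ell_i^\tau\equiv 2d+n-1$, $\ell_j'\!{}^\tau\equiv 2d'+n'-1\pmod 2$ shows that each $-N+2(d'-d)+|\ell_i^\tau-\ell_j'\!{}^\tau|$ is an even integer, so each $\tfrac{-N+2(d'-d)+|\ell_i^\tau-\ell_j'\!{}^\tau|}{2}$ is a positive integer. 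Applying $\Gamma_\C(z)/\Gamma_\C(z+1)=2\pi/z$ factorwise to the ratio $L_v(-N/2)/L_v(1-N/2)$ then yields the displayed formula, the numerator $(2\pi)^{nn'/2}$ collecting one factor $2\pi$ from each of the $nn'/2$ $\Gamma_\C$-quotients. Finally, for the global statement I would take the product over the $[F:\Q]$ archimedean places: each local ratio equals $(2\pi)^{nn'/2}$ divided by a positive integer, hence lies in $\pi^{nn'/2}\cdot\Q^\times$, and multiplying over all $v$ gives that $L_\infty(-N/2,\iota,\sigma\times\sigma'^{\sf v})/L_\infty(1-N/2,\iota,\sigma\times\sigma'^{\sf v})\in\pi^{[F:\Q]nn'/2}\cdot\Q^\times$; the identity $\dim(U_P)=[F:\Q]\dim(U_{P_0})=[F:\Q]\,nn'$ then rewrites the exponent as $\dim(U_P)/2$, as claimed.
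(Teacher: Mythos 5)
Your proposal is correct and follows the same route as the paper, whose proof is a one-line appeal to (\ref{eqn:L-factor-infinity-automorphic-odd}), (\ref{eqn:L-factor-infinity-automorphic-even}) and $\Gamma(s+1)=s\Gamma(s)$; you simply make explicit the combinatorial repackaging of the $\Gamma_\C$-arguments into the single multiset $\{|\ell_i^\tau-\ell_j'\!{}^\tau|\}$ (which the paper leaves implicit but which is exactly what the corollary's displayed indexing requires) and the parity/criticality check that the denominators are nonzero integers.
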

\begin{proof}
Follows from (\ref{eqn:L-factor-infinity-automorphic-odd}) and (\ref{eqn:L-factor-infinity-automorphic-even}), and 
the well-known relation: $\Gamma(s+1) = s\Gamma(s).$  
\end{proof}
This ratio of $L$-values will be relevant in the discussion around Thm.\,\ref{thm:c-infinity}.

\medskip
\paragraph{\bf Comments about shifts in the $s$-variable}
\label{sec:shift-s-variable}

The relation (\ref{eqn:eff-motivic-auto-l-fn}) can be rewritten as a relation between cohomological Rankin--Selberg $L$-functions or the $L$-function of the tensor product of effective motives and the automorphic Rankin--Selberg $L$-functions as:  
\begin{equation}
\label{eqn:eff-motivic-auto-l-fn-tensor-product}
L\left(\iota,  \Mot_{\rm eff}(\sigma_f) \otimes \Mot_{\rm eff}(\sigma_f'^{\sf v}),  s\right) \ = \ 
L^{\rm aut}\left(s + \tfrac{2(d-d')-\w-\w'}{2}, {}^\iota\!\sigma_f \times {}^\iota\!\sigma_f^{\sf v} \right).
\end{equation}
The reader may verify that Prop.\,\ref{prop:critical-set-tensor-product-eff-mot} and 
Prop.\,\ref{prop:critical-set-tensor-product-aut}
are compatible with the above shift in the $s$-variable.

\smallskip

Note that ${}^\iota\!\sigma \otimes |\ |^d =: {}^\circ {}^\iota\!\sigma$ is a unitary cuspidal automorphic representation. 
The critical set for the $L$-function attached to the unitary representations
$$
L(s, \iota, {}^\circ\!\sigma \times  {}^\circ\!\sigma'^{\sf v}) \ = \ 
L(s, {}^\circ {}^\iota\!\sigma \times {}^\circ {}^\iota\!\sigma'^{\sf v}) \ = \ 
L(s + d -d', \iota, \sigma \times  \sigma'^{\sf v})
$$ 
would be centered around $\tfrac12$.

\medskip
\subsubsection{\bf A combinatorial lemma}
\label{sec:com-lemma}

Suppose $\sigma_f \in \Coh_{!!}(G_n, \mu)$ and $\sigma_f' \in \Coh_{!!}(G_{n'}, \mu')$. Assume that $nn'$ is even. Let  
$N = n+n'.$ Recall, from Sect.\,\ref{sec:simple-notation} that the 
assumption on the existence of a balanced Kostant representative 
$w \in W^P$ such that $\lambda:= w^{-1} \cdot (\mu \otimes \mu')$ is dominant implies that the induced representation 
${}^{\rm a}{\rm Ind}_{P(\A_f)}^{G(\A_f)}( \sigma_f \otimes \sigma_f' )$ appears in 
$H^{b_N^F}_{!!}(\partial_P\cS^{G}, \tM_{\lambda, E})$, and for the parabolic subgroup $Q$ that is associate to $P$, the representation 
${}^{\rm a}{\rm Ind}_{Q(\A_f)}^{G(\A_f)}( \sigma_f'(n) \otimes \sigma_f(-n'))$ appears in 
$H^{b_N^F}_{!!}(\partial_Q\cS^G, \tM_{\lambda, E}).$ 
Furthermore, the same assumption also implies that the duals of these induced representations appear in boundary cohomology in degree $\tilde t_N^F-1.$ 
The lemma below characterizes the existence of such a balanced Kostant representative as a purely combinatorial condition involving the cuspidal parameters and the abelian parts of $\mu$ and $\mu'.$ Amazingly, the same combinatorial condition also characterizes the fact that the $L$-values which intervene in (the proof of) the main theorem on Eisenstein cohomology Thm.\,\ref{thm:rank-one-eis} are critical values. 
We further elaborate on the arithmetic content of this lemma in Sect.\,\ref{sec:arith-comb-lem} below.

\begin{lemma}
\label{lem:comb-lemma}
Suppose $\sigma_f \in \Coh_{!!}(G_n, \mu)$ and $\sigma_f' \in \Coh_{!!}(G_{n'}, \mu')$ have disjoint cuspidal parameters. 
(From Prop.\,\ref{prop:disjoint-param-middle-hodge}, necessarily $nn'$ is even.) 
Let $\mu \otimes \mu' \in X^*(T_N).$ Then the following are equivalent: 
\begin{enumerate}
\item There is a balanced Kostant representative $w \in W^P$ such that $w^{-1}\cdot(\mu \otimes \mu')$ is a dominant integral weight. 

\medskip

\item $-\frac{N}{2} + 1 - \frac{\ell(\mu,\mu')}{2} \ \leq \ (d-d') \ \leq \ 
-\frac{N}{2} - 1 + \frac{\ell(\mu,\mu')}{2}.$

\medskip

\item The points $-\tfrac{N}{2}$ and $1-\tfrac{N}{2}$ are critical for the automorphic Rankin--Selberg $L$-function
$L(s, \iota, \sigma_f \times \sigma_f'^{\v}).$

\end{enumerate}
\end{lemma}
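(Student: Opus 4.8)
\textbf{Proof strategy for Lemma \ref{lem:comb-lemma}.} The plan is to prove the chain of implications $(2) \Leftrightarrow (3) \Leftrightarrow (1)$, treating the equivalence $(2) \Leftrightarrow (3)$ as essentially done and concentrating the real work on $(1) \Leftrightarrow (2)$. The equivalence of $(2)$ and $(3)$ is immediate from Corollary \ref{cor:comb-lem-easy-critical}, which was already established by analyzing the archimedean $\Gamma$-factors of the automorphic Rankin--Selberg $L$-function; so the heart of the matter is translating the combinatorial condition ``there exists a balanced $w \in W^P$ with $w^{-1}\cdot(\mu \otimes \mu')$ dominant'' into the numerical inequality in $(2)$.

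\textbf{Main step: $(1) \Leftrightarrow (2)$.} First I would reduce everything to a single embedding $\tau : F \to E$ and work over the split group $G_0 \times_\tau E = \GL_N/E$, since $w = (w^\tau)_\tau$, the length is additive, $l(w) = \sum_\tau l(w^\tau)$, and balancedness means $l(w^\tau) = \dim(U_{P_0})/2 = nn'/2$ for \emph{each} $\tau$ — so the condition decouples over $\tau$, and by purity the abelian parts $d, d'$ are the same for all $\tau$. For a fixed $\tau$, write $\mu^\tau = (p_1 > p_2 > \cdots > p_n)$ and $\mu'^\tau = (p'_1 > \cdots > p'_{n'})$ in the standard basis, where $p_i = \tfrac{\ell_i^\tau}{2} + d$ and $p'_j = \tfrac{\ell_j'^\tau}{2} + d'$ (up to the shift by $\bfgreek{rho}$). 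The weight $\mu^\tau \otimes \mu'^\tau$ is the concatenated array $(p_1,\dots,p_n,p'_1,\dots,p'_{n'})$, and $w^{\tau -1}\cdot(\mu^\tau \otimes \mu'^\tau)$ is dominant precisely when $w^\tau$ is the unique Kostant representative in $W^{P_0}_0$ that sorts this array into decreasing order; such a $w^\tau$ always exists (since the entries are distinct by the disjoint-cuspidal-parameters hypothesis — this is where that hypothesis is used), and its length $l(w^\tau)$ equals the number of \emph{inversions} between the two blocks, i.e., $l(w^\tau) = \#\{(i,j) : p_i < p'_j\}$. So $w$ balanced $\iff$ for every $\tau$, exactly half of the $nn'$ pairs $(i,j)$ satisfy $p_i < p'_j$, i.e., $\#\{(i,j): p_i < p'_j\} = \#\{(i,j): p_i > p'_j\} = nn'/2$.

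\textbf{The counting identity.} The remaining task is to show this ``exactly-half'' condition on inversions is equivalent to the two-sided inequality in $(2)$ relating $d - d'$, $N$, and the cuspidal width $\ell(\mu,\mu') = \min_{i,j,\tau}|\ell_i^\tau - \ell_j'^\tau|$. Here I would exploit the symmetry coming from purity: the cuspidal parameters satisfy $\ell_i^\tau = -\ell_{n-i+1}^\tau$ and $\ell_j'^\tau = -\ell_{n'-j+1}'^\tau$, so the multiset $\{p_i - p'_j\}$ is symmetric about $d - d'$. Consequently $\#\{p_i < p'_j\} = \#\{p_i - p'_j < 0\}$ equals $nn'/2$ if and only if $0$ lies ``in the middle'' of the symmetric gap around $d-d'$ — more precisely, if and only if no difference $p_i - p'_j$ has the same sign as $d - d'$ would force an imbalance; quantitatively, the balance fails by exactly the number of pairs with $0 < |p_i - p'_j|$ lying on the wrong side of $d-d'$, and a short computation using $p_i - p'_j = \tfrac{\ell_i^\tau - \ell_j'^\tau}{2} + (d - d')$ together with the extremal value $\ell(\mu,\mu')$ of $|\ell_i^\tau - \ell_j'^\tau|$ yields precisely $\tfrac{-N}{2} + 1 - \tfrac{\ell(\mu,\mu')}{2} \le d - d' \le \tfrac{-N}{2} - 1 + \tfrac{\ell(\mu,\mu')}{2}$ after accounting for the $\bfgreek{rho}$-shift (which contributes the $\pm N/2$ and the $\pm 1$). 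I expect the main obstacle to be bookkeeping this last equivalence cleanly: one must carefully track the $\bfgreek{rho}_N$-shift converting highest weights to the strictly-decreasing arrays, handle the parity of $N$ (the critical points are integral or half-integral accordingly, matching Prop.\,\ref{prop:critical-set-tensor-product-aut}), and verify that the ``exactly half'' inversion count is \emph{uniform in $\tau$} — but since $\ell(\mu,\mu')$ is defined as a min over all $\tau$ and all pairs, the inequality in $(2)$ is automatically the binding constraint, so uniformity is built in. Finally, I would remark that Appendix 1 (Weselmann) gives an independent, more structural proof of the same equivalences; the argument above is meant as the direct verification.
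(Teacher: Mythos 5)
Your overall route is the same as the paper's: $(2)\Leftrightarrow(3)$ is quoted from Cor.\,\ref{cor:comb-lem-easy-critical}, and for $(1)\Leftrightarrow(2)$ you decouple over the embeddings $\tau$, identify $l(w^\tau)$ with the number of cross-block inversions in the array $\mu^\tau\otimes\mu'^\tau+\bfgreek{rho}_N$, and use the symmetry $\ell^\tau_i=-\ell^\tau_{n-i+1}$ to show that exactly half the $nn'$ pairs invert precisely when $|(d-d')+\tfrac N2|$ is smaller than every $|\ell_i^\tau-\ell_j'{}^\tau|/2$. This is exactly the mechanism of Weselmann's proof in Appendix 1 (his sets $Q_{i,j}$ formalize your ``symmetric about the center'' counting).

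There is, however, one genuine gap. You assert that a Kostant representative sorting the concatenated array into decreasing order ``always exists, since the entries are distinct by the disjoint-cuspidal-parameters hypothesis.'' That hypothesis only rules out $\beta_i=\beta'_j$ (equivalently, by the self-duality symmetry, $\beta_i+\beta'_j\neq 0$); it does \emph{not} prevent a cross-block coincidence $\tilde b_i=\tilde b_{n+j}$, which happens exactly when $|(d-d')+\tfrac N2|=|\beta_i-\beta'_j|$ for some pair $(i,j)$. In that degenerate case two entries of $\mu\otimes\mu'+\bfgreek{rho}_N$ coincide, so \emph{no} $w\in W$ whatsoever makes $w^{-1}\cdot(\mu\otimes\mu')$ dominant (a dominant weight plus $\bfgreek{rho}_N$ has strictly decreasing entries), and your inversion count is not even well defined. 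This case must be treated separately — it is Weselmann's Lemma \ref{Permutation} — and one must then check that it is automatically excluded by the inequality in $(2)$: this uses the parity congruence $2\tilde a(\tilde\mu)\equiv 2\tilde p(\tilde\mu)\pmod 2$, which converts the strict inequality $|\tilde a|<\tilde p$ into the closed inequality with the $\pm1$ in $(2)$. Your remark that the $\pm1$ ``comes from the $\bfgreek{rho}$-shift'' is therefore not quite right; it comes from this parity argument. Both points are repairable, but as written the proposal silently assumes the non-degenerate case.
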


\begin{proof}
The  equivalence (1) $\iff$ (2) was conjectured in our announcement \cite{harder-raghuram}, and was later proved by 
Uwe Weselman; his proof is given in Appendix 1. 
The equivalence (2) $\iff$ (3) is exactly the statement of Cor.\,\ref{cor:comb-lem-easy-critical} above. 
\end{proof}

\medskip
\paragraph{\bf Sufficiently large cuspidal width} 
Note that a necessary condition for the inequalities in (2) to hold is that the cuspidal width is at least $2$, i.e., 
$\ell(\mu,\mu') \geq 2.$ For $L$-functions for $G_2 \times G_1$, i.e., for $L$-functions for Hilbert modular forms twisted by algebraic Hecke characters of $F$, in the notations of \ref{sec:example-hilbert}, the condition 
$\ell(\mu,\mu') \geq 2$ translates to $k^0 = {\rm min}(k_\tau) \geq 3.$

\medskip
\paragraph{\bf Comparison of notations with \cite{harder-raghuram}}
The combinatorial lemma, especially (1) $\iff$ (2), was conjectured by us \cite[Conj.\,5.1]{harder-raghuram} in our announcement, and was proved by Uwe Weselmann; see the appendix for his proof. In this short paragraph  we just point to some notational differences: The quantity $p(\mu)$ of \cite{harder-raghuram} is the largest critical point of the cohomological $L$-function which is the same the motivic $L$-function for the effective motives. From 
Prop.\,\ref{prop:critical-set-tensor-product-eff-mot} we can see that $p(\mu)$ is equal to our  
$(\ell(\mu, \mu') + \w + \w')/2.$  It should now be clear that the above formulation of the combinatorial lemma is the same as in \cite{harder-raghuram}.

\medskip
\paragraph{\bf An arithmetic consequence of the combinatorial lemma} 
\label{sec:arith-comb-lem}
This article is about an application of Eisenstein cohomology to prove rationality 
results for ratios of successive $L$-values. 
The combinatorial lemma says that we can prove such rationality results for a ratio of $L$-values when these are critical values, and furthermore, the lemma also says that every ratio of critical values is captured by these techniques, and if either of the $L$-values is not critical then our methods do not apply. This may be justified as follows: Applying the lemma, and as should be clear from the proof of Thm.\,\ref{thm:rank-one-eis}, we are able to prove a rationality result for the ratio
$$
\frac{L^{\rm aut}\left(-\tfrac{N}{2}, \iota, \sigma_f \times \sigma_f'^{\v}\right)}
{L^{\rm aut}\left(1-\tfrac{N}{2}, \iota, \sigma_f \times \sigma_f'^{\v}\right)}
\ = \ 
\frac{L^{\rm coh}\left(\iota, \sigma_f \times \sigma_f'^{\v}, -\tfrac{N}{2} + \tfrac{(\w+\w')}{2}-(d-d')\right)}
{L^{\rm coh}\left(\iota , \sigma_f \times \sigma_f'^{\v}, 1-\tfrac{N}{2} + \tfrac{(\w+\w')}{2}-(d-d')\right)}. 
$$
For brevity, let 
$$
{\sf m}_0 := -\tfrac{N}{2} + \tfrac{(\w+\w')}{2}-(d-d').
$$ Now, given $\sigma_f \in \Coh_{!!}(G_n, \mu)$ and $\sigma_f' \in \Coh_{!!}(G_{n'}, \mu')$, let's take all possible Tate twists.  
Note that this means, we have fixed the semi-simple parts 
$\mu^{(1)}$ and $\mu'^{(1)}$ and we are letting $d$ and $d'$, i.e., the abelian parts $\mu_{\rm ab} = d \bfgreek{delta}_n$ 
$\mu'_{\rm ab} = d' \bfgreek{delta}_{n'},$ to vary. The inequalities in (2) of 
Lem.\,\ref{lem:comb-lemma} bounds $(d-d')$ since the cuspidal width 
$\ell(\mu,\mu') = \ell(\mu^{(1)}, \mu'^{(1)})$ depends only on the semi-simple parts of the weights. As $(d-d')$ varies between its lower bound $-\frac{N}{2} + 1 - \frac{\ell(\mu,\mu')}{2}$ to its upper-bound 
$-\frac{N}{2} - 1 + \frac{\ell(\mu,\mu')}{2}$, one may check using Prop.\,\ref{prop:critical-set-tensor-product-eff-mot}, 
that $m_0$ varies from one less than the largest critical point to the lowest critical point, i.e., 
the ratio 
$$
\frac{L^{\rm coh}\left(\iota, \sigma_f \times \sigma_f'^{\v}, {\sf m}_0\right)}
{L^{\rm coh}\left(\iota , \sigma_f \times \sigma_f'^{\v}, 1+{\sf m}_0\right)}
$$ 
runs exactly over all possible successive critical values; no more and no less!

\medskip
\subsection{The main result on special values of $L$-functions}
\label{sec:special-values}

\medskip
\subsubsection{\bf The main result on $L$-values}
\label{sec:subsec-special-values}

\begin{thm}
\label{thm:main-theorem-l-values}
Let $\mu \in X^*_0(T_n)$ 
(resp., $\mu' \in X^*_0(T_{n'})$) be a pure weight. Suppose $\sigma_f \in \Coh_{!!}(G_n, \mu)$ and 
$\sigma_f' \in \Coh_{!!}(G_{n'}, \mu')$ have disjoint cuspidal parameters; in particular $nn'$ is even.  Assume that there is a balanced Kostant representative $w \in W^P$ such that $\lambda := w^{-1}\cdot(\mu \otimes \mu')$ is a dominant integral weight. Let  ${\sf m}_0 = -\tfrac{N}{2} + \tfrac{(\w+\w')}{2}-(d-d').$ Let the rest of the notations be as in 
Sect.\,\ref{sec:simple-notation}.

\begin{enumerate}

\item Suppose that $n$ is even and $n'$ is odd. For $\iota : E \to \C$ we have
$$
\frac{1}{\Omega^{\varepsilon'}({}^\iota\!\sigma_f)} \, 
\frac{L^{\rm coh}\left(\iota, \sigma \times \sigma'^{\v}, {\sf m}_0\right)}
{L^{\rm coh}\left(\iota , \sigma \times \sigma'^{\v}, 1+{\sf m}_0\right)}
\ \in  \ 
\iota(E). 
$$
Moreover, for any $\tau \in {\rm Gal}(\bar\Q/\Q)$  
we have: 
$$
\tau \left(
\frac{1}{\Omega^{\varepsilon'}({}^\iota\!\sigma_f)} \, 
\frac{L^{\rm coh}\left(\iota, \sigma \times \sigma'^{\v}, {\sf m}_0\right)}
{L^{\rm coh}\left(\iota , \sigma \times \sigma'^{\v}, 1+{\sf m}_0\right)} \right)
\ = \ 
\frac{1}{\Omega^{{}^{\tau\!\varepsilon'}}( {}^{\tau \circ \iota}\sigma_f)} \, 
\frac{L^{\rm coh}\left(\tau \circ \iota, \sigma \times \sigma'^{\v}, {\sf m}_0\right)}
{L^{\rm coh}\left(\tau \circ \iota , \sigma \times \sigma'^{\v}, 1+{\sf m}_0\right)}. 
$$

\smallskip

\item Suppose that both $n$ and $n'$ are even. For $\iota : E \to \C$ we have
$$
\frac{L^{\rm coh}\left(\iota, \sigma \times \sigma'^{\v}, {\sf m}_0\right)}
{L^{\rm coh}\left(\iota , \sigma \times \sigma'^{\v}, 1+{\sf m}_0\right)}
\ \in  \ 
\iota(E). 
$$
Moreover, for any $\tau \in {\rm Gal}(\bar\Q/\Q)$  
we have: 
$$
\tau \left(
\frac{L^{\rm coh}\left(\iota, \sigma \times \sigma'^{\v}, {\sf m}_0\right)}
{L^{\rm coh}\left(\iota , \sigma \times \sigma'^{\v}, 1+{\sf m}_0\right)} \right)
\ = \ 
\frac{L^{\rm coh}\left(\tau \circ \iota, \sigma \times \sigma'^{\v}, {\sf m}_0\right)}
{L^{\rm coh}\left(\tau \circ \iota , \sigma \times \sigma'^{\v}, 1+{\sf m}_0\right)}. 
$$
\end{enumerate}
\end{thm}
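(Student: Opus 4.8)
The strategy is to run the entire chain of constructions that this article has set up and track the image of a distinguished cohomology class all the way down to a ratio of $L$-values, and then to observe that every step was defined over $\Q$ (equivariant for $\mathrm{Gal}(\bar\Q/\Q)$) except for the choice of one basis vector which is the source of the period $\Omega^{\varepsilon'}$. First I would fix $\iota:E\to\C$ and start with the main result on Eisenstein cohomology, Thm.\,\ref{thm:rank-one-eis}: by (\ref{eqn:image-description-PQ}) (or (\ref{eqn:image-description-QP}), depending on whether $\tfrac{N}{2}+d-d'\le 0$ and Prop.\,\ref{prop:at-least-one-holomorphic} which guarantees one of the two Eisenstein series is holomorphic at its point of evaluation), the image $\mathrm{Image}(\fR\circ\r^*)$ is precisely the graph of an operator $T^P_{\rm Eis}$ (resp.\ $T^Q_{\rm Eis}$), defined over $E$, between the $\sf k$-dimensional spaces $I^\place_b(\sigma_f,\sigma_f',\varepsilon')_{P,w}$ and $I^\place_b(\sigma_f,\sigma_f',\varepsilon')_{Q,w'}$. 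The ``slope'' of this graph is $T^P_{\rm Eis}$, and because the whole of Eisenstein cohomology, the Manin--Drinfeld projection $\fR$, and the restriction $\r^*$ are intrinsic to the $E$-structure of the cohomology and commute with $\mathrm{Gal}(\bar\Q/\Q)$ (Sect.\,\ref{sec:change-field-E}), the operator $T^P_{\rm Eis}$ is $E$-rational and Galois-equivariant in $\iota$.

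Next I would compute this same operator transcendentally. Base-changing via $\iota$ and using Langlands's constant term theorem (Thm.\,\ref{thm:langlands}), $T^P_{\rm Eis}$ is, up to the arithmetic and transcendental identifications and the relative periods of Sect.\,\ref{sec:rel-periods}, the cohomological avatar of the global standard intertwining operator $T_{\rm st}(-N/2,{}^\iota\!\sigma\otimes{}^\iota\!\sigma')$ evaluated at $s=-N/2$. Using (\ref{eqn:T-stand-ratio-L-fns}) this factorizes as the ratio of partial $L$-values $L^\place(-\tfrac{N}{2},{}^\iota\!\sigma\times{}^\iota\!\sigma'^{\sf v})/L^\place(1-\tfrac{N}{2},{}^\iota\!\sigma\times{}^\iota\!\sigma'^{\sf v})$ times a product of normalized local intertwining operators $T_{\rm loc}$ over $v\in\place$. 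The combinatorial lemma (Lem.\,\ref{lem:comb-lemma}) is what guarantees $-N/2$ and $1-N/2$ are both critical, so: at finite ramified places $T_{\rm loc}$ is nonzero, holomorphic and rational by M\oe glin--Waldspurger; at the archimedean places, using Speh's irreducibility results and Shahidi's local constants (\ref{eqn:local-constant}), (\ref{eqn:local-constant-gamma}) one shows $T_{\rm st}$ is holomorphic and nonzero there, hence induces an isomorphism on the relevant one-dimensional $(\g,K_\infty)$-cohomology groups, and the resulting scalar is a specific power of $2\pi i$ (this is exactly the content of Harder~\cite{harder-arithmetic}, and compatibly Cor.\,\ref{cor:ratio-l-values-infty}), which combines with the $i^{{\sf r}n/2}$ normalization of $T^\varepsilon({}^\iota\lambda)$ in (\ref{eqn:correct-power-i}) to a rational number. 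Absorbing the local factors and the $\pi$-powers into the definition of $L^{\rm coh}$ via (\ref{eqn:coh-aut-l-fns-completed}) and shifting the $s$-variable by $\tfrac{2(d-d')-\w-\w'}{2}$ as in (\ref{eqn:eff-motivic-auto-l-fn-tensor-product}) turns the partial-$L$-ratio into the completed cohomological-$L$-ratio $L^{\rm coh}(\iota,\sigma\times\sigma'^{\sf v},{\sf m}_0)/L^{\rm coh}(\iota,\sigma\times\sigma'^{\sf v},1+{\sf m}_0)$.

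Putting the two computations together: the transcendental formula for $T^P_{\rm Eis}$ equals (an $E$-rational, by the first computation, linear map), so after dividing by the relative period $\Omega^{\varepsilon'}({}^\iota\!\sigma_f)$ introduced in Def.\,\ref{defn:relative-periods} to pass from $T_{\rm trans}$ to $T_{\rm arith}$, we get that $\tfrac{1}{\Omega^{\varepsilon'}({}^\iota\!\sigma_f)}\cdot\tfrac{L^{\rm coh}(\iota,\sigma\times\sigma'^{\sf v},{\sf m}_0)}{L^{\rm coh}(\iota,\sigma\times\sigma'^{\sf v},1+{\sf m}_0)}\in\iota(E)$ when $n$ even, $n'$ odd. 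When both $n,n'$ are even, the component group argument of Sect.\,\ref{sec:ind-pi-0} forces $\varepsilon=\varepsilon'$ on $\pi_0(K^{M_P}_\infty)$ and, crucially, $\D_\lambda\otimes\varepsilon\cong\D_\lambda$ for $G_{n'}$ as well, so there is no sign-character ambiguity and hence no relative period at all — the ratio itself lies in $\iota(E)$. The Galois-equivariance statements in both cases then follow formally: $\tau$ acts compatibly on cohomology (the system $\{H^\bullet(\SGK,\tM_{{}^\eta\lambda,E})\}_\eta$ is defined over $\Q$), the $T_{\rm arith}$'s were chosen compatibly (diagram (\ref{eqn:t-arith})), and $\tau$ sends ${}^\iota\sigma_f$ to ${}^{\tau\circ\iota}\sigma_f$ and ${}^\iota\lambda$ to ${}^{\tau\circ\iota}\lambda$, while $\Omega^{\varepsilon'}$ transforms to $\Omega^{{}^\tau\varepsilon'}$ by Def.\,\ref{defn:relative-periods}.

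\textbf{The main obstacle.} The genuinely hard step is the archimedean local analysis: proving that the standard intertwining operator at infinity, evaluated at the critical point $-N/2$, is holomorphic \emph{and} nonzero, hence an isomorphism on the one-dimensional archimedean cohomology groups, and then pinning down the resulting scalar \emph{exactly} as a rational multiple of the correct power of $2\pi i$ with the right sign. This requires Speh's classification to know the induced representations $\D_\mu$, $\D_{\mu'}$ and $\D_\lambda$ are irreducible, Shahidi's results on archimedean local factors and local constants to control holomorphy and nonvanishing, a careful bookkeeping of the normalizations of the generators $w^\varepsilon({}^\iota\lambda)$ (so that the comparison is with an operator genuinely defined over $\Q$), and finally the delicate computation — essentially that of Harder~\cite{harder-arithmetic} — identifying the cohomological scalar; keeping track of the interplay between this scalar, the $\Gamma$-factor ratio from Cor.\,\ref{cor:ratio-l-values-infty}, and the shift converting automorphic to cohomological $L$-functions, so that nothing but a rational number is left over, is where all the real work lies. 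Everything else (the cohomological-algebra input, Manin--Drinfeld, Poincaré duality, the combinatorial lemma guaranteeing criticality) is either already proved in the earlier sections or is formal.
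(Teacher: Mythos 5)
Your proposal follows essentially the same route as the paper's proof: interpret the slope of the Eisenstein image via Langlands's constant term theorem, normalize the standard intertwining operator locally (M\oe glin--Waldspurger at the finite places, Speh/Shahidi and Harder's computation of the archimedean scalar $c_\infty \approx_{\Q^\times}(i\bfgreek{pi})^{{\sf r}nn'/2}$ at infinity), and descend to $E$ through the relative period, with Galois equivariance coming from the compatibly chosen $T_{\rm arith}$ and the $\Q$-rationality of the whole system of sheaves. The one minor imprecision is your explanation of why no period appears when both $n$ and $n'$ are even: the paper's mechanism is that the Tate twists by $-n'$ and $n$ are then both by even integers, so the archimedean comparison operator reduces to $T_{\rm Tate}(-n',{}^\iota\!\sigma_\infty,-\varepsilon')$ alone with no sign-switching factor $T^{\varepsilon'}({}^\iota\mu)$ and is already rational, rather than anything about $\D_\lambda\otimes\varepsilon\cong\D_\lambda$ on the $G_{n'}$ factor.
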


\medskip
\subsubsection{\bf Proof of Thm.\,\ref{thm:main-theorem-l-values} in the case $n$ is even and $n'$ is odd}
The notations are as in Sect.\,\ref{sec:simple-notation}.
If $-N/2+d-d' \leq 0$ then we go from $P$ to $Q$ as suggested by 
Prop.\,\ref{prop:at-least-one-holomorphic}, and we will present the details of the proof below. 
The case when $-N/2+d-d' \geq 0$ is absolutely similar and we will leave the details to the reader. 
Recall from Thm.\,\ref{thm:rank-one-eis} that $\fI^b(\sigma_f, \sigma_f', \varepsilon')$, which is the image of global cohomology in the $2{\sf k}$-dimensional $E$-vector space 
$I^\place_b(\sigma_f,\sigma_f', \varepsilon')_{P,w} \oplus I^\place_b(\sigma_f,\sigma_f', \varepsilon')_{Q,w'}$, 
is a ${\sf k}$-dimensional $E$-subspace; and furthermore, from the proof of this theorem, it is clear that we 
get an intertwining operator: 
\begin{equation}
\label{eqn:T_eis}
T_{\rm Eis}(\sigma, \sigma', \varepsilon') \ : \ 
I^\place_b(\sigma_f,\sigma_f', \varepsilon')_{P,w} \ \longrightarrow \  
I^\place_b(\sigma_f,\sigma_f', \varepsilon')_{Q, w'}, 
\end{equation}
defined over $E$, such that 
$$
\fI^b(\sigma_f, \sigma_f', \varepsilon') \ = \ 
\left\{ \left(\xi, \, T_{\rm Eis}(\sigma, \sigma', \varepsilon')(\xi)\right) \ | \ 
\xi \in I^\place_b(\sigma_f,\sigma_f', \varepsilon')_{P,w} \right\}.
$$
 The idea of the proof is to take $T_{\rm Eis}(\sigma, \sigma', \varepsilon')$ to a transcendental level, use the constant term theorem which gives $L$-values, and after introducing the relative periods, to descend down to an arithmetic level, giving a rationality result for the said $L$-values divided by the relative periods.

\medskip
\paragraph{\bf The Eisenstein operator at a transcendental level}
Consider the Eisenstein operator in (\ref{eqn:T_eis}), and pass to $\C$ via an embedding $\iota : E \to \C.$ 
Recall the part of the proof of the main theorem on Eisenstein cohomology, where we used Langlands's constant term theorem after going to a transcendental level; we get 
\begin{equation}
\label{eqn:T-eis-pass-to-C}
T_{\rm Eis}(\sigma, \sigma', \varepsilon') \otimes_{E, \iota} {\bf 1}_\C \ = \ 
T_{\rm st}(-\tfrac{N}{2}, {}^\iota\!\sigma \otimes {}^\iota\!\sigma')^\bullet(\varepsilon'), 
\end{equation}
where the right hand side is the map induced at the level of relative Lie algebra cohomology 
(Sect.\,\ref{sec:conclude-proof-nonzero-image}) by the global standard intertwining operator 
(Sect.\,\ref{sec:T_st}), i.e., $T_{\rm st}(-\tfrac{N}{2}, {}^\iota\!\sigma \otimes {}^\iota\!\sigma')^\bullet(\varepsilon')$ is the intertwining operator 
\begin{multline}
H^{b_N^F}(\g, K_\infty^0, \aInd_{P(\A)}^{G(\A)}({}^\iota\!\sigma \otimes {}^\iota\!\sigma')^{K_f} 
\otimes \M_{{}^\iota\!\lambda, \C})(\varepsilon') 
\ \longrightarrow \\ 
H^{b_N^F}(\g, K_\infty^0, \aInd_{Q(\A)}^{G(\A)}({}^\iota\!\sigma'(n) \otimes {}^\iota\!\sigma(-n'))^{K_f} 
\otimes \M_{{}^\iota\!\lambda, \C})(\varepsilon'). 
\end{multline}
Since the global standard intertwining operator is a product of local operators we get
\begin{equation}
\label{eqn:T-st-global-to-local}
T_{\rm st}(-\tfrac{N}{2}, {}^\iota\!\sigma \times {}^\iota\!\sigma')^\bullet( \varepsilon') \ = \ 
T_{\rm st}(-\tfrac{N}{2}, {}^\iota\!\sigma_\infty \times {}^\iota\!\sigma'_\infty)^\bullet(\varepsilon') \otimes 
T_{\rm st}(-\tfrac{N}{2}, {}^\iota\!\sigma_f \times {}^\iota\!\sigma_f'). 
\end{equation}

\medskip
\paragraph{\bf The local operator at infinity}
We begin by showing  that 
$T_{\rm st}(-\tfrac{N}{2}, {}^\iota\!\sigma_\infty \times {}^\iota\!\sigma'_\infty)^\bullet(\varepsilon')$ is a nonzero isomorphism between one-dimensional vector spaces. 

\begin{prop}
The induced representations
$$
\aInd_{P(\R)}^{G(\R)}({}^\iota\!\sigma_\infty \otimes {}^\iota\!\sigma'_\infty) \quad {\rm and} \quad 
\aInd_{Q(\R)}^{G(\R)}({}^\iota\!\sigma'_\infty(n) \otimes {}^\iota\!\sigma_\infty(-n'))
$$
are irreducible representations of $G(\R).$ The operator 
$T_{\rm st}(-\tfrac{N}{2}, {}^\iota\!\sigma \times {}^\iota\!\sigma')_\infty$
is an isomorphism between these two induced representations, and hence induces an isomorphism 
$T_{\rm st}(-\tfrac{N}{2}, {}^\iota\!\sigma \times {}^\iota\!\sigma')_\infty^\bullet(\varepsilon')$ between the one-dimensional $\C$-vector spaces: 
\begin{multline*}
H^{b_N^F}(\g, K_\infty^0, \aInd_{P(\R)}^{G(\R)}({}^\iota\!\sigma_\infty \otimes {}^\iota\!\sigma'_\infty)
\otimes \M_{{}^\iota\!\lambda, \C})(\varepsilon') 
\ \longrightarrow \\ 
H^{b_N^F}(\g, K_\infty^0, \aInd_{Q(\R)}^{G(\R)}({}^\iota\!\sigma'_\infty(n) \otimes {}^\iota\!\sigma_\infty(-n'))
\otimes \M_{{}^\iota\!\lambda, \C})(\varepsilon'). 
\end{multline*}
\end{prop}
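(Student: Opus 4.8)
The plan is to establish the three assertions in sequence: (i) irreducibility of the two induced representations from $P(\R)$ and $Q(\R)$; (ii) holomorphy and non-vanishing, hence bijectivity, of the standard intertwining operator $T_{\rm st}(-\tfrac{N}{2}, {}^\iota\!\sigma \times {}^\iota\!\sigma')_\infty$ between them; and (iii) the consequence that the induced map on the (one-dimensional) relative Lie algebra cohomology spaces is an isomorphism. Since everything factors over the real places $v\in\place_\infty$, it suffices to work one place at a time with $\GL_N(\R)$, its standard $(n,n')$-parabolic, and the inducing data $\D_{{}^\iota\!\mu^{\nu}}\otimes\D_{{}^\iota\!\mu'^{\nu}}$; here each $\D_{{}^\iota\!\mu^\nu}$ is, by Prop.\,\ref{prop:d-lambda}, an irreducible essentially tempered representation of $\GL_n(\R)$, and similarly for $n'$.

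\textbf{Step (i): irreducibility.} First I would invoke Speh's reducibility criterion for representations of $\GL_N(\R)$ parabolically induced from essentially tempered (indeed from essentially discrete series) data on a Levi subgroup; see, e.g., \cite[Theorem 10b]{moeglin-edinburgh}. Writing $\D_{{}^\iota\!\mu^\nu} = \mathrm{Ind}_{R_n(\R)}^{\GL_n(\R)}(\bigotimes_i D_{\ell_i^\tau}\otimes|\cdot|^{-d})$ and similarly for $\D_{{}^\iota\!\mu'^\nu}$, the representation $\aInd_{P(\R)}^{G(\R)}({}^\iota\!\sigma_\infty\otimes{}^\iota\!\sigma'_\infty)$ is, after unwinding the induction in stages, a representation induced from a parabolic of type $(2,\dots,2)$ (with possibly one or two trailing $1$'s according to the parities of $n,n'$) from a product of $D_{\ell}$'s and characters. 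Reducibility of such an induced representation is governed by coincidences among the "segments"; concretely, it is irreducible precisely when the inducing data have \emph{disjoint} cuspidal parameters — which is exactly our standing hypothesis. The point of evaluation $s=-N/2$ enters through the relative twists $|\cdot|^{\frac{n'}{N}s+\frac{n'}{2}}$ on the $\GL_n$-factor and $|\cdot|^{-\frac{n}{N}s-\frac{n}{2}}$ on the $\GL_{n'}$-factor, which at $s=-N/2$ become $|\cdot|^{0}$ and $|\cdot|^{0}$ respectively; so the hypothesis $\ell(\mu,\mu')>0$ (disjointness of cuspidal parameters) directly prevents any segment coincidence, and irreducibility follows. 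The same argument applies verbatim to the $Q$-side, since the inducing data $\sigma'_\infty(n)\otimes\sigma_\infty(-n')$ differ only by integral twists that do not alter the cuspidal parameters.

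\textbf{Steps (ii) and (iii): the intertwining operator.} For holomorphy and non-vanishing of $T_{\rm st}(-\tfrac{N}{2}, {}^\iota\!\sigma \times {}^\iota\!\sigma')_\infty$ I would use the Langlands--Shahidi normalization: by (\ref{eqn:local-constant}) and (\ref{eqn:local-constant-gamma}) the local standard intertwining operator, when composed with Whittaker functionals, is governed by the local constant $C_{\psi_v}(s,\sigma_v\times\sigma'_v) = \varepsilon(s,\psi_v,\sigma_v\times\sigma'^{\sf v}_v)\,L(1-s,\sigma_v^{\sf v}\times\sigma'_v)/L(s,\sigma_v\times\sigma'^{\sf v}_v)$, and the Whittaker functional $\lambda_{\psi_v}$ is entire and nowhere zero (Shahidi \cite{shahidi-duke80}). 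Thus the order of the pole (resp.\ zero) of $T_{\rm st}$ at $s=-N/2$ is the order of the zero (resp.\ pole) of $C_{\psi_v}$ there, i.e.\ is controlled by $L(1+\tfrac{N}{2},\sigma_v^{\sf v}\times\sigma'_v)$ in the numerator and $L(-\tfrac N2,\sigma_v\times\sigma'^{\sf v}_v)$ in the denominator. The hypothesis that $-N/2$ and $1-N/2$ are critical (which by Lem.\,\ref{lem:comb-lemma} is equivalent to the existence of the balanced $w$) forces the relevant archimedean $L$-factors on both sides of the functional equation to be \emph{finite} at these points: $L(s,\sigma_\infty\times\sigma'^{\sf v}_\infty)$ and $L(1-s,\sigma^{\sf v}_\infty\times\sigma'_\infty)$ are regular at $s=-N/2$. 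Hence $C_{\psi_v}$ has neither a zero nor a pole at $s=-N/2$, so $T_{\rm st}(-\tfrac N2,{}^\iota\!\sigma_v\times{}^\iota\!\sigma'_v)$ is holomorphic and nonzero there; since source and target are irreducible by Step (i), a nonzero intertwiner is an isomorphism. Taking the tensor product over $v\in\place_\infty$ gives that $T_{\rm st}(-\tfrac N2,{}^\iota\!\sigma\times{}^\iota\!\sigma')_\infty$ is a $G(\R)$-isomorphism. Finally, for (iii): applying $H^{b_N^F}(\g,K_\infty^0;-\otimes\M_{{}^\iota\!\lambda,\C})(\varepsilon')$ to this isomorphism yields an isomorphism, and by Prop.\,\ref{prop:cohomology-degree} (together with the computation of lowest $K_\infty$-types as in Thm.\,\ref{thm:lowest-degree}) both cohomology spaces are one-dimensional; any isomorphism between one-dimensional spaces is a nonzero scalar, which proves the claim.

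\textbf{Main obstacle.} The delicate point is not the formal bookkeeping but pinning down \emph{why} the critical-point hypothesis is exactly what makes $C_{\psi_v}$ invertible at $s=-N/2$ — i.e.\ the interplay between the archimedean $L$-factor recipe (via the explicit Langlands parameters of $\D_{{}^\iota\!\mu^\nu}$ given in (\ref{eqn:langlands-parameters-tensor-product-odd})--(\ref{eqn:langlands-parameters-tensor-product-even})) and the regularity conditions in Def.\,\ref{def:critical-aut-l-fn}. Concretely one must check that the $\Gamma_\C$-factors entering $L(-\tfrac N2,\sigma_v\times\sigma'^{\sf v}_v)$ and $L(1+\tfrac N2,\sigma_v^{\sf v}\times\sigma'_v)$ contribute no poles at the relevant arguments, and that the $\varepsilon$-factor, being an exponential, is harmless; this is precisely where the inequality in Cor.\,\ref{cor:comb-lem-easy-critical} is used. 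I expect this to be a short but careful computation with the $\Gamma_\C$-factors in (\ref{eqn:L-factor-infinity-automorphic-odd})--(\ref{eqn:L-factor-infinity-automorphic-even}), essentially re-deriving Cor.\,\ref{cor:ratio-l-values-infty} at the level of regularity rather than of values.
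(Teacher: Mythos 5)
Your Steps (ii) and (iii) follow the paper's argument essentially verbatim: the paper also combines Shahidi's identity $C_{\psi_\infty}(s,\sigma_\infty\times\sigma'_\infty)\,\bigl(\lambda_{\psi}\circ T_{\rm st}\bigr)=\lambda_{\psi}$ with the expression of $C_{\psi_\infty}$ as a ratio of archimedean $L$-factors, the nowhere-vanishing of those factors, and the regularity at $s=-N/2$ forced by criticality, to conclude that the operator is nonzero, hence an isomorphism of irreducibles, hence an isomorphism of the one-dimensional cohomology spaces.

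The genuine gap is in Step (i). You claim the induced representation ``is irreducible precisely when the inducing data have disjoint cuspidal parameters,'' on the grounds that the exponents $\tfrac{n'}{N}s+\tfrac{n'}{2}$ and $-\tfrac{n}{N}s-\tfrac{n}{2}$ vanish at $s=-N/2$. That vanishing only identifies $I_P^G(-\tfrac{N}{2},\sigma\otimes\sigma')$ with $\aInd_{P(\R)}^{G(\R)}(\sigma_\infty\otimes\sigma'_\infty)$; it does not remove the relative twist between the two blocks of discrete series. When you rewrite $\aInd_{P(\R)}^{G(\R)}(\sigma_\infty\otimes\sigma'_\infty)$ by normalized induction in stages, the discrepancy $\delta_P^{\pm 1/2}$ shifts the $\GL_n$- and $\GL_{n'}$-blocks against each other by $N/2$, and $\sigma_\infty$, $\sigma'_\infty$ themselves carry the twists $|\cdot|^{-d}$, $|\cdot|^{-d'}$; so a factor $D_{\ell_i}$ from the first block and a factor $D_{\ell'_j}$ from the second are offset by $|d-d'+\tfrac{N}{2}|$, which is generically nonzero. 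Speh's criterion (parts (ii) and (iii) of Mœglin's Thm.\,10b, which is what the paper applies) then demands, for each $\tau$ and all $i,j$, that $-\tfrac{1}{2}|\ell_i-\ell'_j|+|d-d'+\tfrac{N}{2}|\notin\{1,2,3,\dots\}$. Disjointness alone does not give this: since $|d-d'+\tfrac{N}{2}|-\tfrac{1}{2}\ell(\mu,\mu')$ is always an integer, a suitable Tate twist makes it equal to $1$ and the induction becomes reducible even with disjoint cuspidal parameters. What one actually needs is the inequality $|d-d'+\tfrac{N}{2}|\le\tfrac{1}{2}\ell(\mu,\mu')-1$ from part (2) of the combinatorial lemma — i.e., exactly the criticality of $-\tfrac{N}{2}$ and $1-\tfrac{N}{2}$ that you invoke only in Step (ii). You must feed that same hypothesis into the irreducibility check; the paper does precisely this and stresses that the combinatorial lemma is crucial at this point.
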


\begin{proof}
Irreducibility of the induced representations follows by applying results of Birgit Speh; see 
\cite[Thm.\,10b]{moeglin-edinburgh}. We briefly sketch the details in one case. (Let's introduce some convenient and well-known notation. For any local field $\F$, suppose $\pi_1$ and $\pi_2$ are representations of $\GL_{n_1}(\F)$ and 
$\GL_{n_2}(\F)$, then by $\pi_1 \times \pi_2$ we denote the representation of $\GL_{n_1+ n_2}$ obtained by normalized  parabolical induction from the representation $\pi_1 \otimes \pi_2$ of the Levi subgroup 
$\GL_{n_1}(\F) \times \GL_{n_2}(\F).$) Translating to normalized induction, and using transitivity of normalized parabolic induction, we may write $\aInd_{P(\R)}^{G(\R)}({}^\iota\!\sigma_\infty \otimes {}^\iota\!\sigma'_\infty)$ as: 
$$
\D_{{}^\iota\!\ell_1}(-d-\tfrac{n'}{2}) \times \cdots \times \D_{{}^\iota\!\ell_{n/2}}(-d-\tfrac{n'}{2}) \times 
\D_{{}^\iota\!\ell'_1}(-d'-\tfrac{n}{2}) \times \cdots \times \D_{{}^\iota\!\ell_{(n'-1)/2}}(-d'-\tfrac{n}{2}) \times 
{}^\iota\!\varepsilon'(-d'-\tfrac{n}{2}).
$$
To check irreducibility, by (ii) and (iii) of \cite[Thm.\,10b]{moeglin-edinburgh}, we need to check, 
for $1 \leq i \leq n/2$ and $1 \leq j \leq (n'+1)/2$, that  
$$
- \frac{|{}^\iota\!\ell_i - {}^\iota\!\ell'_j|}{2} + |d-d'+\tfrac{N}{2}| \ \notin \ \{1, 2, 3, \dots \}. 
$$
We leave it to the reader to see that this follows from the inequalities in (ii) of Lem.\ref{lem:comb-lemma}. It is also easy to see along the same lines that the irreducibility holds if both $n$ and $n'$ are even. 

Now, we have the standard intertwining operator $T_{\rm st}(-\tfrac{N}{2}, {}^\iota\!\sigma \times {}^\iota\!\sigma')_\infty$ between two irreducible representations. To show it is an isomorphism, it suffices to show that it is nonzero. But, suppose 
$T_{\rm st}(-\tfrac{N}{2}, {}^\iota\!\sigma \times {}^\iota\!\sigma')_\infty = 0.$ Then using Shahidi's results 
\cite{shahidi-duke80} on local factors (see (\ref{eqn:local-constant}) and (\ref{eqn:local-constant-gamma})), we deduce that 
$C_{\psi_\infty}(s, \sigma_\infty \times \sigma'_\infty)$ has a pole at $s = -N/2.$ Since local $L$-factors are nonvanishing everywhere, we further deduce that $L(1+N/2, \sigma_\infty^{\sf v} \times \sigma'_\infty)$ is a pole; but this is not possible since $-N/2$ is a critical point. Hence, $T_{\rm st}(-\tfrac{N}{2}, {}^\iota\!\sigma \times {}^\iota\!\sigma')_\infty \neq 0$; whence an isomorphism. The rest is clear since any functor takes an isomorphism to an isomorphism. 
\end{proof}

The reader should appreciate the point that the conditions of the combinatorial lemma are crucial in the proof of the above proposition.

\medskip
\subparagraph{\bf Choice of bases at infinity}
Delorme's Lemma (see \cite[Thm.\,III.3.3]{borel-wallach}) describes the relative Lie algebra cohomology of a parabolically induced representation in terms of the cohomology of the inducing data. This allows us to  
make the following identifications: 
\begin{multline}
\label{eqn:delorme-1}
H^{b_N^F}(\g_N, K_{N,\infty}^0, \aInd_{P(\R)}^{G(\R)}({}^\iota\!\sigma_\infty \otimes {}^\iota\!\sigma'_\infty)
\otimes \M_{{}^\iota\!\lambda, \C})(\varepsilon')  \ \cong \\
H^{b_n^F}(\g_n, K_{n,\infty}^0, {}^\iota\!\sigma_\infty \otimes \M_{{}^\iota\!\mu, \C})(\varepsilon') \otimes
H^{b_{n'}^F}(\g_{n'}, K_{n',\infty}^0, {}^\iota\!\sigma'_\infty \otimes \M_{{}^\iota\!\mu', \C})(\varepsilon'), 
\end{multline}
and 
\begin{multline}
\label{eqn:delorme-2}
H^{b_N^F}(\g_N, K_{N,\infty}^0, \aInd_{Q(\R)}^{G(\R)}({}^\iota\!\sigma'_\infty(n) \otimes {}^\iota\!\sigma_\infty(-n'))
\otimes \M_{{}^\iota\!\lambda, \C})(\varepsilon')  \ \cong \\
H^{b_n^F}(\g_n, K_{n,\infty}^0, 
{}^\iota\!\sigma_\infty(-n') \otimes \M_{{}^\iota\!\mu+n'\bfgreek{delta}_n, \C})(\varepsilon') \otimes
H^{b_{n'}^F}(\g_{n'}, K_{n',\infty}^0, 
{}^\iota\!\sigma'_\infty(n) \otimes \M_{{}^\iota\!\mu'-n\bfgreek{delta}_{n'}, \C})(\varepsilon'). 
\end{multline}
In the above identifications, the balanced Kostant representative $w \in W^P$ and its associate $w' \in W^Q$ have played an important role, since ${}^\iota\!w \cdot {}^\iota\!\lambda = {}^\iota\!\mu \otimes {}^\iota\!\mu'$ 
and ${}^\iota\!w' \cdot {}^\iota\!\lambda = ({}^\iota\!\mu' -n\bfgreek{delta}_{n'})  \otimes ({}^\iota\!\mu+n'\bfgreek{delta}_{n}).$

Recall that we fixed a basis element $w^{\varepsilon'}({}^\iota\mu)$ of the one-dimensional space
$H^{b_n^F}(\g_n, K_{n,\infty}^0, {}^\iota\!\sigma_\infty \otimes \M_{{}^\iota\!\mu, \C})(\varepsilon')$ in 
(\ref{eqn:basis-w-epsilon-lambda}), and recall also the map $T^{\varepsilon'}({}^\iota\mu)$ in 
(\ref{eqn:switch-infinity}) which sends $w^{\varepsilon'}({}^\iota\mu)$ to $w^{-\varepsilon'}({}^\iota\mu).$
We have the map 
\begin{equation}
\label{eqn:T-trans-comes-from-here}
T_{\rm Tate}(-n', {}^\iota\!\sigma_\infty, -\varepsilon') \circ T^{\varepsilon'}({}^\iota\mu)
\end{equation}
which takes 
$w^{\varepsilon'}({}^\iota\mu)$ to $w^{\varepsilon'}({}^\iota\mu+n'\bfgreek{delta}_n);$ we have used $n'$ is odd. 
Similarly, we have 
$$
T_{\rm Tate}(n, {}^\iota\!\sigma'_\infty, \varepsilon') 
$$
which identifies $H^{b_{n'}^F}(\g_{n'}, K_{n',\infty}^0, {}^\iota\!\sigma'_\infty \otimes \M_{{}^\iota\!\mu', \C})(\varepsilon')$  
with $H^{b_{n'}^F}(\g_{n'}, K_{n',\infty}^0, 
{}^\iota\!\sigma'_\infty(n) \otimes \M_{{}^\iota\!\mu'-n\bfgreek{delta}_{n'}, \C})(\varepsilon')$, since $n$ is even. 
Hence, we get a map 
$$
T_{\rm Tate}(-n', {}^\iota\!\sigma_\infty, -\varepsilon') \circ T^{\varepsilon'}({}^\iota\mu) \otimes 
T_{\rm Tate}(n, {}^\iota\!\sigma'_\infty, \varepsilon')
$$
identifying the right hand side of (\ref{eqn:delorme-1}) with the right hand side of (\ref{eqn:delorme-2}); this defines the corresponding isomorphism of the left hand sides which we denote 
$T_{\rm loc}({}^\iota\!\sigma_\infty, {}^\iota\!\sigma'_\infty)(\varepsilon').$

\medskip
\subparagraph{\bf The quantity $c({}^\iota\!\sigma_\infty, {}^\iota\!\sigma'_\infty)$}
Observe that both the operators 
$T_{\rm st}(-\tfrac{N}{2}, {}^\iota\!\sigma_\infty \otimes {}^\iota\!\sigma'_\infty)^\bullet(\varepsilon')$ 
and 
$T_{\rm loc}({}^\iota\!\sigma_\infty, {}^\iota\!\sigma'_\infty)(\varepsilon')$
are isomorphisms between the same one-dimensional $\C$-vector spaces. This defines a nonzero complex number
$c({}^\iota\!\sigma_\infty, {}^\iota\!\sigma'_\infty)$ as: 
$$
T_{\rm st}(-\tfrac{N}{2}, {}^\iota\!\sigma_\infty \otimes {}^\iota\!\sigma'_\infty)^\bullet(\varepsilon') \ = \ 
c({}^\iota\!\sigma_\infty, {}^\iota\!\sigma'_\infty) \, T_{\rm loc}({}^\iota\!\sigma_\infty, {}^\iota\!\sigma'_\infty)(\varepsilon'). 
$$
The following theorem is due to Harder \cite{harder-arithmetic}: 
\begin{thm}
\label{thm:c-infinity}
$$ 
c_\infty({}^\iota\!\sigma_\infty, {}^\iota\!\sigma'_\infty) \ \approx_{\Q^\times} \,( i\bfgreek{pi})^{[F:\Q] nn'/2},
$$
where $\approx_{\Q^\times}$ means equality up to a nonzero rational number. 
\end{thm}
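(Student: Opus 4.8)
The plan is to reduce the computation of $c_\infty({}^\iota\!\sigma_\infty, {}^\iota\!\sigma'_\infty)$ to a product of archimedean computations, one for each $v \in \place_\infty$, since every object in sight factors as a tensor product over the infinite places $v$ (equivalently over $\tau : F \to E$ via $\iota$). Because both $T_{\rm st}(-\tfrac{N}{2}, {}^\iota\!\sigma_\infty \otimes {}^\iota\!\sigma'_\infty)^\bullet(\varepsilon')$ and the normalized operator $T_{\rm loc}({}^\iota\!\sigma_\infty, {}^\iota\!\sigma'_\infty)(\varepsilon')$ are built place-by-place, the constant $c_\infty$ will be a product $\prod_{v} c_v$ of local constants, each of which is a ratio of two isomorphisms of one-dimensional spaces, namely the standard intertwining operator on relative Lie algebra cohomology versus the operator coming from Delorme's Lemma identifications together with the chosen bases $w^{\varepsilon'}({}^\iota\!\mu)$, the maps $T^{\varepsilon'}({}^\iota\!\mu)$, and the Tate-twist maps $T_{\rm Tate}$. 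So the theorem reduces to showing each $c_v \approx_{\Q^\times} (i\bfgreek{pi})^{nn'/2}$, and since $[F:\Q] = {\sf r}$ archimedean places contribute, multiplying gives the exponent $[F:\Q]nn'/2 = \dim(U_P)/2$.

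The key steps, in order, are as follows. First I would write down the irreducible induced representations at $v$ explicitly as products of essentially discrete series $\D_{{}^\iota\!\ell_i}$ and $\D_{{}^\iota\!\ell'_j}$ (and a one-dimensional piece when $n'$ is odd), as already done in the proof of the irreducibility proposition just above, so that the standard intertwining operator factors into a composite of rank-one intertwining operators, each associated to an $\SL_2$ or $\GL_2 \times \GL_1$ situation. Second, I would invoke the relation (\ref{eqn:local-constant}) and (\ref{eqn:local-constant-gamma}) between the standard intertwining operator, the Whittaker functional $\lambda_{\psi_v}$, and the local constant $C_{\psi_v}(s, \sigma_v \times \sigma'_v) = \varepsilon(s, \psi_v, \sigma_v \times \sigma_v'^{\sf v})L(1-s, \sigma_v^{\sf v} \times \sigma'_v)/L(s, \sigma_v \times \sigma_v'^{\sf v})$. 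Evaluated at $s = -N/2$, the $\varepsilon$-factor is an exponential (contributing a power of $i$ and of $e$, but the latter is harmless up to the normalization one can fix), and Cor.\,\ref{cor:ratio-l-values-infty} already tells us the ratio of the relevant archimedean $L$-values is $(2\bfgreek{pi})^{nn'/2}$ up to $\Q^\times$. Third, I would match the Whittaker-functional normalization of the intertwining operator against the cohomological normalization: the basis vectors $w^{\varepsilon'}$ are chosen via relative Lie algebra cohomology of the $\D$'s (as in \S\ref{para:interlude-basis}), and the scaling factor $i^{{\sf r}n/2}$ built into $T^\varepsilon({}^\iota\!\lambda)$ in (\ref{eqn:correct-power-i}) is precisely designed to make these operators rational; tracking how the Whittaker normalization differs from the cohomological one is the content of Harder~\cite{harder-arithmetic}, which I would cite for the precise comparison. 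Finally I would assemble: the $L$-value ratio gives $\bfgreek{pi}^{nn'/2}$ per place, the $\varepsilon$-factor and the discrepancy between Whittaker and cohomological bases supply the factor $i^{nn'/2}$ per place, and everything else is rational; taking the product over the ${\sf r}$ archimedean places yields $(i\bfgreek{pi})^{[F:\Q]nn'/2}$.

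The main obstacle, as I see it, is the careful bookkeeping of the powers of $i$ (and of $2\bfgreek{pi}$ versus $\bfgreek{pi}$) coming from three separate sources: the archimedean $\varepsilon$-factors in (\ref{eqn:local-constant-gamma}), the Whittaker functional normalization of $T_{\rm st}$ at infinity, and the explicit rationalizing twist $i^{{\sf r}n/2}$ (and its analogue on the $Q$-side) hidden in the definitions of $T^{\varepsilon'}({}^\iota\!\mu)$ and the Tate-twist operators $T_{\rm Tate}(\pm n', \cdot)$, $T_{\rm Tate}(n,\cdot)$. Getting the parity right — in particular that $n$ even and $n'$ odd (or both even) make all the relevant half-integers into integers, so that no stray factor of $i$ survives beyond the advertised $(i\bfgreek{pi})^{\dim(U_P)/2}$ — requires the disjoint-cuspidal-parameter hypothesis and the combinatorial lemma, exactly as flagged in the remark after the irreducibility proposition. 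Since the delicate normalization comparison is carried out in detail in Harder~\cite{harder-arithmetic}, I would organize the proof so that the only genuinely new input here is the reduction to the local situation and the citation of Cor.\,\ref{cor:ratio-l-values-infty} for the $L$-value ratio, deferring the hardest normalization statement to \emph{loc.\,cit.}
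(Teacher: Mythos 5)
The paper offers no proof of Thm.\,\ref{thm:c-infinity} at all: it is stated as being due to Harder~\cite{harder-arithmetic} and the entire computation is delegated to that reference. So there is no argument in the paper to compare yours against step by step; the only fair comparison is whether your outline, taken on its own terms, actually constitutes a proof. It does not, for the following reason. The constant $c_\infty({}^\iota\!\sigma_\infty, {}^\iota\!\sigma'_\infty)$ is by definition the ratio of $T_{\rm st}(-\tfrac{N}{2},\cdot)^\bullet(\varepsilon')$ to the operator $T_{\rm loc}$ built from the \emph{cohomological} bases $w^{\varepsilon'}({}^\iota\!\mu)$, the maps $T^{\varepsilon'}$, and the Tate twists. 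Your route via Shahidi's local coefficients (\ref{eqn:local-constant})--(\ref{eqn:local-constant-gamma}) only tells you how $T_{\rm st}$ transforms the Whittaker functional $\lambda_{\psi_v}$; to convert that into the scalar by which $T_{\rm st}^\bullet$ acts on the one-dimensional cohomology spaces you must know the value of the Whittaker functional on the specific minimal-$K$-type vectors entering $w^{\varepsilon'}$, i.e.\ you must compare the Whittaker normalization with the cohomological one. You explicitly defer exactly this comparison to \cite{harder-arithmetic} — but that comparison \emph{is} the theorem (once one has it, the local-coefficient detour is unnecessary), so the proposal is circular: it reduces the statement to the content of the very reference the paper cites, which is precisely what the paper already does, only with more scaffolding.

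There is also a concrete bookkeeping error in the scaffolding itself. The local coefficient satisfies $C_{\psi_v}(s,\sigma_v\times\sigma'_v) = \varepsilon(s,\psi_v,\sigma_v\times\sigma_v'^{\sf v})\,L(1-s,\sigma_v^{\sf v}\times\sigma'_v)/L(s,\sigma_v\times\sigma_v'^{\sf v})$, so the ratio it produces at $s=-N/2$ is of the form $L(-\tfrac{N}{2})/L(1+\tfrac{N}{2})$ (the two sides of the functional equation), whereas Cor.\,\ref{cor:ratio-l-values-infty} and the normalization in (\ref{eqn:T-loc-finite}) involve the ratio of \emph{successive} values $L(-\tfrac{N}{2})/L(1-\tfrac{N}{2})$. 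These are different quantities, and passing from one to the other requires an additional use of the explicit $\Gamma$-factors (\ref{eqn:L-factor-infinity-automorphic-odd})--(\ref{eqn:L-factor-infinity-automorphic-even}); as written, citing Cor.\,\ref{cor:ratio-l-values-infty} inside the local-coefficient computation does not type-check. The place-by-place factorization and the parity remarks in your last paragraph are fine, but the heart of the matter — evaluating the standard intertwining integral on the cohomological generators, or equivalently the archimedean period comparison — is missing and cannot be supplied by the ingredients you list.
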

From Cor.\,\ref{cor:ratio-l-values-infty}, this may be stated in terms of cohomological $L$-factors at infinity as 
$$
c_\infty({}^\iota\!\sigma_\infty, {}^\iota\!\sigma'_\infty) \ \approx_{\Q^\times} \ 
i^{{\sf r}nn'/2} \cdot 
\frac{L^{\rm coh}_\infty\left(\iota, \sigma \times \sigma'^{\v}, {\sf m}_0\right)}
{L^{\rm coh}_\infty\left(\iota , \sigma \times \sigma'^{\v}, 1+{\sf m}_0\right)}.
$$
(Recall, ${\sf r} = {\sf r}_F = [F:\Q].$) In particular, we have: 
\begin{equation}
\label{eqn:c-infinity-L-values}
T_{\rm st}(-\tfrac{N}{2}, {}^\iota\!\sigma_\infty \otimes {}^\iota\!\sigma'_\infty)^\bullet(\varepsilon') 
 \ \approx_{\Q^\times} \ 
i^{{\sf r}nn'/2} \cdot
\frac{L^{\rm coh}_\infty\left(\iota, \sigma \times \sigma'^{\v}, {\sf m}_0\right)}
{L^{\rm coh}_\infty\left(\iota , \sigma \times \sigma'^{\v}, 1+{\sf m}_0\right)} \,
T_{\rm loc}({}^\iota\!\sigma_\infty, {}^\iota\!\sigma'_\infty)(\varepsilon').
\end{equation}

\medskip
\paragraph{\bf The local operator at finite places} 
We define an operator $T_{\rm loc}(-\tfrac{N}{2}, {}^\iota\!\sigma \times {}^\iota\!\sigma')_v$ at any finite place as: 
\begin{equation}
\label{eqn:T-loc-finite}
T_{\rm loc}(-\tfrac{N}{2}, {}^\iota\!\sigma \times {}^\iota\!\sigma')_v \ := \  
\left(\frac{L(-\tfrac{N}{2}, {}^\iota\!\sigma_v \times {}^\iota\!\sigma_v'^{\sf v})}
{L(1-\tfrac{N}{2}, {}^\iota\!\sigma_v \times {}^\iota\!\sigma_v'^{\sf v})}\right)^{-1} \, 
T_{\rm st}(-\tfrac{N}{2}, {}^\iota\!\sigma \otimes {}^\iota\!\sigma')_v. 
\end{equation}
By \cite[Prop.\,I.10]{moeglin-waldspurger}, we know that $T_{\rm loc}(-\tfrac{N}{2}, {}^\iota\!\sigma \times {}^\iota\!\sigma')_v$ is finite and nonzero. (In \cite{moeglin-waldspurger}, the normalization factor on the right hand side also involves the local epsilon-factor, but this doesn't matter to us, since the epsilon-factor is an exponential function which is holomorphic and nonvanishing everywhere.)  
Langlands's calculation (see Sect.\,\ref{sec:ratio-l-fns}) about the standard intertwining operator at any unramified place says that $T_{\rm loc}(-\tfrac{N}{2}, {}^\iota\!\sigma \times {}^\iota\!\sigma')_v$ takes the 
normalized spherical vector $f_v^0$ to the normalized spherical vector $\tilde f_v^0$. 

At a ramified place we have basically the same. We have the $\HKv \times E$  module 
$I_P^G(-N/2,  \sigma_v \otimes \sigma'_v)$, and we have the base extension
  $$
  I_P^G(s, \sigma_v \otimes \sigma'_v) \ = \ 
  I_P^G(  \sigma_v \otimes \sigma'_v)\otimes_{E,\iota}\C\otimes \vert\gamma_P\vert_v^s.
  $$
It will be shown in
Harder~\cite{harder-badprimes} that  the evaluation of 
$ T_{\rm loc}(s, {}^\iota\!\sigma \times {}^\iota\!\sigma')_v $ at $s=-\frac{N}{2}$
is the base extension of an operator over $E$: 
$$
T_{\rm loc}(-\tfrac{N}{2}, {}^\iota\!\sigma \times {}^\iota\!\sigma')_v : I_P^G(-N/2, \sigma_v \otimes \sigma'_v)^{K_v}
\ \longrightarrow \ 
I_Q^G( N/2, \sigma^\prime_v \otimes \sigma_v)^{K_v}.
  $$ 
  This operator is nonzero provided $K_v$ is sufficiently small. (In general, it may not be sufficient to simply have  
  $I_P^G( -N/2, \sigma_v \otimes \sigma'_v)^{K_v}\not= 0$ to guarantee nonvanishing of the operator.) 
  The reason for this rationality result is that the integral defining the intertwining operator
  can be written as a finite sum of geometric series. 
  
  \bigskip
  
    Putting (\ref{eqn:T-st-global-to-local}), (\ref{eqn:c-infinity-L-values}) and (\ref{eqn:T-loc-finite}) together,  
while keeping Thm.\,\ref{thm:langlands} and Prop.\,\ref{prop:at-least-one-holomorphic} in mind, we deduce: 
\begin{multline}
\label{eqn:T-st-to-T-loc}
T_{\rm st}(-\tfrac{N}{2}, {}^\iota\!\sigma \times {}^\iota\!\sigma')^\bullet( \varepsilon') \ \approx_{\Q^\times} \\ 
i^{{\sf r}nn'/2} \cdot
\frac{L^{\rm coh}\left(\iota, \sigma \times \sigma'^{\v}, {\sf m}_0\right)}
{L^{\rm coh}\left(\iota , \sigma \times \sigma'^{\v}, 1+{\sf m}_0\right)} \, 
\left(T_{\rm loc}({}^\iota\!\sigma_\infty, {}^\iota\!\sigma'_\infty)(\varepsilon') \otimes 
T_{\rm loc}(-\tfrac{N}{2}, {}^\iota\!\sigma \times {}^\iota\!\sigma')_f\right).
\end{multline}

\medskip
\paragraph{\bf The operator $T_{\rm loc}$} 
From the definitions of $T_{\rm loc}({}^\iota\!\sigma_\infty, {}^\iota\!\sigma'_\infty)(\varepsilon')$ and 
$T_{\rm loc}(-\tfrac{N}{2}, {}^\iota\!\sigma \times {}^\iota\!\sigma')_f$, the definition of 
$T_{\rm trans}$ in Sect.\,\ref{sec:T-trans} and $T_{\rm Tate}$ in Sect.\,\ref{sec:T-Tate}, and 
after identifying $M_P$ with $M_Q$, it is clear that: 
\begin{multline}
\label{eqn:T-loc-to-T-trans}
T_{\rm loc}({}^\iota\!\sigma_\infty, {}^\iota\!\sigma'_\infty)(\varepsilon') \otimes 
T_{\rm loc}(-\tfrac{N}{2}, {}^\iota\!\sigma \times {}^\iota\!\sigma')_f \ = \\ 
\smallskip
\frac{1}{i^{{\sf r}n/2}}
\aIndPG 
\left(
(T_{\rm Tate}^{-\varepsilon'}({}^\iota\!\mu, {}^\iota\!\sigma_f, -n') \circ 
T_{\rm trans}^{\varepsilon'}({}^\iota\!\mu, {}^\iota\!\sigma_f)) \, \otimes \, 
T_{\rm Tate}^{\varepsilon'}({}^\iota\!\mu', {}^\iota\!\sigma'_f, n)
\right). 
\end{multline}
Using the definition (Def.\,\ref{defn:relative-periods}) of the relative period 
$\Omega^\varepsilon({}^\iota\!\sigma_f),$ we can rewrite 
(\ref{eqn:T-loc-to-T-trans}) as: 
\begin{multline}
\label{eqn:T-trans-to-T-arith}
T_{\rm loc}({}^\iota\!\sigma_\infty, {}^\iota\!\sigma'_\infty)(\varepsilon') \otimes 
T_{\rm loc}(-\tfrac{N}{2}, {}^\iota\!\sigma \times {}^\iota\!\sigma')_f \ = \\ 
\smallskip
\frac{1}{i^{{\sf r}n/2} \Omega^\varepsilon({}^\iota\!\sigma_f)}
\aIndPG
\left(
(T_{\rm Tate}^{-\varepsilon'}({}^\iota\!\mu, {}^\iota\!\sigma_f, -n') \circ 
T_{\rm arith}^{\varepsilon'}({}^\iota\!\mu, {}^\iota\!\sigma_f)) \, \otimes \, 
T_{\rm Tate}^{\varepsilon'}({}^\iota\!\mu', {}^\iota\!\sigma'_f, n) 
\right). 
\end{multline}
Finally, we use the fact that both $T_{\rm Tate}$ and $T_{\rm arith}$ are defined over $E$, and furthermore, the process of algebraic induction is also rationally defined; hence we may take the $\iota$ out: 
\begin{multline}
\label{eqn:pull-out-iota} 
\aIndPG
\left((T_{\rm Tate}^{-\varepsilon'}({}^\iota\!\mu, {}^\iota\!\sigma_f, -n') \circ 
T_{\rm arith}^{\varepsilon'}({}^\iota\!\mu, {}^\iota\!\sigma_f)) \, \otimes \, 
T_{\rm Tate}^{\varepsilon'}({}^\iota\!\mu', {}^\iota\!\sigma'_f, n) \right)  \ = \\
\medskip
\aIndPG
\left((T_{\rm Tate}^{-\varepsilon'}(\mu, \sigma_f, -n') \circ 
T_{\rm arith}^{\varepsilon'}(\mu, \sigma_f)) \, \otimes \, 
T_{\rm Tate}^{\varepsilon'}(\mu', \sigma'_f, n) \right) \otimes_{E, \iota} {\bf 1}_\C.
\end{multline}
Putting (\ref{eqn:T-loc-to-T-trans}), (\ref{eqn:T-trans-to-T-arith}) and (\ref{eqn:pull-out-iota}) together, we have: 
\begin{multline}
\label{eqn:T-loc-to-T-arith} 
T_{\rm loc}({}^\iota\!\sigma_\infty, {}^\iota\!\sigma'_\infty)(\varepsilon') \otimes 
T_{\rm loc}(-\tfrac{N}{2}, {}^\iota\!\sigma \times {}^\iota\!\sigma')_f \ = \\ 
\smallskip
\frac{1}{i^{{\sf r}n/2} \Omega^\varepsilon({}^\iota\!\sigma_f)} \,
\aIndPG
\left((T_{\rm Tate}^{-\varepsilon'}(\mu, \sigma_f, -n') \circ 
T_{\rm arith}^{\varepsilon'}(\mu, \sigma_f)) \, \otimes \, 
T_{\rm Tate}^{\varepsilon'}(\mu', \sigma'_f, n) \right) \otimes_{E, \iota} {\bf 1}_\C. 
\end{multline}

\medskip
\paragraph{\bf The concluding step in the proof} 
From (\ref{eqn:T-eis-pass-to-C}), (\ref{eqn:T-st-to-T-loc}) and (\ref{eqn:T-loc-to-T-arith}) it follows that:  

{\Small
\begin{multline}
\label{eqn:T-eis-all-the-way}
T_{\rm Eis}(\sigma, \sigma', \varepsilon') \otimes_{E, \iota} {\bf 1}_\C \ \approx_{\iota(E)^\times} \\
\medskip
\frac{1}{\Omega^\varepsilon({}^\iota\!\sigma_f)}
\frac{L^{\rm coh}\left(\iota, \sigma \times \sigma'^{\v}, {\sf m}_0\right)}
{L^{\rm coh}\left(\iota , \sigma \times \sigma'^{\v}, 1+{\sf m}_0\right)} \, 
\aIndPG
\left((T_{\rm Tate}^{-\varepsilon'}(\mu, \sigma_f, -n') \circ 
T_{\rm arith}^{\varepsilon'}(\mu, \sigma_f)) \, \otimes \, 
T_{\rm Tate}^{\varepsilon'}(\mu', \sigma'_f, n) \right) \otimes_{E, \iota} {\bf 1}_\C, 
\end{multline}}
where, we have used that $i^{{\sf r}n(n'-1)/2}$ is just $\pm 1$ which gets subsumed into $\iota(E)^\times.$ 
We deduce 
\begin{equation}
\label{eqn:L-value-by-period}
\frac{1}{\Omega^\varepsilon({}^\iota\!\sigma_f)}
\frac{L^{\rm coh}\left(\iota, \sigma \times \sigma'^{\v}, {\sf m}_0\right)}
{L^{\rm coh}\left(\iota , \sigma \times \sigma'^{\v}, 1+{\sf m}_0\right)} \ \in \ \iota(E).
\end{equation}
Furthermore, Galois equivariance is also clear from (\ref{eqn:T-eis-all-the-way}) since we know the behavior of all the objects as we change $\iota.$

\medskip
\subsubsection{\bf Proof of Thm.\,\ref{thm:main-theorem-l-values} in the case when both $n$ and $n'$ are even}
The proof is almost identical, but for some minor variations which we now explain: 
since $n'$ is even, the operator in (\ref{eqn:T-trans-comes-from-here}) simplifies to 
$T_{\rm Tate}(-n', {}^\iota\!\sigma_\infty, -\varepsilon')$. Hence, the operator 
$T_{\rm loc}({}^\iota\!\sigma_\infty, {}^\iota\!\sigma'_\infty)(\varepsilon') \otimes 
T_{\rm loc}(-\tfrac{N}{2}, {}^\iota\!\sigma \times {}^\iota\!\sigma')_f$ in (\ref{eqn:T-loc-to-T-trans}) is already rationally 
defined; in particular, the step taken in (\ref{eqn:T-trans-to-T-arith}) is not needed in this situation, i.e., there is no need to introduce any period. Furthermore, the term $i^{{\sf r}nn'/2}$ as in (\ref{eqn:c-infinity-L-values}) is just $\pm 1$ and so can be absorbed into $\iota(E).$ 
The rest of the argument is the same eventually giving us the result that 
\begin{equation}
\label{eqn:L-value-by-period-even-even}
\frac{L^{\rm coh}\left(\iota, \sigma \times \sigma'^{\v}, {\sf m}_0\right)}
{L^{\rm coh}\left(\iota , \sigma \times \sigma'^{\v}, 1+{\sf m}_0\right)} \ \in \ \iota(E).
\end{equation}
It is worth commenting that if $n = n'$ is even, i.e., if we are in the self-associate situation, then in (\ref{eqn:T_eis}) change $Q$ to $P$ and everything else goes through exactly as above giving the result in 
(\ref{eqn:L-value-by-period-even-even}).

\medskip
\subsubsection{\bf Example of Hilbert modular forms}
\label{sec:example-hilbert}

Let $\bk = (k_\tau)_{\tau : F \to E}$ be a ${\sf r}_F$-tuple of integers. Let 
$k_0 = {\rm max}\{k_\nu\}.$ Let $m \in \Z.$ Consider the weight $\lambda \in X^*(T_2 \times E)$ given by 
$$
\lambda = (\lambda^\tau), \quad \lambda^\tau = (k_\tau -2)\bfgreek{rho}_2 + (-m - \tfrac{k_0}{2})\bfgreek{delta}_2
$$
Then $\lambda$ is dominant if $k_\tau \geq 2$ and is integral if $k_\tau \equiv k_0 \pmod{2},$ and furthermore, by definition, it satisfies the algebraicity condition and clearly $\lambda$ is also pure.  
Let $\sigma_f \in \Coh_{!!}(G_2, \lambda).$ Take $\iota : E \to \C.$ There is a holomorphic Hilbert modular cusp form 
${}^\iota\ff$ of weight $\bk$, such that if $\pi({}^\iota\ff)$ is the associated cuspidal automorphic representation of 
$\GL_2(\A_F)$ then 
$$
{}^\iota\sigma = \pi({}^\iota\ff) \otimes | \ |^{m + \tfrac{k_0}{2}}.
$$
(This can be seen from Raghuram-Tanabe~\cite[Thm.\,1.4]{raghuram-tanabe}.) The cuspidal parameters of 
$\sigma_f$ are $\ell = (\ell^\tau)$ with $\ell^\tau = (\ell^\tau_1, \ell^\tau_2)$ with 
$\ell^\tau_1 = k_\tau - 1$ and $\ell^\tau_2 = -\ell^\tau_1 = 1-k_\tau.$ The motivic weight of $\lambda$ is 
$\w = \w_\lambda = k_0-1.$ The abelian part of $\lambda$ is $d = d_\lambda = -m-k_0/2.$ 

Let $\Mot(\sigma_f)$ be the motive attached to $\sigma_f.$ It's Hodge types are: 
$$
\Hod(\nu, {}^\iota\Mot(\sigma_f)) \ = \ 
\left\{\left(\frac{k_\tau-k_0-2m}{2}, \frac{2-k_\tau -k_0 -2m}{2} \right), 
\left(\frac{2-k_\tau -k_0 -2m}{2}, \frac{k_\tau-k_0-2m}{2} \right) \right\}, 
$$
where $\tau = \iota^{-1}\circ\nu.$ The weight of $\Mot(\sigma_f)$ is $\tilde\w = 2d+n-1 = -2m-k_0+1.$
The effective motive $\Mot_{\rm eff}(\sigma_f)$ attached to $\sigma_f$ has Hodge types: 
$$
\Hod(\nu, {}^\iota\Mot_{\rm eff}(\sigma_f)) \ = \ 
\left\{\left(\frac{k_\tau+k_0-2}{2}, \frac{-k_\tau + k_0}{2} \right), 
\left(\frac{-k_\tau +k_0}{2}, \frac{k_\tau+k_0-2}{2} \right) \right\}, 
$$
which are independent of the abelian part of $\lambda.$ 
The weight of $\Mot_{\rm eff}(\sigma_f)$ is the motivic weight $\w = k_0-1$ of $\lambda$. To compute the critical set, let's denote $k^0 = {\rm min}(k_\tau)$ and observe that 
$$
p_{\rm max} = \frac{k_0 - k^0}{2}, \quad q_{\rm min} = \frac{k_0+k^0}{2}-1.
$$ 
The critical set for $L(\iota, \Mot_{\rm eff}(\sigma_f), s)$ is: 
$$
\left\{ m \in \Z \ : \ \frac{k_0 - k^0}{2} < m < \frac{k_0 + k^0}{2} \right\}.
$$

For $\iota : E \to \C$, we can see that the motivic $L$-function attached to $\iota$ and the 
effective motive $\Mot_{\eff}(\sigma_f)$ is the Hecke-Shimura $L$-function, $L^{\rm Hecke}(s, {}^\iota\ff)$, attached to ${}^\iota\ff$: 
\begin{equation*}
\begin{split}
L(\iota, \Mot_{\eff}(\sigma_f), s) 
& \ = L^{\rm aut}\left(s + \frac{2d-\w}{2}, {}^\iota\sigma_f\right) \ = \ 
L^{\rm aut}_f \left(s + \frac{-k_0-2m-(k_0-1)}{2}, \pi({}^\iota\ff) \otimes  | \ |^{m + \tfrac{k_0}{2}}\right) \\
& \ = \  L^{\rm aut}_f\left(s - \frac{k_0-1}{2}, \pi({}^\iota\ff)\right) \ = \ L^{\rm Hecke}(s, {}^\iota\ff), 
\end{split}
\end{equation*}
where the last equality follows from  \cite[Thm.\,1.4, (1)]{raghuram-tanabe}. The moral is that irrespective of the integer $m$, we always end up with same $L$-function for the effective motive.  

\smallskip

The reader should work out the case of $\GL_2 \times \GL_1$ (resp., $\GL_2 \times \GL_2$)  
explicating our Thm.\,\ref{thm:main-theorem-l-values} using the well-known results of Shimura 
on the special values of the standard $L$-function attached to a Hilbert modular form and a Hecke character of $F$ 
\cite[Thm.\,4.3]{shimura-duke} (resp., the special values of the Rankin--Selberg $L$-function attached to a pair of Hilbert modular forms \cite[Thm.\,4.2]{shimura-duke}). Some more explicit examples may be produced using symmetric power transfers of Hilbert modular forms and special value results in Raghuram~\cite{raghuram-imrn}, \cite{raghuram-preprint}, and the examples in Grobner--Raghuram~\cite[\S\,8]{grobner-raghuram}.

\bigskip
\bigskip

\section*{\bf Appendix 1:  Proof of the combinatorial lemma.}
\begin{center}{\it by Uwe Weselmann}\end{center}

\bigskip

The aim of this appendix is to prove the combinatorial Lem.\,\ref{lem:comb-lemma}. In the following we use the notations of the main article without further comment, but with some minor modifications
explained below. Let us suppose, that the assumptions of Lem.\,\ref{lem:comb-lemma}. are satisfied. So $n$ denotes an even integer, $n'$ an arbitrary positive integer, and $N=n+n'$. For simplicity we will start to work inside the character group $X^*_\Q(T_0\times_\tau E)$  for a fixed embedding $\tau:F\hookrightarrow E$. Since this can and will be identified with $\Q^n =X^*_\Q(T_0)$, we will drop the
$\tau$ from all notations until the final proof. In $X^*_\Q(T_0)$
the half sum of the positive weights on $\GL_n$ will be denoted by $\bfgreek{rho}_n$, the determinant by $\bfgreek{delta}_n=\det_n$
and the $i$-th standard basis vector by $e_i$.
Write $$\mu +\bfgreek{rho}_n = (\bar b_1,\ldots,\bar b_n)\in \Q^n =X^*_\Q(T_0), \quad \text{ i.e. }
\bar b_i\;=\; b_i +\frac{n+1}2-i$$
 with the $b_i$ from Sect.\,\ref{sec:standard-fundamental}. Recall that the $\bfgreek{gamma}_i$, which are extensions of the fundamental weights, may be written in the form
$$\bfgreek{gamma}_i=\sum_{j=1}^{i} e_j-\frac in\cdot \sum_{j=1}^n e_j\qquad\text{ for } i=1,\ldots,n-1.$$
Thus
$$ \mu+\bfgreek{rho}_n= \sum_{i=1}^{n-1} a_i \gamma_i+ d\cdot \bfgreek{delta}_n =
  \sum_{j=1}^{n}\left(\sum_{i=j}^{n-1} a_i\right) e_j + \left(-\sum_{i=1}^{n-1} \frac in a_i + d\right)\cdot \sum_{j=1}^n e_j.$$
If $\mu+\bfgreek{rho}_n$ is essentially self dual, we have $a_{n-i}=a_i$ and thus
$$\sum_{i=1}^{n-1} \frac in a_i=\frac 12\cdot\left(\sum_{i=1}^{n-1} \frac in a_i +\sum_{i=1}^{n-1} \frac in a_{n-i}\right)
= \frac 12\cdot\left(\sum_{i=1}^{n-1} \frac in a_i +\sum_{i=1}^{n-1} \frac {n-i}n a_{i}\right)=
\frac 12\cdot\left(\sum_{i=1}^{n-1}  a_i \right).$$
We deduce $\bar b_j=\beta_j+d$, where
$$ 
\quad \beta_j\quad=\quad \sum_{i=j}^{n-1} a_i- \frac 12\cdot\left(\sum_{i=1}^{n-1}  a_i\right)
\;=\; \frac 12\cdot\left( \sum_{i=j}^{n-1} a_j-\sum_{i=1}^{j-1} a_j\right).
$$
Thus $\beta_j\;=\; \frac{l_j}2$ in the notations of (\ref{eqn:cuspidal-parameter}). The weight being essentially self dual
now means $\beta_{n+1-j}=-\beta_j$. Observe $a_i= \beta_{i}-\beta_{i+1}$ and $\beta_1> \beta_2>\ldots > \beta_n$.

\medskip
Similarly write $\mu'+\bfgreek{rho}_{n'} =(\bar b_1',\ldots,\bar b_{n'}')\in \Q^{n'}$ with
$\bar b_i'=\beta_i'+d'$ and $\beta'_{n'+1-i}=-\beta'_i$.

\medskip
The motivic weights are ${\bf w}=2\beta_1$ and ${\bf w'}=2\beta_1'$. 
The Hodge numbers of the effective motives are
$$(\beta_1+\beta_i,\beta_1-\beta_i)\qquad i=1,\ldots,n\qquad\text{ for } {\bf M}_{\rm eff}$$
$$(\beta_1'+\beta_j',\beta_1'-\beta_j')\qquad j=1,\ldots,n'\qquad\text{ for } {\bf M}_{\rm eff}'.$$
 Then ${\bf M}_{\rm eff}\otimes {\bf M}_{\rm eff}'$ has the Hodge numbers
$$(\beta_1+\beta_1'+\beta_i+\beta_j,\; \beta_1+\beta_1'-\beta_i-\beta_j)\quad\text{ for }
(i,j)\in S ,\quad\text{ where }$$
$$S=\set{1,\ldots, n}\times \set{1,\ldots,n'}. $$
In the following  the characters of the split torus of $\GL_N$ will be written as pairs $\tilde\lambda=(\lambda,\lambda')\in \Z^n\times \Z^{n'}$,
especially we write $\tilde\mu=(\mu,\mu').$
\medskip
We introduce the half integer
$$\tilde p(\tilde\mu) \; := \;\min\set{\beta_i+\beta_j'> 0\; |\; (i,j)\in S}\; =\; 
\min\set{\nu> 0\;|\quad  h^{\beta_1+\beta_1'+\nu,\beta_1+\beta_1'-\nu}\ne 0}.$$  
With the integer $p(\tilde \mu)=\min\{p>\frac{{\bf w}+{\bf w'}}2|\; h^{p,{\bf w}+{\bf w'}-p}\ne 0\}$ from \cite[3]{harder-raghuram} we have $$\tilde p(\tilde\mu) \; =\;  p(\tilde\mu)-\frac{{\bf w}+{\bf w}'}2\; =\;  p(\tilde\mu)- \beta_1-\beta_1'.$$  
 Recall that it is an assumption of Lem.\,\ref{lem:comb-lemma}.,  that  $\sigma_f$ and $\sigma_f'$ have disjoint cuspidal parameters. This is equivalent, by Prop.\,\ref{prop:disjoint-param-middle-hodge}, to the fact that  the middle Hodge number of the tensor motive is zero, which means $\beta_i+\beta_j'\ne 0$ for all $(i,j)\in S$. 
  In view of the symmetries $\beta_{n+1-j}=-\beta_j$ and $\beta'_{n'+1-i}=-\beta'_i$ we thus get
\begin{align}\label{pmin}
  &\tilde p(\tilde\mu)\;=\; \min_{(i,j)\in S} |\beta_i-\beta_j'|\;=\; \min_{(i,j)\in S^+} |\beta_i-\beta_j'|, \quad\text{ where }\\
  \notag S^+\; =\; &\left\{ (i,j)\in S\; |\; 1\le i\le \frac {n+1}2,\; 1\le j\le \frac{n'+1}2\right\}
      =\left\{ (i,j)\in S\, |\; \beta_i\ge 0,\beta_j'\ge 0\right\}.
\end{align}
\medskip

We have
$$ \tilde\mu+\bfgreek{rho}_N\; =\; (\mu,\mu')+\bfgreek{rho}_N\; =\;  \left(\mu+\bfgreek{rho}_n+\frac {n'}2\cdot \bfgreek{delta}_n,\;
\mu'+\bfgreek{rho}_{n'}-\frac n2\cdot \bfgreek{delta}_{n'}\right),$$
and if we write $\tilde\mu+\rho_N=(\tilde b_1,\ldots,\tilde b_N)$, then we get
\begin{align*} \tilde b_i= \bar b_i+\frac{n'}2 &=\beta_i + \tilde d\qquad\;\; \text{  with } \tilde d=d+\frac{n'}2\quad
\text{ for }1\le i\le n \quad\text{ and }\\
\tilde b_{j+n}= \bar b_{j}'-\frac n2 &= \beta_{j}'+\tilde d' \qquad \text{ with }\tilde d'=d'-\frac n2 \quad\text{ for }
1\le j \le n'.
\end{align*}

Let $$\tilde a(\tilde\mu) = (d-d')+\frac{n+n'}2=\tilde d -\tilde d'.$$

\begin{lemma}\label{Permutation} If we have $|\tilde a(\tilde \mu)|=|\beta_i-\beta_j'|$ for some $(i,j)\in S^+$, then there does not exist $w\in W^P$ such that
$\tilde\mu+\bfgreek{rho}_N$ is of the form $w(\lambda+\bfgreek{rho}_N)$ with a dominant weight $\lambda$.
\end{lemma}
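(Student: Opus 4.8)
The plan is to show that if $|\tilde a(\tilde\mu)|$ coincides with one of the ``small'' gaps $|\beta_i-\beta_j'|$ for $(i,j)\in S^+$, then the components of $\tilde\mu+\bfgreek{rho}_N$ fail to be \emph{distinct}, or more precisely that when we sort them in decreasing order and subtract $\bfgreek{rho}_N$ the result cannot be dominant --- equivalently, that there is a coincidence of two coordinates of $\tilde\mu+\bfgreek{rho}_N$, which is exactly what obstructs $\tilde\mu+\bfgreek{rho}_N$ from lying in the $W$-orbit of a strictly dominant (regular) vector. The key observation is that $w\in W^P$ being a Kostant representative means $w^{-1}$ permutes $\tilde\mu+\bfgreek{rho}_N$ into a vector whose first $n$ entries are strictly decreasing and whose last $n'$ entries are strictly decreasing (this is the condition $w\in W^P$), and asking in addition that this rearranged vector be of the form $\lambda+\bfgreek{rho}_N$ with $\lambda$ dominant forces \emph{all} $N$ entries to be strictly decreasing, hence pairwise distinct; so the whole question reduces to: the $N$ rational numbers $\{\beta_i+\tilde d\}_{1\le i\le n}\cup\{\beta_j'+\tilde d'\}_{1\le j\le n'}$ are pairwise distinct. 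Since the $\beta_i$ are already pairwise distinct among themselves (as $\beta_1>\cdots>\beta_n$) and likewise the $\beta_j'$, the only possible collision is between a $\beta_i+\tilde d$ and a $\beta_j'+\tilde d'$, i.e. $\beta_i-\beta_j'=\tilde d'-\tilde d=-\tilde a(\tilde\mu)$.

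First I would record the elementary reduction above carefully: if $w\in W^P$ and $w(\lambda+\bfgreek{rho}_N)=\tilde\mu+\bfgreek{rho}_N$ with $\lambda$ dominant, then $\lambda+\bfgreek{rho}_N$ is regular dominant, so its entries are pairwise distinct, so the entries of $\tilde\mu+\bfgreek{rho}_N$ are pairwise distinct. This is purely the statement that $W$ acts on coordinates by permutation and a permutation of a multiset with a repetition still has that repetition. Then I would use the explicit formula derived just above the lemma, namely $\tilde b_i=\beta_i+\tilde d$ for $1\le i\le n$ and $\tilde b_{j+n}=\beta_j'+\tilde d'$ for $1\le j\le n'$, to convert ``entries pairwise distinct'' into the three assertions: the $\beta_i$ are distinct (clear from the strict chain $\beta_1>\cdots>\beta_n$), the $\beta_j'$ are distinct (same), and $\beta_i+\tilde d\ne\beta_j'+\tilde d'$ for all $(i,j)\in S$. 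The last is $\beta_i-\beta_j'\ne\tilde d'-\tilde d=-\tilde a(\tilde\mu)$ for all $(i,j)\in S$, i.e. $\tilde a(\tilde\mu)\ne\beta_j'-\beta_i$ for all $(i,j)$.

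Finally I would invoke the symmetries $\beta_{n+1-i}=-\beta_i$ and $\beta_{n'+1-j}'=-\beta_j'$ to see that the set of values $\{\beta_j'-\beta_i:(i,j)\in S\}$ is stable under negation, hence equals $\{\pm(\beta_i-\beta_j'):(i,j)\in S\}$, and that by the same reasoning used to establish (\ref{pmin}) it suffices to let $(i,j)$ range over $S^+$ when taking absolute values. Therefore the hypothesis $|\tilde a(\tilde\mu)|=|\beta_i-\beta_j'|$ for some $(i,j)\in S^+$ says precisely that $\tilde a(\tilde\mu)$ \emph{is} realized as some $\beta_{j}'-\beta_{i}$ with $(i,j)\in S$, which produces the forbidden coincidence $\tilde b_{i}=\tilde b_{n+j}$ and contradicts the distinctness just shown; hence no such $w$ exists. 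I do not anticipate a serious obstacle here: the only place requiring a little care is the bookkeeping with the two sign symmetries to pass between $S$ and $S^+$ and between $\beta_i-\beta_j'$ and its negative, exactly as in the derivation of (\ref{pmin}); everything else is the soft fact that Weyl group elements permute coordinates and a regular dominant vector has distinct coordinates. I would also make sure to state it for a fixed $\tau$ (the $\tau$ having been suppressed), the passage to general $\tau$ being immaterial for this particular lemma since the statement is about a single coordinate block.
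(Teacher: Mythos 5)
Your proposal is correct and follows essentially the same route as the paper: both reduce the question to the pairwise distinctness of the entries of $\tilde\mu+\bfgreek{rho}_N$ (since $\lambda$ dominant makes $\lambda+\bfgreek{rho}_N$ strictly decreasing and a Weyl element merely permutes coordinates), and both use the symmetries $\beta_{n+1-i}=-\beta_i$, $\beta'_{n'+1-j}=-\beta'_j$ to convert the hypothesis $|\tilde a(\tilde\mu)|=|\beta_i-\beta_j'|$ into an actual coincidence $\tilde b_{i}=\tilde b_{n+j}$ or $\tilde b_{n+1-i}=\tilde b_{N+1-j}$. The paper handles the two signs of $\tilde a(\tilde\mu)$ by an explicit case split where you instead observe that the set of differences is negation-stable; this is a cosmetic difference only.
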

\begin{proof} If $|\tilde a(\tilde\mu)|=|\beta_i-\beta_j'|$ then either $\tilde d-\tilde d'=\beta_i-\beta_j'$ which is equivalent to
  $\tilde b_{n+1-i}=\tilde d-\beta_i=\tilde d'-\beta_j'=\tilde b_{N+1-j}$ or we have $\tilde d'-\tilde d=\beta_i-\beta_j'$, which is equivalent
  to $ \tilde b_i=\tilde b_{n+j}$. Thus at least two entries of $\tilde\mu+\bfgreek{rho}_N$ coincide.
  But if $\lambda$ is dominant, then  $\lambda+\bfgreek{rho}_N$ has strictly decreasing entries, and
  all entries of  $w(\lambda+\bfgreek{rho}_N)$ are different for every $w\in W^P$.
  \end{proof}

\smallskip

\begin{lemma}\label{Laenge}  Assume  $|\tilde a(\tilde\mu)|\ne |\beta_i-\beta_j'|$ for all $(i,j)\in S^+$.
Then there exists a unique $w\in W^P$, such that $\tilde\mu+\bfgreek{rho}_N=w(\lambda+\bfgreek{rho}_N)$
with a dominant weight $\lambda\in \Z^N$.
\begin{itemize}
\item[(a)] If \quad $|\tilde a(\tilde\mu)|< \tilde p(\tilde\mu)$, \quad  then  \quad  $l(w)=\frac {dim(U_P)}2$;
\item[(b)] If \quad  $\tilde a(\tilde\mu)> \tilde p(\tilde\mu)$, \quad  then  \quad  $l(w)<\frac {dim(U_P)}2$;
\item[(c)] If \quad  $\tilde a(\tilde \mu)< -\tilde p(\tilde\mu)$, \quad  then  \quad  $l(w)>\frac {dim(U_P)}2$.
\end{itemize}
\end{lemma}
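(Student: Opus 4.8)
\textbf{Plan of proof for Lemma \ref{Laenge}.} The point of departure is the explicit description of $\tilde\mu+\bfgreek{rho}_N=(\tilde b_1,\ldots,\tilde b_N)$ derived just above the statement: the first $n$ entries are $\beta_i+\tilde d$ and the last $n'$ entries are $\beta_j'+\tilde d'$, where $\beta_1>\cdots>\beta_n$ and $\beta_1'>\cdots>\beta_{n'}'$ are each strictly decreasing. Under the hypothesis $|\tilde a(\tilde\mu)|\ne|\beta_i-\beta_j'|$ for all $(i,j)\in S^+$ — hence, by the symmetries $\beta_{n+1-i}=-\beta_i$, $\beta'_{n'+1-j}=-\beta_j'$, for \emph{all} $(i,j)\in S$ — the $N$ numbers $\tilde b_1,\ldots,\tilde b_N$ are pairwise distinct. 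First I would record this: the $\tilde b_k$ being distinct means there is a unique permutation $w^{-1}$ of $\{1,\ldots,N\}$ that sorts them into strictly decreasing order, i.e.\ a unique $w\in W$ with $w^{-1}(\tilde\mu+\bfgreek{rho}_N)$ strictly decreasing; setting $\lambda:=w^{-1}(\tilde\mu+\bfgreek{rho}_N)-\bfgreek{rho}_N$ gives a weight with $\lambda+\bfgreek{rho}_N$ strictly decreasing, hence $\lambda$ dominant (and automatically integral since each $\beta_i$, $\beta_j'$ is a half-integer and the shifts by $\tilde d,\tilde d'$ are chosen so the entries land in $\Z$ — this is exactly the integrality bookkeeping already in place). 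Moreover, since within the first block the $\beta_i+\tilde d$ are already decreasing and within the second block the $\beta_j'+\tilde d'$ are already decreasing, $w$ only interleaves the two blocks without permuting inside a block; this is precisely the condition $w\in W^P$ (Kostant representative: $w^{-1}$ sends the simple roots of $M_P$ to positive roots). This establishes existence and uniqueness.

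For the length computation, the key observation is that $l(w)$ equals the number of ``inversions'' between the two blocks, i.e.\ the number of pairs $(i,j)\in\{1,\ldots,n\}\times\{1,\ldots,n'\}$ with $\tilde b_{n+j}>\tilde b_i$, equivalently $\beta_j'+\tilde d'>\beta_i+\tilde d$, equivalently $\tilde a(\tilde\mu)=\tilde d-\tilde d'<\beta_j'-\beta_i$. So
$$
l(w)\;=\;\#\{(i,j)\in S\;:\;\beta_j'-\beta_i>\tilde a(\tilde\mu)\}.
$$
Since $\dim(U_P)=nn'=\#S$, the claim $l(w)=\tfrac12\dim(U_P)$ in case (a) amounts to showing that the set $S$ is split into two halves of equal size by the condition $\beta_j'-\beta_i>\tilde a$. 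I would exploit the involution $\iota_S:(i,j)\mapsto(n+1-i,\,n'+1-j)$ of $S$, under which $\beta_j'-\beta_i\mapsto -(\beta_j'-\beta_i)$. When $|\tilde a|<\tilde p(\tilde\mu)=\min_{(i,j)\in S}|\beta_i-\beta_j'|$, no quantity $\beta_j'-\beta_i$ lies in the closed interval $[-|\tilde a|,|\tilde a|]$, so for every $(i,j)\in S$ exactly one of $(i,j)$, $\iota_S(i,j)$ satisfies $\beta_j'-\beta_i>\tilde a$; pairing up via $\iota_S$ (which has no fixed points, as $n$ is even) gives $l(w)=\tfrac12\#S$. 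For case (b), $\tilde a>\tilde p(\tilde\mu)$ means there is at least one pair $(i_0,j_0)\in S^+$ with $0<\beta_{i_0}-\beta'_{j_0}<\tilde a$ (using disjointness and the definition of $\tilde p$), equivalently $\beta'_{j_0}-\beta_{i_0}>-\tilde a$ but $\beta'_{j_0}-\beta_{i_0}<\tilde a$ fails to exceed... — more cleanly: strictly more than half the pairs fail the inequality $\beta_j'-\beta_i>\tilde a$, because the $\iota_S$-pairing now has at least one pair where \emph{both} members fail (namely where $|\beta_j'-\beta_i|<\tilde a$), and no pair where both succeed. Hence $l(w)<\tfrac12\#S$. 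Case (c) is the mirror image with $\tilde a<-\tilde p(\tilde\mu)$, giving at least one pair where both members succeed and none where both fail, so $l(w)>\tfrac12\#S$.

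\textbf{Main obstacle.} The existence/uniqueness part and the identification $l(w)=\#\{(i,j):\beta_j'-\beta_i>\tilde a\}$ are essentially bookkeeping. The delicate point will be the strict inequalities in (b) and (c): one must be careful that $\tilde a>\tilde p(\tilde\mu)$ (a statement about the \emph{minimum} of $|\beta_i-\beta_j'|$ over $S^+$) really does force the count to drop strictly below $nn'/2$, which requires combining the $\iota_S$-involution argument with the disjointness hypothesis so that the ``borderline'' pairs with $|\beta_j'-\beta_i|<\tilde a$ are correctly accounted for and that no spurious coincidences occur (these are excluded precisely by the hypothesis $|\tilde a|\ne|\beta_i-\beta_j'|$). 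I would therefore carry out the $\iota_S$-partition bookkeeping explicitly, classifying each $\iota_S$-orbit $\{(i,j),\iota_S(i,j)\}$ as of ``type $++$'', ``type $--$'', or ``mixed'' according to whether both, neither, or exactly one of its members satisfies $\beta_j'-\beta_i>\tilde a$, noting that type $++$ and type $--$ cannot coexist (their existence would force $\tilde a<-\tilde p$ resp.\ $\tilde a>\tilde p$ by taking the orbit achieving $\tilde p(\tilde\mu)$), and then reading off (a), (b), (c) from the count of non-mixed orbits. This is the step where all the hypotheses of the lemma get used, and it is where I expect to spend the real effort.
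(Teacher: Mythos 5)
Your proposal is correct and follows essentially the same route as the paper: uniqueness of $w$ from the pairwise distinctness of the entries of $\tilde\mu+\bfgreek{rho}_N$, the identification of $l(w)$ with the number of cross-block inversions $\#\{(i,j)\in S:\beta_j'-\beta_i>\tilde a(\tilde\mu)\}$, and a symmetry-pairing of $S$ using $\beta_{n+1-i}=-\beta_i$, $\beta'_{n'+1-j}=-\beta_j'$. The only (harmless) difference is that you pair elements via the fixed-point-free involution $(i,j)\mapsto(n+1-i,n'+1-j)$ into orbits of size $2$, whereas the paper groups them into the slightly larger sets $Q_{i,j}$ of size $4$ (or $2$) indexed by $S^+$; the counting is the same.
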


\begin{proof} 
The assumption implies, by the arguments of  Lem.\,\ref{Permutation}, that the entries of $\tilde\mu+\bfgreek{rho}_N$ are pairwise different. Therefore there exists a unique permutation $w\in W$ such that $w^{-1}(\tilde\mu+\bfgreek{rho}_N)$ is regular dominant (i.e. the entries form a strictly decreasing sequence). Since the entries differ by integers, we get an equation $\tilde\mu+\bfgreek{rho}_N=w(\lambda+\bfgreek{rho}_N)$ with a dominant $\lambda$. Since we already know $\tilde b_1>\ldots>\tilde b_n$ and $\tilde b_{n+1}>\ldots>\tilde b_N$, we have $w\in W^P$.

\medskip
Since the Weyl group $W$ is the symmetric group $W=S_N$, the length  can be computed by the formula:
$$ l(w)\quad =\quad \# \set{(i,j)\in \N\times\N\; |\;  1\le i<j\le N,\quad  \tilde b_i< \tilde b_j}.$$
But since the first $n$ entries  are already in a decreasing order as are the last $n'$ entries, $l(w)$ is the cardinality of the set
$$ S_<\quad =\quad \set{ (i,j)\in S\; |\;  \tilde b_i< \tilde b_{n+j}},\qquad \text{ where } S=  \set{1,\ldots,n}\times \set{1,\ldots,n'}.$$
Observe  $\#(S)= n\cdot n'=dim(U_P)$. To calculate $l(w)$ we will determine the intersections $Q_{i,j}\cap S_<\;\; $,  where
$$ Q_{i,j}=\set{(i,j),\; (n+1-i,j),\; (i,n'+1-j),\; (n+1-i,n'+1-j)}$$
for $(i,j)\in S^+$. Observe that $S=\bigcup_{(i,j)\in S^+} Q_{i,j}$ is a disjoint partition in subsets of order $4$
(for $j<\frac{n'+1}2$ ) respectively of order $2$ for $j=\frac{n'+1}2$. Observe that $i\ne \frac{n+1}2$ for all $i$, since $n$ is assumed to be even.

\medskip
(a) If $|\tilde a(\tilde\mu)|< \tilde p(\tilde\mu)$ we have $|\tilde d-\tilde d'| < |\beta_i-\beta_j'|$ for all $(i,j)\in S^+$.
   In the case $\beta_i>\beta_j'\ge 0$ this means
   $$ \beta_j'-\beta_i< \tilde d-\tilde d'< \beta_i-\beta_j'\qquad \text{ and consequently }$$
   $$ \tilde d-\beta_i=\tilde b_{n+1-i}\; <\; \tilde b_{n+n'+1-j}=\tilde d'-\beta_j' \;\le\;
    \tilde b_{n+j}= \tilde d'+ \beta_j'\; <\;  \tilde b_i    =\tilde d+\beta_i .$$
  Therefore  $Q_{i,j}\cap S_<\; =\; \{(n+1-i,j),(n+1-i,n'+1-j)\}$.
  Thus $\#(Q_{i,j}\cap S_<)= \frac 12 \cdot \#(Q_{i,j}).$  (This means $2=\frac 12\cdot 4$ in the case
  $j<n'+1-j$ and $1=\frac 12\cdot 2$ in the case $\beta_j=0  \Leftrightarrow j=n'+1-j$.)
\smallskip
   In the case $\beta_j'>\beta_i> 0$  we get similarly
   $$ \beta_i-\beta_j'< \tilde d-\tilde d'< \beta_j' -\beta_i\quad \text{ and therefore }\quad
    \tilde b_{n+n'+1-j} < \tilde b_{n+1-i} < \tilde b_{i} < \tilde b_{n+j}.$$
   Thus we have $Q_{i,j}\cap S_<\; =\; \{(i,j),(n+1-i,j)\}$ in this case, and again 
   $\#(Q_{i,j}\cap S_<)=\frac 12\cdot \#(Q_{i,j})$.
   Summing up over all indices we get
   $$l(w)=\#(S_<)= \sum_{(i,j)\in S^+} \#(Q_{i,j}\cap S_<)= \frac 12 \sum_{(i,j)\in S^+} \#(Q_{i,j})
   =\frac 12 \#(S)= \frac 12 \dim(U_P). $$

\medskip
  (b)  In this case we have $\tilde d -\tilde d' > | \beta_i-\beta_j'|$ for at least one
  pair $(i,j)\in S^+$. Consequently  $\tilde d -\tilde d' > \beta_i-\beta_j'$ and $\tilde d -\tilde d' > \beta_j'-\beta_i$.  This implies
  $\tilde b_i> \tilde b_{n+j}\ge \tilde b_{n+n'+1-j}$ and $\tilde b_{n+1-i}> \tilde b_{n+n'+1-j}$. Thus
  $\#(Q_{i,j}\cap S_<) \subset\{(n+1-i,j)\}$ in case $\beta_j'>0$ and $\#(Q_{i,j}\cap S_<)=\emptyset$ in case $\beta_j'=0$.
  In both cases we get   $$\#(Q_{i,j}\cap S_<)\quad <\quad \frac 12\cdot \#(Q_{i,j}).$$
  For those pairs $(i,j)\in S^+ $ for which we do not have $\tilde d -\tilde d' >|\beta_i-\beta_j'|$, we must have
  $\tilde d -\tilde d'=|\tilde d -\tilde d'|<|\beta_i-\beta_j'|$ and get $\#(Q_{i,j}\cap S_<)=\frac 12\cdot \#(Q_{i,j})$ by the
  calculations in (a).
  Summing up we conclude  $l(w)=\#(S_<)< \frac 12 \#(S)= \frac 12 \dim(U_P)$.

  \medskip
  (c) This case is dual to (b): If we have $\tilde d-\tilde d'< -| \beta_i-\beta_j'|$ for some pair $(i,j)\in S$, then we get
   $Q_{i,j}\cap S_<\; \supset\;  \{( (i,j),(n+1-i,n'+1-j),(n+1-i,j)\}$, and thus
   $$\#(Q_{i,j}\cap S_<)\quad >\quad \frac 12\cdot \#(Q_{i,j}).$$
   Since for all other pairs we have $\#(Q_{i,j}\cap S_<)\ge \frac 12\cdot \#(Q_{i,j})$ by the same reasoning respectively by (a), we conclude
   $l(w)=\#(S_<)>\frac 12 \#(S)= \frac 12 \dim(U_P)$.
   \end{proof}
   
\medskip

\begin{prop}  The existence of  $w\in W^P$ of length $l(w)=\frac {dim(U_P)}2$, such that $\tilde\mu+\bfgreek{rho}_N=w(\lambda+\bfgreek{rho}_N)$
   with a dominant weight $\lambda\in \Z^N$, is equivalent to  
   $$-\tilde p(\tilde\mu)+1\;\le\; \tilde a(\tilde\mu)\;\le\; \tilde p(\tilde\mu)-1.$$
\end{prop}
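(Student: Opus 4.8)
The statement to be proven is the characterization: a balanced Kostant representative $w\in W^P$ with $\tilde\mu+\bfgreek{rho}_N = w(\lambda+\bfgreek{rho}_N)$ for some dominant $\lambda\in\Z^N$ exists if and only if $-\tilde p(\tilde\mu)+1\le \tilde a(\tilde\mu)\le \tilde p(\tilde\mu)-1$. The plan is to assemble this directly from Lem.\,\ref{Permutation} and Lem.\,\ref{Laenge}, which together already exhaust all the arithmetic possibilities for the relative position of $\tilde a(\tilde\mu)$ and the numbers $|\beta_i-\beta_j'|$. The only subtlety is to translate the open versus closed inequality conditions carefully, using that $\tilde a(\tilde\mu)$, $\tilde p(\tilde\mu)$, and all the $|\beta_i-\beta_j'|$ are half-integers whose pairwise differences relevant here are integers, so that a strict inequality between them is equivalent to a weak inequality shifted by $1$.

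\textbf{Step 1: reduce to a trichotomy.} First I would observe that there are exactly three mutually exclusive cases for $\tilde a(\tilde\mu)$ relative to the set $\{\,|\beta_i-\beta_j'| : (i,j)\in S^+\,\}$ and its minimum $\tilde p(\tilde\mu)$: either (i) $|\tilde a(\tilde\mu)| = |\beta_i-\beta_j'|$ for some $(i,j)\in S^+$; or (ii) $|\tilde a(\tilde\mu)|\ne|\beta_i-\beta_j'|$ for all such pairs, in which case exactly one of $|\tilde a(\tilde\mu)|<\tilde p(\tilde\mu)$, $\tilde a(\tilde\mu)>\tilde p(\tilde\mu)$, $\tilde a(\tilde\mu)<-\tilde p(\tilde\mu)$ holds. (Here I must note that, since cuspidal parameters are disjoint, all the $|\beta_i-\beta_j'|$ are positive, so $\tilde p(\tilde\mu)>0$, and the cases $\tilde a(\tilde\mu) = \pm\tilde p(\tilde\mu)$ fall under (i).)

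\textbf{Step 2: apply the two lemmas.} In case (i), Lem.\,\ref{Permutation} shows no $w\in W^P$ of \emph{any} length exists with $\tilde\mu+\bfgreek{rho}_N = w(\lambda+\bfgreek{rho}_N)$, $\lambda$ dominant — in particular none of length $\tfrac12\dim(U_P)$. In case (ii), Lem.\,\ref{Laenge} produces the unique such $w$ and computes $l(w)$: it equals $\tfrac12\dim(U_P)$ precisely when $|\tilde a(\tilde\mu)|<\tilde p(\tilde\mu)$, and is strictly smaller or larger in the other two sub-cases. Hence a balanced $w$ exists iff we are in case (ii) with $|\tilde a(\tilde\mu)|<\tilde p(\tilde\mu)$, i.e. iff $-\tilde p(\tilde\mu)<\tilde a(\tilde\mu)<\tilde p(\tilde\mu)$ together with $\tilde a(\tilde\mu)\notin\{\pm|\beta_i-\beta_j'|\}$.

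\textbf{Step 3: convert to the stated integrality bounds.} The last thing to do is to see that the condition ``$-\tilde p(\tilde\mu)<\tilde a(\tilde\mu)<\tilde p(\tilde\mu)$ and $\tilde a(\tilde\mu)\ne\pm|\beta_i-\beta_j'|$ for all $(i,j)\in S^+$'' is equivalent to ``$-\tilde p(\tilde\mu)+1\le\tilde a(\tilde\mu)\le\tilde p(\tilde\mu)-1$''. For this I would check the congruence $\tilde a(\tilde\mu)\equiv\tilde p(\tilde\mu)\pmod 1$, i.e. that $\tilde a(\tilde\mu)-\tilde p(\tilde\mu)\in\Z$; this follows from $\tilde a(\tilde\mu)=\tilde d-\tilde d'$ with $\tilde d=d+n'/2$, $\tilde d'=d'-n/2$, and $\tilde p(\tilde\mu)=\min|\beta_i-\beta_j'|$, combined with the parity relations $\ell_i^\tau\equiv 2d+n-1$ and $\ell_j'^\tau\equiv 2d'+n'-1\pmod 2$ recorded in (\ref{eqn:parity-cuspidal-parameter}). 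Once $\tilde a(\tilde\mu)$ lies in the same coset mod $1$ as $\tilde p(\tilde\mu)$ (and as every $|\beta_i-\beta_j'|$, all of which are $\ge\tilde p(\tilde\mu)$), the strict inequality $|\tilde a(\tilde\mu)|<\tilde p(\tilde\mu)$ is automatically the same as $|\tilde a(\tilde\mu)|\le\tilde p(\tilde\mu)-1$, which in turn is the same as $-\tilde p(\tilde\mu)+1\le\tilde a(\tilde\mu)\le\tilde p(\tilde\mu)-1$, and it already excludes $\tilde a(\tilde\mu)=\pm|\beta_i-\beta_j'|$. I expect this final bookkeeping with half-integers versus integers — making sure no off-by-one or parity slip occurs in Step 3 — to be the only place demanding real care; the structural content is entirely carried by Lemmas \ref{Permutation} and \ref{Laenge}.
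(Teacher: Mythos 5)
Your proposal is correct and follows essentially the same route as the paper: the proposition is assembled directly from Lem.\,\ref{Permutation} and Lem.\,\ref{Laenge}, with the only substantive point being the verification that $\tilde a(\tilde\mu)-\tilde p(\tilde\mu)\in\Z$ (the paper checks this by computing $2\tilde a(\tilde\mu)\equiv 2\tilde p(\tilde\mu)\equiv \w+\w'\pmod 2$, exactly the parity bookkeeping you flag in Step 3). Your additional remark that $|\tilde a(\tilde\mu)|<\tilde p(\tilde\mu)$ automatically excludes the degenerate case $|\tilde a(\tilde\mu)|=|\beta_i-\beta_j'|$, since $\tilde p(\tilde\mu)$ is the minimum of these quantities, is a correct and worthwhile observation that the paper leaves implicit.
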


\begin{proof} This is an immediate consequence of the two lemmas, once we have proved that $|\tilde a(\tilde\mu)|< \tilde p(\tilde\mu)$ is equivalent to the inequality in the proposition.
But this follows from the fact that the difference between the half integers $\tilde a(\tilde\mu)$ and $\tilde p(\tilde\mu)$ is an integer: Observe $2d=2\bar b_1-2\beta_1=2 b_1+n-1-{\bf w}\equiv n-1-{\bf w}$ mod $2$, which implies
 $2\tilde a(\tilde\mu)= 2d-2d'+N \equiv n-1-{\bf w}-(n'-1-{\bf w'})+n+n'\equiv {\bf w}+{\bf w'}$ mod $2$. Furthermore $2\tilde p(\tilde\mu)= 2 p(\tilde\mu)-({\bf w}+{\bf w}')\equiv {\bf w}+{\bf w}'$ mod
 $2$, since $p(\tilde\mu)$ is  an integer. Consequently $2\tilde a(\tilde\mu)\equiv 2\tilde p(\tilde\mu)$ mod $2$.
 \end{proof}

\medskip
 \begin{cor} The conditions (1) and (2) of Lem.\,\ref{lem:comb-lemma}. are equivalent.
 \end{cor}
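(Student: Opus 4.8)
The statement to be proved is the Corollary asserting that conditions (1) and (2) of Lem.\,\ref{lem:comb-lemma} are equivalent, where (1) is the existence of a balanced Kostant representative $w \in W^P$ with $w^{-1}\cdot(\mu\otimes\mu')$ dominant, and (2) is the inequality $-\tfrac{N}{2}+1-\tfrac{\ell(\mu,\mu')}{2} \leq d-d' \leq -\tfrac{N}{2}-1+\tfrac{\ell(\mu,\mu')}{2}$. The entire machinery has already been set up in the appendix: one has the Proposition immediately preceding the Corollary, which says precisely that (1) is equivalent to $-\tilde p(\tilde\mu)+1 \leq \tilde a(\tilde\mu) \leq \tilde p(\tilde\mu)-1$, where $\tilde a(\tilde\mu) = (d-d')+\tfrac{n+n'}{2}$ and $\tilde p(\tilde\mu) = \min\{|\beta_i - \beta_j'| : (i,j)\in S^+\}$. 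So the only thing left is to translate that inequality into the form appearing in (2). This is purely a bookkeeping matter relating the quantities $\tilde a(\tilde\mu)$, $\tilde p(\tilde\mu)$ to $d-d'$, $N$, and the cuspidal width $\ell(\mu,\mu')$.

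First I would unwind the definition of $\tilde p(\tilde\mu)$. From (\ref{pmin}) in the appendix, $\tilde p(\tilde\mu) = \min_{(i,j)\in S}|\beta_i-\beta_j'|$; and since $\beta_i = \ell_i/2$ and $\beta_j' = \ell_j'/2$ by the computation $\beta_j = \ell_j/2$ made at the start of the appendix (here with the caveat that the $\ell_j'$ are the cuspidal parameters of $\mu'$, equivalently of $\mu'^{\sf v}$), one gets $\tilde p(\tilde\mu) = \tfrac12\min_{i,j}|\ell_i^\tau - \ell_j'^\tau|$. Taking this over all $\tau$ (the final proof of the combinatorial lemma assembles the $\tau$-components), and recalling Def.\,\ref{def:cuspidal-width} which defines $\ell(\mu,\mu') = \min\{|\ell_i^\tau - \ell_j'^\tau| : i,j,\tau\}$, we obtain $\tilde p(\tilde\mu) = \ell(\mu,\mu')/2$. (One should be slightly careful: $\tilde p$ as literally written in the appendix is computed for a fixed $\tau$; the passage to the minimum over all $\tau$ is exactly the point made in the final assembly step, so I would either fold this in here or simply note that the balanced-length condition must hold for every $\tau$ simultaneously, which forces the minimum over $\tau$.)

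Next I would substitute $\tilde a(\tilde\mu) = (d-d')+\tfrac{N}{2}$ and $\tilde p(\tilde\mu) = \ell(\mu,\mu')/2$ into the inequality from the preceding Proposition, namely $-\tilde p(\tilde\mu)+1 \leq \tilde a(\tilde\mu) \leq \tilde p(\tilde\mu)-1$. This gives
\[
-\frac{\ell(\mu,\mu')}{2}+1 \;\leq\; (d-d')+\frac{N}{2} \;\leq\; \frac{\ell(\mu,\mu')}{2}-1,
\]
and subtracting $N/2$ throughout yields exactly
\[
-\frac{N}{2}+1-\frac{\ell(\mu,\mu')}{2} \;\leq\; d-d' \;\leq\; -\frac{N}{2}-1+\frac{\ell(\mu,\mu')}{2},
\]
which is condition (2). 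Combined with the Proposition, this proves (1)$\iff$(2).

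\textbf{Expected main obstacle.} There is no real analytic or geometric obstacle here — the hard work is entirely in the two Lemmas (Lem.\,\ref{Permutation} and Lem.\,\ref{Laenge}) and the Proposition that precede the Corollary, all of which may be assumed. The only subtlety I anticipate is the $\tau$-bookkeeping: the appendix works inside $X^*_\Q(T_0\times_\tau E)$ for a single fixed $\tau$ and drops $\tau$ from the notation, so I must be careful that ``balanced'' means $l(w^\tau) = \dim(U_{P_0})/2$ for \emph{every} $\tau$, hence the relevant quantity is $\min_\tau \tilde p^\tau(\tilde\mu)$, and this minimum over $\tau$ is what matches $\ell(\mu,\mu')/2$ via Def.\,\ref{def:cuspidal-width}. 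I would state this explicitly at the point of passing from the single-$\tau$ Proposition to the global Corollary, citing the final assembly paragraph of the appendix; everything else is a one-line substitution and a check that $\beta_i = \ell_i^\tau/2$, already recorded.
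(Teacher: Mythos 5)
Your proposal is correct and follows essentially the same route as the paper's own proof: apply the single-$\tau$ Proposition to every embedding $\tau : F \to E$, observe that $\tilde a(\tilde\mu) = (d-d') + \tfrac{N}{2}$ and that $\min_\tau \tilde p(\tilde\mu^\tau) = \ell(\mu,\mu')/2$ via $\beta_i^\tau = \ell_i^\tau/2$ and Def.\,\ref{def:cuspidal-width}, and rearrange the inequality. The $\tau$-bookkeeping you flag as the only subtlety is precisely the content of the paper's argument, and you resolve it the same way (balancedness must hold for each $\tau$-component, which is equivalent to the bound with the minimum over $\tau$).
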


\begin{proof} One has to apply the proposition to all embeddings $\tau:F\to E$. 
 Write $\tilde\mu =(\mu,\mu')=(\tilde\mu^\tau)_{\tau:F\to E}\in \prod_\tau X^*(T_0\times_\tau E)$. Condition
 (1) means (compare 5.3.7.) that for each embedding $\tau: F\to E$ there exists $w^\tau\in W^P$ of length $\dim(U_{P_0})/2$ such that
 $(w^{\tau})^{-1}(\tilde\mu^\tau+\bfgreek{rho}_N)-\bfgreek{rho}_N$ is dominant.
 
  After adding $\frac N2$ to the inequality (2) it reads:
 ${-\frac{\ell(\mu,\mu')}2+1\le \tilde a(\tilde\mu)\le \frac{\ell(\mu,\mu')}2-1}$.
 The relations $\beta_i^\tau=\frac{ \ell_i^\tau}2$ imply, in view of (\ref{pmin}) and Def.\,\ref{def:cuspidal-width}, that 
$\frac{\ell(\mu,\mu')}2 =\min_{\tau:F\to E}\; \tilde p(\tilde\mu^\tau),$  so that the condition (2) is equivalent to
 $$-\tilde p(\tilde\mu^\tau)+1\;\le\; \tilde a(\tilde\mu)\;\le\; \tilde p(\tilde\mu^\tau)-1\qquad\text{ for all }\tau:F\to E.$$
\end{proof}

\medskip

 \begin{cor}
 Conjecture 5.1. of \cite{harder-raghuram} is true.
 \end{cor}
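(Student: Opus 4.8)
The final corollary asserts that Conjecture 5.1 of \cite{harder-raghuram} holds. The plan is to observe that this conjecture is, verbatim up to notational translation, the equivalence (1) $\iff$ (2) of the combinatorial Lem.\,\ref{lem:comb-lemma}, which has just been established in the preceding corollary. So the proof has two ingredients, neither of which requires new mathematics: first, cite the equivalence (1) $\iff$ (2) proved above; second, match the notation of \cite{harder-raghuram} with the notation of this appendix. For the second ingredient I would invoke the bridge already spelled out in the paragraph ``Comparison of notations with \cite{harder-raghuram}'': the quantity $p(\mu)$ of \cite{harder-raghuram} is the largest critical point of the cohomological (equivalently, effective-motivic) $L$-function, and by Prop.\,\ref{prop:critical-set-tensor-product-eff-mot} one has $p(\mu) = (\ell(\mu,\mu') + \w + \w')/2$; the quantity $\tilde p(\tilde\mu)$ used in this appendix satisfies $\tilde p(\tilde\mu^\tau) = p(\tilde\mu^\tau) - (\w+\w')/2$ and $\min_{\tau} \tilde p(\tilde\mu^\tau) = \ell(\mu,\mu')/2$, while $\tilde a(\tilde\mu) = (d-d') + N/2$. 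Under these identifications, the inequality ``$-\tilde p(\tilde\mu^\tau) + 1 \le \tilde a(\tilde\mu) \le \tilde p(\tilde\mu^\tau) - 1$ for all $\tau$'' from the Proposition above is exactly the inequality defining the critical-width condition in \cite{harder-raghuram}, and the existence of a balanced Kostant representative $w \in W^P$ with $w^{-1}\cdot(\mu\otimes\mu')$ dominant is precisely the ``Manin--Drinfeld/boundary'' hypothesis of Conjecture 5.1.

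Concretely, the write-up would read: by the previous corollary, condition (1) of Lem.\,\ref{lem:comb-lemma} is equivalent to condition (2), i.e., to $-\tfrac{N}{2}+1-\tfrac{\ell(\mu,\mu')}{2} \le d-d' \le -\tfrac{N}{2}-1+\tfrac{\ell(\mu,\mu')}{2}$. Translating $\ell(\mu,\mu')$ into the critical-point language via Prop.\,\ref{prop:critical-set-tensor-product-eff-mot} (so that $\tfrac12(\ell(\mu,\mu')+\w+\w')$ is the top critical point $p(\mu)$ of the effective-motivic $L$-function, as recorded in the comparison paragraph), and rewriting the bound on $d-d'$ as a bound on the point of evaluation $\mathsf{m}_0$, one obtains precisely the statement of Conjecture 5.1 of \cite{harder-raghuram}. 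This completes the proof.

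\textbf{Main obstacle.} Since the mathematical content is already done, the only thing to be careful about is the bookkeeping in the notational dictionary: one must check that the shift conventions (the $\bfgreek{rho}_N$-shifts, the $\tfrac{n'}{2}\bfgreek{delta}_n$ versus $-\tfrac{n}{2}\bfgreek{delta}_{n'}$ twists entering $\tilde d, \tilde d'$, and the half-integral normalization $\tilde p = p - (\w+\w')/2$) line up exactly with how $p(\mu)$ and the critical set were defined in \cite{harder-raghuram}, and that the parity/integrality remark at the end of the Proposition (that $2\tilde a(\tilde\mu) \equiv 2\tilde p(\tilde\mu) \bmod 2$) is what makes the strict and non-strict forms of the inequality interchangeable in both papers. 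This is routine but is the step most prone to sign or off-by-one errors, so I would carry it out by writing both inequalities side by side and subtracting.
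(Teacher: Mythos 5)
Your proposal is correct and matches the paper's own argument: the paper likewise deduces the corollary directly from the preceding Proposition and Corollary, with the single notational observation that $\tilde a(\tilde\mu)=a(\tilde\mu)+\tfrac N2$ where $a(\tilde\mu)=d-d'$ in the notation of \cite{harder-raghuram}. Your more detailed dictionary (via $p(\mu)=(\ell(\mu,\mu')+\w+\w')/2$ and $\tilde p=p-(\w+\w')/2$) is just an expanded version of the same bookkeeping.
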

 
\begin{proof} This is clear from the proposition and the preceeding corollary. Observe that we have $\tilde a(\tilde \mu)= a(\tilde \mu)+\frac N2$, where $a(\tilde\mu)=d-d'$ in the notations of \cite{harder-raghuram}.
 \end{proof}

\medskip

\begin{rem}{\rm 
It should be noted that the sets $\Phi=\set{\beta_1,\ldots,\beta_n}$ and $\Phi'=\set{\beta_1',\ldots,\beta_n'}$ are characteristic sets in the sense of \cite{weselman}:
The set $\Phi$ is either of type $B_*$ (half integers) or of type $D_*$ (integers).
If $n=2g$, then the weight satisfies ${\bf w}=2\beta_1 \equiv a_g$ mod $2$. Thus for $a_g$ even we have a characteristic set of type $D_g$, and for $a_g$ odd the characteristic set is of type $B_g$.}
\end{rem}

\medskip
If $n'$ is odd, the set $\Phi'$ is of type $C_*$ (i.e., $0\in \Phi'$). If $n'$ is even it may be of type $B_*$ or of type $D_*$. 
If $n'$ is odd and if the representations $\sigma$ and $\sigma'$ are self equivalent, then the central
Eisenstein class in Theorem 5.2 of \cite{harder-raghuram} (i.e., where the usual Rankin--Selberg $L$-function is evaluated at $s=0$ and at $s=1$) is a twisted lift of an endoscopic $L$-packet on $\Sp_{N-1}$ by the results of \cite{weselman}.

\bigskip

{U{\tiny WE} W{\tiny ESELMAN}:}  Mathematisches Institut, Im Neuenheimer Feld 288,  D-69121 Heidelberg

{\it Email address:} weselman@mathi.uni-heidelberg.de

\bigskip
\bigskip

\section*{\bf Appendix 2: The case of $\GL_n \times \GL_{n'}$ when both $n$ and $n'$ are odd}
\begin{center}{\it by Chandrasheel Bhagwat and A. Raghuram}\end{center}

\bigskip

After the announcement \cite{harder-raghuram}, the authors of this appendix proved certain period relations 
in \cite{bhagwat-raghuram} for the periods of a tensor product $\Mot \otimes \Mot'$ of two pure motives of with ranks $n$ and $n'.$ If the parities of $n$ and $n'$ are different then these period relations together with Deligne's conjecture \cite{deligne} exactly predicts the main result on $L$-values Thm.\,\ref{thm:main-theorem-l-values}, (1). It was already observed by Yoshida 
\cite{yoshida-ajm} that if $n$ and $n'$ are both even, then the two periods $c^\pm$ of Deligne for the tensor product motive are equal; this predicts Thm.\,\ref{thm:main-theorem-l-values}, (2). In the case when both $n$ and $n'$ are odd, 
in \cite{bhagwat-raghuram}, an interesting period relation for the ratio $c^+/c^-$ for $\Mot \otimes \Mot'$ is proved; this suggests that there should be a result analogous to Thm.\,\ref{thm:main-theorem-l-values}, however, it turns out that the methods of the paper break down in this situation. 
The aim of this second appendix is to discuss the combinatorial lemma (Lem.\,\ref{lem:comb-lemma}) in the situation when the Levi subgroup is $\GL_n \times \GL_{n'}$ with both $n$ and $n'$ being odd positive integers. We will see that certain simple combinatorial conditions on the highest weights $\mu$ and $\mu'$ assure us of the existence of two successive critical points, however, the same conditions will give that there is no element of the Weyl group which makes the character $\mu \otimes \mu'$ dominant. This is already seen when the base field $F = \Q$, and so we will work over $\Q$ for notational simplicity. 

 \begin{prop} 
 \label{prop:appendix-2}
Let $\sigma \in \Coh(\GL_n,\mu)$ and $\sigma' \in \Coh(\GL_{n'},\mu')$
for pure weights $\mu$ and $\mu'$ respectively. Assume that $n$ and $n'$ are odd positive integers. 
Let $\epsilon, \epsilon'  \in \{0,1\}$ be such that $\sigma_\infty = \D_\mu \otimes {\rm sgn}^\epsilon$ and 
$\sigma'_\infty = \D_{\mu'} \otimes {\rm sgn}^{\epsilon'}.$ Put $\epsilon_0 = \epsilon + \epsilon' \pmod{2}.$ 
Let $\ell_i$ and $\ell'_j$ denote the cuspidal parameters as in (\ref{eqn:cuspidal-parameter}); recall that all the 
$\ell_i$ and $\ell'_j$ are even, and $\ell_{(n+1)/2} = \ell'_{(n'+1)/2} = 0;$ and because of the last property, we modify the definition the cuspidal width as 
$$
\ell^+(\mu, \mu') \ := \ \min\{ |\ell_i - \ell'_j| \ : \ 1 \leq i \leq (n-1)/2, 1 \leq j \leq (n'-1)/2\}. 
$$
Then we have: 
\begin{enumerate}
\item If $\ell^+(\mu, \mu') = 0$ then $L(s, \sigma \times \sigma'^{\sf v})$ has no critical points. 

\medskip

\item If $\ell := \ell^+(\mu, \mu') > 0$ and $\epsilon = \epsilon'$, i.e., $\epsilon_0 = 0,$ then the critical set of integers for 
$L(s, \sigma \times \sigma^{\sf v})$ is given by: 
{\small 
$$
\left\{(d-d') - [\tfrac{\ell}{2}]_e - 1, \ \dots \ , (d-d')-3, (d-d')-1 \ ; \  (d-d')+2, (d-d')+4, \ \dots \ , (d-d')+ [\tfrac{\ell}{2}]_e \right\}, 
$$}
\noindent where $[\ell/2]_e$ is the largest even integer less than or equal to $\ell/2.$ 

\medskip

\item   If $\ell = \ell^+(\mu, \mu') > 0$ and $\epsilon \neq \epsilon'$, i.e., $\epsilon_0 = 1,$ then the critical set of integers for $L(s, \sigma \times \sigma^{\sf v})$ is  given by: 
{\small 
$$
\left\{(d-d') - [\tfrac{\ell}{2}]_o - 1, \ \dots \ , (d-d')-2, (d-d') \ ; \  (d-d')+1, (d-d')+3, \ \dots \ , (d-d')+ [\tfrac{\ell}{2}]_o \right\}, 
$$}
\noindent where $[\ell/2]_o$ is the largest odd integer less than or equal to $\ell/2.$

\medskip

\item The points $ - \tfrac{N}{2}$ and $1 - \tfrac{N}{2}$ are critical for $L(s,~\sigma \times\sigma'^{\sf v})$ 
if and only if  $\ell^+(\mu, \mu') > 0$, $\epsilon \neq \epsilon'$ and $d - d' = -\tfrac{N}{2}$.
 
 \medskip
 
 \item Suppose $d - d' = -N/2$ then there does not exist an element $w \in W$ such that $w^{-1} \cdot (\mu \otimes \mu')$ is dominant.
 
 \end{enumerate}
 \end{prop}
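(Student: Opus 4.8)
The plan is to treat parts (1)--(4) by a hands-on computation of the archimedean Euler factor of the Rankin--Selberg $L$-function together with the criticality criterion of Def.\,\ref{def:critical-aut-l-fn}, and to dispatch part (5) by an elementary observation about coordinates of characters of $T_N$. The first step is to write down the archimedean Langlands parameter of $\sigma_\infty \otimes (\sigma'_\infty)^{\sf v}$. Since $n$ and $n'$ are odd, the description in (\ref{eqn:rep-infinity-1}), twisted by the sign characters, gives
$$
\varrho(\sigma_\infty) = \bigoplus_{i=1}^{(n-1)/2} I(\ell_i)\otimes|\ |^{-d} \ \oplus\ {\rm sgn}^{\epsilon}|\ |^{-d}, \qquad
\varrho\big((\sigma'_\infty)^{\sf v}\big) = \bigoplus_{j=1}^{(n'-1)/2} I(\ell'_j)\otimes|\ |^{d'} \ \oplus\ {\rm sgn}^{\epsilon'}|\ |^{d'},
$$
where $I(k)$ is the two-dimensional representation of $W_\R$ from \S\,\ref{sec:critical-points-comb-lemma}, and the one-dimensional summands come from the trailing $\GL_1$-block in $\D_\mu$ and $\D_{\mu'}$ that is present precisely because $n,n'$ are odd. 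Using the rules $I(a)\otimes I(b)\cong I(a+b)\oplus I(|a-b|)$, $I(a)\otimes{\rm sgn}\cong I(a)$ and ${\rm sgn}^a\otimes{\rm sgn}^b\cong{\rm sgn}^{a+b}$ recalled there, the tensor product decomposes into the two-dimensional pieces $I(\ell_i\pm\ell'_j)$, $I(\ell_i)$, $I(\ell'_j)$, all twisted by $|\ |^{d'-d}$, together with a \emph{single} one-dimensional piece ${\rm sgn}^{\epsilon_0}|\ |^{d'-d}$. This last summand --- whose $L$-factor is a $\Gamma_\R$-factor rather than a $\Gamma_\C$-factor --- is the qualitatively new feature absent from the cases treated in the body of the paper, and the parity of $\epsilon_0$ controls the location of its poles.

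Next I would feed this into Def.\,\ref{def:critical-aut-l-fn}: an integer $m$ is critical iff both $L_\infty(s,\sigma\times\sigma'^{\sf v})$ and $L_\infty(1-s,\sigma^{\sf v}\times\sigma')$ are finite at $s=m$. Each $\Gamma_\C$-factor $\Gamma_\C\big(s-(d-d')+\tfrac{|r|}{2}\big)$ coming from a two-dimensional piece of weight $r$ is finite at an integer argument $\ge 1$; imposing this on both sides of the functional equation cuts out a finite interval of integers $m$ symmetric about $d-d'+\tfrac12$, whose length is governed by the cuspidal width: when $\ell^+(\mu,\mu')=0$ one of these factors is $\Gamma_\C\big(s-(d-d')\big)$, whose finiteness conditions at $s=m$ on the two sides of the functional equation are incompatible, so the interval is empty --- this is part (1). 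The $\Gamma_\R$-factor $\Gamma_\R\big(s-(d-d')+\epsilon_0\big)$, together with its mirror on the dual side, is finite at $s=m$ unless $m-(d-d')+\epsilon_0$ (resp.\ its mirror) is a non-positive even integer; intersecting this constraint with the $\Gamma_\C$-interval deletes every other integer in a pattern depending on $\epsilon_0\bmod 2$: for $\epsilon_0=0$ it removes the central pair $\{d-d',\,d-d'+1\}$ together with alternate integers outwards, yielding the set of part (2), while for $\epsilon_0=1$ it retains the central pair and deletes its neighbours, yielding the set of part (3). I would carry out this bookkeeping carefully enough to pin down the endpoints $[\ell/2]_e$ and $[\ell/2]_o$. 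Part (4) is then essentially immediate: $-\tfrac N2$ and $1-\tfrac N2$ are two consecutive integers with midpoint $\tfrac12-\tfrac N2$, so they can occur only as the central pair of the critical set, which forces $d-d'=-\tfrac N2$; and the central pair lies in the critical set exactly when the $\Gamma_\C$-interval is non-degenerate, i.e.\ $\ell^+(\mu,\mu')>0$, and the $\Gamma_\R$-factor does not annihilate the centre, i.e.\ $\epsilon\neq\epsilon'$. (One checks $m=-N/2$ and $m=1-N/2$ directly against the criterion.)

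Finally, part (5) is short and purely combinatorial: when $d-d'=-N/2$ I claim $\mu\otimes\mu'+\bfgreek{rho}_N$ has a repeated coordinate. Writing $\bfgreek{rho}_N$ restricted to the two blocks of $M_{P_0}=\GL_n\times\GL_{n'}$ as $\bfgreek{rho}_n+\tfrac{n'}{2}\bfgreek{delta}_n$ and $\bfgreek{rho}_{n'}-\tfrac n2\bfgreek{delta}_{n'}$ (the unipotent radical $U_{P_0}$ contributes $+\tfrac{n'}{2}$, resp.\ $-\tfrac n2$, to every coordinate of the respective block), and using that the standard coordinates of $\mu+\bfgreek{rho}_n$ are $\big(\tfrac{\ell_1}{2}+d,\ldots,\tfrac{\ell_n}{2}+d\big)$, so that --- because $n$ is odd and $\mu$ is pure, whence $\ell_{(n+1)/2}=0$ --- its $\tfrac{n+1}{2}$-th coordinate is $d$, and likewise the $\tfrac{n'+1}{2}$-th coordinate of $\mu'+\bfgreek{rho}_{n'}$ is $d'$, the two coordinates of $\mu\otimes\mu'+\bfgreek{rho}_N$ in these middle slots are $d+\tfrac{n'}{2}$ and $d'-\tfrac n2$, which coincide precisely when $d-d'=-\tfrac N2$. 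Since $\lambda+\bfgreek{rho}_N$ has strictly decreasing (hence pairwise distinct) coordinates for every dominant weight $\lambda$ of $\GL_N$, and any $w\in W$ merely permutes coordinates, there can be no $w$ with $w^{-1}\cdot(\mu\otimes\mu')$ dominant.

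The main obstacle will be the bookkeeping in (1)--(3): propagating the sign characters $\epsilon,\epsilon'$ correctly through the contragredient and the tensor product, identifying which $\Gamma_\C$-factor gives the binding constraint (so that the length of the interval is exactly $\ell^+(\mu,\mu')$ and not merely bounded by it), and then tracking how the single $\Gamma_\R$-factor --- whose pole pattern depends on $\epsilon_0$ --- interleaves with that interval so as to produce the precise sets with endpoints $[\ell/2]_e$ and $[\ell/2]_o$ asserted in parts (2) and (3). By contrast (4) is a formal consequence of (1) and (3), and (5) is a one-line coordinate count that uses in an essential way that both $n$ and $n'$ are odd.
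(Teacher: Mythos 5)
Your proposal follows essentially the same route as the paper's proof: the same archimedean Langlands parameters and decomposition of the tensor product into $I(\ell_i\pm\ell'_j)$, $I(\ell_i)$, $I(\ell'_j)$ and the single ${\rm sgn}^{\epsilon_0}$ piece, the same regularity criterion on both sides of the functional equation (the paper records these as the explicit inequalities (\ref{eqn:odd-odd-1})--(\ref{eqn:odd-odd-2}) and likewise leaves the final bookkeeping for (2)--(3) to the reader), and for (5) the identical observation that when $d-d'=-N/2$ the two middle coordinates of $\mu\otimes\mu'+\bfgreek{rho}_N$ coincide, so no Weyl element can produce a dominant weight. No substantive difference to report.
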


\begin{proof}
For the proofs of (1), (2) and (3), recall that the critical set for the Rankin-Selberg 
$L$-function $L(s, \sigma \times \sigma'^{\sf v})$ is the set of all integers $m$ such that for both $L_{\infty}(s, \sigma \times \sigma'^{\sf v})$ and $L_{\infty}(1-s, \sigma^{\sf v} \times \sigma')$ are regular at $s = m$.
The Langlands parameters for $\sigma_\infty$ and $\sigma'_\infty$ are given by
$$
\rho(\sigma) = \bigoplus \limits_{i=1}^{(n-1)/2} I(\ell_i)\otimes |~|^{-d} \ \oplus \ \text{sgn}^{\epsilon} \otimes |~|^{-d}, 
\ \ {\rm and} \ \ 
\rho(\sigma') = \bigoplus \limits_{j=1}^{(n'-1)/2} I(\ell'_j) \otimes |~|^{-d'} \ \oplus \ \text{sgn}^{\epsilon'} \otimes |~|^{-d'}. 
$$
If $\ell^+(\mu, \mu') > 0$ then the local $L$-factor at infinity for $\sigma \times \sigma'^{\sf v}$ is given by: 
{\small 
\begin{eqnarray*}
L_{\infty}(s, \sigma \times \sigma'^{\sf v}) & = &  
\Gamma_{\R} \left(s + \epsilon_{0} + d'-d\right)
\prod \limits_{i=1}^{(n-1)/2}
 \Gamma_{\C} \left(s + d' - d + \frac{\ell_i}{2}\right)
 \prod \limits_{j=1}^{(n'-1)/2} \Gamma_{\C}\left(s + d' - d + \frac{\ell'_j}{2}\right) \\
&& \cdot
\prod \limits_{\substack{1 \leq i \leq (n-1)/2 \\ 1 \leq j \leq (n'-1)/2}} 
\Gamma_{\C} \left(s + d' - d + \frac{\ell_i + \ell'_j}{2}\right)~  \Gamma_{\C} \left(s + d' - d + \frac{|\ell_i - \ell'_j|}{2}\right). \end{eqnarray*}}
Now, $L_{\infty}(s, \sigma \times \sigma'^{\vee})$ is regular at $s = m$ if and only if for all indices $1 \leq i \leq (n+1)/2$,~ $1 \leq j \leq (n'+1)/2$ (except when $i = (n+1)/2,~ j = (n'+1)/2$), we have:
\begin{equation}
\label{eqn:odd-odd-1} 
\begin{split}
m - (d-d') + |\ell_i - \ell'_j|/2 \ & \geq \ 1, \\
m - (d-d') + \epsilon_{0}  \ & \notin \ \{0, -2, -4, -6, \dots \}.
\end{split}
 \end{equation}
Similarly, the factor $L_{\infty}(1-s, ~\sigma^{\sf v} \times \sigma')$ is regular at $s = m$ if and only if for all indices $1 \leq i \leq (n+1)/2$,~ $1 \leq j \leq (n'+1)/2$ (except when $i = (n+1)/2, ~j = (n'+1)/2$), we have:
 \begin{equation} 
 \label{eqn:odd-odd-2}
 \begin{split}
 -m + (d-d') + |\ell_i - \ell'_j|/2 \ & \geq \ 0, \\
  1-m + d-d' + \epsilon_{0} \ & \notin \ \{0, -2, -4, -6, \dots \}. 
\end{split}
\end{equation}
We leave it to the reader to check that (2) and (3) follow from (\ref{eqn:odd-odd-1}) and (\ref{eqn:odd-odd-1}). For (1), 
suppose $\ell^+(\mu,\mu') = 0$ then $\ell_i = \ell'_j$ for some $i,j$; the local factor at infinity for 
$\sigma \times \sigma'^{\sf v}$ will have factors $\Gamma((s+d-d')/2)\Gamma((s+d-d'+1)/2)$ and it is easy to see then that there are no critical points. Also, (4) easily follows from (1), (2) and (3). 

\medskip
 
We now prove (5). Suppose $\mu = \left( \mu_1, \mu_2, \ldots, \mu_{n} \right)$ and $\mu' = \left( \mu'_1, \mu'_2, \ldots, \mu'_{n'} \right)$ with non-increasing sequences $\mu_i$ and $\mu'_j.$ By definition, we have 
$$
\mu \otimes \mu' = \left( \mu_1,~\mu_2,~\ldots,~\mu_{n},~ \mu'_1,~\mu'_2,~\ldots,~\mu'_{n'} \right).
$$ 
Let 
$\bfgreek{rho}_{N} = \left( \frac{N-1}{2}, ~ \frac{N-3}{2},~ \ldots, ~ \frac{1-N}{2} \right)$ be the half sum of positive roots. It is clear that 
$$
\left(\mu \otimes \mu' + \bfgreek{rho}_{N} \right)_{j} =  
\begin{cases} \mu_j + \frac{N+1}{2} - j & \quad \text{if} \quad 1 \leq j \leq n, \\
  \mu'_{j-n} + \frac{N+1}{2} - j & \quad \text{if} \quad 1+n \leq j \leq N.  
\end{cases}
$$

By definition of the twisted action of the Weyl group we have $w\cdot(\mu \otimes \mu') : = 
w(\mu \otimes \mu' + \bfgreek{rho}_{N}) - \bfgreek{rho}_{N}$. The weight $w^{-1} \cdot(\mu \otimes \mu')$ is dominant if and only if 
$$ 
(w^{-1} \cdot (\mu \otimes \mu'))_{j} \ \geq \ (w^{-1}\cdot(\mu \otimes \mu'))_{j+1} \quad \forall ~j \geq 1.
$$
Equivalently, 
 $$ 
 (w^{-1}(\mu \otimes \mu' + \bfgreek{rho}_N))_{j} \ \geq \ (w^{-1}(\mu \otimes \mu' + \bfgreek{rho}_N))_{j+1} + 1 
 \quad \forall ~j \geq 1. 
 $$
In particular, for $w^{-1} \cdot(\mu \otimes \mu')$ to be dominant, it is necessary that 
$(\mu \otimes \mu' + \bfgreek{rho}_N)_{j}$ are all distinct as $1 \leq j \leq N$. However,  observe that 
$$ 
(\mu \otimes \mu' + \bfgreek{rho}_N)_{\frac{n+1}{2}}  \ = \  d + n'/2  \ = \ d' - n/2 \ = \ 
(\mu \otimes \mu' + \bfgreek{rho}_N)_{n + \frac{n'+1}{2}}, 
$$
since $\mu_{(n+1)/1} = d$, $\mu'_{(n'+1)/1} = d'$, and by hypothesis $d - d' = -N/2.$ 
In other words, there is no element $w$ of the Weyl group $W$ so that the weight $w^{-1} \cdot (\mu \otimes \mu')$ is  dominant.
\end{proof}

If we hope to use the theory of Eisenstein cohomology to prove a result on successive $L$-values, it is clear from the main body of this article that we need the points $-N/2$ and $1-N/2$ to be critical. However, in the situation when $n$ and $n'$ are odd, the conditions (given in part (4) of the above proposition) which ensure criticality of these two points also say (in part (5)) that the relevant induced representation does not occur in the boundary cohomology of $\GL_N$ for any coefficient system $\lambda$; because, if there were such a dominant integral weight $\lambda$, then we would need $\mu \otimes \mu' = w \cdot \lambda$ for some Kostant representative $w \in W^P$, but then this would mean $w^{-1} \cdot (\mu \otimes \mu')$ is dominant contradicting (5) above.



\begin{thebibliography}{99}

\bibitem{bhagwat}
C.\,Bhagwat, 
{\it On Deligne's periods for tensor product motives.} 
C.\,R.\,Math.\,Acad.\,Sci.\,Paris 353 (2015), no.\,3, 191--195.

\bibitem{bhagwat-raghuram}
C.\,Bhagwat and A.\,Raghuram, 
{\it Ratios of periods for tensor product motives.} 
Math.\,Res.\,Lett., 20 (2013) No. 4, 615--628.

\bibitem{borel-duke}
A.\,Borel, 
{\it Regularization theorems in Lie algebra cohomology. Applications.} 
Duke Math.\,J.\ 50 (1983), no. 3, 605--623. 

\bibitem{borel-garland}
A.\,Borel and H.\,Garland, 
{\it Laplacian and the discrete spectrum of an arithmetic group.} 
Amer.\,J.\,Math.\ 105 (1983), no. 2, 309--335. 

\bibitem{borel-wallach}
A.\,Borel and N.\,Wallach, 
{\it Continuous cohomology, discrete subgroups, and representations of reductive groups.}
Second edition. Mathematical Surveys and Monographs, 67. American Mathematical Society, Providence, 
RI, 2000.


\bibitem{borel-serre}
A.\,Borel and J.-P.\,Serre, 
{\it Corners and arithmetic groups,}
Commen. Math. Helvetici 48 (1973), 436-491. 


\bibitem{clozel}
L.\,Clozel, 
{\it Motifs et formes automorphes: applications du principe de 
fonctorialit\'e.} Automorphic forms, Shimura varieties, and $L$-functions, Vol. I (Ann Arbor, MI, 1988), 
77--159, Perspect. Math., 10, Academic Press, Boston, MA, 1990. 

\bibitem{cogdell}
J.\,Cogdell, 
{\it Lectures on L-functions, Converse Theorems, and Functoriality for $\GL(n)$.} 
Lectures on automorphic L-functions. Fields Institute Monographs, 20. American Mathematical Society, Providence, RI, 2004.

\bibitem{deligne}
P.\,Deligne,
{\it Valeurs de fonctions $L$ et p\'eriodes d'int\'egrales (French),}
With an appendix by N. Koblitz and A. Ogus.
Proc. Sympos. Pure Math., XXXIII, Automorphic forms, representations and $L$-functions (Proc.
Sympos. Pure Math., Oregon State Univ., Corvallis, Oregon, 1977), Part 2,
pp. 313--346, Amer. Math. Soc., Providence, R.I., 1979.

\bibitem{demazure-grothendieck}
M.\,Demazure and A.\,Grothendieck, {\it  Structure des sch\'emas en groupes r\'eductifs.} SGA 3, TOME III, 
Lecture Notes in Math., 153, Springer-Verlag, Berlin 1970.  

\bibitem{franke-schwermer}
J.\,Franke and J.\,Schwermer, 
{\it A decomposition of spaces of automorphic forms, and the Eisenstein cohomology of arithmetic groups,} 
Math.\,Ann., 311 (1998), pp. 765--790

\bibitem{gan-raghuram}
W.T.\,Gan and A.\,Raghuram, 
{\it Arithmeticity for periods of automorphic forms,} In ``Automorphic Representations and $L$-functions,''
187--229, Tata Inst. Fundam. Res. Stud. Math., 22, Tata Inst. Fund. Res., Mumbai, 2013.
 

\bibitem{grobner}
H.\,Grobner, 
{\it A cohomological injectivity result for the residual automorphic spectrum of GL(n).} 
Pacific J.\,Math., 268 (2014), no.\,1, 33--46.

\bibitem{grobner-raghuram}
H.\,Grobner and A.\,Raghuram,
{\it On the arithmetic of Shalika models and the critical values of L-functions for GL(2n). With an appendix by Wee Teck Gan.} Amer.\,J.\,Math.\,136 (2014), no.\,3, 675--728.


\bibitem{gross}
B.\,Gross, 
{\it On the Satake isomorphism.} 
Galois representations in arithmetic algebraic geometry (Durham, 1996), 223--237, 
London Math. Soc. Lecture Note Ser., 254, Cambridge Univ. Press, Cambridge, 1998.

\bibitem{harder-inventiones}
G.\,Harder, 
{\it Eisenstein cohomology of arithmetic groups. The case ${\rm GL}\sb 2$.}  
Invent. Math. 89,  no. 1, 37--118 (1987). 

\bibitem{harder-lnm}
G.\,Harder, 
{\it Some results on the Eisenstein cohomology of arithmetic subgroups of ${\rm GL}_n$,} 
in Cohomology of arithmetic groups and automorphic forms (ed. J.-P. Labesse and J. Schwermer), 
Springer Lecture Notes in Mathematics, vol. 1447, pp. 85--153 (1990).

\bibitem{harder-p-adic} 
G.\,Harder, 
{\it Interpolating coefficient systems and p-ordinary cohomology of arithmetic groups.} 
Groups Geom.\,Dyn., 5 (2011), no. 2, 393--444.

\bibitem{harder-cong} 
G.\, Harder
{\it Harder, G. A congruence between a Siegel and an Elliptic Modular Form
The 1-2-3 of Modular Forms, Springer Unitext}

\bibitem{harder-arithmetic}
G.\,Harder, 
{\it Arithmetic Harish-Chandra modules.} Preprint (2014). 

\bibitem{harder-badprimes}
G.\,Harder, 
{\it Rationality properties of local intertwining operators.} 
Manuscript in preparation. 

\bibitem{harder-book}
G.\,Harder, 
{\it Cohomology of arithmetic groups,} Book in preparation. Preliminary version available at \newline
{\tt http://www.math.uni-bonn.de/people/harder/Manuscripts/buch/}

\bibitem{harder-raghuram} 
G.\,Harder and A.\,Raghuram, {\it Eisenstein cohomology and ratios of critical values of Rankin--Selberg L-functions.} 
C.\,R.\,Math.\,Acad.\,Sci. Paris 349 (2011), no. 13-14, 719--724.

\bibitem{jacquet-corvallis}
H.\,Jacquet, 
{\it Principal L-functions of the linear group.} 
Automorphic forms, representations and L-functions (Proc. Sympos. Pure Math., Oregon State Univ., Corvallis, Ore., 1977), Part 2, pp. 63Ð86, Proc. Sympos. Pure Math., XXXIII, Amer. Math. Soc., Providence, R.I., 1979.

\bibitem{jacquet-residual}
H.\,Jacquet, 
{\it On the residual spectrum of GL(n).} Lie group representations, II (College Park, Md., 1982/1983), 185Ð208, 
Lecture Notes in Math., 1041, Springer, Berlin, 1984. 

\bibitem{jacquet-pacific}
H.\,Jacquet, 
{\it A correction to Conducteur des repr\'esentations du groupe lin\'eaire.} 
Pacific J.\,Math.\ 260 (2012), no. 2, 515--525. 

\bibitem{jacquet-ps-shalika}
H.\,Jacquet, I.\,Piatetski-Shapiro and J.\,Shalika.
{\it Conducteur des representations du groupe lineaire.}
Math.~Ann., 256, no.~2, 199--214 (1981). 

\bibitem{jacquet-ps-shalika-AJM}
H.\,Jacquet, I.\,Piatetski-Shapiro and J.\,Shalika.
{\it Rankin--Selberg convolutions.}
Amer.\,J.\,Math. 105 (1983), no. 2, 367--464.

\bibitem{jacquet-shalika-I}
H.\,Jacquet and J.A.\,Shalika, 
{\it On Euler products and the classification of automorphic representations. I.} 
Amer.\,J.\,Math. 103 (1981), no. 3, 499--558.

\bibitem{jacquet-shalika-II}
H.\,Jacquet and J.A.\,Shalika, 
{\it On Euler products and the classification of automorphic forms. II.} 
Amer. J. Math. 103 (1981), no. 4, 777--815. 

\bibitem{kim-fields}
H.\,Kim, 
{\it Automorphic $L$-functions.}
Lectures on automorphic $L$-functions,  97--201, Fields Inst. Monogr., 20, Amer. Math. Soc., Providence, RI, 2004.

\bibitem{kostant}
B.\,Kostant, 
{\it Lie algebra cohomology and the generalized Borel-Weil theorem}
Annals of Math., Vol 74, No. 2 (1961) 329--387. 

\bibitem{knapp}
A.\,Knapp, 
{\it Local Langlands correspondence: the Archimedean case,}
Motives (Seattle, WA, 1991), 393--410, Proc. Sympos. Pure Math., 55, Part 2, Amer. Math. Soc., Providence, RI, 1994. 

\bibitem{kudla}
S.S.\,Kudla, 
{\it The local Langlands correspondence: the non-Archimedean case.} 
Motives (Seattle, WA, 1991), 365--391, Proc. Sympos. Pure Math., 55, Part 2, Amer. Math. Soc., Providence, RI, 1994.

\bibitem{langlands-euler}
R.\,Langlands, 
{\it Euler products,} Yale University Press (1971). 

\bibitem{langlands-eisenstein}
R.\,Langlands, 
{\it On the functional equations satisfied by Eisenstein series.}
Lecture Notes in Mathematics, Vol.\,544., Springer-Verlag, Berlin-New York, 1976.

\bibitem{moeglin-edinburgh}
C.\,M\oe glin, 
{\it Representations of $\GL(n)$ over the real field.} 
Representation theory and automorphic forms (Edinburgh, 1996), 157--166, Proc. Sympos. Pure Math., 61, Amer. Math. Soc., Providence, RI, 1997.

\bibitem{moeglin-waldspurger}
C.\,M\oe glin and J.-L.\,Waldspurger, 
{\it Le spectre r\'esiduel de GL(n). (French) [The residual spectrum of GL(n)]} 
Ann.\,Sci.\,\'Ecole Norm.\,Sup.\,(4) 22 (1989), no. 4, 605--674.

\bibitem{nekovar}
J.\,Nekov\'ar, 
{\it Beilinson's conjectures.} 
Motives (Seattle, WA, 1991), 537Ð570, Proc. Sympos. Pure Math., 55, Part 1, Amer. Math. Soc., Providence, RI, 1994.

\bibitem{raghuram-imrn} A.\,Raghuram, {\it On the special values of certain Rankin-Selberg $L$-functions and applications to odd symmetric power $L$-functions of modular forms,} Int.\,Math.\,Res.\,Not., Vol. (2010) 334--372, doi:10.1093/imrn/rnp127. 

\bibitem{raghuram-comparison}
A.\,Raghuram,
{\it Comparison results for certain periods of cusp forms on $\GL(2n)$ over a totally real number field.} 
The legacy of Srinivasa Ramanujan, 323--334, Ramanujan Math.\,Soc., Lect.\,Notes Ser., 20, Ramanujan Math.\,Soc., Mysore, 2013.

\bibitem{raghuram-riemann}
A.\,Raghuram,
{\it Special values of the Riemann zeta function: some results and conjectures.} 
To appear in the proceedings of the Pune workshop on ``The Bloch--Kato conjecture for the Riemann zeta function."

\bibitem{raghuram-preprint} A.\,Raghuram,  
{\it Critical values of 
Rankin--Selberg $L$-functions for $\GL_n \times \GL_{n-1}$ and the symmetric cube $L$-functions for $\GL_2$.}
To appear in Forum Math., 2015. 


\bibitem{raghuram-shahidi-imrn} 
A.\,Raghuram and F.\,Shahidi, 
On certain period relations for cusp forms on $\GL_n$, 
{\it Int. Math. Res. Not.} Vol. (2008), doi:10.1093/imrn/rnn077.

\bibitem{raghuram-tanabe} 
A.\,Raghuram and N.\,Tanabe, 
{\it Notes on the arithmetic of Hilbert modular forms.} 
J. Ramanujan Math. Soc. 26 (2011), no. 3, 261--319. 

\bibitem{ramakrishnan-wang}
D.\,Ramakrishnan and S.\,Wang, 
{\it A cuspidality criterion for the functorial product on $GL(2) \times GL(3)$ with a cohomological application.} 
Int.\,Math.\,Res.\,Not.\, 2004, no. 27, 1355--1394.

\bibitem{schwermer-lnm988}
J.\,Schwermer, 
{\it Kohomologie arithmetisch definierter Gruppen und Eisensteinreihen.} (German) [Cohomology of arithmetically defined groups and Eisenstein series] 
Lecture Notes in Mathematics, 988. Springer-Verlag, Berlin, 1983. iv+170 pp.

\bibitem{schwermer-lnm1447}
J.\,Schwermer,
{\it Cohomology of arithmetic groups, automorphic forms and L-functions.} Cohomology of arithmetic groups and automorphic forms (Luminy-Marseille, 1989), 1--29, Lecture Notes in Math., 1447, Springer, Berlin, 1990.

\bibitem{serre}
J.-P.\,Serre, 
{\it Facteurs locaux des fonctions zeta des vari\'et\'es alg\'ebriques (d\'efinitions et conjectures)}, 
in S\'eminaire Delange-Pisot-Poitou, 1969/70, Collected Papers II, vol. 19, p. 581--592.

\bibitem{shahidi-duke80}
F.\,Shahidi, 
{\it Whittaker models for real groups.} Duke Math. J. 47 (1980), no. 1, 99--125.

\bibitem{shahidi-ajm81}
F.\,Shahidi, 
{\it On certain L-functions.} Amer. J. Math. 103 (1981), no. 2, 297--355.

\bibitem{shahidi-duke85}
F.\,Shahidi, 
{\it Local coefficients as Artin factors for real groups.} 
Duke Math. J. 52 (1985), no. 4, 973--1007.

\bibitem{shahidi-pcmi}
F.\,Shahidi, 
{\it Langlands--Shahidi method.}  
Automorphic forms and applications,  299--330, IAS/Park City Math. Ser., 12, Amer. Math. Soc., Providence, RI, 
(2007).

\bibitem{shahidi-book}
F.\,Shahidi, 
{\it Eisenstein series and automorphic L-functions.} 
American Mathematical Society Colloquium Publications, 58. 
American Mathematical Society, Providence, RI, (2010). 

\bibitem{shimura-duke}
G. Shimura, 
{\it The special values of the zeta functions associated with Hilbert modular forms.}
Duke Math. J. 45 (1978), no. 3, 637--679.

\bibitem{Vo-Zu}
D.\,Vogan  and G.\,Zuckerman, 
{\it Unitary representations with nonzero cohomology.}
Comp.\,Math.\,53 (1984), no. 1, 51--90. 

\bibitem{weselman}
U.\,Weselmann, {\it  Siegel Modular Varieties and the Eisenstein Cohomology of ${\rm PGL}_{2g+1}$},
 Manuscripta Math.\,145 (2014), no. 1-2, 175--220.

\bibitem{yoshida-ajm}
H. Yoshida, 
{\it Motives and Siegel modular forms.}  
Amer. J. Math.  123,  no. 6, 1171--1197 (2001). 


\end{thebibliography}
 \end{document}